\newcommand{\FF}{{\mathbb{F}}}
\newcommand{\ZZ}{{\mathbb{Z}}}
\newcommand{\fA}{{\mathfrak{A}}}   
\newcommand{\fS}{{\mathfrak{S}}}   
\newcommand{\bB}{{{B}}}
\newcommand{\bC}{{{C}}}
\newcommand{\bG}{{{G}}}
\newcommand{\bH}{{{H}}}
\newcommand{\bL}{{{L}}}
\newcommand{\bM}{{{M}}}
\newcommand{\bP}{{{P}}}
\newcommand{\bQ}{{{Q}}}
\newcommand{\bT}{{{T}}}
\newcommand{\bU}{{{U}}}
\newcommand{\bX}{{{X}}}
\newcommand{\cE}{{\mathcal{E}}}
\newcommand{\cO}{{\mathcal{O}}}
\newcommand{\cU}{{\mathcal{U}}}
\newcommand{\Ph}[1]{\Phi_#1}
\newcommand{\CF}{{\operatorname{CF}}}
\newcommand{\Ind}{{\operatorname{Ind}}}
\newcommand{\Infl}{{\operatorname{Infl}}}
\newcommand{\Irr}{{\operatorname{Irr}}}
\newcommand{\Res}{{\operatorname{Res}}}
\newcommand{\ad}{{\operatorname{ad}}}
\newcommand{\SC}{{\operatorname{sc}}}      
\newcommand{\Aut}{{\operatorname{Aut}}}
\newcommand{\Br}{{\operatorname{Br}}}
\newcommand{\GL}{{\operatorname{GL}}}
\newcommand{\PGL}{{\operatorname{PGL}}}
\newcommand{\SL}{{\operatorname{SL}}}
\newcommand{\OO}{{\operatorname{O}}}
\newcommand{\GU}{{\operatorname{GU}}}
\newcommand{\SU}{{\operatorname{SU}}}
\newcommand{\PGU}{{\operatorname{PGU}}}
\newcommand\RLG{{R_\bL^\bG}}
\newcommand\RLPG{{R_{\bL\subset\bP}^\bG}}
\newcommand\RLM{{R_\bL^\bM}}
\newcommand\RMG{{R_\bM^\bG}}
\newcommand\RLH{{R_\bL^\bH}}
\newcommand\RHG{{R_\bH^\bG}}
\newcommand\RTG{{R_\bT^\bG}}
\newcommand\tbG{{\tilde\bG}}
\newcommand\tbL{{\tilde\bL}}
\newcommand\tbZ{{\tilde{\mathbf{Z}}}}
\newcommand{\tw}[1]{{}^#1\!}
\newcommand{\pl}{\!+\!}
\newcommand{\cd}{\!\cdot\!}
\newcommand{\blangle}{\big\langle}
\newcommand{\brangle}{\big\rangle}
\def\pmod#1{~({\rm mod}~#1)}
\newcommand{\thrd}{\frac{1}{3}}
\newcommand{\chrs}{\text{ chars}}
\let\la=\lambda
\let\ti=\times
\newtheorem{thm}{Theorem}[section]
\newtheorem{lem}[thm]{Lemma}
\newtheorem{cor}[thm]{Corollary}
\newtheorem{prop}[thm]{Proposition}
\theoremstyle{definition}
\newtheorem{exmp}[thm]{Example}
\newtheorem{defn}[thm]{Definition}
\theoremstyle{remark}
\newtheorem{rem}[thm]{Remark}
\begin{document}

\title{Quasi-isolated blocks and\\ Brauer's height zero conjecture}

\date{\today}

\author{Radha Kessar}
\address{Institute of Mathematics, University of Aberdeen,
  Fraser Noble Building, Aberdeen, AB243UE, UK}
\email{r.kessar@abdn.ac.uk}
\author{Gunter Malle}
\address{FB Mathematik, TU Kaiserslautern,
  Postfach 3049, 67653 Kaisers\-lautern, Germany.}
\email{malle@mathematik.uni-kl.de}

\thanks{The project was supported by EPSRC grant EP/I033637/1}

\begin{abstract}
This paper has two main results. Firstly, we complete the parametrisation of
all $p$-blocks of finite quasi-simple groups by finding the so-called
quasi-isolated blocks of exceptional groups of Lie type for bad primes.
This relies on the explicit decomposition of Lusztig induction from suitable
Levi subgroups. Our second major result is the proof of one
direction of Brauer's long-standing height zero conjecture on blocks of
finite groups, using the reduction by Berger and Kn\"orr to the quasi-simple
situation. We also use our result on blocks to verify a conjecture of Malle
and Navarro on nilpotent blocks for all quasi-simple groups.
\end{abstract}

\maketitle

\pagestyle{myheadings}
\markboth{R. Kessar and G. Malle}{Quasi-isolated blocks and the height zero conjecture}

\section{Main results} \label{sec:main}

Brauer's famous height zero conjecture \cite{Br55} from 1955 states that
a $p$-block $B$ of a finite group has an abelian defect group if and only if
every ordinary irreducible character in $B$ has height zero.

Here we are concerned with one direction of this conjecture:
\begin{itemize}
\item[(HZC1)] If a $p$-block $B$ of a finite group has abelian defect groups,
 then every ordinary irreducible character of $B$ has height zero.
\end{itemize}
One of the main aims of this paper is the proof of the following result:

\begin{thm}   \label{thm:main}
 The 'if part' (HZC1) of Brauer's height zero conjecture holds for all finite
 groups.
\end{thm}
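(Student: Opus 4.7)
The plan is to invoke the Berger--Kn\"orr reduction, which shows that HZC1 for an arbitrary finite group follows from its validity for every $p$-block of every finite quasi-simple group. Combined with the classification of finite simple groups, this reduces Theorem~\ref{thm:main} to the following assertion: whenever $B$ is a $p$-block of a quasi-simple group $G$ whose defect groups are abelian, every ordinary irreducible character in $B$ has height zero.

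I would then run through the families of quasi-simple groups furnished by the classification. Cyclic groups, alternating groups, and sporadic groups can be handled directly, the last via known block distributions and character tables. For a quasi-simple group of Lie type defined in characteristic $\ell$ I would split into two subcases. In the defining-characteristic case $p=\ell$, a $p$-block has either defect zero or a full (typically non-abelian) Sylow $p$-subgroup as defect group, so the claim is immediate. In the cross-characteristic case $p\ne\ell$, the Bonnaf\'e--Rouquier Jordan decomposition of blocks, together with its compatibility with heights, reduces the statement for an arbitrary $p$-block to the corresponding statement for $p$-blocks whose associated semisimple label is quasi-isolated in the ambient reductive group $\bG$. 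The theorem of Cabanes--Enguehard then handles all such quasi-isolated blocks when $p$ is a good prime for $\bG$.

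The remaining case---quasi-isolated $p$-blocks of finite groups of Lie type at bad primes $p$, especially in the exceptional types---is where the main obstacle lies, and it is precisely for this case that the parametrisation of quasi-isolated blocks announced in the abstract, obtained via the explicit decomposition of $\RLG$ from suitable Levi subgroups $\bL\le\bG$, is designed. For each candidate block, labelled by a quasi-isolated semisimple $p'$-element $s$ in the dual group together with a unipotent block $b$ of $C_{\bG^*}(s)$, I would extract the defect group structure from the generic degree formula for the Jordan-corresponding characters. Whenever this defect group turns out to be abelian, I would then verify directly---from the list of unipotent character degrees of the centralisers arising in the exceptional types---that every character in the block has the same $p$-part of its degree, hence height zero. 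Combining these case-by-case checks with the easier families above establishes HZC1 for all quasi-simple groups, and hence via Berger--Kn\"orr for all finite groups.
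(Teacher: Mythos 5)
Your overall architecture coincides with the paper's: Berger--Kn\"orr plus the classification reduces to quasi-simple groups, the defining-characteristic and good-prime cases are dispatched by Humphreys and by Cabanes--Enguehard/Enguehard respectively, and Bonnaf\'e--Rouquier plus the new parametrisation of quasi-isolated blocks at bad primes finishes the job. But there is a genuine gap at the Bonnaf\'e--Rouquier step. To transfer the hypothesis of (HZC1) from a block $b$ of $\bG^F$ to its quasi-isolated correspondent $c$ of a proper Levi $\bM^F$, you must know that abelianness (and the isomorphism type, or at least the order) of defect groups is preserved by this correspondence. A Morita equivalence over $\cO$ does \emph{not} in general preserve abelianness of defect groups, and for the Bonnaf\'e--Rouquier bimodules this is precisely what is not known; the paper states this explicitly in the introduction and devotes all of Section~\ref{sec:def} (Theorem~\ref{thm:all-ab-intro} $=$ Theorem~\ref{thm:all-ab}) to proving it, by a case analysis using ``good pairs'' rather than by general Morita-theoretic arguments. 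Your phrase ``together with its compatibility with heights'' presupposes exactly the statement that has to be proved. Moreover, since the reduction is to quasi-simple groups $\bG^F/Z$ rather than to $\bG^F$ itself, this preservation must be established for the images $\bar b$, $\bar c$ modulo a central $\ell$-subgroup $Z$, which is where the hardest cases ($E_6$ at $\ell=3$ and $E_7$ at $\ell=2$) arise.

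Two further points would need attention. First, for central quotients of $\SL_n(q)$ and $\SU_n(q)$ the good-prime Jordan decomposition argument breaks down because $\ell$ may divide the order of the centre; the paper disposes of type $A$ entirely by invoking the theorem of Blau--Ellers, which your sketch omits. Second, in the bad-prime exceptional case, verifying height zero ``directly from the list of unipotent character degrees of the centralisers'' is not quite enough: a block $B\subseteq\cE_\ell(\bG^F,s)$ also contains characters from the series $\cE(\bG^F,st)$ for $\ell$-elements $t$, whose block distribution is not determined by the parametrisation. The paper circumvents this either by observing that the known height-zero characters of $B$ already realise the maximal $\ell$-part of a degree in all of $\cE_\ell(\bG^F,s)$, or, where that fails (certain $E_8$ cases and the unipotent blocks of Proposition~\ref{prop:excunibad}), by a further reduction to a proper $e$-split Levi subgroup using basic sets (Lemma~\ref{lem:goodLevi-split}) or Enguehard's Jordan decomposition of unipotent blocks. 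Without one of these devices your final verification step does not close.
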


Our proof relies on the crucial paper of Berger and Kn\"orr \cite{BK} where
they show that this direction of the conjecture holds for all groups, provided
that it holds for all quasi-simple groups. An alternative proof of this
reduction was later given by Murai \cite{Mu}.

Many particular cases of (HZC1) had been considered before. Olsson \cite{Ol90}
showed the claim for the covering groups of alternating groups. The case of
unipotent blocks of groups of Lie type was treated by Brou\'e--Malle--Michel
\cite{BMM} and Brou\'e--Michel \cite{BM93}. Cabanes--Enguehard \cite{CE93}
then showed (HZC1) for most blocks of finite reductive groups. In addition to
these results we use the theorem of Blau--Ellers \cite{BE} that this direction
of the conjecture holds for all central quotients of special linear and special
unitary groups.
\par
Let us mention a few other important partial results: Gluck and Wolf \cite{GlWo}
proved both directions of the height zero conjecture for $p$-solvable groups.
Fong--Harris showed (HZC1) for principal 2-blocks,  Navarro--Tiep
\cite{NT10} recently proved both directions for 2-blocks of maximal defect and
Kessar--Koshitani--Linckelmann \cite{KKL} proved (HZC1) for 2-blocks whose
defect groups are  elementary abelian of order $8$.  Our
paper is independent of the latter results.

As our second main result and as a crucial ingredient to the proof of
Theorem~\ref{thm:main} we complete the parametrisation of the $\ell$-blocks of
finite quasi-simple groups, where $\ell$ is a prime number. (See
Remark~\ref{rem:history} for historic comments on this problem.) The only
case that remains to be considered is the one of quasi-isolated blocks of
exceptional groups of Lie type when $\ell$ is bad, that is, non-unipotent
blocks  parametrised by non-identity semisimple elements whose centraliser
in the dual group is not contained in any proper Levi subgroup. This is the
case which we solve here.

Although our determination of quasi-isolated blocks proceeds in a case-by-case
manner, the result on blocks and their defect groups can be phrased in the
following general, generic form, which also appeared for the blocks considered
in the earlier work of Cabanes and Enguehard. Throughout this introduction,
$\bG$ denotes a simple simply connected algebraic group over an algebraic
closure of a finite field $\FF_p$ with Steinberg endomorphism $F:\bG \to\bG$.
See Sections~2--6 for further notation and the proofs.

\begin{thm}[Parametrization of Blocks]   \label{thm:mainblocks}
 Assume that $\bG$ is simple simply connected of exceptional Lie type in
 characteristic~$p$ and $\ell\ne p$ a bad prime for $\bG$.
 Let $1\ne s\in \bG^{*F}$ be a quasi-isolated $\ell'$-element.
 \begin{enumerate}[\rm(a)]
  \item There is a natural bijection
   $$b_{\bG^F}(\bL,\la)\longleftrightarrow (\bL,\la)$$
   between $\ell$-blocks of $\bG^F$ in $\cE_\ell(\bG^F,s)$ and pairs
   $(\bL,\la)$ up to $\bG^F$-conjugation, where
   \begin{enumerate}
    \item[\rm(1)] $\bL$ is an $e$-split Levi subgroup of $\bG$, with
     $e=e_\ell(q)$,
    \item[\rm(2)] $\la\in\cE(\bL^F,s)$ is $e$-cuspidal, and
    \item[\rm(3)] $\la$ is of quasi-central $\ell$-defect.
   \end{enumerate}
  \item There is a defect group $P\leq N_\bG(\bL,\la)^F$ of
   $b_{\bG^F}(\bL,\la)$ with a normal series
   $$Z(\bL)_\ell^F \unlhd D:=C_P(Z(\bL)_\ell^F) \unlhd P,$$
   with quotients $P/D$ isomorphic to a  Sylow $\ell$-subgroup
   of $W_{\bG^F}(\bL,\la)$ and $D/Z(\bL)_\ell^F$ isomorphic to a Sylow
   $\ell$-subgroup of $\bL^F/Z(\bL)_\ell^F[\bL,\bL]^F$.
  \item Here, $b_{\bG^F}(\bL,\la)$ has abelian defect if and only if
   $W_{\bG^F}(\bL,\la)$ is an $\ell'$-group.
  \item Further, when $\ell\ne2$ then $D=Z(\bL)_\ell^F$ in {\rm(b)} and
   $P$ is a Sylow $\ell$-subgroup of the extension of $Z(\bL)_\ell^F$ by
   $W_{\bG^F}(\bL,\la)$.
 \end{enumerate}
\end{thm}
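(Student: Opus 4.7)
The plan is a case-by-case analysis running through each exceptional type $\bG$ and each bad prime $\ell \ne p$. For each such pair one starts from the list of $\bG^{*F}$-conjugacy classes of quasi-isolated semisimple $\ell'$-elements $s \ne 1$ of $\bG^{*F}$, together with their centralisers $C_{\bG^*}(s)$, which is available from the work of Bonnaf\'e. The rational Lusztig series $\cE(\bG^F, s)$ is then known via Jordan decomposition, and its characters distribute themselves along their $e$-Harish-Chandra labels $(\bL, \la)$ in the Brou\'e--Malle--Rouquier framework. Crucially, since $s$ is quasi-isolated, the Bonnaf\'e--Rouquier Morita equivalence does not reduce the problem to a proper Levi, forcing a direct attack.

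For part~(a), every character in $\cE(\bG^F, s)$ belongs to a unique $e$-Harish-Chandra series, so the task is to decide which series combine into a single $\ell$-block. For $(\bL, \la)$ with $\la$ of quasi-central $\ell$-defect I expect to show that all constituents of $\RLG(\la)$---parametrised via the underlying relative Hecke algebra by $\Irr(W_{\bG^F}(\bL, \la))$---lie in one $\ell$-block, the central character being forced by $s$ and $Z(\bL)_\ell^F$. Conversely, distinct $\bG^F$-classes of such pairs must be separated; this is where the case analysis enters, via the explicit decomposition of Lusztig induction from the candidate $e$-split Levi subgroups, controlled in exceptional groups through the structure of the relative Weyl groups. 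The quasi-central $\ell$-defect hypothesis on $\la$ is exactly what excludes pairs whose induction would otherwise subdivide the block; pairs failing this hypothesis should be re-identified with ones coming from a smaller Levi.

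For parts~(b)--(d), I would follow the Cabanes--Enguehard skeleton for defect groups, adapted to the bad-prime context. A defect group $P$ of $b_{\bG^F}(\bL, \la)$ can be chosen inside $N_\bG(\bL, \la)^F$; it contains $Z(\bL)_\ell^F$ as a normal subgroup and projects onto a Sylow $\ell$-subgroup of $W_{\bG^F}(\bL, \la)$ via the normaliser action on $(\bL, \la)$. The intermediate subgroup $D = C_P(Z(\bL)_\ell^F)$ is identified with a Sylow $\ell$-subgroup of the extension of $Z(\bL)_\ell^F$ by the $\ell$-part of $\bL^F/Z(\bL)_\ell^F[\bL,\bL]^F$, the quotient measuring the non-centrality of $\la$, which can only contribute at $\ell = 2$. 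Part~(c) is then immediate, and for~(d) one uses that for $\ell$ odd the quotient $\bL^F/Z(\bL)_\ell^F[\bL,\bL]^F$ is an $\ell'$-group in this situation, forcing $D = Z(\bL)_\ell^F$.

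The main obstacle is that, unlike in the good-prime Cabanes--Enguehard setting, there is no uniform statement guaranteeing that characters of a single $e$-Harish-Chandra series remain in one $\ell$-block, so this has to be verified case by case. The genuinely laborious subcases will be the quasi-isolated elements of $E_7$ and $E_8$ at $\ell = 2$, and of $E_8$ at $\ell = 3, 5$, where both the number of centraliser structures to examine and the sizes of the relative Weyl groups are largest; here the essential technical work is the concrete control of $\RLG(\la)$ and its reduction modulo $\ell$.
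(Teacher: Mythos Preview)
Your overall architecture matches the paper's: a case-by-case run over exceptional types and bad primes, starting from Bonnaf\'e's classification of quasi-isolated elements, organising $\cE(\bG^F,s)$ by $e$-Harish-Chandra series, and identifying defect groups via a Cabanes--Enguehard-style argument inside $N_{\bG^F}(\bL,\la)$. Two technical points, however, are either missing or mischaracterised.

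First, the mechanism putting all constituents of $\RLG(\la)$ into one $\ell$-block is not ``central characters being forced by $s$ and $Z(\bL)_\ell^F$''. The paper uses a Brauer-pair criterion (an adaptation of Cabanes) which requires, for each relevant $(\bL,\la)$, the verifications $\bL=C_\bG(Z(\bL)_\ell^F)$ and that $\bL$ is $(e,\ell)$-\emph{adapted} (the successive centralisers of generators of $Z(\bL)_\ell^F$ are $e$-split). These are concrete computational checks, done type by type. Feeding this criterion also requires first proving that $e$-Harish-Chandra theory holds above each such pair (the paper's Theorem~1.4), obtained by explicitly decomposing $\RLG(\la)$ via the Mackey formula, uniform projections, and transitivity; you should flag this as a separate theorem to be established, not assume it as part of the Brou\'e--Malle--Michel framework.

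Second, distinct $(\bL,\la)$ are not separated into different blocks via the decomposition of Lusztig induction; the paper instead computes the defect groups first (via your part~(b) argument) and then distinguishes blocks \emph{a posteriori} by comparing defect-group orders or abelianness, and in a few cases by non-conjugacy of maximal Brauer pairs. Relatedly, your remark that pairs failing quasi-central defect are ``re-identified with ones coming from a smaller Levi'' is not what happens: such pairs produce $e$-Harish-Chandra series that are shown to merge into the \emph{same} block as a quasi-central pair for the same $s$, typically by exhibiting an auxiliary Harish-Chandra series (sometimes for a different $e'$) containing both, or by invoking that all of $\cE(\bL^F,s)$ lies in a single block of $\bL^F$.
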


In \cite{BR}, Bonnaf\'e and Rouquier proved that every $\ell$-block of a
finite reductive group in characteristic different from $\ell$ is Morita
equivalent, via Lusztig induction, to a quasi-isolated block of some Levi
subgroup. This comparison theorem provides a crucial reduction in the proof
of Theorem~\ref{thm:main}. But note that it is not known in general whether
Morita equivalences preserve abelianess of defect groups. In our context,
relying on previous results, mainly of Cabanes--Enguehard, we prove the
following result:

\begin{thm}[Preservation of Abelian Defect Groups]   \label{thm:all-ab-intro}
 Let $\bG$ be simple, simply connected in characteristic~$p$ and $\ell\ne p$
 a prime. Let $\bM$ be an $F$-stable Levi subgroup of $\bG$, and
 let $b$ and $c$ be Bonnaf\'e--Rouquier corresponding $\ell$-blocks of
 $\bG^F$ and $\bM^F$ respectively (see Definition~\ref{dfn:bon-rou-corr}).
 Let $Z$ be a central $\ell$-subgroup of $\bG^F$ and let $\bar b$ and
 $\bar c$ be the images of $b$ and $c$ in $\bG^F/Z$ and $\bM^F/Z$ respectively.
 If either $\bar b$ or $\bar c$ has abelian defect groups, then the defect
 groups of $\bar b$ and $\bar c$ are isomorphic.
\end{thm}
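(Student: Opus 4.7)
The plan is to compare defect groups via the explicit descriptions afforded by Theorem~\ref{thm:mainblocks} combined with its Cabanes--Enguehard predecessors in the non-quasi-isolated or tame cases. By construction of the Bonnaf\'e--Rouquier correspondence, $c\in\cE_\ell(\bM^F,s)$ for a semisimple $\ell'$-element $s\in\bM^{*F}$ with $C_{\bG^*}(s)\le\bM^*$, and $b\in\cE_\ell(\bG^F,s)$ is its correspondent. The parametrization attaches a single pair $(\bL,\la)$---with $\bL\le\bM$ an $e$-split Levi and $\la\in\cE(\bL^F,s)$ an $e$-cuspidal character of quasi-central $\ell$-defect---to both blocks, thanks to the compatibility of the Bonnaf\'e--Rouquier correspondence with Lusztig induction.

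Theorem~\ref{thm:mainblocks}(b) then provides defect groups $P$ of $b$ and $Q$ of $c$, each equipped with a normal series having the common bottom term $Z(\bL)_\ell^F$, the common middle quotient (a Sylow $\ell$-subgroup of $\bL^F/Z(\bL)_\ell^F[\bL,\bL]^F$), and top quotients equal to Sylow $\ell$-subgroups of $W_{\bG^F}(\bL,\la)$ resp.\ $W_{\bM^F}(\bL,\la)$. The crucial identification is
$$ W_{\bG^F}(\bL,\la) = W_{\bM^F}(\bL,\la), $$
which follows from the Jordan decomposition of characters and the hypothesis $C_{\bG^*}(s)\le\bM^*$: any element of $N_{\bG^F}(\bL)$ stabilizing $\la$ must normalize $C_{\bL^*}(s)$ inside $C_{\bG^*}(s)\le\bM^*$, and so lies in $N_{\bM^F}(\bL)$.

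Under the abelianess hypothesis on $\bar b$ or $\bar c$, Theorem~\ref{thm:mainblocks}(c) makes this common Weyl group an $\ell'$-group, so the top quotient of both normal series is trivial and the series define the same extension of $Z(\bL)_\ell^F$ by the middle piece; hence $P\cong Q$. The central $\ell$-subgroup $Z$ embeds into $Z(\bL)_\ell^F$ (the common bottom term of both series) compatibly, so the isomorphism $P\cong Q$ descends to $P/Z\cong Q/Z$, yielding defect groups of $\bar b$ and $\bar c$ respectively. The main obstacle is the identification $W_{\bG^F}(\bL,\la)=W_{\bM^F}(\bL,\la)$, which demands careful tracking of Jordan decomposition compatibility with the Bonnaf\'e--Rouquier correspondence; a secondary hurdle is the prime $\ell=2$, where the middle quotient $D/Z(\bL)_\ell^F$ is non-trivial, so one must compare full extension structures rather than merely Sylow $\ell$-quotients.
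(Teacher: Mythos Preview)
Your plan has a fundamental gap: you invoke Theorem~\ref{thm:mainblocks}(b) for both $b$ and $c$, but neither block lies in its scope. That theorem applies only to \emph{quasi-isolated} blocks of a simple simply connected \emph{exceptional} group at a bad prime. When $\bM$ is proper, the hypothesis $C_{\bG^*}(s)\le\bM^*$ means precisely that $s$ is \emph{not} quasi-isolated in $\bG^*$, so Theorem~\ref{thm:mainblocks} says nothing about $b$. Likewise $c$ lives in $\bM^F$, and $\bM$ is a Levi subgroup, not a simple exceptional group. The Cabanes--Enguehard predecessors you appeal to (\cite{CE99}) cover only the good-prime case; when $\ell$ is bad for $\bG$ and $s$ is not quasi-isolated in $\bG^*$, there is simply no direct parametrization of $b$ of the form you need --- providing the defect-group comparison in that situation is exactly the content of the theorem you are trying to prove.

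The identity $W_{\bG^F}(\bL,\la)=W_{\bM^F}(\bL,\la)$ is also not justified by your sketch. An element of $N_{\bG^F}(\bL)$ stabilising $\la$ corresponds, on the dual side, to something preserving the $\bL^*$-class of $s$; this does not by itself force the element into $\bM$. The paper obtains the analogous statement $\Aut_Q(A)=\Aut_P(A)$ (Proposition~\ref{prop:jordan-nonab}(a)) only \emph{a posteriori}, by comparing orders of defect groups via the Bonnaf\'e--Rouquier Morita equivalence, not by a direct Weyl-group argument. The actual proof proceeds quite differently: one shows that the quasi-isolated block $c$ admits a ``good pair'' $(\bL,\la)$ (Definition~\ref{dfn:goodpair}, Theorem~\ref{thm:goodpairs}), pushes the associated Brauer pair through $\RMG$ via Proposition~\ref{prop:Cabanesjordancrux}, and controls the resulting Brauer pair for $b$ using nilpotency of the local block, Puig's theorem, and the equality $|P|=|Q|$ forced by the Morita equivalence. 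The case $Z=1$ then falls out immediately (Proposition~\ref{prop:jordan-nonab}(b)); the case $Z\ne1$, which can occur only for $\ell=3$ in type $E_6$ or $\ell=2$ in type $E_7$, requires a further delicate case analysis (Lemmas~\ref{lem:2selfcentA} and~\ref{lem:3A}) to pin down the isomorphism type of $P/Z$ and $Q/Z$.
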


The above result should ideally follow from general properties of the
bimodules inducing Bonnaf\'e--Rouquier Morita equivalences, but our proof is
different. In fact, one expects that if $b$ and $c$ are Bonnaf\'e--Rouquier
correspondents, then any defect group of $c$ is a defect group of $b$ ---
this is known to hold in many cases.

In order to prove Theorem~\ref{thm:mainblocks} we apply a criterion of Cabanes
and Enguehard (see Proposition~\ref{prop:Cabanescrux} below) which allows one
to determine the blocks if Lusztig induction from suitable Levi subgroups can
be shown to satisfy a generalised Harish-Chandra theory. The following
result is not only a crucial ingredient for our proofs but of independent
interest:

\begin{thm}[$e$-Harish-Chandra Theory]   \label{thm:BMM}
 Assume that $\bG$ is simple simply-connected of exceptional Lie type in
 characteristic~$p$ and $\ell\ne p$ a bad prime for $\bG$. Let $s\in \bG^{*F}$
 be a quasi-isolated $\ell'$-element. Then with $e=e_\ell(q)$ we have:
 \begin{enumerate}[\rm(a)]
  \item The sets $\cE(\bG^F,(\bL,\lambda))$, where $(\bL,\lambda)$ runs
   over a set of representatives of the $\bG^F$-conjugacy classes of
   $e$-cuspidal pairs of $\bG$ below $\cE(\bG^F,s)$, partition $\cE(\bG^F,s)$.
  \item $\bG^F$ satisfies an $e$-Harish-Chandra theory above each $e$-cuspidal
   pair $(\bL,\la)$ of $\bG^F$ below $\cE(\bG^F,s)$ (see
   Definition~\ref{def:d-HC} below).
 \end{enumerate}
\end{thm}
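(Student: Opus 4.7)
The plan is to prove (a) and (b) by reducing, via Lusztig's Jordan decomposition of characters, to the known case of unipotent characters of the centraliser of $s$ in $\bG^*$, where the analogous $e$-Harish-Chandra theory has been established by Brou\'e--Malle--Michel~\cite{BMM}.

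First I would exploit the Jordan bijection $\cE(\bG^F,s) \longleftrightarrow \cE(\bH^{F},1)$ with $\bH=C_{\bG^*}(s)$. Since $\bG$ is simply connected, $\bG^*$ is of adjoint type, and for quasi-isolated $s$ the centraliser $\bH$ need not be connected; however $\bH^\circ$ is a connected reductive $F$-stable subgroup of maximal rank in $\bG^*$, and one works with the component group $A(s)=\bH/\bH^\circ$ acting on $\cE(\bH^{\circ F},1)$. Under the Jordan correspondence, the $\bG^F$-conjugacy classes of $e$-split Levi subgroups $\bL\le\bG$ with $s\in\bL^{*F}$ correspond bijectively to the $\bH^F$-classes of $e$-split Levi subgroups of $\bH^\circ$ via $\bL\mapsto C_{\bL^*}(s)^\circ$. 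This correspondence preserves $e$-cuspidality of the associated characters, their relative Weyl groups, and (most importantly) the decomposition of Lusztig induction.

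Second, I would apply the Brou\'e--Malle--Michel $e$-Harish-Chandra theory to $\cE(\bH^{\circ F},1)$, working factor-by-factor on the simple components of $\bH^\circ$ (which, for quasi-isolated $s$ in an exceptional group at a bad prime, are of explicitly known, generally smaller, classical or exceptional type). This gives a partition of $\cE(\bH^{\circ F},1)$ into $e$-Harish-Chandra series and the parametrisation of each series by irreducibles of the appropriate relative Weyl group. Transporting this back along Jordan decomposition yields (a) and (b) once one checks compatibility with the $A(s)^F$-action, which governs how the unipotent series of $\bH^{\circ F}$ extend/induce to $\bH^F$.

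The hardest step will be precisely this compatibility with $A(s)$: the Jordan decomposition is not canonical and is in general not equivariant for all the structure we need (Lusztig induction, $e$-cuspidality, relative Weyl groups) when $A(s)\ne 1$. This failure occurs exactly for several quasi-isolated classes in types $E_6$, ${}^2\!E_6$, and $E_7$, which are also the classes where non-unipotent $\ell$-blocks with genuinely new behaviour arise. The strategy is therefore to complement the general Jordan-theoretic argument with a direct case-by-case verification based on the explicit classification of quasi-isolated $\ell'$-elements of exceptional groups and the explicit decomposition of $\RLG$ from suitable $e$-split Levi subgroups $\bL$ below $\cE(\bG^F,s)$. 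The latter computations, carried out in the subsequent sections using character tables of relative Weyl groups and the formalism of Lusztig induction, simultaneously identify the $e$-cuspidal pairs, confirm disjointness of the resulting series, and supply the bijection with $\Irr(W_{\bG^F}(\bL,\la))$ required by Definition~\ref{def:d-HC}.
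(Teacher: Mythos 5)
Your primary route has a genuine gap: the assertion that the Jordan correspondence ``preserves \dots{} the decomposition of Lusztig induction'' is exactly what is not available. Compatibility of Jordan decomposition with $\RLG$ for a non-toral Levi subgroup $\bL$ is a hard open problem (at the time known essentially only for uniform functions, i.e.\ for induction from maximal tori), and it is not a usable tool here even when $A(s)=1$. You localise the difficulty to the disconnected-centraliser classes in $E_6$, $\tw2E_6$, $E_7$, but for instance for the involution in $F_4$ with centraliser of type $B_4$ or $C_3+A_1$ one still cannot transport the unipotent $e$-Harish-Chandra series of $C_{\bG^*}(s)$ back to $\cE(\bG^F,s)$ without first knowing how $\RLG(\la)$ decomposes --- which is the content of the theorem. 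The claimed bijection $\bL\mapsto C_{\bL^*}(s)^\circ$ between classes of $e$-split Levi subgroups, and the claim that it matches $e$-cuspidality and relative Weyl groups on both sides, are likewise asserted rather than proved.

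The paper does not use Jordan decomposition at all to prove this theorem. It establishes (a) and (b) entirely by the method you relegate to a ``complement'': a case-by-case determination, in Sections~\ref{sec:F4}--\ref{sec:E8}, of the decomposition of $\RLG(\la)$ for every $e$-cuspidal pair listed in the tables, using ordinary Harish-Chandra theory when the $e$-split Levi subgroups are $1$-split, Lusztig's determination of $\RLG$ on uniform functions and on characters of maximal tori, the Mackey formula (Theorem~\ref{thm:RLG}(d)) to compute the norm of $\RLG(\la)$ and thereby pin down its constituents from their uniform projections, and transitivity of Lusztig induction. Your closing paragraph correctly describes this machinery, but it must carry the entire proof for every quasi-isolated class and every bad prime, not only the classes with $A(s)\ne1$; as written, your proposal leaves the connected-centraliser cases resting on an unproved equivariance of Jordan decomposition with respect to Lusztig induction.
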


The case when $s=1$, that is, the case of unipotent characters, was the main
result in \cite[Thm.~3.2]{BMM} (where there was no restriction on the type
of $\bG$, but $\ell$ was assumed to be large enough).
\medskip

Finally, we use the previous results to characterise blocks of quasi-simple
groups all of whose height zero characters have the same degree, thus completing
a programme begun by Malle--Navarro \cite{MN}, and continued by Gramain
\cite{Gr10} for the case of spin-blocks of alternating groups:

\begin{thm}[Characterization of Nilpotent Blocks]   \label{thm:nil}
 Let $S$ be a finite quasi-simple group and $p$ a prime. Assume that $B$ is
 a $p$-block of $S$ all of whose height zero characters have the same
 degree. Then the defect group of $B$ is abelian and thus $B$ is nilpotent.
\end{thm}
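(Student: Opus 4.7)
The plan is to handle quasi-simple groups family by family, using established results where they exist and bringing the block-parametrization machinery of this paper to bear on the remaining cases. Covering groups of alternating groups with $p$ odd, and covering groups of sporadic simple groups, are treated by Malle--Navarro \cite{MN}; the spin $2$-blocks of alternating groups are settled by Gramain \cite{Gr10}. Quasi-simple groups of Lie type in their defining characteristic $p$ have essentially only one $p$-block of positive defect and may be checked directly. The substantive case is $S = \bG^F/Z$ with $\bG$ simple simply-connected in some characteristic different from $p$.

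For such $S$, the first move is to apply the Bonnaf\'e--Rouquier Morita equivalence: the given $p$-block $b$ of $\bG^F$ is Morita equivalent to a quasi-isolated $p$-block $c$ of some Levi $\bM^F$. The equivalence is implemented by Lusztig induction and scales all ordinary character degrees in the block by a fixed factor, so the hypothesis ``all height-zero characters of the same degree'' transfers from $b$ to $c$; meanwhile, Theorem~\ref{thm:all-ab-intro} guarantees that abelianness of the defect group is preserved under this reduction and under passage to the central quotient modulo $Z$. One is thereby placed in the quasi-isolated setting of Theorems~\ref{thm:mainblocks} and~\ref{thm:BMM}.

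So suppose $c = b_{\bM^F}(\bL,\la)$ is a quasi-isolated block all of whose height-zero characters have a common degree. By Theorem~\ref{thm:BMM}, the ordinary characters of $c$ form a single $e$-Harish-Chandra series $\cE(\bM^F,(\bL,\la))$, in bijection with $\Irr(W_{\bM^F}(\bL,\la))$, and the corresponding degrees are—up to a block-constant factor—given by the generic degrees of the reflection-group characters. If $W_{\bM^F}(\bL,\la)$ is not a $p'$-group, Theorem~\ref{thm:mainblocks}(c) combined with Theorem~\ref{thm:main} forces height-zero characters of distinct degrees into $c$, a contradiction; so $W_{\bM^F}(\bL,\la)$ is a $p'$-group, every character of $c$ has height zero by Theorem~\ref{thm:main}, and the equal-degree hypothesis therefore applies to the entire $e$-Harish-Chandra series. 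Examination of the generic degrees attached to this series then forces $W_{\bM^F}(\bL,\la)=1$, and Theorem~\ref{thm:mainblocks}(b)(d) yields defect group $P = Z(\bL)_p^F$, which is abelian, together with a block structure identifying $c$ with a twisted group algebra of $P$; in particular $c$, and hence $b$, is nilpotent.

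The main obstacle is the final collapse step: using the explicit list of quasi-isolated pairs $(\bL,\la)$ compiled earlier in this paper---where $W_{\bM^F}(\bL,\la)$ ranges over cyclic groups, imprimitive complex reflection groups $G(e,1,a)$, and the sporadic reflection groups $G_4,\dots,G_{32}$---to rule out any accidental coincidence of generic degrees that would permit a non-trivial $W_{\bM^F}(\bL,\la)$. Particular care is required for small values of $q$ and small ranks, where such coincidences are most likely to occur; a finite but somewhat delicate case-by-case inspection of the generic degree tables underlies this verification.
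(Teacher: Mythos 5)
Your overall strategy (reduce to quasi-isolated blocks and then inspect the explicit parametrisation and the degree formula) is in the right spirit, but the paper's actual proof is much shorter: it invokes \cite[Thm.~6.1]{MN}, which already reduces Theorem~\ref{thm:nil} to exactly two outstanding families --- spin blocks of double covers of alternating groups (settled by Gramain \cite{Gr10}) and quasi-isolated blocks of exceptional groups at bad primes --- and then reads off from Tables~\ref{tab:quasi-F4}--\ref{tab:quasi-E8-5b} and the Jordan-decomposition degree formula that the only such blocks with all height-zero characters of equal degree are those consisting of a single cuspidal character, whose defect groups are central; \cite[Thm.~4.1]{MN} then gives nilpotency. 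Your version redoes the reduction from scratch, and in doing so leaves a coverage gap: after the Bonnaf\'e--Rouquier step the quasi-isolated block $c$ may live in a Levi subgroup $\bM$ of classical type, or $p$ may be a good prime for $\bM$, and in neither situation do Theorems~\ref{thm:mainblocks} and~\ref{thm:BMM} apply (they are proved only for exceptional types at bad primes). Those cases are handled in \cite{MN} via Fong--Srinivasan, Cabanes--Enguehard and Enguehard, and you would need to supply or cite those arguments separately.

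The more serious problem is the step ``If $W_{\bM^F}(\bL,\la)$ is not a $p'$-group, Theorem~\ref{thm:mainblocks}(c) combined with Theorem~\ref{thm:main} forces height-zero characters of distinct degrees into $c$.'' This is a non sequitur. Theorem~\ref{thm:mainblocks}(c) tells you the defect group is then non-abelian, and Theorem~\ref{thm:main} is only the implication (abelian defect $\Rightarrow$ all heights zero); its contrapositive produces nothing from a non-abelian defect group, and even the full height zero conjecture would only yield a character of \emph{positive} height, which does not contradict the hypothesis (which constrains only the height-zero characters). There is no shortcut here: one must directly exhibit two height-zero characters of different degrees in $\Irr(c)\cap\cE(\bM^F,s)$ whenever $W_{\bM^F}(\bL,\la)\ne1$, which is precisely the table/degree-formula inspection you defer to your final paragraph. (Note also that $\Irr(c)$ is not a single $e$-Harish-Chandra series --- only $\Irr(c)\cap\cE(\bM^F,s)$ is; the block also contains characters from the series $\cE(\bM^F,st)$ for $\ell$-elements $t$.) Once the circular intermediate step is deleted and the degree inspection is actually carried out, your argument collapses to the paper's, but as written the logical chain establishing $W_{\bM^F}(\bL,\la)=1$ is broken.
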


The paper is organised as follows. In Section~\ref{sec:back} we collect
various results on groups of Lie type, Lusztig induction, blocks and Brauer
pairs and we state our main criteria for block distribution and the structure
of defect groups. In Sections~\ref{sec:F4}--\ref{sec:E8} we determine the
decomposition of Lusztig induction from suitable Levi subgroups in the Lusztig
series belonging to quasi-isolated elements of exceptional groups of rank at
least~4 and the block distribution in these series.
Section~\ref{sec:def} is devoted to showing Theorem~\ref{thm:all-ab-intro}.
The remaining steps of the proof of (HZC1) are given in
Section~\ref{sec:proof}, see Theorem~\ref{thm:BrauerHZC}. Finally,
in Section~\ref{sec:nilblocks} we prove Theorem~\ref{thm:nil}.

\section{Background results and methods} \label{sec:back}

Throughout this paper, $\ell$ denotes a prime number.

\subsection{Blocks and Brauer pairs}
Let $G$ be a finite group and let $(K,\cO,k)$ be a splitting modular system
for $G$, i.e., $\cO$ is a complete discrete valuation ring with residue field
$k$ of characteristic $\ell$ and field of fractions $K$ such that $k$ and $K$
are splitting fields for all groups involved in $G$. Let $\CF(G, K)$ denote
the set of $K$-valued class functions on $G$ and let $\Irr(G)$
denote the subset of $\CF(G, K)$ consisting of irreducible characters of $G$.
Let $\langle\ ,\ \rangle_G$ denote the standard inner product on $\CF(G, K)$.

By an $\ell$-block of $G$ we will mean a primitive idempotent of $Z(kG)$.
By idempotent lifting, the canonical surjection of $\cO$ onto
$k$ induces a bijection between the set of primitive idempotents of
$Z(\cO G)$ and primitive idempotents of $Z(kG)$, and this induces an
orthogonal decomposition of the set of $K$-valued class functions
$\CF(G, K)$ on $G$ with respect to the standard inner product.
For $f \in \CF(G, K)$ and $b$ an $\ell$-block of $G$, the projection of $f$
onto the component of $b$ in $\CF(G, K)$ is denoted by $b.f$, and we write
$b=b_G(f)$ if $f =b.f$.
This defines a partition $\Irr(G) = \coprod_b\Irr(b)$, where
$\Irr(b)= \{\chi\in\Irr(G) \mid b.\chi=\chi\}$.

A \emph {Brauer pair} of $G$ (or \emph{$G$-Brauer pair}) with respect to $\ell$
is a pair $(Q, c)$, such that $Q$ is an $\ell$-subgroup of $G$ and $c$ is
an $\ell$-block of $C_G(Q)$. The set of $G$-Brauer pairs has a structure of
a $G$-poset such that the following properties hold: If $(Q,c)$ and $(R,d)$
are Brauer pairs with $(R,d) \subseteq (Q,c)$, then $R \leq Q$, and for any
Brauer pair $(Q,c)$ and any subgroup $R$ of $Q$, there is a unique Brauer pair
$(R,d)$ such that $(R,d) \subseteq (Q,c)$.
In particular, for each Brauer pair $(Q,c)$, there exists a unique
$\ell$-block, say $b$ of $G$ such that $(\{1\},b)\subseteq(Q,c)$, and in this
case we say that $(Q,c)$ is a \emph{$b$-Brauer pair} or that $(Q,c)$ is
\emph{associated to $b$}. A Brauer pair $(Q, c)$ is a $b$-Brauer pair if
and only if $\Br_Q(b)c=c$, if and only if $\Br_Q(b)c \ne 0$, where
${\Br}_Q: (kG)^Q \to kC_G(Q)$ denotes the Brauer homomorphism.
\par
For an $\ell$-block $b$ of $G$, the subset of the set of Brauer pairs of $G$
associated to $b$ is closed under inclusion and under the action of $G$.
For any Brauer pair $(Q,c)$, $Z(Q)$ is contained in every defect group of
$c$ and $(Q,c)$ is said to be \emph{centric} (or \emph{self-centralising})
if $Z(Q)$ is a defect group of $c$. A Brauer pair $(Q,c)$ is maximal if and
only if $(Q,c)$ is centric and $N_G(Q,c)/QC_G(Q)$ is an $\ell'$-group, where
$N_G(Q,c)$ denotes the stabiliser in $G$ of $(Q,c)$.
Further, $(Q,c)$ is maximal if and only if $Q$ is a defect group of
the unique $\ell$-block of $G$ to which $(Q,c)$ is associated.
$G$ acts transitively on the subset of maximal $b$-Brauer pairs.

If $(Q,c)$ and $(R,d)$ are Brauer pairs with $(R,d) \subseteq (Q,c)$,
and such that $R$ is normal in $Q$, then we write $(R,d) \unlhd (Q,c)$.

For a more detailed exposition on Brauer pairs, we refer the reader to the
monographs \cite[\S 40]{The95}, \cite[Part IV]{AKO}, or to the original
article of Alperin and Brou\'e \cite{AB79} --- in the latter reference
Brauer pairs are referred to as subpairs.
Here we recall a few stray facts which will be used in the sequel.

Let $R$ be an $\ell$-subgroup of $G$ and let $H$ be a subgroup of $G$ such that
$RC_G(R)\leq H \leq N_G(R)$. Every central idempotent of $kH$ is in
$kC_G(R) =kC_H(R)$ (see \cite[Part IV, Lemma 3.17]{AKO}).
Now let $(R, d)$ be a $G$-Brauer pair and suppose that
$RC_G(R)\leq H \leq N_G(R,d)$. Then, $d$ is an $\ell$-block of $H$. Further,
for any subgroup $Q$ of $H$ containing $R$, $C_G(Q)=C_H(Q)$, the $H$-Brauer
pairs with first component $Q$ are the $G$-Brauer pairs with first component
$Q$ and for any block $c$ of $C_H(Q)=C_G(Q)$,
$(\{1\},d)\subseteq(Q,c)$ as $H$-Brauer pairs if and only if
$(R, d) \subseteq(Q,c)$ as $H$-Brauer pairs, if and only if
$(R, d)\subseteq(Q,c)$ as $G$-Brauer pairs
(see \cite[Part IV, Lemma 3.18]{AKO}).
We will use these facts without further comment.

We will need a few facts about covering blocks. For $\tilde G $ a finite
group containing $G$ as normal subgroup, $\tilde b$ an $\ell$-block of
$\tilde G$ and $b$ an $\ell$-block of $G$, $\tilde b$ is said to cover $b$ if
$\tilde b b \ne 0$.

\begin{lem}   \label{lem:local-cliff}
 Let $b$ be an $\ell$-block of $G$ and let $(A,u)\subseteq (D,v)\subseteq(P,w)$
 be $b$-Brauer pairs such that $D $ is maximal with respect to
 $D\leq AC_G(A)$ and $P$ is maximal with respect to $P \leq N_G(A, u)$.
 Let $\tilde G$ be a finite group with $G \unlhd\tilde G$. Then:
 \begin{enumerate}[\rm(a)]
  \item  $D$ is a defect group of the block $u$ of $AC_G(A)$ and $P$ is a
   defect group of the block $u$ of $N_G(A,u)$. Further, $D =P \cap AC_G(A)$
   and $P/D$ is isomorphic to a Sylow $\ell$-subgroup of $N_G(A,u)/AC_G(A)$.
  \item  Let $\tilde b$ be an $\ell$-block of $\tilde G$ and $(A,\tilde u)$
   a $\tilde b$-Brauer pair. If $\tilde u$ covers $u$, then $\tilde b$ covers
   $b$.
  \item  There exists an $\ell$-block $\tilde b$ of $\tilde G$ covering $b$,
   and $\tilde b$-Brauer pairs $(A, \tilde u) \unlhd (\tilde P,y)$ such that
   $ \tilde P \leq N_{\tilde G}(A,u)$, $\tilde u$ covers $u$,
   $\tilde P \cap G =P$ and $\tilde P/P$ is isomorphic to a Sylow
   $\ell$-subgroup of $N_{\tilde G}(A,u)/N_G(A, u)$.
 \end{enumerate}
\end{lem}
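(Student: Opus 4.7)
For part (a), the plan is to apply the two Brauer-pair correspondences recalled above. By \cite[Part IV, Lemma~3.17]{AKO} applied to $H=AC_G(A)$ and to $H=N_G(A,u)$, the idempotent $u$ is an $\ell$-block of each of these groups, and by \cite[Part IV, Lemma~3.18]{AKO} the $b$-subpairs above $(A,u)$ with first component inside $H$ agree with the subpairs above $(A,u)$ computed inside $H$. Thus the maximality of $D$ (resp.\ $P$) translates directly into the statement that $D$ is a defect group of the block $u$ of $AC_G(A)$ (resp.\ of $N_G(A,u)$). For $D=P\cap AC_G(A)$, set $Q:=P\cap AC_G(A)$, take the unique $v'$ with $(Q,v')\subseteq (P,w)$, use transitivity of subpair inclusion to see $(A,u)\subseteq(Q,v')$, and invoke the maximality of $D$ to conclude $Q\le D$. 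The claim that $P/D$ is a Sylow $\ell$-subgroup of $N_G(A,u)/AC_G(A)$ is Fong's theorem for the $N_G(A,u)$-invariant block $u$ of $AC_G(A)\unlhd N_G(A,u)$.

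Part (b) reduces to a one-line computation with the Brauer map. Since $b,\tilde b$ are central in $k\tilde G$ they are $A$-fixed, and $\Br_A$ is multiplicative on $(k\tilde G)^A$. Combining $\Br_A(b)u=u$, $\Br_A(\tilde b)\tilde u=\tilde u$ and the covering hypothesis $\tilde u\cdot u\ne 0$ yields
\[
 0\ne \tilde u\cdot u = \Br_A(\tilde b)\tilde u\cdot \Br_A(b)u = \Br_A(\tilde b\,b)\,(\tilde u u),
\]
so $\Br_A(\tilde b\,b)\ne 0$, whence $\tilde b\,b\ne 0$ in $k\tilde G$.

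For part (c), set $\bar G:=N_G(A,u)$ and $\bar{\tilde G}:=N_{\tilde G}(A,u)$, so $\bar G\unlhd\bar{\tilde G}$. By (a), $u$ is a block of $\bar G$ with defect group $P$, and it is $\bar{\tilde G}$-invariant by construction. Fong's theorem for the normal inclusion $\bar G\unlhd\bar{\tilde G}$ with invariant block $u$ produces a block $\tilde y$ of $\bar{\tilde G}$ covering $u$, together with a defect group $\tilde P$ of $\tilde y$ satisfying $\tilde P\cap\bar G=P$ and $\tilde P/P$ a Sylow $\ell$-subgroup of $\bar{\tilde G}/\bar G=N_{\tilde G}(A,u)/N_G(A,u)$. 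Since $\bar{\tilde G}\cap G=\bar G$, this gives $\tilde P\cap G=P$. Now choose a block $\tilde u$ of $C_{\tilde G}(A)$ covering $u$ (existing by standard covering theory for $C_G(A)\unlhd C_{\tilde G}(A)$), arranged so that $\tilde P$ stabilises $\tilde u$. Let $\tilde b$ be the unique block of $\tilde G$ with $(1,\tilde b)\subseteq (A,\tilde u)$; by (b) it covers $b$. Since $\tilde P\le N_{\tilde G}(A,\tilde u)$, applying \cite[Part IV, Lemma~3.18]{AKO} with $R=A$, $d=\tilde u$ and $H=N_{\tilde G}(A,\tilde u)\supseteq\tilde P$ produces a unique block $y$ of $C_{\tilde G}(\tilde P)$ with $(A,\tilde u)\unlhd(\tilde P,y)$, and transitivity makes this a $\tilde b$-Brauer pair.

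The main obstacle is the choice of $\tilde u$ in (c): several blocks of $C_{\tilde G}(A)$ may cover $u$, and the one we use must be stabilised by the specific subgroup $\tilde P$ coming out of Fong's construction. This is arranged by exhibiting the constituents of $\tilde y$ among the blocks of $C_{\tilde G}(A)$ as a single $\bar{\tilde G}$-orbit of blocks covering $u$, on which $\tilde P\le\bar{\tilde G}$ acts, and invoking a fixed-point argument based on $\tilde P/P$ being a Sylow $\ell$-subgroup of $\bar{\tilde G}/\bar G$ to extract a $\tilde P$-stable constituent, which we then take as $\tilde u$.
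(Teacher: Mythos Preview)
Your arguments for (a) and (b) are essentially those of the paper. In (c) the overall architecture is also the same: pass to $\bar G=N_G(A,u)\unlhd\bar{\tilde G}=N_{\tilde G}(A,u)$, use covering block theory (your ``Fong's theorem'', the paper's \cite[Ch.~5, Thm.~5.16]{NTs}) to produce a block of $\bar{\tilde G}$ over $u$ with a defect group $\tilde P$ satisfying $\tilde P\cap G=P$ and $\tilde P/P$ a Sylow $\ell$-subgroup of $\bar{\tilde G}/\bar G$, and then manufacture a suitable $\tilde u$ covering $u$.

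There is, however, a genuine gap in your (c). Knowing that $\tilde u$ is $\tilde P$-stable (i.e.\ $\tilde P\le N_{\tilde G}(A,\tilde u)$) does \emph{not} by itself yield a $\tilde G$-Brauer pair $(\tilde P,y)$ with $(A,\tilde u)\unlhd(\tilde P,y)$: the lemma \cite[Part~IV, Lemma~3.18]{AKO} you cite is a correspondence statement, not an existence statement. What is needed is $\Br_{\tilde P}(\tilde u)\ne 0$, and your fixed-point argument, as stated, only supplies $\tilde P$-invariance of $\tilde u$, not non-vanishing under $\Br_{\tilde P}$. (There is also a secondary imprecision: $C_{\tilde G}(A)$ need not lie in $N_{\tilde G}(A,u)$, so speaking of ``constituents of $\tilde y$ among the blocks of $C_{\tilde G}(A)$'' requires more care.)

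The paper closes this gap by a direct Brauer-map computation that simultaneously produces $\tilde P$-stability and $\Br_{\tilde P}(\tilde u)\ne 0$. Writing $u'$ for the block of $\bar{\tilde G}$ with defect group $\tilde P$, one has $u'u=u'$ (since $u$ is the unique block of $\bar G$ covered by $u'$) and $\Br_{\tilde P}(u')\ne0$, whence $\Br_{\tilde P}(u)\ne0$. Letting $\cU$ be the set of blocks of $C_{\tilde G}(A)$ covering $u$, the identity $u\cdot\sum_{f\in\cU}f=u$ gives $\Br_{\tilde P}(\sum_{f\in\cU}f)\ne0$; since non-trivial $\tilde P$-orbits in $\cU$ contribute relative traces, hence lie in $\ker\Br_{\tilde P}$, some $\tilde P$-fixed $\tilde u\in\cU$ satisfies $\Br_{\tilde P}(\tilde u)\ne 0$. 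This is precisely the missing ingredient your sketch needs; once you have it, the rest of your argument goes through.
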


\begin{proof}
By \cite[Part IV, Lemma 3.18]{AKO}, $(D,v)$ is a maximal
$AC_G(A)$-Brauer pair, and is associated to $u$ so $D$ is a defect group
of $u$. Similarly, $P$ is a defect group of $u$ as block of $N_G(A, u)$.
Consider the normal inclusion $AC_G(A) \unlhd N_G(A,u)$. As $u$ is the
only block of $kN_G(A,u)$ covering the block $u$ of $kC_G(A)$, by covering
block theory, $ D =P \cap A C_G(A)$ and $P/D$ is isomorphic to a Sylow
$\ell$-subgroup of $N_G(A,u)/AC_G(A)$
(see \cite[Ch.~5, Thm.~5.16]{NTs}). This proves~(a).

Let $\tilde b$ and $\tilde u$ be as in~(b) and suppose that $\tilde u$
covers $u$. Then, $\Br_A(\tilde b)\tilde u = \tilde u$, $\Br_A(b)u =u$
and $\tilde u u \ne 0$.
Since $\Br_A$ is an algebra homomorphism, and $\tilde u$ is central in
$C_{\tilde G}(A)$, $\Br_A(\tilde b b) \tilde u u =\tilde u u \ne 0$,
and it follows that $\Br_A(\tilde b b) \ne 0$, whence $\tilde b b \ne 0$,
proving~(b).

For~(c), consider the normal inclusion $N_G(A, u) \unlhd N_{\tilde G}(A,u)$.
By (a), $u$ is a block of $N_G(A, u)$ with defect group $P$. So, again by
\cite[Ch.~5, Thm.~5.16] {NTs}, there exists a block $u'$ of
$N_{\tilde G}(A,u)$ covering $u$ such that $u'$ has a defect group
$\tilde P \leq N_{\tilde G}(A, u)$
with $\tilde P \cap G=\tilde P \cap N_G(A,u) = P$ and $\tilde P/P$
isomorphic to a Sylow $\ell$-subgroup of $N_{\tilde G}(A, u)/N_G(A, u)$.
Now, $\tilde P$ being a defect group of $u'$ implies that
$\Br_{\tilde P}(u')\ne 0$. Also, $u$ is the unique block of $N_G(A,u)$
covered by $u'$, hence $u' u =u' $. So,
$$\Br_{\tilde P}(u')\Br_{\tilde P}(u) =\Br_{\tilde P}(u')\ne 0 ,$$
whence $\Br_{\tilde P}(u) \ne 0$.

Now consider the normal inclusion $C_G(A) \unlhd C_{\tilde G}(A)$. Let
$\cU$ be the set of $\ell$-blocks of $C_{\tilde G}(A)$ covering $u$.
Since $\tilde P$ normalises $C_{\tilde G}(A)$ and stabilises $u$,
$\tilde P$ acts by conjugation on $\cU$. In particular,
$\sum_{f \in \cU} f \in (k\tilde G)^{\tilde P}$.
Also, $ u (\sum_{f \in \cU} f) = u $. So,
$$\Br_{\tilde P} (u) \Br_{\tilde P}(\sum_{f \in \cU} f)=
  \Br_{\tilde P}(u) \ne 0.$$
Since $\tilde P$ permutes the elements of $\cU$, the above equation yields
that there is an element say $\tilde u$, of $\cU$ such that
$\tilde u \in (k\tilde G)^{\tilde P}$ and $\Br_{\tilde P}(\tilde u)\ne 0$.
Consequently, there exists a $\tilde G$-Brauer pair $(\tilde P, y)$ such that
$(A, \tilde u) \leq (\tilde P, y)$. Let $\tilde b$ be the
unique $\ell$-block of $\tilde G$ such that $(A, \tilde u)$ is a
$\tilde b$-Brauer pair. Since $\tilde u$ covers $u$, (b) gives that
$\tilde b$ covers $b$. This proves (c).
\end{proof}

Let $\chi \in \Irr(G)$ and let $\theta$ be a linear character of $G$. Then
$\theta \otimes \chi$ is an irreducible character of $G$ and the map
$\chi \mapsto\theta \otimes \chi $ is a permutation on $\Irr(G)$ which
respects $\ell$-blocks: for any $\ell$-block $b$ of $G$, the set
$\{\theta\otimes\chi \mid \chi\in\Irr(b)\}$ is the set of irreducible
characters of an $\ell$-block of $G$, which we will denote by
$\theta \otimes b$.
Denoting also by $\theta$ the restriction of $\theta$ to any subgroup of $G$,
the map $(Q,f)\mapsto(Q,\theta\otimes f)$ is a $G$-poset isomorphism between
the set of $b$-Brauer pairs and the set of $\theta \otimes b$-Brauer pairs.

\begin{lem}   \label{lem:cliff-ab}
 Let $\tilde G$ be a finite group such that $G \unlhd \tilde G$,  $b$ an
 $\ell$-block of $G$ and $\tilde b$ an $\ell$-block of $\tilde G$ covering $b$.
 Suppose that $\tilde G/G$ is abelian. Then:
 \begin{enumerate}[\rm(a)]
  \item  Any $\ell$-block of $\tilde G$ covering $b$ is of the form
   $\theta\otimes\tilde b$, where $\theta$ is a linear character of
   $\tilde G/G$.
  \item  Assume that $b$ has a defect group $Z \leq Z(\tilde G)$.
   Suppose that the unique character $\chi\in\Irr(b)$ containing $Z$ in its
   kernel extends to its stabiliser $I$ in $\tilde G$.
   Then, $\tilde b$ is nilpotent, and if $D$ is a defect group of
   $\tilde b$, then $D \leq I$, $D \cap G=Z$ and $D /Z $ is isomorphic to
   the Sylow $\ell$-subgroup of $I/G$. Moreover, there are $|I: G|_{\ell'}$
   $\ell$-blocks of $\tilde G$ covering $b$.
 \end{enumerate}
\end{lem}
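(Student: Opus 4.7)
The plan is: for (a), apply standard block-theoretic Clifford theory; for (b), first reduce to the case $I=\tilde G$ via Fong--Reynolds, then quotient by $Z$ to arrive at a defect-zero character, and then use the extendability hypothesis to identify the covering block algebra as a matrix algebra over $k(\tilde G/G)$.

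For (a), let $T=\operatorname{Stab}_{\tilde G}(b)$ be the stabiliser of $b$ in $\tilde G$. Fong--Reynolds gives a bijection between blocks of $\tilde G$ covering $b$ and blocks of $T$ covering $b$ via induction; and the blocks of $T$ covering $b$ form a single orbit under tensoring with linear characters of $T/G$. Since $\tilde G/G$ is abelian, every linear character of $T/G$ extends to $\tilde G/G$, so the action of the linear characters of $\tilde G/G$ on blocks of $\tilde G$ by tensor product is transitive on those covering $b$. That $\theta\otimes\tilde b$ still covers $b$ for $\theta$ a linear character of $\tilde G/G$ is immediate from $\theta|_G=1_G$.

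For (b), first note that $I=\operatorname{Stab}_{\tilde G}(b)$: since $Z\unlhd\tilde G$ and $\chi$ is the unique element of $\Irr(b)$ containing $Z$ in its kernel, any $g\in\tilde G$ stabilising $b$ has $\chi^g\in\Irr(b)$ with $Z\leq\ker(\chi^g)$, whence $\chi^g=\chi$ and $g\in I$; the converse is immediate. By Fong--Reynolds, the blocks of $\tilde G$ covering $b$ are in defect-preserving bijection with blocks of $I$ covering $b$, so we may assume $\tilde G=I$, i.e.\ $\chi$ is $\tilde G$-invariant and extends to some $\hat\chi\in\Irr(\tilde G)$. Since $\hat\chi|_Z=\chi(1)\cdot 1_Z$, we have $Z\leq\ker\hat\chi$, and we may moreover pass to $\tilde G/Z$: the image $\bar b$ of $b$ in $G/Z$ is a defect-zero block consisting of the single character $\chi$.

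With these reductions in place, the defect-zero block idempotent $e_\chi\in k(G/Z)$ is central in $k(\tilde G/Z)\cdot e_\chi$, and the fact that $\chi$ extends to $\tilde G/Z$ trivialises the 2-cocycle of the associated crossed product, giving an algebra isomorphism
\[
 k(\tilde G/Z)\cdot e_\chi\;\cong\;M_{\chi(1)}\bigl(k(\tilde G/G)\bigr).
\]
Since $\tilde G/G$ is abelian, $k(\tilde G/G)\cong\prod_{\theta\in\Irr((\tilde G/G)_{\ell'})}k((\tilde G/G)_\ell)$ has exactly $|I\!:\!G|_{\ell'}$ blocks, each isomorphic to $k((\tilde G/G)_\ell)$, hence local and nilpotent with defect group the Sylow $\ell$-subgroup of $I/G$. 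Pulling back, the $|I\!:\!G|_{\ell'}$ blocks of $\tilde G$ covering $b$ are all nilpotent, and for a defect group $D$ of any such block one finds $D\cap G=Z$, $D\leq I$, and $D/Z$ is a Sylow $\ell$-subgroup of $I/G$ (by order comparison through the central quotient). The main obstacle is establishing the algebra isomorphism above, which depends essentially on the extendability of $\chi$; without it the obstruction cocycle would produce a twisted group algebra of $\tilde G/G$ in place of $k(\tilde G/G)$, and the structure of defect groups of the covering blocks would be substantially more delicate to pin down.
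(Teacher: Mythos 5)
Your proof is correct in substance and follows the paper's reductions for part (b) exactly: identifying $I$ as the stabiliser of $b$, passing to $I$ via Fong--Reynolds, and quotienting by the central defect group $Z$ to reach a defect-zero situation. Part (a) is also fine, though the paper argues more directly (pick $\eta\in\Irr(\tilde b)$ and $\eta'\in\Irr(b')$ over a common $\chi\in\Irr(b)$ and apply Gallagher to get $\eta'=\theta\otimes\eta$), whereas you route through Fong--Reynolds and the transitivity of $\Irr(T/G)$ on covering blocks; both are standard. Where you genuinely diverge is the endgame of (b): the paper splits off the $\ell'$-part of $\tilde G/G$, reduces to the case where $\tilde G/G$ is an $\ell$-group, and then reads off $|D|$ from the defect of the extension $\hat\chi$ together with $D\cap G=1$; you instead identify the whole algebra $k(\tilde G/Z)e_\chi$ as a crossed product whose cocycle is trivialised by the extension of $\chi$, giving $M_{\chi(1)}(k(\tilde G/G))$, and read off the block count, the defect group order and the local structure from the group algebra of the abelian quotient. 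Your version is more structural and delivers the count of $|I\!:\!G|_{\ell'}$ covering blocks in the same stroke, at the price of invoking the crossed-product machinery.

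One point needs tightening: you deduce nilpotency of $\tilde b$ from a $k$-algebra isomorphism with $M_{\chi(1)}(k((\tilde G/G)_\ell))$, but nilpotency is not known to be invariant under Morita equivalence (or algebra isomorphism) over $k$ alone --- Puig's theorem, which the paper uses elsewhere, requires an equivalence over $\cO$. This is easily repaired: since $\chi$ has defect zero after the quotient, its extension $\hat\chi$ is afforded by an $\cO$-lattice extending the one for $\chi$, so your crossed-product isomorphism already holds over $\cO$ and Puig's theorem applies; alternatively, one checks directly that for every $\tilde b$-Brauer pair $(Q,e_Q)$ with $Q\le D$ one has $[g,Q]\le Q\cap G=Z\le Z(\tilde G)$ for $g\in N_{\tilde G}(Q,e_Q)$, so the inertial quotient embeds in $\operatorname{Hom}(Q/Z,Z)$ and is an $\ell$-group. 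Either fix is one line, so I would not call this a gap in the argument, only in its justification as written.
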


\begin{proof}
Let $ b'$ be an $\ell$-block of $\tilde G$ covering $b$ and let
$\chi \in \Irr(b)$. Then there exists $\eta\in\Irr(\tilde b)$ and
$\eta'\in \Irr(b')$ such that $\eta$ and $\eta'$ both cover $b$
(see \cite[Ch.~5, Lemma 5.8(ii)]{NTs}).
But since $\tilde G/G $ is abelian, $\eta' =\theta \otimes \eta$ for some
linear character $\theta$ of $\tilde G/ G$. This proves (a).

Suppose that the hypotheses of (b) hold. Then $I$ is the stabiliser in
$\tilde G$ of $b$. Induction induces a bijection between the set of
$\ell$-blocks of $I$ covering $b$ and the set of $\ell$-blocks of $\tilde G$
covering $b$;
corresponding blocks under this bijection are source algebra equivalent,
(see for instance \cite[Sec.~2]{Ke}) and in particular the correspondence
preserves the nilpotency of blocks and corresponding blocks have common
defect groups. Hence, we may assume that $ I=\tilde G$.

Since $Z$ is a central subgroup of $\tilde G$, the canonical surjection
of $\tilde G$ onto $\tilde G/Z$ induces a bijection between the $\ell$-blocks
of $\tilde G$ and the $\ell $-blocks of $\tilde G/Z$
(see \cite[Ch.~5, Thm.~8.11]{NTs}),
and also between the $\ell$-blocks of $G$ and the $\ell$-blocks of $ G/Z$;
for any block $ d$ of $\tilde G$ (respectively $G$) denote by $\bar d$
the corresponding block of $\tilde G/Z$ (respectively $G/Z$).
Then a block $b'$ of $\tilde G$ covers $b$ if and only if $\bar b'$ covers
$\bar b$, $b'$ is nilpotent if and only if $\bar b'$ is nilpotent,
and the defect groups of $\bar b'$ are of the form $D/Z$, where $D$ is a
defect group of $ b'$. Further, $\chi $ extends to an irreducible character
of $\tilde G$, so $\chi$ considered as an element of $\tilde G/Z$ extends to
an irreducible character of $\tilde G/Z$.

Thus, we may assume that $Z=1$. If $\tilde G/G$ is an $\ell'$-group, the
claim is immediate. Thus we may also assume that $\tilde G/G$ has $\ell$-power
order. Then there is a unique block $\tilde b$ lying above $b$. By assumption,
$\chi$ extends to $\tilde G$, so a defect group $D$ of $\tilde b$ is isomorphic
to $\tilde G/G$ and satisfies $D\cap G=1$. In particular, $\tilde b$ is
nilpotent.
\end{proof}

\subsection{Lusztig series and $\ell$-blocks}
We set up the following notation. Let $\bG$ be a connected reductive algebraic
group over an algebraic closure of a finite field $\FF_p$ with a
Steinberg endomorphism $F:\bG\rightarrow\bG$, and $\bG^F$ the finite group
of fixed points. We are interested in the $\ell$-blocks
of $\bG^F$, where $\ell$ is a prime number different from the defining
characteristic~$p$ of $\bG$. We first recall several useful results. \par
Let $\bT$ be an $F$-stable maximal torus of $\bG$, and $\bG^*$ a group in
duality with $\bG$ with respect to $\bT$, with corresponding Steinberg
endomorphism again denoted by $F$
(see \cite[13.10]{DM91}). We denote by $q$ the absolute value of the
eigenvalues of $F$ on the character group of $\bT$, an integral power
of~$\sqrt{p}$. By the fundamental results of Lusztig, the set of complex
irreducible characters of $\bG^F$ is a disjoint union of rational Lusztig
series $\cE(\bG^F,s)$, where $s$ runs over semisimple elements of $\bG^{*F}$
up to conjugation. Lusztig series are compatible with block theory in the
following sense (see \cite[Thm.~9.12]{CE}):

\begin{thm}[Brou\'e--Michel, Hiss] \label{thm:BrMiHi}
 Let $s\in \bG^{*F}$ be a semisimple $\ell'$-element. Then:
 \begin{enumerate}[\rm(a)]
  \item The set
   $$\cE_\ell(\bG^F,s):=\bigcup_{t\in C_{\bG^*}(s)_\ell^F}\cE(\bG^F,st)$$
   is a union of $\ell$-blocks (where $t$ runs over the $\ell$-elements in
   $C_{\bG^*}(s)^F$ up to conjugation).
  \item Any $\ell$-block in $\cE_\ell(\bG^F,s)$ contains a character from
   $\cE(\bG^F,s)$.
 \end{enumerate}
\end{thm}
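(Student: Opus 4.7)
The plan is to deduce both parts from one fundamental identity on character values at $\ell$-regular elements: for $\chi\in\cE(\bG^F,st)$ parametrised via Lusztig's Jordan decomposition by a unipotent character $\la$ of $C_{\bG^*}(st)^F$, the values of $\chi$ on $\ell$-regular elements of $\bG^F$ coincide, up to a sign and an $\ell'$-integer scaling, with those of a specific character $\chi_{s,\la'}\in\cE(\bG^F,s)$, where $\la'$ is attached to $\la$ by descent from $C_{\bG^*}(st)^F$ to $C_{\bG^*}(s)^F$. This is the crux; both statements fall out of it.

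For (a), I would invoke Brauer's central-character criterion: two characters of $\bG^F$ lie in the same $\ell$-block iff their central characters agree modulo the maximal ideal $\mathfrak{m}$ of $\cO$ on every $\ell$-regular conjugacy class sum. The key identity shows that any $\chi\in\cE_\ell(\bG^F,s)$ has the same restriction to $\ell$-regular classes as some $\chi_{s,\la'}\in\cE(\bG^F,s)$, up to a scalar that is an $\ell'$-unit (so invertible modulo $\mathfrak{m}$); the degrees $\chi(1)$ and $\chi_{s,\la'}(1)$ likewise differ only by an $\ell$-adjustment. Hence $\omega_\chi\equiv\omega_{\chi_{s,\la'}}\pmod{\mathfrak m}$ on all $\ell$-regular class sums, and $\chi,\chi_{s,\la'}$ lie in a common block. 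It follows that $\cE_\ell(\bG^F,s)$ is a union of $\ell$-blocks.

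For (b), the same identity expresses the restriction to $\ell$-regular classes of any character in $\cE_\ell(\bG^F,s)$ as a $\ZZ$-linear combination of restrictions of characters in $\cE(\bG^F,s)$, so $\cE(\bG^F,s)$ serves as a \emph{basic set} for the union of blocks forming $\cE_\ell(\bG^F,s)$. Consequently, any block $B\subseteq\cE_\ell(\bG^F,s)$ must contain at least one character of $\cE(\bG^F,s)$, since otherwise its Brauer characters could not be written as $\ZZ$-combinations of reductions of $\cE(\bG^F,s)$, contradicting the basic set property.

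The main obstacle is the proof of the character identity itself. One applies Lusztig's formula at a semisimple $\ell'$-element $g\in\bG^F$ to express $\chi(g)$ for $\chi\in\cE(\bG^F,st)$ as a weighted sum indexed by $F$-stable maximal tori $\bT^*$ of $\bG^*$ containing $st$, with weights involving values of $\la$ and signs $\eps_{\bG^*,\bT^*}$, and must argue that this reduces, after extracting $\ell'$-factors, to the analogous sum indexed by tori containing $s$. The argument uses crucially that $t$ is an $\ell$-element centralising $s$ while $g^*$ is $\ell$-regular, so the $t$-contributions in centraliser indices and torus orders are $\ell$-powers that either cancel or become invisible modulo $\mathfrak{m}$. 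Once this delicate comparison of Lusztig-theoretic sums is executed, both (a) and (b) follow immediately.
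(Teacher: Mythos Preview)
The paper does not give a proof of this theorem; it simply cites \cite[Thm.~9.12]{CE}. So there is no ``paper's own proof'' to compare against, only the original sources (Brou\'e--Michel for~(a), Hiss for~(b)).

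Your proposal contains a genuine gap: the ``key identity'' you state is false. It is \emph{not} true that for $\chi\in\cE(\bG^F,st)$ the restriction of $\chi$ to $\ell$-regular elements is proportional (up to sign and an $\ell'$-scalar) to the restriction of a single character $\chi_{s,\la'}\in\cE(\bG^F,s)$. Already for $\GL_2(q)$ with $\ell\mid q-1$: a principal-series character of degree $q+1$ in some $\cE(\bG^F,t)$ restricts on $\ell'$-elements to $d^1(1_G)+d^1(\mathrm{St})$, which is not a scalar multiple of either $d^1(1_G)$ or $d^1(\mathrm{St})$. What \emph{is} true (and underlies Hiss's argument for~(b)) is that $d^{1,\bG^F}(\chi)$ lies in the $\ZZ$-span of $\{d^{1,\bG^F}(\chi'):\chi'\in\cE(\bG^F,s)\}$; this is established first for Deligne--Lusztig characters via $d^{1}(R_\bT^\bG(\theta))=d^{1}(R_\bT^\bG(\theta_{\ell'}))$ and then extended. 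Your central-character argument for~(a) also does not close: even granting a correct version of the identity, you only show each $\chi\in\cE_\ell(\bG^F,s)$ shares a block with some element of $\cE(\bG^F,s)$; you never rule out a block straddling $\cE(\bG^F,s)$ and $\cE(\bG^F,s')$ for non-conjugate $\ell'$-elements $s,s'$.

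The actual Brou\'e--Michel proof of~(a) proceeds differently: they exhibit a closed formula for the idempotent $e_s:=\sum_{\chi\in\cE_\ell(\bG^F,s)}e_\chi$ (via Deligne--Lusztig induction of the corresponding idempotent on a torus) and verify directly that its coefficients lie in $\cO$, hence $e_s\in Z(\cO\bG^F)$. This is a global integrality argument, not a character-by-character comparison. For~(b), Hiss shows that $\cE(\bG^F,\ell')$ is a basic set; your basic-set strategy for~(b) is the right one, but it rests on the correct $\ZZ$-span statement above, not on the proportionality you wrote.
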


Thus, to parametrise the $\ell$-blocks of $\bG^F$, it suffices to decompose
$\cE(\bG^F,s)$ into $\ell$-blocks, for all $\ell'$-elements $s\in\bG^{*F}$.

We'll also use the following notation for the union of Lusztig series
corresponding to $\ell'$-elements:
$$\cE(\bG^F,\ell'):=
   \bigcup_{\text{$\ell'$-elements }s\in \bG^{*F}}\cE(\bG^F,s).$$

\subsection{Quasi-central defect and defect groups}
In  this subsection we  develop  some results which will allow us to identify
the defect groups of blocks.

\begin{defn}   \label{def:quasicentraldef}
 Let $\zeta \in \cE(\bG^F, \ell')$. We say that $\zeta $ is \emph{of
 central $\ell$-defect} if $|\bG^F|_\ell=\zeta(1)_\ell |Z(\bG)^F|_\ell$ and
 that $\zeta$ is \emph{of quasi-central $\ell$-defect} if some (and hence any)
 character of $[\bG, \bG]^F$ covered by $\zeta$ is of central $\ell$-defect.
\end{defn}

The above definition makes sense since if $\zeta \in\cE(\bG^F,\ell')$, then
any character of $[\bG,\bG]^F$ covered by $\zeta$ is in
$\cE([\bG, \bG]^F,\ell')$.
The following are some properties of characters of quasi-central and central
$\ell$-defect. They rely on Lusztig's result \cite[Prop.~10]{Lu88} on the
restriction of irreducible characters under regular embeddings being
multiplicity free.

\begin{prop}   \label{prop:quasicentraldefpro}
 Let $\zeta \in \cE(\bG^F, \ell')$,  $A= Z(\bG)_\ell^F$, and
 $A_0 = Z([\bG, \bG])_\ell^F$. Then:
 \begin{enumerate}[\rm(a)]
  \item $\zeta $ is of quasi-central $\ell$-defect if and only if
   $|[\bG, \bG]^F|_\ell=\zeta(1)_\ell |Z ([\bG, \bG])^F|_\ell$.
  \item If $\zeta$ is of central $\ell$-defect, then $\zeta$ is of
   quasi-central $\ell$-defect.
  \item $\zeta$ is of central $\ell$-defect if and only if $A$ is a
   defect group of $b_{\bG^F}(\zeta)$.
 \end{enumerate}
 Suppose that $\zeta$ is of quasi-central $\ell $-defect. Then:
 \begin{enumerate}[\rm(a)]
  \item[\rm(d)] $b_{\bG^F}(\zeta)$ is nilpotent.
  \item [\rm(e)]Any defect group $D$ of $b_{\bG^F}(\zeta)$ contains $A$
   with $D/A$ isomorphic to a Sylow $\ell$-subgroup of $\bG^F/ A[\bG, \bG]^F$
   and $D \cap [\bG, \bG]^F =A_0$.
  \item [\rm(f)] $\cE(\bG^F, \ell') \cap \Irr(b_{\bG^F}(\zeta))= \{\zeta\}$.
  \item [\rm(g)] $\zeta$ is of central $\ell$-defect if and only if
   $\bG^F/ A[\bG, \bG]^F$ is an $\ell'$-group.
 \end{enumerate}
\end{prop}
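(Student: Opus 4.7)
The plan is to pass to a regular embedding $\bG\hookrightarrow\tilde\bG$ (with $Z(\tilde\bG)$ connected and $[\tilde\bG,\tilde\bG]=[\bG,\bG]$) and to analyse the normal chain $[\bG,\bG]^F\unlhd\bG^F\unlhd\tilde\bG^F$, whose successive quotients are both abelian (since $\bG/[\bG,\bG]$ and $\tilde\bG/\bG$ are tori). Lusztig's multiplicity-free restriction theorem \cite[Prop.~10]{Lu88} applied to $\tilde\bG^F\supseteq\bG^F$, combined with $\tilde\bG^F=\bG^F Z(\tilde\bG)^F$ (obtained by Lang--Steinberg applied to the torus $\tilde\bG/\bG$) and the fact that $Z(\tilde\bG)^F$ acts on characters by scalars, yields that $\zeta|_{[\bG,\bG]^F}$ is also multiplicity-free. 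This is the fundamental input.

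For parts (a)--(c): let $\zeta_0$ be an irreducible constituent of $\zeta|_{[\bG,\bG]^F}$. Multiplicity-freeness gives, via Clifford's formula, $\zeta(1)=[\bG^F:I_{\bG^F}(\zeta_0)]\,\zeta_0(1)$. The hypothesis $\zeta\in\cE(\bG^F,\ell')$ forces, through the duality identification of linear characters of $\bG^F/[\bG,\bG]^F$ with elements of $Z(\bG^*)^F$ (tensoring by a linear $\ell$-character would shift $\zeta$ out of $\cE(\bG^F,\ell')$), that the index $[\bG^F:I_{\bG^F}(\zeta_0)]$ is an $\ell'$-number; hence $\zeta(1)_\ell=\zeta_0(1)_\ell$. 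Combining this with the elementary identity $|\bG^F|_\ell/|Z(\bG)^F|_\ell=|[\bG,\bG]^F|_\ell/|Z([\bG,\bG])^F|_\ell$ (a consequence of $\bG=Z(\bG)^\circ[\bG,\bG]$ together with Lang--Steinberg control of $H^1$ at the $\ell$-primary level) proves~(a), and (b) is then immediate. For~(c), central $\ell$-defect means $|\bG^F|_\ell/\zeta(1)_\ell=|A|$, and since $A\leq Z(\bG^F)$ lies in every defect group of $b_{\bG^F}(\zeta)$, the defect group equals $A$ and $\zeta$ has height zero; the converse uses the standard fact that all characters in a block with central defect group have height zero.

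For (d)--(g), assume that $\zeta$ is of quasi-central $\ell$-defect. By definition and~(c), the block $b_0=b_{[\bG,\bG]^F}(\zeta_0)$ has central defect group $A_0=Z([\bG,\bG])_\ell^F$, hence is nilpotent, and $\zeta_0$ extends to its stabiliser in $\bG^F$. Apply Lemma~\ref{lem:cliff-ab}(b) to the normal inclusion $[\bG,\bG]^F\unlhd\bG^F$ (abelian quotient, central $Z=A_0$ since $Z([\bG,\bG])^F\leq Z(\bG)^F$): this yields nilpotency of $b_{\bG^F}(\zeta)$, proving~(d), and produces a defect group $D$ with $D\cap[\bG,\bG]^F=A_0$ and $D/A_0$ isomorphic to a Sylow $\ell$-subgroup of the stabiliser image in $\bG^F/[\bG,\bG]^F$. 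Since $A\leq D$ (as $A$ is a central $\ell$-subgroup of $\bG^F$) and $A\cap[\bG,\bG]^F=A_0$, this produces~(e). Part~(f) follows from nilpotency (which forces at most one character from each Lusztig $\ell'$-series in a given block), and~(g) is immediate from~(e): $D=A$ if and only if $\bG^F/A[\bG,\bG]^F$ is an $\ell'$-group. The main obstacle will be the $\ell'$-index step establishing $\zeta(1)_\ell=\zeta_0(1)_\ell$, where the hypothesis $\zeta\in\cE(\bG^F,\ell')$ must interact carefully with Lusztig's parametrisation; the remaining steps are standard applications of Clifford theory and Lemma~\ref{lem:cliff-ab}.
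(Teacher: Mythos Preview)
Your overall strategy is the same as the paper's: reduce to $\zeta_0(1)_\ell=\zeta(1)_\ell$ via Lusztig's multiplicity-one result together with the observation that $[\bG^F:I_{\bG^F}(\zeta_0)]$ is prime to $\ell$, then feed the normal inclusion $[\bG,\bG]^F\unlhd\bG^F$ into Lemma~\ref{lem:cliff-ab}(b). The detour through a regular embedding is harmless but unnecessary; the paper simply cites \cite[Prop.~10]{Lu88} to conclude directly that $\zeta_0$ extends to its stabiliser.

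There is, however, a genuine gap in your argument for~(f). You assert that nilpotency of $b_{\bG^F}(\zeta)$ ``forces at most one character from each Lusztig $\ell'$-series in a given block'', but this is not a formal consequence of nilpotency. A nilpotent block has a unique irreducible Brauer character, so two characters $\zeta,\zeta'\in\cE(\bG^F,\ell')\cap\Irr(b)$ would have proportional $\ell$-modular reductions; to conclude $\zeta=\zeta'$ you would need linear independence of their reductions, i.e.\ the basic-set property of $\cE(\bG^F,\ell')$, which is not available in general (in particular not for bad primes). Nor does the height-zero route work: since the defect group here is abelian, \emph{all} characters in the block have height zero, so this does not single out $\zeta$. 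The paper proves~(f) differently, by a counting argument: Lemma~\ref{lem:cliff-ab}(b) shows there are exactly $|I:[\bG,\bG]^F|_{\ell'}$ blocks of $\bG^F$ covering $b_{[\bG,\bG]^F}(\zeta_0)$, while a direct check (using that tensoring $\zeta$ by a linear character of $\ell$-power order moves it out of $\cE(\bG^F,\ell')$) shows there are exactly $|I:[\bG,\bG]^F|_{\ell'}$ characters in $\cE(\bG^F,\ell')$ lying over $\zeta_0$. Since each such block contains at least one such character by Theorem~\ref{thm:BrMiHi}(b), each contains exactly one.
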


\begin{proof}
Let $\zeta_0$ be an irreducible constituent of the restriction of $\zeta$
to $[\bG,\bG]^F$ and let $I$ be the stabiliser in $\bG^F$ of $\zeta_0$.
Since $\zeta_0 \in \cE([\bG,\bG]^F,\ell')$, the index of $I$ in $\bG^F$
is prime to $\ell$. On the other hand, by
\cite[Prop.~10]{Lu88}, $\zeta_0$ extends to an irreducible character of $I$.
Thus, $\zeta_0(1)_\ell = \zeta(1)_\ell$, proving~(a) and~(b).
Since $A$ is a central $\ell$-subgroup of $\bG$ and $\zeta\in\cE(\bG^F,\ell')$,
$A$ is in the kernel of  $\zeta$, from which~(c) is immediate.

Assume till the end of the proof that $\zeta$ is of quasi-central
$\ell$-defect.
By~(c), $b_{[\bG,\bG]^F}(\zeta_0)$ has defect group $A_0$.
Further, since $\bG= Z(\bG)[\bG, \bG]$ and
$A_0\leq Z([\bG, \bG])^F$, $A_0$ is central in $\bG^F $.
So the hypotheses of Lemma~\ref{lem:cliff-ab}(b) are satisfied for the normal
subgroup $[\bG, \bG]^F$ of $\bG^F$ and the blocks
$b_{[\bG, \bG]^F}(\zeta_0)$ and $b_{\bG^F}(\zeta)$, proving~(d) and~(e). 
The index of $I$ in $\bG^F$ is prime to $\ell$, so again by
Lemma~\ref{lem:cliff-ab}(b) there are $|I : [\bG, \bG]^F|_{\ell'}$
$\ell $-blocks of $\bG^F$ covering $b_{[\bG, \bG]^F}(\zeta_0)$. Also, there
are $|I : [\bG,\bG]^F|_{\ell'}$ elements of $\cE(\bG^F, \ell')$ covering
$\zeta_0$.
Now (f) follows from Theorem~\ref{thm:BrMiHi} and~(g) follows from~(c) and~(e).
\end{proof}

The next results will be our main tools for the identification of defect groups.
We first derive an easy upper bound for the orders of defect groups:

\begin{lem}   \label{lem:defectbound}
 Let $s\in \bG^{*F}$ be a semisimple $\ell'$-element.
 \begin{enumerate}[\rm (a)]
  \item The defect groups of any $\ell$-block in $\cE_\ell(\bG^F,s)$ have order
   at most $|C_{\bG^{*F}}(s)|_\ell$.
  \item There exists an $\ell$-block in $\cE_\ell(\bG^F,s)$ whose defect groups
   have order $|C_{\bG^{*F}}(s)|_\ell$.
 \end{enumerate}
 In particular, if $\cE_\ell(\bG^F,s)$ is a single $\ell$-block, then the defect
 groups of this block have order  $|C_{\bG^{*F}}(s)|_\ell$.
\end{lem}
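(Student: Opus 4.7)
\emph{Plan.} The argument rests on the standard relation between the defect group $D$ of an $\ell$-block $B$ of a finite group $G$ and its character degrees, $|D| = |G|_\ell / \min\{\chi(1)_\ell : \chi \in \Irr(B)\}$, combined with the divisibility for characters in rational Lusztig series coming from Jordan decomposition of characters (Deligne--Lusztig). Both parts of the lemma then reduce to controlling the $\ell$-parts of character degrees appearing in $\cE_\ell(\bG^F,s)$.

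For part~(a), let $B$ be an $\ell$-block contained in $\cE_\ell(\bG^F,s)$ and pick any $\chi \in \Irr(B)$. By the definition recalled in Theorem~\ref{thm:BrMiHi}, $\chi \in \cE(\bG^F,st)$ for some $\ell$-element $t \in C_{\bG^*}(s)^F$, and Jordan decomposition forces $\chi(1)$ to be divisible by $[\bG^F : C^\circ_{\bG^*}(st)^F]_{p'}$. Since $\ell \ne p$ and $C^\circ_{\bG^*}(st) \leq C^\circ_{\bG^*}(s) \leq C_{\bG^*}(s)$, passing to $\ell$-parts yields
\[
  \chi(1)_\ell \;\geq\; [\bG^F : C_{\bG^{*F}}(s)]_\ell.
\]
Taking the minimum over $\chi \in \Irr(B)$ and inverting via the defect formula gives $|D| \leq |C_{\bG^{*F}}(s)|_\ell$.

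For part~(b), it suffices to exhibit a character $\chi_0 \in \cE_\ell(\bG^F,s)$ whose degree attains this lower bound, i.e.\ $\chi_0(1)_\ell = [\bG^F : C_{\bG^{*F}}(s)]_\ell$; the block $B_0$ containing $\chi_0$ then lies in $\cE_\ell(\bG^F,s)$ by Theorem~\ref{thm:BrMiHi}(a), and combining with~(a) forces $|D_{B_0}| = |C_{\bG^{*F}}(s)|_\ell$. The natural candidate is the semisimple character corresponding (under Lusztig's Jordan decomposition) to the trivial unipotent character of $C_{\bG^*}(s)^F$, which has degree with precisely the required $\ell$-part.

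The one genuine technicality is the disconnected centraliser case, where both the divisibility used in~(a) and the existence of a semisimple character realising the bound in~(b) require Lusztig's extension of Jordan decomposition (the same reference \cite{Lu88} already invoked in Proposition~\ref{prop:quasicentraldefpro}); in the connected case both statements are immediate. The final assertion, that a single-block $\cE_\ell(\bG^F,s)$ forces defect $|C_{\bG^{*F}}(s)|_\ell$, is an immediate combination of~(a) and~(b).
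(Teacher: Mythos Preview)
Your proposal is correct and follows essentially the same approach as the paper: both bound $|\bG^F|_\ell/\chi(1)_\ell$ via the Jordan decomposition degree formula and the containment $C_{\bG^*}(st)\le C_{\bG^*}(s)$ for part~(a), and both take the semisimple character (Jordan correspondent of the trivial unipotent character) for part~(b). The only cosmetic difference is that the paper writes the degree formula directly with the full centraliser $C_{\bG^{*F}}(st)$, avoiding the connected-component detour you take; your stated divisibility by $[\bG^F:C^\circ_{\bG^*}(st)^F]_{p'}$ is not literally correct when the centraliser is disconnected (the unipotent $\psi$ need not absorb the component-group index), but since you explicitly defer the disconnected case to Lusztig's extension this is not a gap, and the weaker bound via the full centraliser is all that is needed anyway.
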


\begin{proof}
Let $t$ be an $\ell$-element of $C_{\bG^{*F}}(s)$ and let
$\chi\in\cE(\bG^F,st)$. By the Jordan decomposition of characters, there
exists an irreducible (unipotent) character $\psi$ of $C_{\bG^{*F}}(st)$
such that
$$\frac{|\bG^F|_\ell}{\chi(1)_\ell}
  =\frac{|C_{\bG^{*F}}(st)|_\ell} {\psi(1)_\ell}. $$
In particular,
$$\frac{|\bG^F|_\ell}{\chi(1)_\ell} \leq |C_{\bG^{*F}}(st)|_\ell \leq
  |C_{\bG^{*F}}(s)|_\ell. $$
This proves the first part. If $\chi \in
\cE(\bG^F,s)$ corresponds to the trivial character of $C_{\bG^{*F}}(s)$,
then by the above formula, the $\ell$-defect of $\chi$ is
$|C_{\bG^{*F}}(s)|_\ell $, hence the block containing $\chi$ has defect at
least $|C_{\bG^{*F}}(s)|_\ell $. This proves ~(b).
\end{proof}

\begin{prop}   \label{prop:defect:re}
 Let $\bL\le\bG$ be an $F$-stable Levi subgroup and $A=Z(\bL)_\ell^F$,
 $A_0= Z([\bL,\bL])_\ell^F$. Suppose that
 $$ \bL=C_\bG^\circ(A), \qquad \bL^F=C_{\bG^F}(A).$$
 For $\lambda\in\cE(\bL^F,\ell')$ of quasi-central $\ell$-defect let
 $u=b_{\bL^F}(\la)$ and let $b$ be the block of $\bG^F$ such that $(A,u)$ is
 a $b$-Brauer pair. Then
 \begin{enumerate}[\rm(a)]
  \item $N_{\bG^F}(A)= N_{\bG^F}(\bL)$, $N_{\bG^F}(A, u)= N_{\bG^F}(\bL, \la)$,
   and $N_{\bG^F}(A,u)/C_{\bG^F}(A) =W_{\bG^F}(\bL, \la)$, where
   $W_{\bG^F}(\bL, \la):=N_{\bG^F}(\bL,\la)/\bL^F$.
 \end{enumerate}
 Let $(A,u)\subseteq (D,v) \subseteq (P,w)$ be $b$-Brauer pairs such that
 $D$ is maximal with respect to $D \leq C_{\bG^F}(A)$ and $P$ is maximal with
 respect to $P \leq N_{\bG^F}(\bL, \la)$. Then:
 \begin{enumerate}[\rm(a)]
  \item[\rm(b)]  $D/A$ is isomorphic to a Sylow $\ell$-subgroup of
   $\bL^F/A[\bL,\bL]^F$, $P \cap \bL^F =D$ and $P/D$ is isomorphic to a Sylow
   $\ell$-subgroup of $W_{\bG^F}(\bL,\la)$.
 \end{enumerate}
 Let $s\in {\bG^*}^F$ be an $\ell'$-element such that
 $\Irr(b)\subseteq \cE_\ell(\bG^F,s)$. Then:
 \begin{enumerate}[\rm(a)]
  \item[\rm(c)] If
   $$|C_{\bG^{*F}}(s)|_\ell = |Z^\circ(\bL)_\ell^F|\cdot|A_0|\cdot
     |W_{\bG^F}(\bL,\la)|_\ell,$$
   then  $P$  is a defect group of $b$.
  \item[\rm(d)] If
   $A$ is characteristic in $P$, then  $P$ is a defect group of $b$.
  \item[\rm(e)]  If the defect groups of $b$ are abelian,
   then  $\ell$ does not divide $|W_{\bG^F}(\bL, \la)| $.
  \item[\rm(f)]  If $A=D$ and $\ell$ does not divide $|W_{\bG^F}(\bL, \la)|$,
   then $A$ is a defect group of $b$.
  \item[\rm(g)] If $Z^\circ (\bL)^F \cap [\bL, \bL]^F$ is an $\ell'$-group,
   then $A=D$.
 \end{enumerate}
\end{prop}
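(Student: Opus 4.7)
The plan is to prove parts (a)--(g) in order, using the hypothesis $\bL = C_\bG^\circ(A)$ and $\bL^F = C_{\bG^F}(A)$ to translate between normalisers of $A$ and of $\bL$, together with Proposition~\ref{prop:quasicentraldefpro} for the quasi-central defect bookkeeping. For (a), any element of $\bG^F$ normalising $A$ normalises its connected centraliser $\bL = C_\bG^\circ(A)$, and conversely any element normalising $\bL$ normalises $Z(\bL)_\ell^F = A$. For the second equality, Proposition~\ref{prop:quasicentraldefpro}(f) identifies $\lambda$ as the unique member of $\cE(\bL^F,\ell') \cap \Irr(u)$, so the stabiliser of $u$ coincides with that of $\lambda$; the quotient identification is then immediate from $\bL^F = C_{\bG^F}(A)$. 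For (b), I apply Lemma~\ref{lem:local-cliff}(a) to the chain $(A,u) \subseteq (D,v) \subseteq (P,w)$: this yields that $D$ is a defect group of $u$ as a block of $\bL^F$ (and the structure of $D/A$ then comes from Proposition~\ref{prop:quasicentraldefpro}(e)), and that $P$ is a defect group of $u$ as a block of $N_{\bG^F}(A,u) = N_{\bG^F}(\bL,\lambda)$ with $P \cap \bL^F = D$ and $P/D$ isomorphic to a Sylow $\ell$-subgroup of $W_{\bG^F}(\bL,\lambda)$.

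For (c), I would combine the order formula $|P| = |A|\cdot|D/A|\cdot|P/D|$ from (b), the quasi-central identity $\lambda(1)_\ell = |[\bL,\bL]^F|_\ell/|A_0|$ from Proposition~\ref{prop:quasicentraldefpro}(a), and the Lang--Steinberg identity $|\bL^F| = |Z^\circ(\bL)^F|\cdot|[\bL,\bL]^F|$ coming from the central isogeny $Z^\circ(\bL)\times[\bL,\bL]\to\bL$, to compute $|P| = |Z^\circ(\bL)_\ell^F|\cdot|A_0|\cdot|W_{\bG^F}(\bL,\lambda)|_\ell$. The hypothesis then equates this with $|C_{\bG^{*F}}(s)|_\ell$, which is the upper bound for defect orders in $\cE_\ell(\bG^F,s)$ provided by Lemma~\ref{lem:defectbound}(a); since $P$ is the first component of a $b$-Brauer pair, equality forces $P$ to be a defect group of $b$. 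Part (d) uses a standard $\ell$-group self-normaliser trick: embed $(P,w)$ in a maximal $b$-Brauer pair $(Q,f)$, so $Q$ is a defect group of $b$ and $P \le Q$. Since $A$ is characteristic in $P$, $N_Q(P)$ normalises $A$; as $(A,u)$ is the unique subpair of $(Q,f)$ with first component $A$, any $q \in N_Q(P)$ also fixes $u$, so $N_Q(P) \le N_{\bG^F}(A,u) = N_{\bG^F}(\bL,\lambda)$. The maximality of $P$ then forces $N_Q(P) = P$, and in the $\ell$-group $Q$ this yields $P = Q$. Part (f) is immediate from (d) since in that case $A = D = P$ is characteristic in itself.

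For (e), if some (hence every) defect group of $b$ is abelian, choose a maximal $b$-Brauer pair $(Q,f) \supseteq (P,w)$; then $A \le Q$ together with $Q$ abelian gives $Q \subseteq C_{\bG^F}(A) = \bL^F$. The correspondence between $\bL^F$-Brauer pairs and $\bG^F$-Brauer pairs with first component in $\bL^F$ forces $|Q| \le |D|$, and combined with $D \le P \le Q$ this yields $P = D = Q$, whence $|W_{\bG^F}(\bL,\lambda)|_\ell = |P/D| = 1$. Finally for (g), I would analyse $\bL^F/A[\bL,\bL]^F$ as a quotient of the abelian group $\bL^F/[\bL,\bL]^F$: using $|\bL^F| = |Z^\circ(\bL)^F|\cdot|[\bL,\bL]^F|$ and the $\ell'$-hypothesis on $Z^\circ(\bL)^F \cap [\bL,\bL]^F$, the Sylow $\ell$-subgroup of $\bL^F/[\bL,\bL]^F$ turns out to be the image of $Z^\circ(\bL)_\ell^F \le A$, so $\bL^F/A[\bL,\bL]^F$ is an $\ell'$-group and $D = A$. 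The main technical subtlety I anticipate is the Lang--Steinberg bookkeeping entering (c) and (g), in particular verifying $|\bL^F| = |Z^\circ(\bL)^F|\cdot|[\bL,\bL]^F|$ via the central isogeny with finite non-connected kernel $Z^\circ(\bL) \cap [\bL,\bL]$.
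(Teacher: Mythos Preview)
Your proposal is correct and follows essentially the same approach as the paper's proof: parts (a)--(d) and (g) match the paper almost verbatim, while your (e) spells out in more detail what the paper calls ``immediate from (b)'' (the paper implicitly uses $Q \le C_{\bG^F}(A) = \bL^F$ together with $P \cap \bL^F = D$ to get $P = D$ directly), and your (f) derives the conclusion from (d) rather than from the self-normalising criterion, which is an equivalent route. No gaps.
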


\begin{proof}
Since $\bL= C_\bG^\circ (A)$, $N_{\bG^F}(A) \leq N_{\bG^F}(\bL)$
and since $A= Z(\bL)_\ell^F$, $N_{\bG^F}(\bL) \leq N_{\bG^F}(A)$. Thus,
$N_{\bG^F}(A) = N_{\bG^F}(\bL)$.
By Proposition~\ref{prop:quasicentraldefpro}(f), $\la$ is the unique element
of $\cE(\bL^F,\ell')\cap\Irr(u)$.
Since conjugation by elements of $N_{\bG^F}(\bL)$ stabilises
$\cE(\bL^F,\ell')$,  we get that
$$N_{\bG^F}(A, u) =N_{\bG^F}(\bL, u)= N_{\bG^F}(\bL, \la).$$
This proves~(a). From this, (b) follows by Lemma~\ref{lem:local-cliff} and
Proposition~\ref{prop:quasicentraldefpro}(e).

By~(b) we have
$$|P|= \frac{ |A|\cdot|\bL^F|_\ell \cdot|W_{\bG^F}(\bL, \la)|_\ell}
            { |A [\bL, \bL]^F|_\ell }.$$
Now, $|\bL^F| = |Z^\circ (\bL)^F|\cdot|[\bL,\bL]^F|$ and as
pointed out in the proof of Proposition~\ref{prop:quasicentraldefpro},
$A_0 =A\cap [\bL, \bL]^F$. Hence
$$|P| = |Z^\circ(\bL)_\ell^F|\cdot|A_0|\cdot|W_{\bG^F}(\bL,\la)|_\ell$$ and~(c)
follows from Lemma~\ref{lem:defectbound}(a).

Suppose that $A$ is characteristic in $P$. Let
$(P,w)\subseteq(R,f)\subseteq(S,j)$ be $b$-Brauer pairs with $(S,j)$
maximal and $R=N_S(P)$. Since $R$ normalises $A$,
$P\leq R\leq N_{\bG^F}(A,u)=N_{\bG^F}(\bL,\la)$.
So, by maximality of $P$, $R=P$, whence $S=P$, proving~(d).

Part~(e) is immediate from part~(b).

If $A=D$ and  $W_{\bG^F}(\bL, \la)$ is an $\ell'$-group, then $P=A$,
which means that if $(S, j)$ is any maximal Brauer pair containing $(A, u)$,
then $N_S(A) = A$. But this implies that $(A, u)$ is maximal, proving~(f).

If $Z^\circ(\bL)^F \cap [\bL,\bL]^F$ is an $\ell'$-group, then from
the equality $|\bL^F|= |Z^\circ (\bL)^F|\cdot|[\bL, \bL]^F|$ it follows that
$\bL^F/ Z^\circ (\bL)^F[\bL, \bL]^F$  and hence $\bL^F/ A[\bL,\bL]^F$
is an $\ell'$-group. But by part~(a), $D/A$ is isomorphic to a Sylow
$\ell$-subgroup of $\bL^F/A[\bL,\bL]^F$. This proves~(g).
\end{proof}

\subsection{Lusztig induction and $e$-Harish-Chandra theory}
It is known that the $\ell$-blocks of $\bG^F$ are in close
relation with Lusztig induction. For any $F$-stable Levi subgroup $\bL$ of
a (not necessarily $F$-stable) parabolic subgroup $\bP$ of $\bG$ Lusztig
defines linear maps
$$\RLPG:\ZZ\Irr(\bL^F)\longrightarrow\ZZ\Irr(\bG^F),$$
$$^*\RLPG:\ZZ\Irr(\bG^F)\longrightarrow\ZZ\Irr(\bL^F),$$
adjoint to each other with respect to the standard scalar
product on complex characters. This Lusztig induction enjoys the following
important properties:

\begin{thm}   \label{thm:RLG}
 Let $\bL$ be an $F$-stable Levi subgroup of a parabolic subgroup $\bP$ of
 $\bG$.
\begin{itemize}
 \item[(a)] If $\bM\le\bL$ is an $F$-stable Levi subgroup of a parabolic
   subgroup $\bQ$ of $\bP$ then
  $$R_{\bM\subset\bQ}^\bG =\RLPG\circ R_{\bM\subset\bQ\cap\bL}^\bL.$$
 \item[(b)] Let $\bL^*$ be an $F$-stable Levi subgroup of $\bG^*$ in duality
  with $\bL$. For any semisimple element $s\in\bL^*$, $\RLPG$
  restricts to a linear map
  $$\RLPG:\ZZ\cE(\bL^F,s)\longrightarrow\ZZ\cE(\bG^F,s).$$
 \item[(c)] If $\bP$ is $F$-stable, then
  $$\RLPG=\Ind_{\bP^F}^{\bG^F}\circ\Infl_{\bL^F}^{\bP^F}.$$
 \item[(d)] The Mackey formula holds for $\RLPG$ except possibly if $\bG^F$
  has a simple component $\tw2E_6(2)$, $E_7(2)$ or $E_8(2)$.
\end{itemize}
\end{thm}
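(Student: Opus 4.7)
The plan is to treat Theorem~\ref{thm:RLG} as a compendium of four foundational properties of Deligne--Lusztig induction, to be invoked as a toolkit in the rest of the paper, and to argue each part by pointing to or briefly sketching its established proof rather than redeveloping the machinery. All four statements lie in the literature; the main references are Digne--Michel~\cite{DM91} and the subsequent work on the Mackey formula.

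For part~(a), the transitivity, I would invoke the Lusztig--Spaltenstein argument. With $\bM\subseteq\bL$ as Levi subgroups of parabolics $\bQ\subseteq\bP$, the Deligne--Lusztig variety attached to $\bQ$ fibres over the one attached to $\bP$ with fibre isomorphic to the Deligne--Lusztig variety of $\bM\subset\bQ\cap\bL$ inside $\bL$, and multiplicativity of alternating traces on $\ell$-adic cohomology then yields the claimed factorisation of virtual characters. Part~(b) I would derive from the character formula for $\RLPG$, which expresses $(\RLPG\la)(su)$ as a weighted sum of products $\la(s^g)\,Q_\bT^\bL(u)$ over $F$-stable maximal tori $\bT\leq\bL$ containing conjugates of $s$; since every torus appearing is already contained in $\bL$, the geometric conjugacy class of the semisimple label $s$ is unchanged, which is exactly the statement that Lusztig series are preserved.

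For part~(c), when $\bP$ itself is $F$-stable, the variety underlying the Deligne--Lusztig construction reduces to a disjoint union of points; its $\ell$-adic cohomology collapses to degree zero, and unwinding the definition then recovers the classical Harish-Chandra formula $\Ind_{\bP^F}^{\bG^F}\circ\Infl_{\bL^F}^{\bP^F}$. This is routine and I would simply cite it.

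The real obstacle is part~(d). Unlike the others, the Mackey formula was open in full generality for a long time and is still not known for every finite reductive group. The plan is to cite the cumulative progress: Deligne--Lusztig originally proved it when both parabolics share a maximally split torus, and several subsequent authors (most notably Bonnafé--Michel) extended the range of validity using geometric methods and arguments on characteristic classes; the net effect is that the formula is known whenever the simple components of $\bG^F$ avoid the three exceptional groups $\tw2E_6(2)$, $E_7(2)$, and $E_8(2)$. In those residual cases current geometric techniques seem not to suffice, so one must either develop new tools or, as we do whenever the Mackey formula is needed later in the paper, verify separately that the Levi subgroups to which we apply it do not fall in the problematic configurations.
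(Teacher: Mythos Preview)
Your proposal is correct and takes essentially the same approach as the paper: treat the theorem as a collection of known results and point to the literature. The paper is even more terse, giving bare citations (\cite[11.5]{DM91} for (a), \cite[\S11]{DM91} for (c), Bonnaf\'e--Michel \cite{BMi} for (d)) with no sketches at all. One small difference worth noting: for (b) the paper says it is immediate from (a) and the definition of Lusztig series---since $\lambda\in\cE(\bL^F,s)$ is a constituent of some $R_\bT^\bL(\theta)$ with $(\bT,\theta)$ in the class of $s$, transitivity gives that $R_\bL^\bG(\lambda)$ has its constituents inside $R_\bT^\bG(\theta)$, hence in $\cE(\bG^F,s)$. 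Your character-formula sketch for (b) is a bit garbled (the formula you wrote mixes up where the Green functions live), but the route via transitivity is cleaner and is what the paper uses.
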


\begin{proof}
See \cite[11.5]{DM91} for (a), part (b) is immediate from this and the
definition of the Lusztig series, for~(c) see \cite[\S 11]{DM91}, and the
recent paper of Bonnaf\'e--Michel \cite{BMi} for (d).
\end{proof}

Note that, as a formal consequence of the validity of the Mackey formula,
Lusztig induction is independent of the choice of parabolic subgroup $\bP$
containing $\bL$ (except possibly in the groups excluded in
Theorem~\ref{thm:RLG}(d)). We will henceforth just write $\RLG$.
\vskip 1pc

An $F$-stable torus $\bT$ of $\bG$ is called an \emph{$e$-torus} if it splits
completely over $\FF_{q^e}$ but does not split over any smaller field.
Equivalently, there is $a\ge0$ such that $|\bT^{F^k}|=\Phi_e(q^k)^a$ for
all $k\ge1$, where $\Phi_e$ denotes the $e$th cyclotomic polynomial. The
centralisers of $e$-tori of $\bG$ are called \emph{$e$-split Levi subgroups}.
(Note that these are indeed Levi subgroups, which are $F$-stable.)
A character $\chi\in\Irr(\bG^F)$ is called \emph{$e$-cuspidal} if
$^*\RLG(\chi)=0$ for every $e$-split proper Levi subgroup $\bL$ of $\bG$.
A pair $(\bL,\lambda)$ consisting of an $e$-split Levi subgroup $\bL$ and an
$e$-cuspidal character $\lambda\in\Irr(\bL^F)$ is then called an
\emph{$e$-cuspidal pair}. Given an $e$-cuspidal pair $(\bL,\lambda)$, we write
$$\cE(\bG^F,(\bL,\lambda)):=\{\chi\in\Irr(\bG^F)\mid
  \langle ^*\RLG(\chi),\lambda\rangle\ne0\}$$
for the set of constituents of $\RLG(\lambda)$. This is called the
\emph{$e$-Harish-Chandra series of $\bG^F$ above $(\bL,\la)$}.

\begin{defn}   \label{def:d-HC}
 We say that \emph{$\RLG$ satisfies an $e$-Harish-Chandra theory above the
 $e$-cuspidal pair $(\bL,\la)$} if there exists a collection of isometries
 $$I^\bM_{(\bL,\la)}:\ZZ\Irr(W_{\bM^F}(\bL,\la))
                     \rightarrow \ZZ\cE(\bM^F,(\bL,\la))\,,$$
 where $\bM$ runs over the set of all $e$-split Levi subgroups of $\bG$
 containing $\bL$, such that
 \begin{enumerate}[\rm(1)]
  \item for all $\bM$ we have
   $$\RMG \circ I^\bM_{(\bL,\la)} =
     I^\bG_{(\bL,\la)}\circ\Ind_{W_{\bM^F}(\bL,\la)}^{W_{\bG^F}(\bL,\la)}\,;$$
  \item the collection $\left(I^\bM_{(\bL,\la)} \right)_{\bM,(\bL,\la)}$ is
   stable under the conjugation action by $W_{\bG^F}$; and
  \item $I^\bL_{(\bL,\la)}$ maps the trivial character of the trivial
   group $W_{\bL^F}(\bL,\la)$ to $\la$.
 \end{enumerate}
\end{defn}

The following is shown in \cite[Prop.~3.15 and Thm.~3.11]{BMM}:

\begin{prop}   \label{prop:HC-coroll}
 Assume that $\RLG$ satisfies an $e$-Harish-Chandra theory above $(\bL,\la)$.
 Then for any $e$-split Levi subgroup $\bL\le\bH\le\bG$ and any
 $\chi\in\Irr(\bH^F)$ with $\langle \RLH(\la),\chi\rangle\ne 0$ we have:
 \begin{enumerate}[\rm(a)]
  \item  $$^*\RLH(\chi)=\langle\,{}^*\RLH(\chi),
   \la\rangle_{\bL^F}\sum_{g\in N_{\bH^F}(\bL)/N_{\bH^F}(\bL,\la)}{}^g\la.$$
  \item  Every constituent $\psi$ of $\RHG(\chi)$ is a constituent
   of $\RLG(\la)$.

 \end{enumerate}
\end{prop}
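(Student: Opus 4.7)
The plan is to exploit the collection of isometries $I^\bM_{(\bL,\la)}$ supplied by Definition~\ref{def:d-HC} together with adjointness of Lusztig induction. Since $\chi$ is a constituent of $\RLH(\la)$, it lies in $\cE(\bH^F,(\bL,\la))$ by definition, and the isometry
\[I^\bH_{(\bL,\la)}\colon\ZZ\Irr(W_{\bH^F}(\bL,\la))\longrightarrow\ZZ\cE(\bH^F,(\bL,\la))\]
sends the orthonormal basis of $W_{\bH^F}(\bL,\la)$-irreducibles onto $\pm\cE(\bH^F,(\bL,\la))$. Hence I may write $\chi=\eps\,I^\bH_{(\bL,\la)}(\eta)$ for some sign $\eps$ and some $\eta\in\Irr(W_{\bH^F}(\bL,\la))$; this expression is the common starting point for both parts.

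For~(b), I would apply $\RHG$ to the identity $\chi=\eps\,I^\bH_{(\bL,\la)}(\eta)$ and invoke property~(1) of Definition~\ref{def:d-HC} with $\bM=\bH$ to obtain
\[\RHG(\chi)=\eps\,I^\bG_{(\bL,\la)}\bigl(\Ind_{W_{\bH^F}(\bL,\la)}^{W_{\bG^F}(\bL,\la)}(\eta)\bigr).\]
Expanding the induced character as a $\ZZ$-combination of elements of $\Irr(W_{\bG^F}(\bL,\la))$ and applying the isometry $I^\bG_{(\bL,\la)}$ term by term shows that $\RHG(\chi)$ is a $\ZZ$-combination of signed elements of $\cE(\bG^F,(\bL,\la))$. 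Consequently every irreducible constituent of $\RHG(\chi)$ lies in $\cE(\bG^F,(\bL,\la))$, which by definition of the $e$-Harish-Chandra series means it is a constituent of $\RLG(\la)$.

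For~(a), I would first use adjointness: for any $\mu\in\Irr(\bL^F)$,
\[\langle {}^*\RLH(\chi),\mu\rangle_{\bL^F}=\langle\chi,\RLH(\mu)\rangle_{\bH^F}.\]
When $\mu={}^g\la$ with $g\in N_{\bH^F}(\bL)$, the $\bH^F$-equivariance of Lusztig induction gives $\RLH({}^g\la)=\RLH(\la)$, so this multiplicity equals the constant $c:=\langle {}^*\RLH(\chi),\la\rangle_{\bL^F}$ for all such $g$, producing the asserted sum. The principal step — and the one I expect to be the main obstacle — is to show $\langle {}^*\RLH(\chi),\mu\rangle=0$ whenever $\mu$ lies outside the $N_{\bH^F}(\bL)$-orbit of $\la$. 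I would derive this from the partition of $\Irr(\bH^F)$ into $e$-Harish-Chandra series that is implicit in the full collection of isometries together with the $W_{\bG^F}$-stability~(2): if $\mu$ is itself $e$-cuspidal, then $\chi\in\cE(\bH^F,(\bL,\la))\cap\cE(\bH^F,(\bL,\mu))$ forces $(\bL,\la)$ and $(\bL,\mu)$ to be $N_{\bH^F}(\bL)$-conjugate, while if $\mu$ is non-$e$-cuspidal, the transitivity of Lusztig induction (Theorem~\ref{thm:RLG}(a)) applied to the $e$-cuspidal support of $\mu$ in $\bL$ places $\RLH(\mu)$ inside an $e$-HC series attached to a proper $e$-split Levi $\bL''<\bL$, disjoint from $\cE(\bH^F,(\bL,\la))$; in either case $\chi$ cannot occur.
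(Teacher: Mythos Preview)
The paper does not give a proof of this proposition; it simply records the statement and cites \cite[Prop.~3.15 and Thm.~3.11]{BMM}. So the comparison is with the argument in Brou\'e--Malle--Michel.

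Your proof of (b) is correct and is exactly the argument one extracts from the isometry axioms: writing $\chi=\eps\,I^\bH_{(\bL,\la)}(\eta)$ and applying property~(1) of Definition~\ref{def:d-HC} yields $\RHG(\chi)=\eps\,I^\bG_{(\bL,\la)}(\Ind\eta)\in\ZZ\cE(\bG^F,(\bL,\la))$, which is the claim.

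Your argument for (a) has a genuine gap. The hypothesis of Definition~\ref{def:d-HC} supplies isometries only above $(\bL,\la)$ and, via property~(2), above its $W_{\bG^F}$-conjugates; it does \emph{not} supply a partition of $\Irr(\bH^F)$ into $e$-Harish-Chandra series. The sentence ``$\chi\in\cE(\bH^F,(\bL,\la))\cap\cE(\bH^F,(\bL,\mu))$ forces $(\bL,\la)$ and $(\bL,\mu)$ to be conjugate'' is precisely the disjointness assertion of Theorem~\ref{thm:BMM}(a), which is a separate result and not a formal consequence of the isometries. The same objection applies to your treatment of non-$e$-cuspidal $\mu$: asserting that $\RLH(\mu)$ lies in a series ``disjoint from $\cE(\bH^F,(\bL,\la))$'' again presupposes the partition. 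In \cite{BMM} the formula in~(a) is obtained instead from the Mackey formula: if $\mu$ occurs in ${}^*\RLH(\chi)$ then $\chi$ occurs in $\RLH(\mu)$, whence $\langle\RLH(\mu),\RLH(\la)\rangle\ne 0$; Mackey (together with $e$-cuspidality of $\la$) then forces $\mu$ into the $N_{\bH^F}(\bL)$-orbit of $\la$. This is the step your proposal is missing.
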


\subsection{$\ell$-adapted Levi subgroups and Cabanes' criterion}
The results in this section are adaptations and extensions of a powerful
criterion of Cabanes, formulated in
\cite[Prop.~3]{En00}, which provides a strong relation between the explicit
decomposition of the Lusztig induction functor $\RLG$ for suitable Levi
subgroups $\bL$ of $\bG$ and the subdivision of $\cE_\ell(\bG^F,s)$ into
$\ell$-blocks through the inclusion of Brauer pairs.

For an $\ell$-element $z$ of $\bG^F$, we write $d^{z,\bG^F}$ for the
generalised decomposition map which sends a $K$-valued class function $f$ of
$\bG^F$ to the class function $d^{z,\bG^F}(f)$ on $C_{\bG^F}(z)$  by the rule 
$d^{z,\bG^F}(f)(zy)= f(zy)$ if $y\in C_{\bG^F} (z)$ has order prime to $\ell$
and $d^{z,\bG^F}(f)(zy)= 0$ if the order of $y\in C_{\bG^F}(z)$ is
divisible by $\ell$. The map $d^{1,\bG^F}$ is the usual decomposition map. 
Note that if $A$ is an abelian $\ell$-subgroup of $\bG^F$ contained in a
maximal torus of $\bG$, then $C_\bG(A)/C_\bG^\circ(A)$ is an $\ell$-group
(see \cite[Prop.~14.20]{MT}, \cite[Prop.~2.1(i)]{CE94}).
So, $C_{\bG^F}(A)/C_\bG^\circ (A)^F$ is an $\ell$-group, and hence,
each $\ell$-block of $C_\bG^\circ (A)^F$ is covered by a unique block of
$C_{\bG^F}(A)$. We will use this fact without further comment.

\begin{lem}  \label{lem:Cabanescrux}
 Let $\bL$ be an $F$-stable Levi subgroup of $\bG$, let
 $\la\in\cE(\bL^F,\ell')$ and $\chi\in\cE(\bG^F,\ell')$. Suppose that
 $\langle\chi, \RLG(\lambda) \rangle \ne 0$ and
 $\langle {}^*\RLG(\chi), d^{1,\bL^F}(\lambda)\rangle \ne 0$.
 Then, for any $z\in Z(\bL)_\ell^F$, there exists an irreducible constituent
 $\phi$ of $\RLH(\lambda)$, where $\bH:=C_\bG^\circ(z)$, such that
 denoting by  $\tilde b$ the unique block of $C_{\bG^F}(z)$
 covering the block containing $\phi$,
 $(\langle z\rangle, \tilde b)$ is a $ b_{\bG^F}(\chi)$-Brauer pair.
\end{lem}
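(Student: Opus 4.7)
My plan would combine two ingredients: the Broué--Michel commutation formula relating the generalised decomposition map to Lusztig induction, and Broué's theorem linking generalised decomposition numbers to Brauer subpairs. The commutation formula states that for $z \in Z(\bL)_\ell^F$ (which forces $\bL \subseteq \bH:= C_\bG^\circ(z)$),
$$d^{z,\bG^F} \circ \RLG \;=\; \Ind_{\bH^F}^{C_{\bG^F}(z)} \circ R_\bL^\bH \circ d^{z,\bL^F}.$$
Broué's theorem states: if $\chi \in \Irr(b)$ for an $\ell$-block $b$ of $\bG^F$ and $\langle d^{z,\bG^F}(\chi), \eta\rangle_{C_{\bG^F}(z)} \neq 0$ for some $\eta \in \Irr(\tilde b')$, then $(\langle z\rangle, \tilde b')$ is a $b$-Brauer pair.

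First I would use the centrality of $z$ in $\bL^F$ (and in $\bH^F$) to rewrite $d^{z,\bL^F}(\lambda)$ as $\omega_\lambda(z)$ times the shift by $z$ of $d^{1,\bL^F}(\lambda)$, and to simplify the commutation inside $\bH$ to $R_\bL^\bH \circ d^{z,\bL^F} = d^{z,\bH^F} \circ R_\bL^\bH$ (the $\Ind$ being trivial since $z \in Z(\bH)^F$). Decomposing $\RLH(\lambda) = \sum_\phi a_\phi \phi$ into irreducibles and substituting yields
$$d^{z,\bG^F}(\RLG(\lambda)) \;=\; \sum_\phi a_\phi\, \Ind_{\bH^F}^{C_{\bG^F}(z)}\bigl(d^{z,\bH^F}(\phi)\bigr).$$
By the first hypothesis, $\chi$ appears in $\RLG(\lambda)$ with coefficient $m\neq 0$, so $d^{z,\bG^F}(\chi)$ contributes to the left-hand side. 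The second hypothesis, via Frobenius reciprocity and the $z=1$ instance of the commutation formula (which reads $\RLG \circ d^{1,\bL^F} = d^{1,\bG^F} \circ \RLG$), rephrases as $\langle \chi, d^{1,\bG^F}(\RLG(\lambda))\rangle \neq 0$; this forbids $\chi$'s contribution from being fully cancelled by the other constituents of $\RLG(\lambda)$ on the $\ell$-regular part. Since $z$ acts on $\lambda$ by the nonzero scalar $\omega_\lambda(z)$, the same non-cancellation persists at the $z$-section: some irreducible $\eta \in \Irr(C_{\bG^F}(z))$ appearing above a constituent $\phi$ of $\RLH(\lambda)$ satisfies $\langle d^{z,\bG^F}(\chi), \eta\rangle \neq 0$.

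By Broué's theorem, the block $\tilde b'$ of $\eta$ then makes $(\langle z\rangle, \tilde b')$ a $b_{\bG^F}(\chi)$-Brauer pair. Since $C_{\bG^F}(z)/\bH^F$ is an $\ell$-group (noted before the lemma), $\tilde b'$ uniquely covers a block of $\bH^F$, namely that of $\phi$, so $\tilde b' = \tilde b$ as required. The main obstacle is the transfer-of-non-cancellation step: converting the $z=1$ non-vanishing supplied by the second hypothesis into a non-vanishing of $\langle d^{z,\bG^F}(\chi), \eta\rangle$ for arbitrary $z \in Z(\bL)_\ell^F$, which I would handle by exploiting the centrality of $z$ to reduce the $z$-section of the relevant inner product to its $z=1$ counterpart up to a twist by $\omega_\lambda(z)$.
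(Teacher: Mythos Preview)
Your two key ingredients --- the commutation of generalised decomposition maps with Lusztig functors, and Brauer's second main theorem --- are exactly right, and the closing step via the $\ell$-power index of $\bH^F$ in $C_{\bG^F}(z)$ is fine. The gap is the ``transfer-of-non-cancellation'' step you yourself flag. Your proposed resolution does not work as stated: the centrality of $z$ and the twist by $\omega_\lambda(z)$ operate only on $\bL^F$ and $\bH^F$, where $z$ is central, but the quantity you need to control, $\langle d^{z,\bG^F}(\chi),\eta\rangle_{C_{\bG^F}(z)}$, lives on $C_{\bG^F}(z)$, where $z$ is central too --- yet $\chi$ is a character of $\bG^F$, and there is no reason why $\chi(zy)$ should relate to $\chi(y)$ by a fixed scalar. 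So the passage from $\langle \chi, d^{1,\bG^F}(\RLG(\lambda))\rangle\ne 0$ to the existence of a suitable $\eta$ at the $z$-section is not justified: applying the commutation to $\RLG(\lambda)$ mixes $d^{z,\bG^F}(\chi)$ with the $d^{z,\bG^F}(\chi')$ of all the other constituents, and nothing you have written separates them.

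The fix is to apply the commutation in the \emph{restriction} direction to $\chi$ rather than in the induction direction to $\lambda$. This is what the paper does: one has
\[
  {}^*\RLH\bigl(d^{z,\bG^F}(\chi)\bigr)=d^{z,\bL^F}\bigl({}^*\RLG(\chi)\bigr),
\]
and since ${}^*\RLG(\chi)\in\ZZ\cE(\bL^F,\ell')$ and $z$ is a central $\ell$-element of $\bL^F$, every constituent has $z$ in its kernel, whence $d^{z,\bL^F}({}^*\RLG(\chi))=d^{1,\bL^F}({}^*\RLG(\chi))$. Now adjunction and the second hypothesis give directly
\[
  \blangle d^{z,\bG^F}(\chi),\RLH(\lambda)\brangle_{\bH^F}
  =\blangle d^{1,\bL^F}({}^*\RLG(\chi)),\lambda\brangle_{\bL^F}
  =\blangle {}^*\RLG(\chi),d^{1,\bL^F}(\lambda)\brangle_{\bL^F}\ne 0,
\]
and Brauer's second main theorem finishes. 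The point is that the centrality argument must take place on $\bL^F$, which forces you to push everything down via ${}^*R$ rather than up via $R$; once you make this switch the non-cancellation obstacle simply disappears.
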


\begin{proof}
We have
$$d^{1, \bL^F}({}^*\RLG(\chi))= d^{z, \bL^F} ({}^*\RLG(\chi)) =
  {}^*\RLH (d^{z, \bG^F}(\chi)), $$
the first equality holding since $z$ is a central $\ell$-element of
$\bL^F$ whereas $\chi$ is in an $\ell'$-series, and the second because of
the commutation of Lusztig restriction and generalised decomposition maps
(see \cite[Thm.~21.4]{CE}). It follows that
$$\langle d^{z, \bG^F}(\chi), \RLH (\lambda) \rangle \ne 0. $$
Now the result follows by Brauer's second main theorem and the fact that the
index of $\bH=C_\bG^\circ (z)$ in $C_\bG(z)$ is a power of $\ell$.
\end{proof}

\begin{prop}   \label{prop:Cabanescrux}
 Let $\bL$ be an $F$-stable Levi subgroup of $\bG$ and let
 $\lambda\in \cE(\bL^F, \ell')$. Let $Z$ be a subgroup of $Z(\bL)_\ell^F$ and
 $\{z_1,\ldots,z_m\}$ a generating set for $Z$. Set
 $\bH_i= C_\bG^\circ(z_1,\ldots,z_i)$, $ 1\leq i\leq m$, and $ \bH_0= \bG$.
 Suppose the following:
 \begin{enumerate}[\rm(1)]
  \item For any $i$, $0\leq i\leq m-1$, and any character
   $\chi\in\Irr(\bH_{i}^F)$ with
   $\langle R_\bL^{\bH_{i}}(\la),\chi\rangle_{\bH_{i}^F}\ne0$ we have
   $\langle d^{1,\bL^F}(\la),{}^*R_\bL^{\bH_{i}}(\chi)\rangle_{\bL^F}\ne0$.
  \item The irreducible constituents of $R_\bL^{\bH_{m}}(\la)$ lie in a
   single $\ell$-block of $\bH_m^F$.
 \end{enumerate}
 Then, for all $i$, $0 \leq i \leq m$, there exists a unique block, say
 $b_i$ of $\bH_i^F$ such that all constituents of $R_\bL^{\bH_{i}}(\la)$
 lie in $b_i$. Further, letting $\tilde b_i $ be the unique block of
 $C_\bG (z_1,\ldots, z_i)^F$ covering $b_i$ for $ 1\leq i \leq m$, we have
 inclusions of Brauer pairs
 $$\left(\{1\},b_0 \right)\subseteq
    (\langle z_1\rangle,\tilde b_1)\subseteq\ldots\subseteq(Z,\tilde b_m).$$
\end{prop}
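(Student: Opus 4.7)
The plan is to do a two-pass induction. The first pass would establish existence and uniqueness of $b_i$ by downward induction on $i$ from $m$ down to $0$, while the second pass would build the chain of $\bG^F$-Brauer pairs by upward induction.

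For the first pass, the base case $i=m$ is hypothesis~(2). In the inductive step at $1\le i\le m$, I would assume all constituents of $R_\bL^{\bH_i}(\la)$ lie in a unique block $b_i$ of $\bH_i^F$ and apply Lemma~\ref{lem:Cabanescrux} with $\bH_{i-1}$ in place of $\bG$ and $z=z_i$. Note that $C_{\bH_{i-1}}^\circ(z_i)=\bH_i$, since $\bH_i=C_\bG^\circ(z_1,\ldots,z_i)$ is connected and sits inside $\bH_{i-1}\cap C_\bG(z_i)=C_{\bH_{i-1}}(z_i)$; and $\bL\subseteq\bH_{i-1}$ because $\bL$ is connected and centralises $z_1,\ldots,z_{i-1}\in Z(\bL)$. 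Hypothesis~(1) supplies the required non-vanishing for any constituent $\chi$ of $R_\bL^{\bH_{i-1}}(\la)$, and the lemma produces a constituent $\phi$ of $R_\bL^{\bH_i}(\la)$ such that $(\langle z_i\rangle,\tilde b')$ is a $b_{\bH_{i-1}^F}(\chi)$-Brauer pair of $\bH_{i-1}^F$, where $\tilde b'$ is the unique block of $C_{\bH_{i-1}}(z_i)^F$ covering $b_{\bH_i^F}(\phi)$. By the inductive hypothesis $b_{\bH_i^F}(\phi)=b_i$, so $\tilde b'$ is independent of $\chi$; and since a Brauer pair determines its associated block uniquely, $b_{\bH_{i-1}^F}(\chi)$ must also be independent of $\chi$, which defines $b_{i-1}$.

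For the second pass, the base case $i=0$ is trivial. In the inductive step I would suppose the chain has been built through $(\langle z_1,\ldots,z_{i-1}\rangle,\tilde b_{i-1})$ in $\bG^F$. From the first pass, $(\langle z_i\rangle,\tilde b')$ is a $b_{i-1}$-Brauer pair of $\bH_{i-1}^F$, with $\tilde b'$ covering $b_i$. Since $\langle z_1,\ldots,z_i\rangle\le Z(\bL)_\ell^F$ lies in a maximal torus of $\bG$, the subgroup $\bH_i^F$ is normal of $\ell$-power index in both $C_{\bH_{i-1}}(z_i)^F$ and $C_\bG(z_1,\ldots,z_i)^F$, so the unique covering block $\tilde b_i$ of $b_i$ in $C_\bG(z_1,\ldots,z_i)^F$ automatically covers $\tilde b'$. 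I would then apply Lemma~\ref{lem:local-cliff}(b) to the normal inclusion $\bH_{i-1}^F\unlhd C_\bG(z_1,\ldots,z_{i-1})^F$ (again of $\ell$-power index, so covering blocks are unique) to conclude that $(\langle z_i\rangle,\tilde b_i)$ is a $\tilde b_{i-1}$-Brauer pair of $C_\bG(z_1,\ldots,z_{i-1})^F$.

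To finally transfer this to $\bG^F$, I would invoke the principle of \cite[Part~IV, Lemma~3.18]{AKO} recalled in the text. Setting $R=\langle z_1,\ldots,z_{i-1}\rangle$ and $H=C_{\bG^F}(R)=C_\bG(z_1,\ldots,z_{i-1})^F$, the conditions $RC_{\bG^F}(R)\le H\le N_{\bG^F}(R,\tilde b_{i-1})$ hold (the second since $\tilde b_{i-1}\in Z(kH)$), so for $Q=\langle z_1,\ldots,z_i\rangle$ the inclusion $(R,\tilde b_{i-1})\subseteq(Q,\tilde b_i)$ in $\bG^F$ is equivalent to $(Q,\tilde b_i)$ being a $\tilde b_{i-1}$-Brauer pair of $H$. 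Since $R\le Z(H)$ forces $C_H(Q)=C_H(\langle z_i\rangle)$ and hence $\Br_Q(\tilde b_{i-1})=\Br_{\langle z_i\rangle}(\tilde b_{i-1})$ on $Z(kH)$, the Brauer-pair relation already established at level $\langle z_i\rangle$ in $H$ upgrades to level $Q$, closing the induction. The main obstacle will be the delicate bookkeeping of the three blocks $b_i$, $\tilde b'$, $\tilde b_i$ lying in the ascending chain $\bH_i^F\le C_{\bH_{i-1}}(z_i)^F\le C_\bG(z_1,\ldots,z_i)^F$ (the latter two both covering $b_i$), together with the transfer of Brauer-pair inclusions through the chain of $\ell$-power-index normal overgroups at each step.
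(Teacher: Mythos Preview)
Your proposal is correct and uses essentially the same ingredients as the paper's proof: Lemma~\ref{lem:Cabanescrux} applied inside $\bH_{i-1}$ with $z=z_i$, together with the uniqueness of covering blocks under the $\ell$-power-index normal inclusions $\bH_i^F\unlhd C_{\bH_{i-1}}(z_i)^F\unlhd C_\bG(z_1,\ldots,z_i)^F$, and the transfer principle from \cite[Part~IV, Lemma~3.18]{AKO}. The only difference is organizational: the paper runs a single induction on $m$ (establishing $b_{m-1}$ and the last inclusion $(\langle z_1,\ldots,z_{m-1}\rangle,\tilde b_{m-1})\subseteq(Z,\tilde b_m)$ first, then invoking the inductive hypothesis for $z_1,\ldots,z_{m-1}$), whereas you unwind this into two explicit passes, which is arguably cleaner to read.
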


\begin{proof}
Proceed by induction on $m$. Suppose first that $m=1$, and let $b_1$ be the
block of $\bH_1^F$ in which all constituents of $R_\bL^{\bH_1}(\la)$ lie.
By (1) and Lemma \ref{lem:Cabanescrux}, for any irreducible constituent $\chi$
of $\RLG(\lambda)$, we have an inclusion of Brauer pairs
$$(\{1\}, b_{\bG^F}(\chi)) \subseteq (\langle z_1\rangle,\tilde b_1)
  =(Z,\tilde b_1).$$
Now by the uniqueness of inclusion of Brauer pairs it follows that
$b_{\bG^F}(\chi)= b_{\bG^F}(\chi')$ for any irreducible constituents
$\chi, \chi'$ of $\RLG(\lambda)$.
\par
Now suppose $m>1$. Since $\bH_m = C_{\bH_{m-1}}^\circ (z_m)$,
by the previous argument there exists a unique block $b_{m-1}$ of $\bH_{m-1}^F$
such that all constituents of $R_\bL^{\bH_{m-1}}(\la)$ lie in
$b_{m-1}$ and there is an inclusion of $\bH_{m-1}^F$-Brauer pairs
$$ (\{1\}, b_{m-1}) \subseteq (\langle z_m \rangle, b_m'),$$
where $b_m'$ is the unique block of $C_{\bH_{m-1}^F}(z_m)$ covering the block
$b_m$ of $\bH_m^F$ (note that $C_{\bH_{m-1}}(z_m)$
may be a proper subgroup of $C_\bG (z_1,\ldots,z_m)$).
This yields an inclusion of $C_\bG(z_1,\ldots,z_{m-1})^F$-Brauer pairs
$$ (\{1\},\tilde b_{m-1}) \subseteq (\langle z_m \rangle,\tilde b_m),$$
and hence we have the inclusion of $\bG^F$-Brauer pairs
$$(\langle z_1,\ldots,z_{m-1}\rangle,\tilde b_{m-1})\subseteq
  (Z,\tilde b_m).$$
The result now follows by induction since we have shown above that
all constituents of $R_\bL^{\bH_{{m-1}}}(\la)$ lie in the same block.
\end{proof}

The following gives sufficient criteria for condition~(2) of
Proposition~\ref{prop:Cabanescrux} to hold.

\begin{prop}  \label{prop:Cabanessingle}
 In the notation of Proposition~\ref{prop:Cabanescrux} condition~(2) is
 satisfied for any $\la\in\cE(\bL^F,\ell')$ if one of the following holds:
 \begin{enumerate}[\rm(1)]
  \item  $\bL=C_\bG^\circ(Z)$; or
  \item  $ \ell = 2 $ and the simple components of $C_\bG^\circ(Z)$ are of
   classical type $B$, $C$ or $D$.
 \end{enumerate}
\end{prop}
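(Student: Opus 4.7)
The statement splits into two cases of rather different character, so I would treat them separately.

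Case~(1) is essentially formal. When $\bL = C_\bG^\circ(Z)$ we have $\bH_m = \bL$, so $R_\bL^{\bH_m}(\la) = \la$ is a single irreducible character of $\bH_m^F$, which trivially lies in one $\ell$-block. Nothing more is needed.

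For case~(2), suppose $\ell = 2$ and every simple component of $\bH_m = C_\bG^\circ(Z)$ is of classical type $B$, $C$ or $D$. Since $\la \in \cE(\bL^F, \ell')$, we have $\la \in \cE(\bL^F, s)$ for some semisimple $2'$-element $s \in \bL^{*F}$. By Theorem~\ref{thm:RLG}(b) every constituent of $R_\bL^{\bH_m}(\la)$ lies in $\cE(\bH_m^F, s)$, and hence in $\cE_2(\bH_m^F, s)$, which by Theorem~\ref{thm:BrMiHi} is a union of $2$-blocks of $\bH_m^F$. The problem thus reduces to showing that $\cE_2(\bH_m^F, s)$ is a \emph{single} $2$-block of $\bH_m^F$ under the classical-type hypothesis.

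This last statement is a known consequence of Enguehard's work on $2$-blocks of finite reductive groups of classical type (the same circle of ideas underlying \cite[Prop.~3]{En00}): on each simple classical component $\bS$ of $[\bH_m,\bH_m]$ and each semisimple $2'$-element, the set $\cE_2(\bS^F, \cdot)$ consists of a single $2$-block. The plan is therefore to reduce the block question on $\bH_m^F$ to these components, using that $\bH_m = Z^\circ(\bH_m) \cdot [\bH_m,\bH_m]$ as a central product, that Lusztig series (and their $\ell$-enlargements) are compatible with such central-product decompositions, and that $\ell$-blocks of $\bH_m^F$ are determined by tuples of $\ell$-blocks of its simple components together with the central torus via the covering-block machinery of Lemma~\ref{lem:cliff-ab}.

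The main obstacle is the bookkeeping in this component-wise reduction: one must check that the single-block conclusion propagates correctly from each simple factor to $\bH_m^F$ in spite of the isogeny issues between $[\bH_m,\bH_m]$ and its simply connected cover, and handle the central torus cleanly. Once this compatibility is in place and Enguehard's result is applied uniformly across the classical components, the full set $\cE_2(\bH_m^F, s)$ collapses to one $2$-block, verifying condition~(2) of Proposition~\ref{prop:Cabanescrux}.
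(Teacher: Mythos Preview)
Your treatment of case~(1) matches the paper exactly: $\bH_m=\bL$, so $R_\bL^{\bH_m}(\la)=\la$ and there is nothing to prove.

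For case~(2), your overall strategy is the same as the paper's --- reduce to showing that $\cE_2(\bH_m^F,s)$ is a single $2$-block and then invoke Enguehard --- but you have the wrong reference and are doing unnecessary work. The relevant result is \cite[Prop.~1.5(b)]{En08} (not \cite[Prop.~3]{En00}, which is the Cabanes criterion underlying Proposition~\ref{prop:Cabanescrux} itself). Enguehard's Proposition~1.5 is stated in sufficient generality that it applies directly to the connected reductive group $\bH_m$ once its simple components are of type $B$, $C$ or $D$; the paper's proof is literally the one-line citation ``the assertion follows by \cite[Prop.~1.5(b)]{En08}''. Your component-wise reduction via central products and Lemma~\ref{lem:cliff-ab}, and the worry about isogeny bookkeeping, are therefore not needed: that work has already been absorbed into Enguehard's statement.
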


\begin{proof}
The assertion is obvious in the first case since then $\bH_m=\bL$.
In the second case, the assertion follows  by \cite[Prop. 1.5(b)]{En08}.
\end{proof}

We will make mostly use of condition~(1) above which has been checked in
many cases by Enguehard \cite{En00} for the choice $Z=Z(\bL)_\ell^F$.

Now we develop  sufficient criteria for condition~(1) of
Proposition~\ref{prop:Cabanescrux} to hold.

\begin{defn}
 Let $\bL\le\bG$ be an $e$-split Levi subgroup. We say that $\bL$ is
 \emph{$(e,\ell)$-adapted}, if there exist generators
 $Z(\bL)_\ell^F=\langle z_1,\ldots,z_m\rangle$ such that
 $C_\bG^\circ(z_1,\ldots,z_i)$ is an $e$-split Levi subgroup of $\bG$ for all
 $1\le i\le m$.
\end{defn}


\begin{prop}  \label{prop:HC-theorycrux}
 Let $e\ge1$ and let $(\bL,\la)$ be an $e$-cuspidal pair such that
 $\la\in\cE(\bL^F,\ell')$. Assume that $\RLG$ satisfies an $e$-Harish-Chandra
 theory above $\la$. Then for any $e$-split Levi subgroup $\bL\le\bH\le\bG$
 and any
 $\chi\in\Irr(\bH^F)$ such that $\langle\RLH(\la),\chi\rangle \ne 0$, we have
 $$\blangle d^{1,\bL^F} ({}^*\RLH(\chi)),{}^*\RLH(\chi)\brangle_{\bL^F}\ne 0.$$

 Further, if $\bL$ is $(e,\ell)$-adapted in $\bG$ with respect to the
 generating set $\{z_1,\ldots,z_m\}$ of $Z(\bL)_\ell^F$,
 then condition~(1) of Proposition \ref{prop:Cabanescrux} is satisfied with
 respect to $z_1,\ldots,z_m$.
\end{prop}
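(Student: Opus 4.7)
My plan is to prove the first assertion by a direct computation using the formula for ${}^*\RLH(\chi)$ furnished by Proposition~\ref{prop:HC-coroll}(a), and then to derive the second assertion from the first by combining the $N_{\bH^F}(\bL)$-invariance of ${}^*\RLH(\chi)$ with the self-adjointness of the generalised decomposition map.

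For the first assertion, I would start by invoking Proposition~\ref{prop:HC-coroll}(a), which under the $e$-Harish-Chandra hypothesis yields
$$ f \;:=\; {}^*\RLH(\chi) \;=\; c\sum_{g\in N_{\bH^F}(\bL)/N_{\bH^F}(\bL,\la)}{}^g\la, $$
where $c=\langle\RLH(\la),\chi\rangle_{\bH^F}\ne 0$ by Frobenius reciprocity; write $N$ for the number of cosets appearing. The key observation is that $d:=d^{1,\bL^F}$ is the orthogonal projection of $\CF(\bL^F,K)$ onto the subspace of class functions supported on $\ell$-regular elements, so $\langle d(f),f\rangle_{\bL^F}=\|d(f)\|^2$. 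Because $d(f)(1)=f(1)=cN\la(1)\ne0$, this norm is strictly positive, giving the first assertion.

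For the second assertion, I would first note that the $(e,\ell)$-adaptedness of $\bL$ guarantees each $\bH_i=C_\bG^\circ(z_1,\ldots,z_i)$ is an $e$-split Levi containing $\bL$, so the first assertion applies with $\bH=\bH_i$ for every $\chi\in\Irr(\bH_i^F)$ that is a constituent of $R_\bL^{\bH_i}(\la)$. Using the identity $R_{{}^g\bL}^{\bH_i}={}^gR_\bL^{\bH_i}$ together with ${}^g\chi=\chi$ for $g\in\bH_i^F$, the function $f:={}^*R_\bL^{\bH_i}(\chi)$ is $N_{\bH_i^F}(\bL)$-invariant; since $d$ also commutes with $\bG^F$-conjugation, the orbit-sum expansion of $f$ collapses the pairing with itself to
$$ \langle d(f),f\rangle_{\bL^F}\;=\;cN\langle d(f),\la\rangle_{\bL^F}\;=\;cN\langle d(\la),f\rangle_{\bL^F} $$
by self-adjointness of $d$. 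Since $cN\ne0$ and the left-hand side is nonzero by the first assertion, I conclude $\langle d(\la),{}^*R_\bL^{\bH_i}(\chi)\rangle_{\bL^F}\ne0$, which is exactly condition~(1) of Proposition~\ref{prop:Cabanescrux}.

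I do not expect a serious obstacle: everything reduces formally to Proposition~\ref{prop:HC-coroll}(a) and standard properties of the generalised decomposition map. The mildly subtle step is the simultaneous use, for the second assertion, of $N_{\bH^F}(\bL)$-invariance of ${}^*\RLH(\chi)$, commutation of $d$ with conjugation, and self-adjointness of $d$; but each of these is routine to verify.
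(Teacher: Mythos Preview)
Your argument is correct and essentially the same as the paper's: both invoke Proposition~\ref{prop:HC-coroll}(a) to write ${}^*\RLH(\chi)$ as an orbit sum, use that $d^{1,\bL^F}$ is the orthogonal projection onto $\ell'$-supported class functions (so $\langle d(f),f\rangle=\|d(f)\|^2>0$ since $f(1)\ne0$), and then collapse the orbit sum via $N_{\bH^F}(\bL)$-invariance to obtain $\langle d(\la),f\rangle\ne0$. The only cosmetic difference is that the paper expands $d(f)=c\sum_g{}^g d(\la)$ in the first slot directly, which avoids your self-adjointness step (and the implicit complex conjugate in passing from $\langle d(f),\la\rangle$ to $\langle d(\la),f\rangle$, harmless here since the left-hand side is real).
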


\begin{proof}
Let $\bL\le\bH\le\bG$ be $e$-split and $\chi\in\Irr(\bH^F)$ such that
$\langle\RLH(\la),\chi\rangle \ne 0 $. By
Proposition~\ref{prop:HC-coroll} we have
$$^*\RLH(\chi) = a \sum_{g\in N_{\bH^F}(\bL)/N_{\bH^F}(\bL,\la)}{}^g\la$$
with $a:=\langle\la,{}^*\RLH(\chi)\rangle=\langle\RLH(\la),\chi\rangle\ne 0$,
whence we see that $^*\RLH(\chi)(1)\ne0$, and thus
$$\blangle d^{1,\bL^F}({}^*\RLH(\chi)),{}^*\RLH(\chi)\brangle_{\bL^F}
  \ne 0.$$
But,
\begin{eqnarray*} \blangle d^{1,\bL^F} ({}^*\RLH(\chi)),
  {}^*\RLH(\chi) \brangle_{\bL^F}  &=&
  a \sum_{g\in N_{\bH^F}(\bL)/N_{\bH^F}(\bL,\la)}  \blangle d^{1,\bL^F}({}^g\la),
  {}^*\RLH(\chi) \brangle_{\bL^F}\\ &=&
  a|N_{\bH^F}(\bL): N_{\bH^F}(\bL,\la)|\blangle d^{1,\bL^F}(\la),
  {}^*\RLH(\chi) \brangle_{\bL^F}. \end{eqnarray*}
This proves the  first assertion. The second part follows by repeatedly
applying the first assertion to the cases $\bH=\bG$, respectively
$\bH = C_\bG^\circ(z_1,\ldots, z_i)$, $1\leq i\leq m $.
\end{proof}

The next result contains further useful criteria for condition~(1) of
Proposition \ref{prop:Cabanescrux}.

\begin{prop}  \label{prop:decomp}
 Let $\bL$ be an $F$-stable Levi subgroup of $\bG$, let
 $\lambda\in\cE(\bL^F,\ell')$ and let $b$
 be the $\ell$-block of $\bL^F$ containing $\la$. Suppose that one of the
 following holds:
 \begin{enumerate}[\rm(1)]
  \item $\bL$ is a torus;
  \item $\ell$ is good for $\bL$ and the $\ell$-block of $\bL^F$ containing
   $\la$ is nilpotent;
  \item $\la$ is of quasi-central $\ell$-defect; or
  \item $\Irr(b)\cap\cE(\bL^F,\ell')=\{\la\}$.
 \end{enumerate}
 Then for any character $\chi\in\Irr(\bG^F)$ with $\langle \RLG
 (\la),\chi\rangle_{\bG^F}\ne0$ we have
 $$\langle d^{1,\bL^F}(\la),{}^*\RLG(\chi)\rangle_{\bL^F}\ne0.$$
 Consequently, condition~(1) of Proposition~\ref{prop:Cabanescrux} holds for
 any subgroup $Z$ of $Z(\bL)_\ell^F$ and any generating set
 $\{z_1,\ldots,z_m\}$ of $Z$.
\end{prop}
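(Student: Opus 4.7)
The plan is to prove the main inequality by Frobenius reciprocity combined with the block-theoretic compatibility of the generalised decomposition map, reducing the problem to a uniqueness statement about $\ell'$-characters in the block of $\la$, and then to verify that uniqueness in each of the four cases. By adjunction, the hypothesis $\langle \RLG(\la),\chi\rangle\ne 0$ reads $\langle\la,{}^*\RLG(\chi)\rangle_{\bL^F}\ne0$, so $\la$ is a genuine constituent of the virtual character ${}^*\RLG(\chi)$. Writing ${}^*\RLG(\chi)=\sum_{\psi\in\Irr(\bL^F)}m_\psi\psi$ with $m_\la\ne0$, the inner product to be shown nonzero becomes
$$\langle d^{1,\bL^F}(\la),{}^*\RLG(\chi)\rangle_{\bL^F}
   =\sum_{\psi\in\Irr(\bL^F)} m_\psi\,\langle d^{1,\bL^F}(\la),\psi\rangle_{\bL^F}.$$

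Two observations restrict the contributing $\psi$. First, expressing $d^{1,\bL^F}(\la)=\sum_{\varphi\in\mathrm{IBr}(b)}d_{\la,\varphi}\hat\varphi$ via decomposition numbers gives $\langle d^{1,\bL^F}(\la),\psi\rangle_{\bL^F}=\sum_\varphi d_{\la,\varphi}d_{\psi,\varphi}\ge0$, which vanishes unless $\psi\in\Irr(b)$. Second, Theorem~\ref{thm:RLG}(b) forces every constituent of ${}^*\RLG(\chi)$ to lie in $\cE(\bL^F,s)\subseteq\cE(\bL^F,\ell')$, where $s$ is the $\ell'$-semisimple label of $\la$. Hence only $\psi\in\Irr(b)\cap\cE(\bL^F,\ell')$ can contribute, and since $\langle d^{1,\bL^F}(\la),\la\rangle_{\bL^F}=\sum_\varphi d_{\la,\varphi}^2>0$, the whole proof reduces to the equality
$$\Irr(b)\cap\cE(\bL^F,\ell')=\{\la\}.\qquad(\star)$$
Once $(\star)$ is established the sum collapses to the single nonzero term $m_\la\,\langle d^{1,\bL^F}(\la),\la\rangle$, and no cancellation is possible.

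It remains to verify $(\star)$ in each case. In~(3) it is Proposition~\ref{prop:quasicentraldefpro}(f); in~(4) it is the hypothesis itself; in~(1), $\bL^F$ is abelian, so $\ell$-blocks of $\bL^F$ are indexed by characters of $\bL^F_{\ell'}$ while elements of $\cE(\bL^F,\ell')$ are precisely the characters trivial on $\bL^F_\ell$, from which $(\star)$ is immediate. The only case requiring more than the formal set-up is~(2): that a nilpotent block of $\bL^F$ in good characteristic contains at most one element of $\cE(\bL^F,\ell')$. Here the starting point is that such a block has a unique irreducible Brauer character, so that $d^{1,\bL^F}(\la)=d_{\la,\varphi}\hat\varphi$ for that $\varphi$; combined with the Broué--Puig structure of nilpotent blocks and the Jordan decomposition description of the $\ell'$-Lusztig series this should force a unique $\ell'$-character, or one can appeal directly to the analysis of nilpotent blocks of finite reductive groups in~\cite{En00}. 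This is the main obstacle.

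The final consequence for condition~(1) of Proposition~\ref{prop:Cabanescrux} is then immediate: one applies the main statement with $\bG$ replaced in turn by each $\bH_i=C_\bG^\circ(z_1,\ldots,z_i)$, which is an $F$-stable Levi subgroup of $\bG$ containing $\bL$, the hypotheses on $(\bL,\la)$ being inherited verbatim.
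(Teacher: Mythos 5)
Your reduction of the required inequality to the single statement $(\star)$: $\Irr(b)\cap\cE(\bL^F,\ell')=\{\la\}$ is sound — block orthogonality on $\ell$-regular classes kills every constituent of ${}^*\RLG(\chi)$ outside $\Irr(b)$, Theorem~\ref{thm:RLG}(b) kills those outside $\cE(\bL^F,s)\subseteq\cE(\bL^F,\ell')$, and $\langle d^{1,\bL^F}(\la),\la\rangle_{\bL^F}>0$ — and it disposes of cases (1), (3), (4) essentially as the paper does (the paper treats (1) as a special case of (2)/(3) and reduces (3) to (4) via Proposition~\ref{prop:quasicentraldefpro}(f), so that only (2) and (4) need direct argument). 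The genuine gap is case (2), which you yourself flag as "the main obstacle": the appeal to "the Brou\'e--Puig structure of nilpotent blocks and the Jordan decomposition description of the $\ell'$-Lusztig series" is not an argument. The precise missing ingredient is the basic-set theorem for good primes (\cite[Thm.~1.7]{CE99}, going back to Geck and Geck--Hiss): for $\ell$ good for $\bL$, the restrictions to $\ell$-regular elements of the characters in $\cE(\bL^F,\ell')$ are linearly independent. Since $b$ is nilpotent it has a unique irreducible Brauer character, so $d^{1,\bL^F}$ has rank one on $\ZZ\Irr(b)$; its injectivity on $\ZZ\bigl(\Irr(b)\cap\cE(\bL^F,\ell')\bigr)$ then forces $|\Irr(b)\cap\cE(\bL^F,\ell')|=1$, which is your $(\star)$. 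Without this input your case (2) does not close.

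For comparison, the paper's own treatment of (2) never proves $(\star)$: it writes $b.{}^*\RLG(\chi)=a\la+\sum_{\phi\in I'}a_\phi\phi$ with $a\ne0$ and $I'\subseteq\Irr(b)\cap\cE(\bL^F,\ell')\setminus\{\la\}$, notes that the restriction of this sum to $\ell$-regular elements is nonzero by the linear independence above, and that nilpotency makes $\{d^{1,\bL^F}(\la)\}$ an $\ell$-basic set for $b$, so that $d^{1,\bL^F}(b.{}^*\RLG(\chi))=m\,d^{1,\bL^F}(\la)$ with $m\ne0$; pairing with $d^{1,\bL^F}(\la)$ finishes. Both routes use exactly the same two facts, so once the citation is in place your version works. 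Two smaller inaccuracies that do not affect the argument: the formula $\langle d^{1,\bL^F}(\la),\psi\rangle=\sum_\varphi d_{\la,\varphi}d_{\psi,\varphi}$ is wrong (the Gram matrix of the irreducible Brauer characters on $\ell$-regular classes is the inverse Cartan matrix, not the identity), but you only use block orthogonality and positivity of $\langle d^{1,\bL^F}(\la),\la\rangle$, which hold; and $\bH_i=C_\bG^\circ(z_1,\ldots,z_i)$ need not be a Levi subgroup of $\bG$ when $\ell$ is bad — what the final step actually requires, and what is true, is that $\bL=C_{\bH_i}(Z^\circ(\bL))$ is an $F$-stable Levi subgroup of the connected reductive group $\bH_i$.
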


\begin{proof}
(1) is a special case of~(2) and of~(3), and by
Proposition~\ref{prop:quasicentraldefpro}(f), (3) is a special case
of~(4). Also, the second assertion follows by applying the first part with
$\bG$ replaced by $\bH_i$, $1\leq i\leq m$. For any irreducible character
$\chi$ of $\bG^F$, we have that
$$\blangle d^{1,\bL^F}(\la),{}^*\RLG(\chi)\brangle_{\bL^F}
  = \blangle d^{1,\bL^F}(\la),b\cdot {}^*\RLG(\chi)\brangle_{\bL^F}
  = \blangle d^{1,\bL^F}(\la),d^{1,\bL^F}(b.{}^*\RLG(\chi))\brangle_{\bL^F}.$$
Hence, in order to prove the proposition it suffices to show that if
either~(2) or~(4) of the statement hold, then
$$\blangle d^{1,\bL^F}(\la),d^{1,\bL^F}(b.{}^*\RLG(\chi))\brangle_{\bL^F}\ne 0$$
for any $\chi \in\Irr(\bG^F)$ such that
$\langle \RLG (\la),\chi\rangle_{\bG^F}\ne0$. Indeed, for such $\chi$
we have by adjunction
$${}^*\RLG(\chi) = a \lambda + \sum _{\phi \in I} a_\phi \phi$$
for suitable $a \ne 0$, $a_\phi\in\ZZ$, where $I$ is a subset of
$\cE(\bL^F,\ell') \setminus\{\lambda\}$. So,
$$b.{}^*\RLG (\chi) = a \lambda + \sum _{\phi \in I'} a_\phi \phi$$
where $I'=I\cap\Irr(b)$.

Suppose first that~(2) holds. Since $\ell $ is good for $\bL$,
by \cite[Thm. 1.7]{CE99} the restriction of the right hand side of the above
equation to the $\ell'$-elements of $\bL^F$ is non-zero. On the other hand,
since $b$ is nilpotent, $\{d^{1, \bL^F} (\la)\} $ is an $\ell$-basic set
for $b$. Hence $d^{1, \bL^F}(b.{}^*\RLG (\chi)) = m d^{1,\bL^F}(\la)$
for some non-zero $m$. The result follows since $\la(1)\ne 0$.

Now suppose~(4) holds. Then again since
$I'\subseteq\Irr(b)\cap\cE(\bL^F,\ell') \setminus \{\lambda\}$,
the hypothesis implies that $I' = \emptyset$.
The result follows since $\la(1) \ne 0$.
\end{proof}

The previous results combine to give the following criterion which will be
crucial for the proof of Theorem~\ref{thm:mainblocks}. Here, $(\bL,\la)$ is
said to \emph{lie below $\cE(\bG^F,s)$} if the constituents of $\RLG(\la)$
lie in $\cE(\bG^F,s)$, or equivalently, if $\la\in\cE(\bL^F,s)$.

\begin{prop}   \label{prop:excellent}
 Let $e \geq 1$ and let $s\in{\bG^*}^F$ be a semisimple $\ell'$-element.
 Suppose the following.
 \begin{enumerate}[\rm (1)]
  \item The assertions of Theorem~\ref{thm:BMM} hold for the set of
   $e$-cuspidal pairs below $\cE(\bG^F,s)$.
  \item For any $e$-cuspidal pair $(\bL,\la)$ below $\cE(\bG^F,s)$ we have
   $\bL=C_\bG^\circ(Z(\bL)_\ell^F)$, and $\bL^F=C_{\bG^F}(Z(\bL)_\ell^F)$,
   and $\bL$ is $(e,\ell)$-adapted.
 \end{enumerate}
 Then for any  $e$-split Levi subgroup $\bH$ of $\bG$ such that
 $\bH^F=C_{\bG^F}(Z(\bH)_\ell^F)$ the following holds:

 For any $\ell$-block $b$ of $\bH^F$ such that
 $\Irr(b)\cap \cE(\bH^F,s) \ne \emptyset$, there exists a unique $\ell$-block
 $c$ of $\bG^F$ such that for any $\chi\in\Irr (b)\cap \cE(\bH^F, s)$ all
 constituents of $\RHG(\chi)$ lie in $c$.
 Moreover, $(Z(\bH)_\ell^F,b)$ is a $c$-Brauer pair.
\end{prop}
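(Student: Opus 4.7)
The plan is to apply Proposition~\ref{prop:Cabanescrux} twice, first inside $\bG$ to produce the block $c$, and then inside $\bH$ to pin down $b$. Let $b$ be an $\ell$-block of $\bH^F$ with $\Irr(b)\cap \cE(\bH^F,s)\ne\emptyset$, and fix $\chi_0\in \Irr(b)\cap \cE(\bH^F,s)$. Since every irreducible character of $\bH^F$ appears as a constituent of $R_\bL^\bH(\la)$ for some $e$-cuspidal pair $(\bL,\la)$ of $\bH$, we may choose such a pair for $\chi_0$; by Theorem~\ref{thm:RLG}(b) applied to ${}^*R_\bL^\bH(\chi_0)$, $\la\in\cE(\bL^F,s)$. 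As $\bL$ is also $e$-split in $\bG$ and $e$-cuspidality of $\la$ depends only on $\bL$, the pair $(\bL,\la)$ is an $e$-cuspidal pair of $\bG$ below $\cE(\bG^F,s)$, so hypothesis~(2) applies.

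By hypothesis~(2), $\bL=C_\bG^\circ(Z(\bL)_\ell^F)$, $\bL^F=C_{\bG^F}(Z(\bL)_\ell^F)$, and $\bL$ is $(e,\ell)$-adapted via some generating set $z_1,\ldots,z_m$ of $Z(\bL)_\ell^F$. Hypothesis~(1) asserts that $\RLG$ satisfies an $e$-Harish-Chandra theory above $(\bL,\la)$, whence Proposition~\ref{prop:HC-theorycrux} supplies condition~(1) of Proposition~\ref{prop:Cabanescrux} for this chain, while condition~(2) follows from Proposition~\ref{prop:Cabanessingle}(1). Applying Proposition~\ref{prop:Cabanescrux} in $\bG$ yields a unique $\ell$-block $c$ of $\bG^F$ containing all constituents of $\RLG(\la)$, together with a chain of $\bG^F$-Brauer pairs
$$(\{1\},c)\subseteq(\langle z_1\rangle,\tilde b_1)\subseteq\ldots\subseteq(Z(\bL)_\ell^F,u),$$
where $u:=b_{\bL^F}(\la)$.

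To identify $b$ I will run Proposition~\ref{prop:Cabanescrux} a second time, inside $\bH$, using the same generators $z_1,\ldots,z_m$. The crucial point is that each Levi subgroup $C_\bH^\circ(z_1,\ldots,z_i)$ is itself $e$-split in $\bG$: writing $C_\bG^\circ(z_1,\ldots,z_i)=C_\bG(\bS_i)$ and $\bH=C_\bG(\bS_\bH)$ for suitable $e$-tori $\bS_i,\bS_\bH\le\bG$, the product $\bS_i\cdot\bS_\bH$ is again an $e$-torus (its cocharacter lattice is $Y(\bS_i)+Y(\bS_\bH)$, on which $F$ still has characteristic polynomial a power of $\Ph{e}$), and by connectedness of centralizers of tori, $C_\bH^\circ(z_1,\ldots,z_i)=\bH\cap C_\bG^\circ(z_1,\ldots,z_i)=C_\bG(\bS_i\cdot\bS_\bH)$. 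Hence Proposition~\ref{prop:HC-theorycrux} delivers condition~(1) of Proposition~\ref{prop:Cabanescrux} applied inside $\bH$, and condition~(2) is trivial since $C_\bH^\circ(Z(\bL)_\ell^F)=\bL$. The conclusion is that all constituents of $R_\bL^\bH(\la)$ lie in a single block $b^\bH$ of $\bH^F$ for which $(Z(\bL)_\ell^F,u)$ is an associated $\bH^F$-Brauer pair; since $\chi_0$ is such a constituent, $b=b^\bH$.

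It remains to assemble the two pieces. Because $Z(\bH)_\ell^F\le Z(\bL)_\ell^F$ and $\bH^F=C_{\bG^F}(Z(\bH)_\ell^F)$, the compatibility of $\bG^F$- and $\bH^F$-Brauer pairs recalled before Lemma~\ref{lem:local-cliff} translates the $\bH^F$-Brauer pair $(Z(\bL)_\ell^F,u)$ associated to $b$ into the $\bG^F$-Brauer pair inclusion $(Z(\bH)_\ell^F,b)\subseteq(Z(\bL)_\ell^F,u)$, showing that $(Z(\bH)_\ell^F,b)$ is a $c$-Brauer pair. Transitivity of Lusztig induction (Theorem~\ref{thm:RLG}(a)) gives $\RLG=\RHG\circ R_\bL^\bH$, so the constituents of $\RHG(\chi_0)$ lie among those of $\RLG(\la)$, hence in $c$. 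For an arbitrary $\chi\in\Irr(b)\cap\cE(\bH^F,s)$ the same construction applied to a possibly different $e$-cuspidal pair $(\bL',\la')$ for $\chi$ produces a block $c'$ for which $(Z(\bH)_\ell^F,b)$ is again a $c'$-Brauer pair, whence $c'=c$ by uniqueness of the block associated to a given Brauer pair. The main obstacle is the second application of Proposition~\ref{prop:Cabanescrux} inside $\bH$, which hinges on the observation that products of $e$-tori remain $e$-tori.
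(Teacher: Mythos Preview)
Your proof follows essentially the same route as the paper's: reduce to an $e$-cuspidal pair $(\bL,\la)$ below $\chi_0$, verify that the chain of centralisers $C_\bH^\circ(z_1,\ldots,z_i)$ remains $e$-split so that Proposition~\ref{prop:HC-theorycrux} and Proposition~\ref{prop:Cabanescrux} apply both in $\bG$ and in $\bH$, and then compare the two resulting Brauer-pair chains via the hypothesis $\bH^F=C_{\bG^F}(Z(\bH)_\ell^F)$. Your product-of-$e$-tori argument for $e$-splitness is a clean variant of the paper's Sylow-$e$-torus argument.

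There is, however, one genuine gap in your assembly step. You write that transitivity $\RLG=\RHG\circ R_\bL^\bH$ forces every constituent of $\RHG(\chi_0)$ to be a constituent of $\RLG(\la)$. Transitivity alone does not give this: writing $R_\bL^\bH(\la)=\sum_i a_i\chi_i$, one obtains $\RLG(\la)=\sum_i a_i\,\RHG(\chi_i)$, and in a virtual character a constituent of $\RHG(\chi_0)$ could in principle cancel against a constituent of some other $\RHG(\chi_i)$. What rules this out is precisely the $e$-Harish-Chandra theory in hypothesis~(1): this is the content of Proposition~\ref{prop:HC-coroll}(b), which you should cite here instead of bare transitivity. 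With that substitution your argument is complete and matches the paper's.
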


\begin{proof}
Let $\bH$ be as in the statement and let $(\bL,\la)$ be an $e$-cuspidal pair
of $\bG$ such that $\bL \leq\bH$. We claim that
$\bL=C_\bH^\circ(Z(\bL)_\ell^F)$, $\bL^F=C_{\bH^F}(Z(\bL)_\ell^F)$,
$\bL$ is $ (e, \ell)$-adapted in $\bH$, and $\RLH$ satisfies an
$e$-Harish-Chandra theory above $\la$.
The first two assertions of our claim follow from
$$\bL \leq C_\bH^\circ(Z(\bL)_\ell^F) \leq
  C_\bG^\circ(Z(\bL)_\ell^F) \cap \bH =\bL\cap \bH= \bL.$$
Next, we show that $\bL$ is $ (e,\ell)$-adapted in $\bH$. Let
$Z(\bL)_\ell^F=\langle z_1,\ldots,z_m\rangle$ be a system of generators such
that $\bL_i:=C_\bG^\circ(z_1,\ldots,z_i)$ is $e$-split, and
for $1\le i\le m$ let $\bT_i$ be the Sylow $e$-torus of $Z(\bL_i)$ and
$\bT$ the Sylow $e$-torus of $Z(\bH)$, so that $\bL_i = C_\bG(\bT_i)$ and
$\bH = C_\bG(\bT)$. Since $\bT$ is central in $\bH$, and $\bL\le\bH$ is a Levi
subgroup, we have $\bT\le Z(\bL_i)$, so $\bT\le\bT_i$ and
$$C_\bH^\circ(z_1,\ldots,z_i) \leq C_\bG (\bT_i) \cap C_\bH (\bT) =
  C_\bH (\bT_i) =\bH\cap\bL_i= C_\bH^\circ(z_1,\ldots,z_i)$$
for $1\le i\le m$. Finally, since any $e$-split Levi subgroup of $\bH$ is an
$e$-split Levi subgroup of $\bG$, $\RLH$ satisfies an $e$-Harish-Chandra
theory over $\la$ by condition~(1), proving the claim.

Now let $b$ be as in the statement and let $\chi\in\Irr(b)\cap \cE(\bH^F,s)$.
Let $(\bL,\la)$ be an $e$-cuspidal pair of $\bG$ such that $\bL \leq \bH$, and
$\chi$ is a constituent of $\RLH(\la)$. By Proposition~\ref{prop:HC-coroll}(b)
every constituent $\psi$ of $\RHG(\chi)$ is a constituent of $\RLG(\la)$.
As $\RLG$ satisfies an $e$-Harish-Chandra theory above $\la$, condition~(1) of
Proposition~\ref{prop:Cabanescrux} holds for $\{z_1,\ldots,z_m\}$ by
Proposition~\ref{prop:HC-theorycrux}. Further, condition~(2) holds by
hypothesis and by Proposition~\ref{prop:Cabanessingle}.
Hence we have an inclusion of $\bG^F$-Brauer pairs
$$\left(\{1\},b_{\bG^F}(\psi)\right)\subseteq (Z(\bL)_\ell^F,b_{\bL_F}(\la)).$$

On the other hand, by using the claim one sees that the arguments in the
preceding section all apply to $\bH$ also, hence we have an inclusion of
$\bH^F$-Brauer pairs
$$\left(\{1\}, b \right) \subseteq (Z(\bL)_\ell^F , b_{\bL_F}(\la)). $$
Since by hypothesis $\bH^F=C_{\bG^F}(Z(\bH)_\ell^F)$,
this also yields an inclusion of $\bG^F$-Brauer pairs
$$\left(Z(\bH)_\ell^F,b\right) \subseteq (Z(\bL)_\ell^F,b_{\bL_F}(\la)).$$
Let $c$ be the unique block of $\bG^F$ such that we have an inclusion of
$\bG^F$-Brauer pairs
$$\left(\{1\}, c \right) \subseteq \left(Z(\bH)_\ell^F, b \right). $$
By transitivity and uniqueness of inclusion of Brauer pairs, we get that
$c=b_{\bG^F}(\psi)$. This proves the result.
\end{proof}

\section{The quasi-isolated blocks in $F_4(q)$}   \label{sec:F4}

In this section we prove Theorems~\ref{thm:mainblocks} and \ref{thm:BMM} on
quasi-isolated blocks of simple groups of type $F_4$. 

For this recall that a semisimple element $s$ of a connected reductive group
$\bG$ is called \emph{quasi-isolated} if its centraliser $C_\bG(s)$ is not
contained in any proper Levi subgroup of $\bG$. Correspondingly, a
\emph{quasi-isolated $\ell$-block} is a block lying in the Lusztig series
parametrised by a quasi-isolated $\ell'$-elements of the dual group.

By earlier results on blocks (see Remark~\ref{rem:history}) the decomposition
of $\cE_\ell(\bG^F,s)$ into $\ell$-blocks of $\bG^F$ is known except when
$\ell$ is a bad prime for $\bG$ and $s\ne1$ is a quasi-isolated
$\ell'$-element of $\bG^*$, an exceptional group of adjoint type. The various
$(\ell,s)$ will be treated case-by-case in Sections~\ref{sec:F4}--\ref{sec:E8},
so to start we need to recall the classification of quasi-isolated elements
in exceptional groups of adjoint type from \cite[Prop.~4.9 and Table~3]{B05}.

\begin{prop}[Bonnaf\'e]   \label{prop:Bonn}
 Let $\bG$ be a simple exceptional algebraic group of adjoint type and of
 rank at least~4. Then the conjugacy classes of quasi-isolated
 elements $s$ whose order is not divisible by all bad primes for $\bG$, the
 root system of their centraliser $C_\bG(s)$, the group of
 components $A(s):=C_\bG(s)/C_\bG^\circ(s)$ and the automiser
 $A(C):=N_\bG(C_\bG(s))/C_\bG^\circ(s)$ are given in Table~\ref{tab:quasi-el}.
\end{prop}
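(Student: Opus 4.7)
The plan is to invoke the explicit classification of Bonnaf\'e \cite{B05}; what follows is the shape of how one would carry out its proof. The starting point is the Borel--de~Siebenthal description of connected centralisers of semisimple elements, refined by Kac: every semisimple element $s$ of $\bG$ gives rise to a marking of the extended (affine) Dynkin diagram by a finite sequence of non-negative integers (the Kac coordinates), and the nodes carrying a zero mark form the Dynkin diagram of $C_\bG^\circ(s)$. The order of $s$ is recovered from the sum of the marks weighted by the labels on the extended diagram.

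First I would enumerate all proper subdiagrams of the extended Dynkin diagrams of types $F_4$, $E_6$, $E_7$, $E_8$, and single out those that do not embed (up to the Weyl group action) into any subdiagram of the ordinary Dynkin diagram: these correspond exactly to the quasi-isolated centralisers, i.e.\ pseudo-Levi subgroups not contained in any proper Levi. I would then impose the order restriction, keeping only markings whose resulting order is not divisible by every bad prime of $\bG$ (that is, by $6$ in types $F_4, E_6, E_7$ and by $30$ in type $E_8$). This yields the rows of Table~\ref{tab:quasi-el}.

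For each surviving class, $A(s) = C_\bG(s)/C_\bG^\circ(s)$ is identified with a quotient of the fundamental group of the derived subgroup of $\bG$, controlled by the stabiliser of the Kac marking; the automiser $A(C) = N_\bG(C_\bG(s))/C_\bG^\circ(s)$ is computed as the stabiliser of the marked affine diagram in the extended affine Weyl group of $\bG$, modulo the affine Weyl group of $C_\bG^\circ(s)$. Both are finite combinatorial computations. The main obstacle is the careful bookkeeping required when passing between the adjoint and simply connected pictures (where outer symmetries contribute extra component-group elements and fuse simply connected classes), and it is precisely this step that has been tabulated for exceptional types in \cite[Prop.~4.9, Table~3]{B05}, from which we read off the data in Table~\ref{tab:quasi-el}.
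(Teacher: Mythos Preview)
The paper gives no proof of this proposition: it is stated as a quotation of Bonnaf\'e's classification, with the attribution ``[Bonnaf\'e]'' in the heading and the sentence immediately preceding it pointing to \cite[Prop.~4.9 and Table~3]{B05}. Your proposal does the same --- you ultimately read the data off from that reference --- and the outline you give of Kac coordinates, the Borel--de~Siebenthal enumeration of pseudo-Levi subgroups, and the combinatorial computation of component groups is an accurate summary of how Bonnaf\'e arrives at his tables. So your approach matches the paper's, with the bonus of an informative sketch of the cited argument.
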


\begin{table}[htbp]
\caption{Quasi-isolated elements in exceptional groups}   \label{tab:quasi-el}
\[\begin{array}{|c|c|l|c|c|}
\hline
 \bG& o(s)& C_\bG^\circ(s)& A(s)& A(C)\\
\hline
 F_4& 2& C_3\pl A_1,\ B_4& 1& 1\\
    & 3& A_2\pl \tilde A_2& 1& 2\\
    & 4& A_3\pl \tilde A_1& 1& 2\\
 E_6& 2& A_5\pl A_1& 1& 1\\
    & 3& A_2\pl A_2\pl A_2,\ D_4& 3& \fS_3\\
 E_7& 2& D_6\pl A_1& 1& 1\\
    & 2& A_7,\ E_6& 2& 2\\
    & 3& A_5\pl A_2& 1& 2\\
    & 4& A_3\pl A_3\pl A_1,\ D_4\pl A_1\pl A_1& 2& 4\\
 E_8& 2& D_8,\ E_7\pl A_1& 1& 1\\
    & 4& D_5\pl A_3, A_7\pl A_1& 1& 2\\
    & 3& A_8,\ E_6\pl A_2& 1& 2\\
    & 5& A_4\pl A_4& 1& 4\\
\hline
\end{array}\]
\end{table}

In Table~\ref{tab:quasi-el}, in the last two columns, $n$ stands for a cyclic
group of order~$n$. Furthermore, $\tilde A_k$ denotes a component of
$C_\bG^\circ(s)$ of type $A_k$ generated by short root subgroups.

From now on let $\bG$ be simple of type $F_4$, with Steinberg endomorphism
$F:\bG\rightarrow\bG$, so $\bG^F=F_4(q)$, and let $\ell\in\{2,3\}$ be one
of the two bad primes for $\bG$.
According to Proposition~\ref{prop:Bonn} there exist four different types of
centralisers of quasi-isolated elements $1\ne s\in \bG^{*F}$. In
Table~\ref{tab:quasi-F4} we have collected various information on their
centralisers and the corresponding Lusztig series in $\Irr(\bG^F)$ as follows.
Firstly, in the second column we list the possible rational structures
of centralisers of quasi-isolated elements. Here, a quasi-isolated element of
order~4 with centraliser structure $A_3(q)\tilde A_1(q)$ exists when
$q\equiv1\pmod4$, while there is one with centraliser structure
$\tw2A_3(q)\tilde A_1(q)$ when $q\equiv3\pmod4$. Similarly, a quasi-isolated
3-element with centraliser structure $A_2(q)\tilde A_2(q)$ exists when
$q\equiv1\pmod3$, while there is one with centraliser structure
$\tw2A_2(q){}\tw2\tilde A_2(q)$ when $q\equiv2\pmod3$.
\par
In each case there is a unique bad prime $\ell$ not dividing $o(s)$. The third
column contains one of the two possibilities for
$$e=e_\ell(q):=\text{order of $q$ modulo}
  \begin{cases}\ell& \text{if $\ell>2$,}\\
                  4& \text{if $\ell=2$.}\end{cases}$$
More precisely, in order to avoid duplication of arguments, we assume that
$e=1$, that is, $q\equiv1\pmod4$ when $\ell=2$, and $q\equiv1\pmod3$ when
$\ell=3$, respectively. The cases where $e=2$, that is, where $q\equiv1\pmod4$
for $\ell=2$, respectively $q\equiv2\pmod3$ for $\ell=3$, can be obtained from
the former by formally exchanging $q$ by $-q$ in all arguments to come
(the operation of Ennola duality, see \cite[(3A)]{BMM}).
Note that $\bG^F$ itself is its own Ennola dual.

\begin{table}[htbp]
\caption{Quasi-isolated blocks in $F_4(q)$}   \label{tab:quasi-F4}
\[\begin{array}{|r|r|l|llll|}
\hline
 \text{No.}& C_{\bG^*}(s)^F& (\ell,e)& \bL^F& C_{\bL^*}(s)^F& \la& W_{\bG^F}(\bL,\la)\\
\hline\hline
 1&     A_2(q)\,\tilde A_2(q)& (2,1)& \Ph1^4& \bL^{*F}& 1& A_2\ti A_2\\
\hline
 2& \tw2A_2(q)\,\tw2\tilde A_2(q)& (2,1)& \Ph1^2.A_1(q)^2& \Ph1^2\Ph2^2& 1& A_1\ti A_1\\
  &             &      & \Ph1.B_3(q)&  \Ph1\Ph2.\tw2A_2(q)& \phi_{21}& A_1\\
  &             &      & \Ph1.C_3(q)&  \Ph1\Ph2.\tw2\tilde A_2(q)& \tilde\phi_{21}& A_1\\
  &             &      &       \bG^F&  C_{\bG^*}(s)^F& \phi_{21}\otimes\tilde\phi_{21}& 1\\
\hline\hline
 3&       B_4(q)& (3,1)& \Ph1^4& \bL^{*F}& 1& B_4\\
 4&             &      & \Ph1^2.B_2(q)& \bL^{*F}& B_2[1]& B_2\\
\hline
 5& C_3(q)\,A_1(q)& (3,1)& \Ph1^4& \bL^{*F}& 1& C_3\ti A_1\\
 6&             &      & \Ph1^2.B_2(q)& \bL^{*F}& B_2[1]& A_1\ti A_1\\
\hline
 7& A_3(q)\,\tilde A_1(q)& (3,1)& \Ph1^4& \bL^{*F}& 1& A_3\ti A_1\\
\hline
 8& \tw2A_3(q)\,\tilde A_1(q)& (3,1)& \Ph1^3.\tilde A_1(q)& \Ph1^3\Ph2& 1& C_2\ti A_1\\
\hline\hline
 2b&            & (2,2)& \Ph2^4& \bL^{*F}& 1& A_2\ti A_2\\
\hline
\end{array}\]
\end{table}

For each type of centraliser occurring in the table we have also listed in
Table~\ref{tab:quasi-F4} all $e$-cuspidal pairs $(\bL,\la)$ in $\bG$ (up to
$\bG^F$-conjugacy) such that $\la\in\cE(\bL^F,s)$, together with their relative
Weyl groups. More precisely, we denote $\la$ by the standard name of its
unipotent correspondent under Lusztig's Jordan decomposition of characters;
for example $\phi_{21}$ denotes the unipotent character of $\SL_3(q)$
parametrised by the partition $21$ of~3. Thus, in particular, if
$\la\in\cE(\bL,s)$ corresponds to $\rho\in\cE(C_{\bL^*}(s),1)$, then
$\la(1)=|\bL^*:C_{\bL^*}(s)|_{p'}\,\rho(1)$. 

The relative Weyl groups $W_{\bG^F}(\bL,\la)=N_{\bG^F}(\bL,\la)/\bL^F$ can
be computed using the {\sf GAP}-package {\sf Chevie} \cite{MChev}, see also the
paper of Howlett \cite{How}; they are Coxeter groups of the indicated type.

The last line 2b will be needed in one of the arguments below.

\begin{prop}   \label{prop:F4RLG}
 Let $s\ne1$ be a quasi-isolated $\ell'$-element of $\bG^{*F}=F_4(q)$, and
 assume that $e=e_\ell(q)=1$. Then we have:
 \begin{enumerate}[\rm(a)]
  \item $\cE(\bG^F,s)$ is the disjoint union of the $e$-Harish-Chandra
   series listed in the rows of Table~\ref{tab:quasi-F4}.
  \item The assertion of Theorem~\ref{thm:BMM} holds for $\bG$ of type $F_4$.
 \end{enumerate}
\end{prop}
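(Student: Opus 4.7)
The plan is to proceed case by case through the quasi-isolated $\ell'$-classes listed in Proposition~\ref{prop:Bonn}, using the Jordan decomposition of characters to reduce each assertion to a statement about unipotent characters of the centralisers $C_{\bM^*}(s)^F$ for $F$-stable Levi subgroups $\bM$ of $\bG$. For $F_4$ the entries of Table~\ref{tab:quasi-el} show that $A(s)$ is trivial for every quasi-isolated class, so every relevant centraliser is connected and Jordan decomposition provides an isometry $\cE(\bM^F,s)\leftrightarrow\cE(C_{\bM^*}(s)^F,1)$ which, for Levi subgroups containing $s$ in the dual, commutes up to sign with Lusztig induction.

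The first step is to enumerate, for each $s$ with $\ell\nmid o(s)$, the $\bG^F$-conjugacy classes of $F$-stable Levi subgroups $\bL\le\bG$ such that $s\in\bL^{*F}$, and on each such $\bL$ to identify the $e$-cuspidal characters in $\cE(\bL^F,s)$. By the above compatibility, $\la\in\cE(\bL^F,s)$ is $e$-cuspidal in $\bL$ if and only if its Jordan correspondent is $e$-cuspidal as a unipotent character of $C_{\bL^*}(s)^F$. The unipotent $e$-cuspidal characters of these centralisers (all products of classical type or rank~$\le 2$ exceptional type) are available from \cite[Thm.~3.2]{BMM} together with the explicit data encoded in {\sf Chevie}~\cite{MChev}. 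Matching these against the list of $F$-stable Levi subgroups of $\bG$ yields precisely the entries of Table~\ref{tab:quasi-F4}, and the relative Weyl groups $W_{\bG^F}(\bL,\la)$ can then be read off using Howlett's formula \cite{How} or computed directly with {\sf Chevie}.

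For assertion~(a), disjointness of the listed $e$-Harish-Chandra series follows from disjointness of the unipotent $e$-HC series of $C_{\bG^*}(s)^F$ combined with compatibility of Jordan decomposition with $\RLG$. That the union exhausts $\cE(\bG^F,s)$ follows by counting: $|\cE(\bG^F,s)|=|\cE(C_{\bG^*}(s)^F,1)|$, and the right hand side is partitioned into unipotent $e$-HC series by \cite{BMM}, whose cardinalities agree term-by-term with those of $\cE(\bG^F,(\bL,\la))$ for the corresponding rows of Table~\ref{tab:quasi-F4}. For assertion~(b), the required isometries $I^\bM_{(\bL,\la)}$ of Definition~\ref{def:d-HC} are obtained by transporting through Jordan decomposition the analogous isometries supplied by \cite[Thm.~3.2]{BMM} for the unipotent $e$-HC theory of $C_{\bM^*}(s)^F$; the three axioms translate directly, using that the relative Weyl groups $W_{\bM^F}(\bL,\la)$ coincide with those for the corresponding unipotent $e$-cuspidal pair in $C_{\bM^*}(s)^F$.

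The main obstacle is justifying the compatibility of Jordan decomposition with Lusztig induction in the precise generality required, together with controlling the signs. In $F_4$ this is workable because all centralisers $C_{\bG^*}(s)$ are connected, but care is needed in the subtlest case, namely line~2 of Table~\ref{tab:quasi-F4} with $C_{\bG^*}(s)^F = \tw2A_2(q)\tw2\tilde A_2(q)$, where four distinct $e$-cuspidal pairs contribute: here one must verify simultaneously that the corresponding $\cE(\bG^F,(\bL,\la))$ are disjoint, exhaust the Lusztig series, and that the decomposition of $\RLG(\la)$ matches the character ring of $W_{\bG^F}(\bL,\la)$ as demanded by the isometry axiom. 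This reduces to an explicit {\sf Chevie} check on the three proper Levi subgroups of type $A_1^2$, $B_3$ and $C_3$ that appear.
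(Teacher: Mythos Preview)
Your approach is genuinely different from the paper's and worth contrasting. The paper does not transport anything through Jordan decomposition. Instead, it computes $\RLG(\la)$ directly: since the proposition is stated for $e=1$, every $e$-split Levi subgroup has an $F$-stable parabolic, so $\RLG$ is ordinary Harish-Chandra induction (Theorem~\ref{thm:RLG}(c)) and its decomposition is classical. For the Ennola-dual situation $e=2$ (which the paper handles simultaneously), the few non-uniform cases are settled by computing the norm of $\RLG(\la)$ via the Mackey formula (Theorem~\ref{thm:RLG}(d)) and then identifying the constituents from the uniform projection. Parts~(a) and~(b) are then verified by inspection of these explicit decompositions.

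Your route via Jordan decomposition is conceptually attractive and, for $F_4$, plausible because $\bG$ and all its Levi subgroups have connected centre, so centralisers of semisimple elements are connected and the Jordan decomposition isometries are well behaved. However, there is a genuine gap: you invoke ``compatibility of Jordan decomposition with Lusztig induction'' as a black box, but this is not a result you can simply cite in the generality you need. What is required for axiom~(1) of Definition~\ref{def:d-HC} is a commutation statement for \emph{every} intermediate $e$-split Levi $\bL\le\bM\le\bG$, together with the identification $W_{\bM^F}(\bL,\la)\cong W_{C_{\bM^*}(s)^F}(C_{\bL^*}(s),\rho)$ for the unipotent Jordan correspondent $\rho$ of $\la$. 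Neither statement is available off the shelf in this paper's references; establishing them would itself require case-by-case computation of roughly the same weight as the paper's direct approach. Your own final paragraph concedes this by reducing the hardest case to an ``explicit {\sf Chevie} check'', at which point the conceptual advantage over the paper's method has evaporated. The paper's argument is more economical precisely because it exploits $e=1$ to sidestep the compatibility question entirely.
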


\begin{proof}
We first determine the decomposition of $\RLG$ in the relevant cases.
If $\bL$ is 1-split, this is given by the usual 1-Harish-Chandra theory.
Secondly, if $\bL$ is a maximal torus, or if $\la$ is uniform, this was
determined by Lusztig
\cite[Thm.~4.23]{Lu84}. Thus, the decomposition of $\RLG$ is known in
all cases listed in Table~\ref{tab:quasi-F4}, and also for their Ennola duals
unless $\ell=2$, $e=2$, and $\bL$ is the Ennnola twist of lines 2 or~3 in
case~2, or $\ell=3$, $e=2$ and $\bL^F=\Ph2^2.B_2(q)$ is the Ennola twist of
case~4 or~6. In the second situation, by the Mackey formula in
Theorem~\ref{thm:RLG}(d), $\RLH(\la)$, with $\bH\ge\bL$ an $e$-split Levi
subgroup of type $B_3$ or $C_3$ has norm~2, while $\RHG(\mu)$, for $\mu$ a
constituent of $\RLH(\la)$, has norm~3. So in both cases the decomposition
can be recovered from the uniform projections, for which the decomposition
is known by Lusztig's work. Similarly, in the case that $\ell=2$ we use that
$\RLG(\la)$ has norm~3 to determine its decomposition.
\par
It turns out that all decompositions are independent of $q$.
Both~(a) and~(b) can now be checked from these decompositions.
\end{proof}

We now verify the assumptions for Proposition~\ref{prop:excellent}.

\begin{lem}   \label{lem:F4}
 Let $\bL$ and $\ell$ be as in Table~\ref{tab:quasi-F4}, with $e=e_\ell(q)=1$.
 Then:
 \begin{enumerate}[\rm(a)]
  \item in Cases 1--8, $\bL=C_\bG(Z(\bL)_\ell^F)$ and $\bL$ is
   $(e,\ell)$-adapted;
  \item $\la$ is of quasi-central $\ell$-defect precisely in the numbered
   lines of the table; and
  \item in Case~2b, there is $z\in Z(\bL)_2^F$ with $C_\bG(z)$ of type $B_4$.
 \end{enumerate}
\end{lem}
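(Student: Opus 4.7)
The plan is a case-by-case verification row by row through Table~\ref{tab:quasi-F4}, combining the root data of $F_4$ with the Jordan decomposition of characters. Parts~(a) and~(b) concern the numbered cases~1--8 under $e = e_\ell(q) = 1$, while~(c) is a separate assertion about Case~2b (where $e = 2$). The main technical obstacle will be the $(e,\ell)$-adaptedness claim in part~(a), especially for line~2 with $\ell = 2$: the $Z(\SL_2\ti\SL_2)^F$ piece of $Z(\bL)_2^F$ consists of involutions whose centralisers in $\bG$ need not be 1-split, forcing a careful ordering of the generators.

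For part~(a), each listed Levi $\bL$ is 1-split, so $\bL = C_\bG(\bT)$ for a central 1-torus $\bT \le Z(\bL)^\circ$. The hypothesis $e_\ell(q) = 1$ ensures that the $\ell$-part $(\bT^F)_\ell$ is nontrivial and generates a subgroup of the maximally split torus whose connected centraliser in $\bG$ is already $\bL$; combined with a direct check that $\bL^F = C_{\bG^F}(Z(\bL)_\ell^F)$, this yields $\bL = C_\bG(Z(\bL)_\ell^F)$. For $(e,\ell)$-adaptedness I would choose generators $z_1, \ldots, z_m$ of $Z(\bL)_\ell^F$ so that the first several come from $\ell$-generators of rank-one subtori of $Z(\bL)^\circ$ forming a descending chain of 1-split centralisers, and insert any $\ell$-part of $Z([\bL,\bL])^F$ only at the end (once the centraliser has already collapsed to $\bL$ itself).

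For part~(b), I apply the criterion $|[\bL,\bL]^F|_\ell = \la(1)_\ell \cdot |Z([\bL,\bL])^F|_\ell$ of Proposition~\ref{prop:quasicentraldefpro}(a), computing $\la(1)$ from the unipotent character listed in column~6 via Jordan decomposition. In lines~1, 3, 5, 7 the Levi is a maximal torus, so $[\bL,\bL] = 1$ and the condition is trivial. In line~2 ($\ell = 2$, $4 \mid q-1$) one has $|\SL_2(q)^2|_2 = 4\,(q-1)_2^2$, $\la(1)_2 = (q-1)_2^2$, and $|Z(\SL_2\ti\SL_2)^F|_2 = 4$. In lines~4 and~6 ($\ell = 3$, $3 \mid q-1$), $|\Sp_4(q)|_3 = (q-1)_3^2$, the cuspidal unipotent $B_2[1]$ of degree $q(q-1)^2/2$ has $3$-part $(q-1)_3^2$, and $|Z(\Sp_4)|_3 = 1$. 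In line~8, Jordan decomposition gives $\la(1) = q-1$, while $|\SL_2(q)|_3 = (q-1)_3$ since $3 \nmid q+1$, and $|Z(\SL_2)|_3 = 1$. All four equalities check out.

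For part~(c), since $e = 2$ we have $4 \mid q+1$, so the torus $\bL^F = \Ph2^4 \cong (\ZZ/(q+1))^4$ contains an elementary abelian $2$-subgroup of order~$16$. I would pick $z \in \bL^F$ of order~$2$ as the image of an appropriate element of the cocharacter lattice $Y(\bT)$ modulo~$2$, and verify directly from the root system of $F_4$ that the roots $\alpha$ with $\alpha(z) = 1$ form a sub-root system of type $B_4$, thereby identifying $C_\bG(z)$ as a group of type $B_4$.
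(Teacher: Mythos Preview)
Your proposal is correct and takes essentially the same approach as the paper, which gives only a one-sentence proof deferring to {\sf Chevie} or hand calculations in the $F_4$ root system. Your sketch simply makes explicit what those hand calculations look like; the one point you leave implicit is the verification that the unnumbered lines of case~2 \emph{fail} the quasi-central $\ell$-defect criterion, but this is a routine extension of the same degree computations.
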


\begin{proof}
This is easy to check using {\sf Chevie} or by hand calculations in the root
system of type $F_4$.
\end{proof}

In fact, in all numbered lines except~2, $\la$ is even of central $\ell$-defect.

\begin{cor}   \label{cor:F4sub}
 For each quasi-isolated $\ell'$-element $1\ne s\in \bG^{*F}$ the
 $e$-Harish-Chandra series above any $e$-cuspidal pair $(\bL,\la)$ below
 $\cE(\bG^F,s)$ is contained in a unique $\ell$-block of $\bG^F$.
\end{cor}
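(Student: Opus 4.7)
The plan is to reduce the corollary to the block-distribution machinery assembled in Section~\ref{sec:back}. For each of the numbered rows 1--8 of Table~\ref{tab:quasi-F4}, the $e$-cuspidal pair $(\bL,\la)$ satisfies all the hypotheses of Proposition~\ref{prop:excellent}: condition~(1) (the full $e$-Harish-Chandra theory above $(\bL,\la)$) is exactly the content of Proposition~\ref{prop:F4RLG}(b), while condition~(2) (the two centraliser identities $\bL=C_\bG^\circ(Z(\bL)_\ell^F)$, $\bL^F=C_{\bG^F}(Z(\bL)_\ell^F)$ together with $(e,\ell)$-adaptedness of $\bL$) is precisely Lemma~\ref{lem:F4}(a). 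I would then apply Proposition~\ref{prop:excellent} with the choice $\bH=\bL$ and $b=b_{\bL^F}(\la)$. Since $\la\in\Irr(b)\cap\cE(\bL^F,s)$ is visibly non-empty, the proposition produces a unique $\ell$-block $c$ of $\bG^F$ containing every constituent of $\RLG(\la)$; by definition these constituents are exactly the members of $\cE(\bG^F,(\bL,\la))$, which is the desired conclusion for these rows.

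The auxiliary row 2b requires separate treatment, because there $\bL=\Ph2^4$ is only a maximal torus, $\la=1$, and $\bL$ need not coincide with $C_\bG^\circ(Z(\bL)_\ell^F)$, so Proposition~\ref{prop:excellent} is unavailable. Here the plan is to invoke Proposition~\ref{prop:Cabanescrux} directly, with the element $z\in Z(\bL)_2^F$ furnished by Lemma~\ref{lem:F4}(c), taking $Z:=\langle z\rangle$ and the singleton generating set $\{z\}$ so that $\bH_0=\bG$ and $\bH_1=C_\bG^\circ(z)$. Condition~(1) of Proposition~\ref{prop:Cabanescrux} is then supplied by Proposition~\ref{prop:decomp}(1), since $\bL$ is a torus, and condition~(2) is supplied by Proposition~\ref{prop:Cabanessingle}(2), since $\ell=2$ and $\bH_1$ is of classical type $B_4$. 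The conclusion of Proposition~\ref{prop:Cabanescrux} again produces a single $\ell$-block $b_0$ of $\bG^F$ absorbing all constituents of $\RLG(\la)$.

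There is essentially no conceptual obstacle: both steps are invocations of general criteria already pre-packaged in Section~\ref{sec:back}, once fed the case-by-case verifications collected in Lemmas~\ref{lem:F4} and Proposition~\ref{prop:F4RLG}. The one point deserving care is the case split itself—recognising that rows 1--8 are those where $\bL$ is $\ell$-centrally well-aligned so that Proposition~\ref{prop:excellent} applies verbatim, while row 2b is the single exceptional entry where one must fall back on the more primitive Proposition~\ref{prop:Cabanescrux} and exploit the $B_4$-centraliser of a suitable involution of $Z(\bL)_2^F$ via the classical-type alternative in Proposition~\ref{prop:Cabanessingle}(2).
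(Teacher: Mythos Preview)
Your argument for cases 1--8 is exactly the paper's proof: verify the hypotheses of Proposition~\ref{prop:excellent} via Proposition~\ref{prop:F4RLG}(b) and Lemma~\ref{lem:F4}(a), then apply it. Your separate treatment of row~2b is superfluous here: under the standing convention $e=e_\ell(q)=1$, row~2b records a $2$-cuspidal (not $1$-cuspidal) pair and is therefore not among the $e$-cuspidal pairs the corollary addresses; it is an auxiliary entry invoked only later, in the proof of Proposition~\ref{prop:F4-defgrp}, to show that the four $1$-Harish-Chandra series of case~2 all lie in the same $2$-block.
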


\begin{proof}
By Proposition~\ref{prop:F4RLG}(b) and Lemma~\ref{lem:F4} the assumptions of
Proposition~\ref{prop:excellent} are satisfied, so each
$e$-Harish-Chandra series in $\cE(\bG^F,s)$ lies in a unique $\ell$-block.
\end{proof}

We're now ready to determine the quasi-isolated $\ell$-blocks and their
defect groups:

\begin{prop}   \label{prop:F4-defgrp}
 Assume that $e_\ell(q)=1$.
 For any quasi-isolated $\ell'$-element $1\ne s\in \bG^{*F}=F_4(q)$ the block
 distribution of $\cE(\bG^F,s)$ is as indicated by the horizontal lines in
 Table~\ref{tab:quasi-F4}.
 \par
 For each $\ell$-block corresponding to one of the cases 1--8 in the table,
 there is a defect group $P\leq N_{\bG^F}(\bL,\la)$ with the structure
 described in Theorem~\ref{thm:mainblocks}.
 In particular, the defect groups are abelian precisely in cases~4, 6 and~8.
\end{prop}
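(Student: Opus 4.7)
My plan is to combine Corollary~\ref{cor:F4sub} (each $e$-Harish-Chandra series sits in a single $\ell$-block), Proposition~\ref{prop:defect:re} (which computes defect groups from $(e,\ell)$-adapted cuspidal data of quasi-central defect), and Proposition~\ref{prop:excellent} (which links Brauer pairs attached to different $e$-split Levi subgroups) to read off both the block distribution and the defect-group structure row by row of Table~\ref{tab:quasi-F4}.

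For each of the eight numbered rows, Lemma~\ref{lem:F4}(a),(b) verifies the hypotheses of Proposition~\ref{prop:defect:re}: $\bL=C_\bG^\circ(Z(\bL)_\ell^F)=C_{\bG^F}(Z(\bL)_\ell^F)$, the Levi $\bL$ is $(e,\ell)$-adapted, and $\la$ is of quasi-central $\ell$-defect. Proposition~\ref{prop:defect:re} then produces a $\bG^F$-Brauer pair $(P,w)$ with $P\le N_{\bG^F}(\bL,\la)$ and normal series $Z(\bL)_\ell^F\unlhd D\unlhd P$ whose quotients are exactly those asserted in Theorem~\ref{thm:mainblocks}(b); a direct order comparison against $|C_{\bG^{*F}}(s)|_\ell$ via Proposition~\ref{prop:defect:re}(c) confirms that $P$ is a full defect group. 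Furthermore, Proposition~\ref{prop:defect:re}(e),(f) shows $P$ is abelian iff $W_{\bG^F}(\bL,\la)$ is an $\ell'$-group, and inspection of the last column of Table~\ref{tab:quasi-F4} pinpoints this precisely in rows 4, 6, and 8. For distinctness of blocks within a single Lusztig series (rows 3/4 for centraliser $B_4(q)$ and rows 5/6 for $C_3(q)A_1(q)$), I would note that the cuspidal unipotent character $B_2[1]$ in the second row contributes a positive $\ell$-power to $\la(1)_\ell$, so that the defect order read off from Proposition~\ref{prop:defect:re}(c) is strictly smaller than for the first row. Rows 7 and 8 parametrise different quasi-isolated elements, so their blocks are automatically distinct.

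The main obstacle is Case~2, where the three unnumbered rows, for which $\la$ is not of quasi-central defect, must be shown to belong to the block of the numbered row 2 with $(\bL_1^F,\la_1)=(\Ph1^2.A_1(q)^2,\,1)$. For each of the two intermediate pairs $(\bM,\mu)$ with $\bM^F\in\{\Ph1.B_3(q),\Ph1.C_3(q)\}$, both $\bL_1$ and $\bM$ satisfy the hypotheses of Proposition~\ref{prop:excellent}, which furnishes respectively $\bG^F$-Brauer pairs $(Z(\bL_1)_2^F,b_{\bL_1^F}(\la_1))$ and $(Z(\bM)_2^F,b_{\bM^F}(\mu))$ associated to the relevant blocks of $\bG^F$. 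Since $\bL_1\le\bM$ gives $Z(\bM)_2^F\le Z(\bL_1)_2^F$, one identifies the unique $\bG^F$-Brauer pair with first component $Z(\bM)_2^F$ contained in $(Z(\bL_1)_2^F,b_{\bL_1^F}(\la_1))$ and checks that its second component is precisely $b_{\bM^F}(\mu)$; this step uses the known block theory of the classical-type groups $\bM^F$ from Cabanes--Enguehard. Uniqueness of Brauer pair inclusion then forces the two $\bG^F$-blocks to coincide. The delicate pair is $(\bG,\phi_{21}\otimes\tilde\phi_{21})$, for which $Z(\bG)_2^F=1$ so Proposition~\ref{prop:excellent} cannot close the loop. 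Here I would invoke Lemma~\ref{lem:F4}(c): the Ennola-dual torus $\Ph2^4$ of Case~2b contains a $2$-element $z$ with $\bH=C_\bG(z)$ of type $B_4$. Applying Lemma~\ref{lem:Cabanescrux} to $z$, the generalised $z$-decomposition of $\phi_{21}\otimes\tilde\phi_{21}$ lands in a block of $\bH^F$ which, by the known block theory of $B_4(q)$, is also reached from $(\bL_1,\la_1)$ by Brauer pair inclusion inside $\bH^F$. Tracing this back to $\bG^F$ closes the loop to $\bL_1$; this use of the non-Levi centraliser $\bH$ — the reason Case~2b appears in the table at all — is the technical crux of the proof.
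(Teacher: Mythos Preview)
Your overall strategy matches the paper's, but there is a genuine gap in the defect-group computation. You claim that Proposition~\ref{prop:defect:re}(c) applies to all eight numbered rows, but it does not: the equality
\[
|C_{\bG^{*F}}(s)|_\ell = |Z^\circ(\bL)_\ell^F|\cdot|Z([\bL,\bL])_\ell^F|\cdot|W_{\bG^F}(\bL,\la)|_\ell
\]
fails in rows~4 and~6. For instance in row~4 with $\ell=3$ one has $|C_{\bG^{*F}}(s)|_3=|B_4(q)|_3=3(q-1)_3^4$, whereas the right-hand side is only $(q-1)_3^2$. The paper therefore splits the argument: Proposition~\ref{prop:defect:re}(c) is invoked only for rows 1, 2, 3, 5, 7, 8, while for rows 4, 6 (and also 8) one uses Proposition~\ref{prop:defect:re}(f),(g), checking that $Z^\circ(\bL)^F\cap[\bL,\bL]^F$ and $W_{\bG^F}(\bL,\la)$ are both $\ell'$-groups, whence $P=D=Z(\bL)_\ell^F$ is already the full defect group. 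Your subsequent distinctness argument (``defect order read off from (c) is strictly smaller'') inherits this flaw; once you use (f),(g) for row~4 the orders really are different, but the paper's version---row~3 has non-abelian defect, row~4 abelian---is cleaner.

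Your handling of Case~2 is correct in outline but considerably more laborious than the paper's. Rather than linking the four 1-Harish-Chandra series pairwise via Brauer-pair inclusions through Proposition~\ref{prop:excellent} (which forces you to verify $\bL_1\le\bM$ for each intermediate $\bM$ and that $\cE_2(\bM^F,s)$ is a single block), the paper treats all four rows at once: the single $2$-Harish-Chandra series of Case~2b above the torus $\Ph2^4$ contains all four $1$-Harish-Chandra series of Case~2, and one shows directly via Proposition~\ref{prop:Cabanescrux} (with $Z=\langle z\rangle$ for $z$ as in Lemma~\ref{lem:F4}(c), using Proposition~\ref{prop:Cabanessingle}(2) for the $B_4$-centraliser) that this $2$-series lies in a single $2$-block. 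Since you need the Case~2b argument anyway for the cuspidal row, the paper's route avoids all the intermediate work.
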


\begin{proof}
In cases~1, 7 and~8 in particular, $\cE(\bG^F,s)$ is a single 1-Harish-Chandra
series. Then $\cE_\ell(\bG^F,s)$ must be an $\ell$-block by
Theorem~\ref{thm:BrMiHi}. In case~2b, by Lemma~\ref{lem:F4}(c) there is
$z\in Z(\bL)_2^F$ with centraliser $\bC$ of type $B_4$. But by
\cite[Prop.~1.5]{En08} each $\cE_2(\bC^F,s)$ is a single 2-block. So by
Proposition~\ref{prop:Cabanescrux} all constituents of $\RLG(\la)$ lie in a
unique 2-block. Since this 2-Harish-Chandra series contains all four
1-Harish-Chandra series under line~2, they all must lie in the same $2$-block.
In order to complete the  proof of the first assertion, it remains to
show that the blocks in lines 3 and 4 correspond to distinct blocks as well
as the blocks in lines 5 and 6. We will do this after
determining the defect groups.

By Lemma~\ref{lem:F4}, the assumptions on $(\bL,\la)$ of
Proposition~\ref{prop:defect:re} are satisfied. Let $P$ be as in
Proposition~\ref{prop:defect:re}. We show that $P$ is a defect group of the
corresponding block~$B$.
In lines 1,  2,  3, 5, 7, and~8 one checks the equality
$$|C_{\bG^*}(s)^F|_\ell = |Z^\circ(\bL)_\ell^F|\cdot |Z([\bL,\bL])_\ell^F|\cdot
  | W_{\bG^F}(\bL,\la)|_\ell,$$
whence by Proposition~\ref{prop:defect:re}(c), $P$ is a defect group of $B$.
Further, in cases 1, 2, 3, 5, and~7, $W_{\bG^F}(\bL,\la)$ is not an
$\ell'$-group, so by Proposition~\ref{prop:defect:re}(e), $P$ is not abelian.

In cases~4, 6 and~8, $Z^\circ(\bL)^F \cap[\bL,\bL]^F$ and
$W_{\bG^F}(\bL,\la)$ are both $\ell'$-groups, hence by
Proposition~\ref{prop:defect:re}(f),(g),
$ Z(\bL)_\ell^F =D =P$ is a defect group of $B$.

Finally, since the block corresponding to line 3 has non-abelian defect
groups whereas the one  corresponding to line 4 has abelian defect groups,
these lines correspond to different blocks.
Similarly, lines 5 and 6 correspond to different blocks.
\end{proof}

This completes the proof of Theorem~\ref{thm:mainblocks} for type $F_4$.

\section{The quasi-isolated blocks in $E_6(q)$ and $\tw2E_6(q)$} \label{sec:E6}

Here we prove Theorems~\ref{thm:mainblocks} and~\ref{thm:BMM} for $\bG$ a
simple simply connected group of type $E_6$. Let's first assume that
$\bG^F=E_6(q)_\SC$. The situation here is more complicated than for type $F_4$
since the dual group $\bG^*$ of adjoint type contains semisimple elements with
disconnected centralisers. In Table~\ref{tab:quasi-E6} we have collected the
six possible types of quasi-isolated elements $1\ne s\in \bG^{*F}$
and their centralisers according to Proposition~\ref{prop:Bonn}. Note that,
whether $\ell=2$ or $\ell=3$, we may have $e=e_\ell(q)=1$ or~2, which explains
the fact that each centraliser occurs twice in the table. \par
Again, for each element $s$ we have listed all $e$-cuspidal pairs
$(\bL,\la)$ below $\cE(\bG^F,s)$ up to $\bG^F$-conjugation.
(If $\bL$ is a proper Levi subgroup of $\bG$, the $e$-cuspidality of the given
character $\la$ is known by induction; when $\bL=\bG$ it will be a consequence
of the explicit decomposition of Lusztig induction.) We denote the characters
$\la$ as explained for $F_4$. Moreover, $\phi,\phi',\phi''$ denote the three
extensions of the unique 2-cuspidal unipotent character of $D_4(q)$ to its
extension by the graph automorphism of order~3.

\begin{table}[htbp]
\caption{Quasi-isolated blocks in $E_6(q)$}   \label{tab:quasi-E6}
\[\begin{array}{|r|r|l|llll|}
\hline
 \text{No.}& C_{\bG^*}(s)^F& (\ell,e)& \bL^F& C_{\bL^*}(s)^F& \la& W_{\bG^F}(\bL,\la)\\
\hline\hline
 1&        A_2(q)^3.3& (2,1)& \Ph1^6& \bL^{*F}& 1& A_2\wr3\\
\hline
 2&        A_2(q^3).3& (2,1)& \Ph1^2.A_2(q)^2& \Ph1^2\Ph3^2.3& 1& A_2\\
\hline
 3&   \Ph1^2.D_4(q).3& (2,1)& \Ph1^6& \bL^{*F}& 1& D_4.3\\
  &                  &      & \Ph1^2.D_4(q)& \bL^{*F}& D_4[1]& 3\\
\hline
 4&   \Ph1\Ph2.\tw2D_4(q)& (2,1)& \Ph1^4.A_1(q)^2& \Ph1^4\Ph2^2& 1& B_3\\
\hline
 5& \Ph3.\tw3D_4(q).3& (2,1)& \Ph1^2.A_2(q)^2& \Ph1^2\Ph3^2.3& 1& G_2\\
  &                  &      & \bG^F& C_{\bG^*}(s)^F& \tw3D_4[\pm1]& 1\\
\hline
 6& A_2(q^2).\tw2A_2(q)& (2,1)& \Ph1^3.A_1(q)^3& \Ph1^3\Ph2^3& 1& A_2\ti A_1\\
  &                  &      &  \Ph1^2.D_4(q)&  \Ph1^2\Ph2^2.\tw2A_2(q)& \phi_{21} & A_2\\
\hline\hline
 7&        A_2(q)^3.3& (2,2)& \Ph1^2\Ph2^3.A_1(q)& \Ph1^3\Ph2^3& 1& A_1\wr3\\
  &                  & & \Ph1\Ph2^2.A_3(q)& \Ph1^2\Ph2^2.A_2(q)& \phi_{21}& A_1\ti A_1\\
  &                  & & \Ph2.A_5(q)& \Ph1\Ph2.A_2(q)^2& \phi_{21}\otimes\phi_{21}& A_1\\
  &                  & & \bG^F& C_{\bG^*}(s)^F& \phi_{21}^{\otimes3}& 1\\
\hline
 8&        A_2(q^3).3& (2,2)& \Ph2.A_2(q^2)A_1(q)& \Ph1\Ph2\Ph3\Ph6.3& 1& A_1\\
  &                  & & \bG^F& C_{\bG^*}(s)^F& \phi_{21}& 1\\
\hline
 9&   \Ph1^2.D_4(q).3& (2,2)& \Ph1^2\Ph2^4& \bL^{*F}& 1& D_4.3\\
  &                  & & \bG^F& C_{\bG^*}(s)^F& \phi,\phi',\phi''& 1\\
\hline
10&   \Ph1\Ph2.\tw2D_4(q)& (2,2)& \Ph1^2\Ph2^4& \bL^{*F}& 1& B_3\\
\hline
11&  \Ph3.\tw3D_4(q).3& (2,2)& \Ph2^2.A_2(q^2)& \Ph2^2\Ph3\Ph6.3& 1& G_2\\
  &                   &      & \bG^F& C_{\bG^*}(s)^F& \phi_{2,1},\phi_{2,2}& 1\\
\hline
 12&  A_2(q^2).\tw2A_2(q)& (2,2)& \Ph1^2\Phi_2^4& \bL^{*F}& 1& A_2\ti A_2\\
\hline\hline
 13&      A_5(q)A_1(q)& (3,1)& \Ph1^6& \bL^{*F}& 1& A_5\ti A_1\\
\hline\hline
 14&      A_5(q)A_1(q)& (3,2)& \Ph1^2\Ph2^4& \bL^{*F}& 1& C_3\ti A_1\\
 15&                  &      & \Ph2.A_5(q)& \bL^{*F}& \phi_{321}& A_1\\
\hline
\end{array}\]
\end{table}

The column headed $W_{\bG^F}(\bL,\la)$ describes the relative Weyl group for
the given $e$-cuspidal pairs as a Coxeter group, possibly extended by a
cyclic group of order~3 if $C_{\bG^*}(s)$ is disconnected.

We now proceed as in the case of $F_4$ and first discuss the decomposition of
$\RLG$ for each line in Table~\ref{tab:quasi-E6}:

\begin{prop}   \label{prop:E6RLG}
 Let $1\ne s\in \bG^{*F}=E_6(q)_\ad$ be a quasi-isolated $\ell'$-element, and
 $e=e_\ell(q)$. Then we have:
 \begin{enumerate}[\rm(a)]
  \item $\cE(\bG^F,s)$ is the disjoint union of the $e$-Harish-Chandra
   series listed in Table~\ref{tab:quasi-E6}.
  \item The assertion of Theorem~\ref{thm:BMM} holds for $\bG$ of type $E_6$.
 \end{enumerate}
\end{prop}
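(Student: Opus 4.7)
The plan is to follow the strategy used for $F_4$ in Proposition~\ref{prop:F4RLG}, namely to compute the explicit decomposition of $\RLG(\la)$ for every $e$-cuspidal pair $(\bL,\la)$ listed in Table~\ref{tab:quasi-E6}. Once these decompositions are in hand, parts~(a) and~(b) follow by direct verification, since the $e$-Harish-Chandra series $\cE(\bG^F,(\bL,\la))$ they determine will be pairwise disjoint and their union will exhaust $\cE(\bG^F,s)$ by a character count, while (b) then reduces to exhibiting the isometries of Definition~\ref{def:d-HC}.

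First, whenever $\bL$ is a $1$-split Levi subgroup (i.e.\ when $e=1$ and $\bL$ is $e$-split), $\RLG$ is the ordinary Harish-Chandra induction $\Ind_{\bP^F}^{\bG^F}\Infl_{\bL^F}^{\bP^F}$ by Theorem~\ref{thm:RLG}(c), and the decomposition is governed by classical Harish-Chandra theory together with the relative Weyl group recorded in the last column. Second, when $\bL$ is a maximal torus, or when $\la$ is uniform, Lusztig's formula \cite[Thm.~4.23]{Lu84} computes $\RLG(\la)$ from its uniform projection. These two observations handle most rows of Table~\ref{tab:quasi-E6} --- in particular rows 1, 3 (first line), 4, 6 (first line), 7 (first line), 9 (first line), 10, 12, 13, 14 --- and also all rows for which $\bL$ is a torus.

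For the remaining rows, where $\bL$ is non-toral and $\la$ is non-uniform (for instance the rows involving $\phi_{21}$, $D_4[1]$, $\tw3D_4[\pm1]$, $\phi,\phi',\phi''$ or $\phi_{2,1},\phi_{2,2}$), I would appeal to the Mackey formula (Theorem~\ref{thm:RLG}(d), which is available as none of the excluded groups $\tw2E_6(2), E_7(2), E_8(2)$ occurs here for $\bG^F=E_6(q)_\SC$) to compute the norm $\langle \RLG(\la),\RLG(\la)\rangle$ as a sum over $W_{\bG^F}(\bL,\la)$-double cosets. Combined with the uniform projection of $\RLG(\la)$, this pins down the decomposition uniquely, since the resulting upper bound on multiplicities matches what the uniform projection already prescribes. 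As in type $F_4$, all decompositions obtained this way turn out to be independent of~$q$.

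The main obstacle will be the rows where the component group $A(s)$ of order~$3$ (coming from quasi-isolated $s$ with connected centraliser of type $D_4$, $A_2^3$, $A_2(q^3)$, or $\tw3D_4$) produces several non-uniform $e$-cuspidal characters sharing the same uniform projection. Here the three extensions $\phi,\phi',\phi''$ in row~9 and the pair $\phi_{2,1},\phi_{2,2}$ in row~11 must be disentangled using the action of the triality graph automorphism on the $D_4$- respectively $\tw3D_4$-component together with the permutation action of the cyclic factor of $W_{\bG^F}(\bL,\la)$ of order~3; this refines the uniform-projection argument via Jordan decomposition of characters in the (disconnected) centraliser. Once the individual $e$-Harish-Chandra series are identified, (a) follows from counting characters in $\cE(\bG^F,s)$ against the sum of $|\Irr(W_{\bG^F}(\bL,\la))|$ over the listed pairs, and (b) is obtained by taking the isometries $I^\bM_{(\bL,\la)}$ to be those prescribed by the Jordan decomposition bijections between $\Irr(W_{\bG^F}(\bL,\la))$ and $\cE(\bG^F,(\bL,\la))$, and verifying the transitivity property of Definition~\ref{def:d-HC}(1) from the transitivity of Lusztig induction in Theorem~\ref{thm:RLG}(a).
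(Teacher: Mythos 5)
Your overall strategy is the same as the paper's: compute the decomposition of $\RLG(\la)$ row by row via uniformity (reduction to $\RTG$), ordinary Harish-Chandra theory for $1$-split Levis, and Mackey-formula norms combined with uniform projections for the rest, then read off (a) and (b) from the explicit decompositions. Two details deserve comment. First, for the Levi subgroups of type $A_2^2$ (and $A_2(q^2)$) in cases~2, 5 and~11, the three characters of degree $\frac{1}{3}\Ph1^4\Ph2^2$ in $\cE(\bL^F,s)$ have identical uniform projections, so your "Mackey norm plus uniform projection pins it down" argument cannot separate them on its own; the paper's way out is to observe that the \emph{sum} of the three characters is uniform and that $\bL$ has only type-$A$ components, so the induction of the sum is computable, after which the individual series are recovered. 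Your proposed remedy via triality is aimed at the wrong rows: the entries $\phi,\phi',\phi''$ in case~9 and $\phi_{2,1},\phi_{2,2}$ in case~11 have $\bL=\bG$, so there is nothing to decompose there -- the only content is their $e$-cuspidality, which follows by elimination once all series induced from proper $e$-split Levis have been accounted for (this is also how the paper treats the second lines of cases~3 and~8, using the norm~$3$ from the Mackey formula and the known degree of $\RLG(D_4[1])$). With these adjustments your plan coincides with the paper's proof.
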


\begin{proof}
The characters of all proper Levi subgroups in cases~1, 4, 6, 7, 10, and
12--15 are
uniform, so the decomposition of Lusztig induction can be reduced to the known
decomposition of $\RTG(\theta)$ for suitable maximal tori $\bT$. The same
is true for the first line in cases~3 and~9. Whenever $\bL=\bG$, there is
nothing to do. For each of the two Levi subgroups $\bL$ of type $A_2^2$
(cases~2, 5 and~11) there are three $N_{\bG^F}(\bL)$-orbits of characters
of degree
$\thrd\Ph1^4\Ph2^2$, their sum being uniform. Since $\bL$ only involves
factors of type $A$, Lusztig induction of this sum can be decomposed. In the
second line in case~3, $\RLG(\la)$ has norm~$3$ by Theorem~\ref{thm:RLG}(d),
and from its known degree one concludes that it equals the sum of the three
remaining characters of $\cE(\bG^F,s)$ not occurring in the $e$-Harish-Chandra
series in line~3 of the table. The same considerations apply to case~8.
\par
It follows from the explicit decompositions that both~(a) and~(b) hold.
\end{proof}

The following is easily checked by explicit computation:

\begin{lem}   \label{lem:E6}
 Let $\bL$ and $\ell$ be as in Table~\ref{tab:quasi-E6}, with $e=e_\ell(q)$.
 Then:
 \begin{enumerate}[\rm(a)]
  \item $\bL=C_\bG(Z(\bL)_\ell^F)$ and $\bL$ is $(e,\ell)$-adapted; and
  \item in the table, $\la$ is of quasi-central $\ell$-defect precisely in the
   numbered lines.
 \end{enumerate}
\end{lem}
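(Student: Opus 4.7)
The plan is to proceed by explicit case-by-case verification for each of the Levi subgroups $\bL$ appearing in Table~\ref{tab:quasi-E6}, using the root datum of $E_6$ (or equivalently the {\sf Chevie} package). Since every Levi $\bL$ listed is recorded via the cyclotomic polynomial factorization of $|\bL^F|$ and its reductive type, all required data (root system of $\bL$, the order polynomial of $\bL^F$, the structure of $Z(\bL)^F$ and $Z([\bL,\bL])^F$) are available in compact form, and the verification reduces to comparing finitely many integer orders.

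For part~(a), I would first identify each $\bL$ as the centralizer of its connected center. Since $Z^\circ(\bL)$ is the Sylow $\Phi_e$-torus contained in $Z(\bL)$, the order polynomial determines $Z(\bL)^F$ up to isomorphism, and hence $Z(\bL)_\ell^F$. To check $\bL=C_\bG(Z(\bL)_\ell^F)$, I would note that in each case $Z(\bL)_\ell^F$ contains a subgroup cutting out $\bL$ already at the level of centralizers in $\bG$: this is clear when $Z^\circ(\bL)_\ell^F$ has enough $\ell$-torsion to recover $\bT_e$ inside $\bG$, which happens whenever $\Phi_e(q)_\ell$ is large enough relative to $|Z(\bL)/Z^\circ(\bL)|$, a condition satisfied in every numbered row for the relevant $(e,\ell)$ (and automatic for $\ell\ne p$ in the cases where $[\bL,\bL]$ has trivial or cyclic center). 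To show $(e,\ell)$-adaptedness, I would construct generators $z_1,\ldots,z_m$ iteratively: start with a generator of a cyclic direct factor of $Z^\circ(\bL)_\ell^F$ (its centralizer in $\bG$ is automatically an $e$-split Levi), then adjoin further generators, each time choosing one whose centralizer in the previous $e$-split Levi is still $e$-split; in each case the underlying root-system chain can be read off from the Bonnaf\'e tables listing the Levi subgroups of $E_6$ above $\bL$.

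For part~(b), Proposition~\ref{prop:quasicentraldefpro}(a) reduces the problem to the arithmetic identity
\[
  \la(1)_\ell \cdot |Z([\bL,\bL])^F|_\ell = |[\bL,\bL]^F|_\ell.
\]
Using Jordan decomposition, $\la(1) = |\bL^{*F}:C_{\bL^*}(s)^F|_{p'}\,\rho(1)$ for the unipotent correspondent $\rho$ of $\la$ in $C_{\bL^*}(s)^F$, and both $|\bL^{*F}|$ and $\rho(1)$ are known cyclotomic polynomial expressions recorded in the table (with $\rho$ either trivial, or a degree listed explicitly via its Lusztig label). Thus checking quasi-central $\ell$-defect amounts to comparing the $\ell$-parts of polynomial expressions in $q$ evaluated at $e=e_\ell(q)\in\{1,2\}$. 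In each numbered row, the identity is verified; in each unnumbered continuation row, I would exhibit the failure by observing that either $\rho(1)_\ell \ne 1$ or the index $|\bL^{*F}:C_{\bL^*}(s)^F|_\ell$ contributes a nontrivial $\ell$-factor inconsistent with the above equality.

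The main obstacle is simply the bookkeeping: there is no conceptual difficulty, but one must be careful with the cases where $C_{\bG^*}(s)$ is disconnected (the $.3$ cases in $E_6$), because the component group can affect both $Z(\bL)_\ell^F$ and the index of $C_{\bL^*}(s)^F$ in its full centralizer, and hence the $\ell$-part of $\la(1)$. In those rows I would explicitly track whether the relevant component $3$ divides $|W_{\bG^F}(\bL,\la)|$ and whether it is absorbed into $[\bL,\bL]^F$ or into the quotient, which is precisely the information encoded in the last column of Table~\ref{tab:quasi-E6}; with this in hand the verification again reduces to the arithmetic identity above.
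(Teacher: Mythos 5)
Your proposal matches the paper's approach: the paper offers no argument beyond the remark that the lemma ``is easily checked by explicit computation'' (as for the analogous Lemma for $F_4$, via {\sf Chevie} or root-system calculations), and your outline is precisely a spelled-out version of that computation --- reducing (a) to centraliser/root computations for $Z(\bL)_\ell^F$ in $E_6$ and (b) to the numerical criterion of Proposition~\ref{prop:quasicentraldefpro}(a) combined with the Jordan-decomposition degree formula. The only point to keep in mind when carrying it out is that $\bL=C_\bG(Z(\bL)_\ell^F)$ requires checking the full (not just connected) centraliser, i.e.\ also that no Weyl group element outside $W_\bL$ fixes $Z(\bL)_\ell^F$ pointwise, but this is part of the same finite verification.
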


In fact, in all numbered lines except~6--8, $\la$ is even of central
$\ell$-defect.

By Proposition~\ref{prop:E6RLG}(b) and Lemma~\ref{lem:E6}, the assumptions of
Proposition~\ref{prop:excellent} are satisfied, so again each
$e$-Harish-Chandra series in Table~\ref{tab:quasi-E6} is contained in a
unique $\ell$-block of $\bG^F$.

\begin{prop}   \label{prop:E6-defgrp}
 Let $e=e_\ell(q)$. For any quasi-isolated $\ell'$-element
 $1\ne s\in \bG^{*F}=E_6(q)_\ad$ the block distribution of $\cE(\bG^F,s)$ is as
 indicated by the horizontal lines in Table~\ref{tab:quasi-E6}.

 For each $\ell$-block corresponding to one of the cases~1--15 in the table
 there is a defect group $P\leq N_{\bG^F}(\bL,\la)$ with the structure
 described in Theorem~\ref{thm:mainblocks}.
 In particular, the defect groups are abelian  precisely in case~15.
\end{prop}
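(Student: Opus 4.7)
The overall strategy is the same as in the proof of Proposition~\ref{prop:F4-defgrp}: first use Proposition~\ref{prop:excellent} to put each $e$-Harish-Chandra series into a single $\ell$-block, then merge series when necessary via centralisers of suitable central $\ell$-elements or via Theorem~\ref{thm:BrMiHi}, and finally extract the defect groups from Proposition~\ref{prop:defect:re}. The input required for Proposition~\ref{prop:excellent} is exactly Proposition~\ref{prop:E6RLG}(b) together with Lemma~\ref{lem:E6}, so the first step is immediate: each row of Table~\ref{tab:quasi-E6} corresponds to a set of characters contained in a single $\ell$-block of $\bG^F$.

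The second step is the core of the argument. I would inspect cases 1, 2, 4, 10, 12, 13 and 14/15 individually: each consists of a single $e$-Harish-Chandra series, so Theorem~\ref{thm:BrMiHi} together with the previous step already identifies $\cE_\ell(\bG^F,s)$ with the expected block (or, in cases~14 and~15, realises it as a union of two series, which we will separate later by comparing defect groups). The cases requiring fusion of series are 3, 5, 6 (for $e=1$) and 7, 8, 9, 11 (for $e=2$). For each, the plan is to exhibit a central $\ell$-element $z\in Z(\bL)_\ell^F$ (of the smaller Levi appearing in the row) such that $C_\bG^\circ(z)$ is either of classical type~$D$ (allowing an appeal to Proposition~\ref{prop:Cabanessingle}(2) for $\ell=2$) or equal to a Levi subgroup whose series is already known to consist of one block (for $\ell=3$, only case~13 arises, which is already handled). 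In the classical-type situation, Proposition~\ref{prop:Cabanescrux} then forces all constituents of $\RLG(\la)$ into the same block as the $e$-cuspidal pair attached to this larger Levi, which by construction is exactly the one in the other row(s). This parallels the case~2b argument in $F_4$, but has to be executed for each of the rows noted above.

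The third step is the determination of defect groups. Lemma~\ref{lem:E6} verifies the hypotheses on $(\bL,\la)$ of Proposition~\ref{prop:defect:re}, and I would then simply check, case by case, the numerical identity
\[
 |C_{\bG^*}(s)^F|_\ell = |Z^\circ(\bL)_\ell^F|\cdot|Z([\bL,\bL])_\ell^F|\cdot|W_{\bG^F}(\bL,\la)|_\ell
\]
in every numbered row. Granting this, Proposition~\ref{prop:defect:re}(c) identifies $P$ as a defect group of the corresponding block $B$. Parts~(e)--(g) then decide abelianness: one verifies that only in case~15 is $W_{\bG^F}(\bL,\la)$ an $\ell'$-group and simultaneously $Z^\circ(\bL)^F\cap[\bL,\bL]^F$ an $\ell'$-group, so only there $P=Z(\bL)_\ell^F$ is abelian. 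In particular the block of case~15 has abelian defect groups while that of case~14 has non-abelian defect groups, so they are genuinely distinct blocks.

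The main obstacle I anticipate is the fusion step, especially for cases~7 and~11, where several distinct $e$-Harish-Chandra series (including some attached to $\bL=\bG$) must be collected into one block: a single "classical-type centraliser" element may not suffice, and one may need either to iterate Proposition~\ref{prop:Cabanescrux} along a chain of central $\ell$-elements, or to invoke Theorem~\ref{thm:BrMiHi} once one knows that all constituents of $\cE(\bG^F,s)$ actually show up in the Lusztig induction from a common $(\bL,\la)$. A secondary technical point is that the computation of $W_{\bG^F}(\bL,\la)$ in the rows involving the graph-automorphism extension (characters $\phi,\phi',\phi''$ of $D_4$, case~9) requires care to match the values with the relative Weyl group of the disconnected centraliser; I would rely on {\sf Chevie} here just as in Proposition~\ref{prop:E6RLG}.
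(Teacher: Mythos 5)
Your first and third steps coincide with the paper's proof: Proposition~\ref{prop:excellent} (fed by Proposition~\ref{prop:E6RLG}(b) and Lemma~\ref{lem:E6}) places each row of Table~\ref{tab:quasi-E6} in a single block, and the defect groups are extracted from Proposition~\ref{prop:defect:re} essentially as you describe --- except that for case~15 the paper does not check the numerical identity of part~(c) but argues directly via parts~(f) and~(g), using that $W_{\bG^F}(\bL,\la)$ and $Z^\circ(\bL)^F\cap[\bL,\bL]^F$ are both $\ell'$-groups there.

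The genuine gap is in your fusion step. The mechanism you propose --- find $z\in Z(\bL)_\ell^F$ with $C_\bG^\circ(z)$ of classical type and invoke Proposition~\ref{prop:Cabanessingle}(2) --- cannot reach the $e$-cuspidal characters attached to $\bL=\bG$, which occur in five of your seven fusion cases (5, 7, 8, 9 and~11): for $\bG$ simply connected of type $E_6$ one has $Z(\bG)^F_2=1$, so there is no non-trivial central $2$-element to work with, and no chain of such elements will help. The paper's device is instead to change the integer $d$ of the Harish-Chandra theory: in case~7 ($e=2$) all four $2$-Harish-Chandra series, including the one consisting of the $2$-cuspidal character of $\bG^F$, are contained in the single $1$-Harish-Chandra series of case~1 (with $q\equiv3\pmod4$), which lies in one block by Proposition~\ref{prop:excellent} applied with $e=1$; case~9 similarly uses case~3, and case~8 uses the $1$-cuspidal pair of case~2 together with Proposition~\ref{prop:decomp}. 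Your fallback (``all constituents show up in the Lusztig induction from a common pair'') is exactly this idea, but it still does not cover cases~5 and~11, where no single $d$-Harish-Chandra series contains all of $\cE(\bG^F,s)$: there the paper observes that the $1$-series misses exactly three characters while the $2$-series misses three \emph{different} ones, so the two overlapping series together force all of $\cE(\bG^F,s)$ into one block. Finally, even for cases~3 and~6 the paper does not pass through a classical-type centraliser of a central element; it applies Proposition~\ref{prop:excellent} with $\bH$ equal to the Levi $\Ph1^2.D_4(q)$ of the second row, using \cite[Prop.~1.5]{En08} to see that all of $\cE(\bL^F,s)$ for that Levi lies in a single $2$-block.
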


\begin{proof}
In cases~1, 2, 4, 10, 12 and~13, $\cE(\bG^F,s)$ is a single
$e$-Harish-Chandra series, hence $\cE_\ell(\bG^F,s)$ is an $\ell$-block by
Theorem~\ref{thm:BrMiHi}. The Levi subgroup in the second line of case~3
contains the one in the first line of case~3. In the second line of case~3, 
the irreducible characters in
$\cE(\bL^F,s)$ are products of a fixed linear character of $Z(\bL)^F$ of
order~2 with unipotent characters of the derived group $[\bL,\bL]^F$ of type
$D_4$. Now by \cite[Prop.~1.5]{En08} all unipotent characters of this derived
group are contained in a single 2-block, hence all elements  of $\cE(\bL^F,s)$ 
are in the same  $2$-block so the two Harish-Chandra series of $\bG^F$ lie
above a single 2-block of $\bL^F$, and hence lie in a single 2-block of
$\bG^F$ by Proposition~\ref{prop:excellent}. The same argument applies to
case~6, using again that $\cE_2(\bL^F,s)$ forms a single 2-block and that
the Levi subgroup corresponding to the second line contains the 
one corresponding to the first.
\par
In case~7 we also use 1-Harish-Chandra theory from the 1-cuspidal pair
$(\bL,\la)$ in line~1. It turns out that all assertions of Lemma~\ref{lem:E6}(a)
are also satisfied there when $q\equiv3\pmod4$. Then, by
Proposition~\ref{prop:excellent}, all constituents of $\RLG(\lambda)$ lie
in a single 2-block. Since this 1-Harish-Chandra series contains all
2-Harish-Chandra series below~7, the latter must form a single
2-block. The same argument applies to line~9, using line~3. \par
For case~8, we verify that the 1-cuspidal pair $(\bL,\la)$ in line~2, with
$q\equiv3\pmod4$, satisfies $\bL=C_\bG^\circ(Z(\bL)_\ell^F)$ and that 
$\la $ is of central $\ell$-defect. We may conclude by 
Proposition~\ref{prop:decomp} that the
1-Harish-Chandra series in~2 lies in a unique 2-block. Since this contains
both 2-Harish-Chandra series below~8, these lie in a single 2-block.
\par
In case~5 all character of $\cE(\bG^F,s)$ but three (corresponding to the
cuspidal unipotent character of $D_4$) lie in the same 1-Harish-Chandra series,
hence in the same 2-block. Now we also consider Lusztig induction from the
2-split Levi subgroup with fixed point group $\Ph2^2.A_2(q^2)$ (line~11 with
$q\equiv1\pmod4$).
Then again all characters in $\cE(\bG^F,s)$ but three different ones lie in the
same 2-Harish-Chandra series, hence in the same 2-block.
The same argument applies to case~11, using line~5.
\par
In order to complete the proof of the distribution of blocks,
it remains only to show that lines~14 and~15 correspond to different blocks,
and this will be done after the determination of defect groups.

By Lemma~\ref{lem:E6}, the assumptions on $(\bL,\la)$ of
Proposition~\ref{prop:defect:re} are satisfied. Let $P$ be as in
Proposition~\ref{prop:defect:re}. In lines~1--14 one checks the assumption
of Proposition~\ref{prop:defect:re}(c), whence $P$ is a defect group of $B$.
Further, in all these cases $W_{\bG^F}(\bL,\la)$ is not an $\ell'$-group,
so  by Proposition~\ref{prop:defect:re}(e), $P$ is not abelian.
In case~15, $Z^\circ(\bL)^F \cap[\bL, \bL]^F$ and $W_{\bG^F}(\bL,\la)$ are both
$\ell'$-groups, hence by Proposition~\ref{prop:defect:re}(f) and~(g),
$Z(\bL)_\ell^F =D =P$ is a defect group of $B$.

Finally, since the block corresponding to line~14 has non-abelian defect
groups whereas the one corresponding to line~15 has abelian defect groups,
these lines correspond to different blocks.
\end{proof}

This completes the proof of Theorem~\ref{thm:mainblocks} for $G=E_6(q)$.

\medskip
The Lusztig series to consider in $\tw2E_6(q)$ are Ennola duals of those in
$E_6(q)$, and thus precisely the same arguments as for the latter case apply.
We obtain $\ell$-blocks as in Table~\ref{tab:quasi-E6}, with the cases
$(\ell,1)$ and $(\ell,2)$ interchanged, and the Levi subgroups replaced by
their Ennola-duals.

\section{The quasi-isolated blocks in $E_7(q)$} \label{sec:E7}

We now prove Theorems~\ref{thm:mainblocks} and~\ref{thm:BMM} for $\bG$ a
simple simply connected group of type $E_7$, so $\bG^F=E_7(q)_\SC$. The
relevant non-central quasi-isolated elements $s\in \bG^{*F}$ and their
centralisers when $q\equiv1\pmod4$ (for the first two entries) respectively
$q\equiv1\pmod3$ (for the remaining entries) are given in
Table~\ref{tab:quasi-E7} according to Proposition~\ref{prop:Bonn}.
Thus, we have $e=e_\ell(q)=1$ for the cases listed in the table, and hence
$\ell|(q-1)$. The cases where $q\equiv3\pmod4$ and $\ell=2$ (respectively
$q\equiv2\pmod3$ and $\ell=3$) are obtained from these by Ennola duality.
Note that cases 12, 15, 16 and~19 only occur for $q\equiv1\pmod4$, and cases
13, 17, 18 and~20 only for $q\equiv3\pmod4$.
\par
As for $F_4$ and $E_6$, in each case we give all relevant
1-cuspidal pairs $(\bL,\la)$ (up to $\bG^F$-conjugation) lying below characters
from $\cE(\bG^F,s)$ and their relative Weyl groups. Case~2b, with $e=2$,
case~10b, with $e=3$, will be used to further investigate the $\ell$-blocks
in cases~2 and~10.
\par
In order to fit the table on the page, we've adopted the following notation
for the Levi subgroups $\bL$, except in lines 2b and 10b: we just give the
Dynkin type of the derived subgroup $[\bL,\bL]$, with the understanding that
$\bL$ contains a maximally split torus (since $e=1$).

\begin{table}[htbp]
\caption{Quasi-isolated blocks in $E_7(q)$}   \label{tab:quasi-E7}
\[\begin{array}{|r|r|l|llll|}
\hline
 \text{No.}& C_{\bG^*}(s)^F& (\ell,e)& \bL^F& C_{\bL^*}(s)^F& \la& W_{\bG^F}(\bL,\la)\\
\hline\hline
 1&         A_5(q)A_2(q)& (2,1)& \emptyset& \bL^{*F}& 1& A_5\ti A_2\\
\hline
 2& \tw2A_5(q)\tw2A_2(q)& (2,1)& A_1^3& \Ph1^4\Ph2^3& 1& C_3\ti A_1\\
  &                     &      & D_4& \Ph1^3\Ph2^2.\tw2A_2(q)& \phi_{21}& C_3\\
  &                     &      & D_6& \Ph1\Ph2.\tw2A_5(q)& \phi_{321}& A_1\\
  &                     &      & E_7& C_{\bG^*}(s)^F& \phi_{321}\otimes\phi_{21}& 1\\
\hline\hline
 3&  D_6(q)A_1(q)& (3,1)& \emptyset& \bL^{*F}& 1& D_6\ti A_1\\
 4&              &      & D_4& \bL^{*F}& D_4[1]& B_2\ti A_1\\
\hline
 5&      A_7(q).2& (3,1)& \emptyset& \bL^{*F}& 1& A_7.2\\
\hline
 6&  \tw2A_7(q).2& (3,1)& (A_1^3)'& \Ph1^4\Ph2^3.2& 1& C_4\\
 7&              &      & D_6& \Ph1\Ph2.\tw2A_5(q).2& \phi_{321}& A_1\\
\hline
 8& \Ph1.E_6(q).2& (3,1)& \emptyset& \bL^{*F}& 1& E_6.2\\
 9&              &      & D_4& \bL^{*F}& D_4[1]& A_2.2\\
  &              &      & E_6& \bL^{*F}& E_6[\theta^{\pm1}]& 2\\
\hline
10& \Ph2.\tw2E_6(q).2& (3,1)& (A_1^3)'& \Ph1^4\Ph2^3.2& 1& F_4\\
  &                  &      & E_7& C_{\bG^*}(s)^F& \tw2E_6[\theta^{\pm1}],\tw2E_6[1]\!\!& 1\\
11&                  &      & D_6& \Ph1\Ph2.\tw2A_5(q).2& \phi_{321}& A_1\\
\hline
12& A_3(q)^2A_1(q).2& (3,1)& \emptyset& \bL^{*F}& 1& A_3\wr2\ti A_1\\
\hline
13& \tw2A_3(q)^2A_1(q).2& (3,1)& A_1^2& \Ph1^5\Ph2^2& 1& B_2\wr2\ti A_1\\
\hline
14& A_3(q^2)A_1(q).2& (3,1)& (A_1^3)'& \Ph1^4\Ph2^3.2& 1& A_3\ti A_1\\
\hline
15& \Ph1.D_4(q)A_1(q)^2.2& (3,1)& \emptyset& \bL^{*F}& 1& (D_4\ti A_1^2).2\\
16&                 &      & D_4& \bL^{*F}& D_4[1]& A_1\wr2\\
\hline
17& \Ph2.D_4(q)A_1(q)^2.2& (3,1)& A_1& \Ph1^6\Ph2& 1& (D_4\ti A_1^2).2\\
18&                      &      &D_4\cd A_1& \Ph1^2\Ph2D_4(q)& D_4[1]& A_1\wr2\\
\hline
19& \Ph1.\tw2D_4(q)A_1(q^2).2& (3,1)& A_1^2& \Ph1^5\Ph2^2& 1& (B_3\ti A_1).2\\
\hline
20& \Ph2.\tw2D_4(q)A_1(q^2).2& (3,1)& (A_1^3)'& \Ph1^4\Ph2^3.2& 1& B_3\ti A_1\\
\hline\hline
 2b&                         & (2,2)& \Ph2^7& \bL^{*F}& 1& A_5\ti A_2\\
\hline\hline
 10b&                        & (3,3)& \Ph3^2.A_1(q^3)& \Ph2\Ph3^2\Ph6.2& 1& G_5\\
\hline
\end{array}\]
\end{table}

(We remark that the conjugacy class of parabolic subgroups of type $A_1^3$ of
$W(E_7)$ with normaliser quotient $F_4$, denoted by $(A_1^3)'$ in the above
table, seems to have been overseen in \cite{How}.)

\begin{prop}   \label{prop:E7RLG}
 Let $1\ne s\in \bG^{*F}=E_7(q)_\ad$ be a quasi-isolated $\ell'$-element, and
 assume that $e=e_\ell(q)=1$. Then we have:
 \begin{enumerate}[\rm(a)]
  \item $\cE(\bG^F,s)$ is the disjoint union of the $e$-Harish-Chandra
   series listed in the upper part of Table~\ref{tab:quasi-E7}.
  \item The assertion of Theorem~\ref{thm:BMM} holds for $\bG$ of type $E_7$.
 \end{enumerate}
\end{prop}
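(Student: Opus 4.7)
The plan mirrors the arguments already deployed for types $F_4$ and $E_6$ in the previous two sections. The key observation is that since $e = e_\ell(q) = 1$, every Levi subgroup $\bL$ listed in the upper part of Table~\ref{tab:quasi-E7} contains a maximally split torus, hence is an $F$-stable Levi of some $F$-stable parabolic subgroup. By Theorem~\ref{thm:RLG}(c), $\RLG$ coincides with ordinary Harish-Chandra induction, so the decomposition of $\RLG(\la)$ is governed by classical Howlett--Lehrer theory applied to the relative Weyl group $W_{\bG^F}(\bL,\la)$. In particular, the Mackey-formula caveat of Theorem~\ref{thm:RLG}(d), which would otherwise be a concern for $E_7(2)$, plays no role in the $e=1$ setting.

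First I would go through the cases line by line. For each $1$-cuspidal pair $(\bL,\la)$ in the table, Howlett--Lehrer theory supplies an isometry $I^\bG_{(\bL,\la)}$ between $\ZZ\Irr(W_{\bG^F}(\bL,\la))$ and $\ZZ\cE(\bG^F,(\bL,\la))$ inducing a bijection $\Irr(W_{\bG^F}(\bL,\la)) \leftrightarrow \cE(\bG^F,(\bL,\la))$, and this determines the decomposition of $\RLG(\la)$. The $1$-cuspidality of each $\la$ in the table is verified inductively by restricting to proper $1$-split Levi subgroups of $\bL$ and invoking the known classification of cuspidal unipotent characters of groups of type $A_n$, $D_4$, $D_6$ and $E_6$ together with the Jordan decomposition of characters. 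When $\bL = \bG$ there is nothing to decompose, and one only checks that the indicated characters of $\bG^F$ are genuinely $1$-cuspidal via the same restriction procedure, using Lusztig's tables of generic degrees.

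Second, for assertion (a) I would verify that the $1$-Harish-Chandra series listed partition $\cE(\bG^F,s)$ by comparing cardinalities: each series has $|\Irr(W_{\bG^F}(\bL,\la))|$ elements, and the total matches $|\cE(\bG^F,s)| = |\Irr(C_{\bG^*}(s)^F)|$ obtained from Jordan decomposition (with due care for the component group $A(s)$ when $C_{\bG^*}(s)$ is disconnected). Assertion (b) then amounts to producing isometries $I^\bM_{(\bL,\la)}$ for every intermediate $e$-split Levi $\bL \le \bM \le \bG$ satisfying the three axioms of Definition~\ref{def:d-HC}; conditions~(1) and~(3) are built into the Howlett--Lehrer parametrization (the former expressing transitivity of Harish-Chandra induction combined with induction from a parabolic subgroup of the relative Weyl group), and condition~(2) follows from the $W_{\bG^F}$-equivariance of the construction.

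The main obstacle will be the careful bookkeeping of $e$-cuspidal pairs in the presence of disconnected centralisers $C_{\bG^*}(s)$, particularly in cases~5--11 and~15--20, where $A(s)$ is non-trivial of order~$2$. One must use the extension of the Jordan decomposition of characters to non-connected centralisers and identify which irreducible characters of $\bG^F$ correspond to which characters of $C_{\bG^*}(s)^F$ modulo the action of $A(s)$, being especially vigilant about extensions such as $E_6[\theta^{\pm1}]$, $\tw2E_6[\theta^{\pm1}]$ and $\tw2E_6[1]$ which are not uniform. A secondary difficulty arises from the non-uniform cuspidal characters $D_4[1]$ and $\phi_{321}$ of $(\tw2)A_5(q)$ appearing in cases~4, 7, 9, 11, 16, 18: for these pairs one cannot apply Lusztig's formula \cite[Thm.~4.23]{Lu84} for uniform characters, and one must instead rely on the Howlett--Lehrer description together with knowledge of generic degrees to pin down the decomposition of $\RLG(\la)$. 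Finally, the cases with $q \equiv 3 \pmod 4$ (respectively $q \equiv 2 \pmod 3$) are obtained from those above by Ennola duality \cite[(3A)]{BMM}, exchanging the relevant Levi subgroups with their Ennola-twisted analogues.
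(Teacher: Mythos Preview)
Your approach is correct but takes a genuinely different route from the paper's. You exploit the specific feature that $e=1$ forces all relevant Levi subgroups to be $1$-split, so that $\RLG$ is ordinary Harish-Chandra induction and the classical Howlett--Lehrer machinery delivers the required isometries and the partition of $\cE(\bG^F,s)$ essentially for free. The paper, by contrast, proceeds by explicit computation of the decomposition of $\RLG(\la)$ in each case: when $\la$ is uniform it reduces to Lusztig's known decomposition of $\RTG$ for maximal tori; when the relative Weyl group has order~$2$ it uses the Mackey formula to see that $\RLG(\la)$ has norm~$2$ and then reads off the constituents from the uniform projection; and in the remaining cases it factors through a Levi subgroup of type $E_6$ or $\tw2E_6$ (where the decomposition is known from Section~\ref{sec:E6}) and again uses that the resulting norm is small enough to determine everything from uniform projections.

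What each approach buys: your structural argument is cleaner for the $e=1$ statement as written and sidesteps any worry about the Mackey formula at $q=2$. The paper's computational approach, however, is designed so that the very same calculations (with $\bL$ replaced by its Ennola twist) handle the Ennola-dual $e=2$ cases that are needed later in the section; your invocation of Ennola duality at the end is more of a heuristic than a proof, since transporting the decomposition of Lusztig induction across Ennola duality is not entirely formal and is precisely what the paper's explicit methods establish. For the proposition exactly as stated your argument suffices, but be aware that the paper's extra work is not redundant --- it is what makes the subsequent treatment of $q\equiv 3\pmod 4$ (resp.\ $q\equiv 2\pmod 3$) go through.
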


\begin{proof}
Whenever $\la$ is uniform, the decomposition of $\RLG$ is obtained from the
known decomposition of $\RTG$ for various maximal tori $\bT$ of $\bG$.
Secondly, whenever the relative Weyl group is of order~2,
$\RLG(\la)$ is of norm~2 by the Mackey formula, and its constituents are
easily determined from the uniform projection. Furthermore, in all cases the
induction to a Levi subgroup of type $E_6$ respectively $\tw2E_6$ is known by
the results of the previous section. The norm of characters induced from these
Levi subgroups is small enough to again determine them uniquely from their
uniform projections.
\end{proof}

The conditions on $\bL$ and on $\la$ can be checked as in the previous cases:

\begin{lem}   \label{lem:E7}
 Let $\bL$ and $\ell$ be as in cases 1--20 in Table~\ref{tab:quasi-E7} (and
 recall that $e=e_\ell(q)=1$). Then:
 \begin{enumerate}[\rm(a)]
  \item $\bL=C_\bG(Z(\bL)_\ell^F)$, and $\bL$ is $(e,\ell)$-adapted; and
  \item in the table, $\la$ is of quasi-central $\ell$-defect precisely in the
   numbered lines.
 \end{enumerate}
 Additionally, in cases 2b and 10b we have $\bL=C_\bG^\circ(Z(\bL)_\ell^F)$.
\end{lem}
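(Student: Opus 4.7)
The plan is to verify both assertions by direct case-by-case inspection in the root datum of $E_7$, entirely parallel to the verification already carried out for $F_4$ in Lemma~\ref{lem:F4} and for $E_6$ in Lemma~\ref{lem:E6}. Since $e=e_\ell(q)=1$, every $e$-split Levi subgroup $\bL$ in question contains an $F$-stable maximally split torus, so $\bL$ is determined up to $\bG^F$-conjugacy by the root system of $[\bL,\bL]$ recorded in the fourth column of Table~\ref{tab:quasi-E7}. All the structural quantities one needs --- the finite group $Z(\bL)^F$, its $\ell$-part, the relevant centralisers in $\bG$, and the $\ell$-parts of character degrees --- are computable in the root datum, and in practice via {\sf Chevie} \cite{MChev}.

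First I would dispatch part (a). For each of the twenty Levi subgroups I compute $Z(\bL)^F$ as a quotient of the cocharacter lattice modulo the coroot lattice of $[\bL,\bL]$, read off $Z(\bL)_\ell^F$, and then check that no root of $\bG$ outside the root system of $\bL$ vanishes on $Z(\bL)_\ell^F$; this is a finite check of congruences modulo $\ell$ using $\ell\mid q\pl 1$ or $\ell\mid q\pl 2$ as appropriate. For the $(e,\ell)$-adapted property I choose the generating set $z_1,\ldots,z_m$ by first picking a generator of the connected part $Z^\circ(\bL)_\ell^F$ (whose centraliser is an $e$-split Levi containing $\bL$) and then adding successive generators that impose further vanishing conditions, each corresponding to a subtorus whose centraliser remains a $1$-split Levi. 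The descent $\bG\supseteq C_\bG^\circ(z_1)\supseteq\cdots\supseteq\bL$ can be read off directly from the extended Dynkin diagram.

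Part (b) then follows from Proposition~\ref{prop:quasicentraldefpro}(a), which reduces quasi-central $\ell$-defect of $\la$ to the equality
\[
|[\bL,\bL]^F|_\ell=\la(1)_\ell\cdot|Z([\bL,\bL])^F|_\ell.
\]
In each numbered line this is a routine comparison of $\ell$-parts using the known degrees of the unipotent cuspidals $D_4[1]$, $E_6[\theta^{\pm1}]$, ${}^2\!E_6[\theta^{\pm1}]$, ${}^2\!E_6[1]$, ${}^3\!D_4[\pm1]$, and of $\phi_{321}\in\cE({}^2\!A_5(q),1)$. Conversely, in each unnumbered intermediate line the relevant character has $\ell$-defect strictly smaller than a Sylow $\ell$-subgroup of the appropriate derived quotient, so the equality fails. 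For cases 2b and 10b only $\bL=C_\bG^\circ(Z(\bL)_\ell^F)$ is asserted: in 2b we have $\bL^F=\Ph2^7$ a torus, so the claim is immediate; in 10b one checks that $Z(\bL)_3^F$ generates precisely the $3$-torsion of the Sylow $\Phi_3$-torus, whose connected centraliser is $\bL$ itself.

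The main obstacle is organisational rather than conceptual: with twenty Levi subgroups, several of them carrying nontrivial cuspidal $\la$, and two Ennola variants per case, the bookkeeping is substantial. The genuinely delicate points are (i) verifying the $(e,\ell)$-adapted property in cases such as 18, where $Z(\bL)_3^F$ has more than one generator and one must sequence them so that every partial centraliser is $e$-split, and (ii) confirming central versus merely quasi-central $\ell$-defect in case 10, where disconnectedness of $C_{\bG^*}(s)$ contributes to the extension $\tw2E_6(q).2$ and must be correctly accounted for in the comparison of $\la(1)_\ell$ with $|[\bL,\bL]^F|_\ell/|Z([\bL,\bL])^F|_\ell$.
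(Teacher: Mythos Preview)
Your proposal is correct and follows exactly the approach the paper takes: the paper gives no detailed argument for this lemma, merely stating beforehand that ``the conditions on $\bL$ and on $\la$ can be checked as in the previous cases'' (i.e., as for Lemmas~\ref{lem:F4} and~\ref{lem:E6}, by direct computation in the root datum, in practice via {\sf Chevie}). Your write-up in fact spells out considerably more of the mechanics than the paper does.
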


In fact, in all numbered lines except~2, $\la$ is even of central $\ell$-defect.

\begin{prop}   \label {prop:E7-defgrp}
 Assume that $e_\ell(q)=1$.
 For any quasi-isolated $\ell'$-element $1\ne s\in\bG^{*F}=E_7(q)_\ad$
 the block distribution of $\cE(\bG^F,s)$ is as indicated by the horizontal
 lines in Table~\ref{tab:quasi-E7}.
 \par
 For each $\ell$-block corresponding to one of the cases 1--20 in the table,
 there is a defect group $P\leq N_{\bG^F}(\bL,\la)$ with the structure
 described in Theorem~\ref{thm:mainblocks}.
 In particular, the defect groups are abelian precisely in
 cases~4, 7, 11, 13, 16 and~18.
\end{prop}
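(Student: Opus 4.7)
The argument follows the template of Propositions~\ref{prop:F4-defgrp} and~\ref{prop:E6-defgrp}. Lemma~\ref{lem:E7} together with Proposition~\ref{prop:E7RLG} verifies the hypotheses of Proposition~\ref{prop:excellent} for every numbered entry in Table~\ref{tab:quasi-E7}, so each of the listed $1$-Harish-Chandra series is already contained in a single $\ell$-block of $\bG^F$. The task therefore splits into two parts: deciding, within each choice of $s$, which numbered lines lie in the same block, and computing the defect groups.

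For the block distribution I would first dispose of the cases in which $\cE(\bG^F,s)$ is exhausted by a single numbered $1$-HC series, so that $\cE_\ell(\bG^F,s)$ must be a single $\ell$-block by Theorem~\ref{thm:BrMiHi}: this handles cases~1, 5, 12, 13, 14, 19 and~20. For case~2 I would invoke the auxiliary entry~2b, whose Levi $\Ph2^7$ is a maximal torus: Proposition~\ref{prop:decomp}(1) supplies condition~(1) of Proposition~\ref{prop:Cabanescrux}, condition~(2) holds by Proposition~\ref{prop:Cabanessingle}(1), and the resulting single $2$-HC series of $\bL$ is seen to contain representatives of all four of the $1$-HC series grouped under line~2. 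The analogous argument with entry~10b amalgamates the series grouped under case~10. For the unnumbered second rows of cases~9 and~10 I would pass to the intermediate Levi $\bM$ of type $E_6$ (respectively $\tw2E_6$) containing both relevant cuspidal Levi subgroups, combine the explicit decomposition of $R_{\bL}^{\bM}(\la)$ furnished by Proposition~\ref{prop:E7RLG} with the block distribution of $\bM^F$ already established in Section~\ref{sec:E6}, and conclude via Proposition~\ref{prop:excellent} that the two cuspidal supports lie above a common $\ell$-block of $\bM^F$ and hence in a common $\ell$-block of $\bG^F$.

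Next, the defect groups are read off from Proposition~\ref{prop:defect:re}, whose hypotheses are secured by Lemma~\ref{lem:E7}. For each numbered line I would verify the numerical identity
$$|C_{\bG^*}(s)^F|_\ell=|Z^\circ(\bL)_\ell^F|\cdot|A_0|\cdot|W_{\bG^F}(\bL,\la)|_\ell,$$
so that Proposition~\ref{prop:defect:re}(c) identifies the group $P$ constructed there as a defect group of the corresponding block. In cases outside of $\{4,7,11,13,16,18\}$ the Weyl group $W_{\bG^F}(\bL,\la)$ has non-trivial $\ell$-part, forcing $P$ non-abelian by part~(e); in each of those six remaining cases both $Z^\circ(\bL)^F\cap[\bL,\bL]^F$ and $W_{\bG^F}(\bL,\la)$ are $\ell'$-groups, so parts~(f) and~(g) give $P=Z(\bL)_\ell^F$ abelian. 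These computations also serve to separate the blocks sharing a single centraliser: within each of the pairs $(3,4)$, $(6,7)$, $(10,11)$, $(15,16)$, $(17,18)$ one member has abelian and the other non-abelian defect, and for the pair $(8,9)$ the orders of the defect groups differ.

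The main obstacle I foresee lies in the amalgamation step for the unnumbered lines of cases~9 and~10: whereas the torus-based entries~2b and~10b handle the top-level mergers cleanly, fusion inside the intermediate $E_6$-type Levi requires extracting a block-fusion statement at the bad prime $\ell=3$ from the explicit Lusztig-induction computations of Section~\ref{sec:E6}, rather than from a uniform result such as \cite[Prop.~1.5]{En08}, which is only available in classical types $B$, $C$, $D$ for $\ell=2$.
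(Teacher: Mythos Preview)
Your overall strategy mirrors the paper's proof closely, and the block-distribution arguments (via Theorem~\ref{thm:BrMiHi} for the single-series cases, via entries~2b and~10b for the mergers in cases~2 and~10, and via an intermediate Levi for the unnumbered line after case~9) are essentially what the paper does. One small correction: for the unnumbered $E_6[\theta^{\pm1}]$-line attached to cases~8--9, the relevant input is not Section~\ref{sec:E6} of this paper (which treats \emph{non-unipotent} quasi-isolated blocks of $E_6$) but rather Enguehard's tables in \cite{En00,En08} for the unipotent $3$-blocks of $E_6(q)$, since $s$ is central in the dual of that Levi.

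There is, however, a genuine gap in the defect-group step. The numerical identity of Proposition~\ref{prop:defect:re}(c),
\[
|C_{\bG^*}(s)^F|_\ell = |Z^\circ(\bL)_\ell^F|\cdot|A_0|\cdot|W_{\bG^F}(\bL,\la)|_\ell,
\]
does \emph{not} hold for every numbered line; it holds only for the first numbered line attached to each $s$. In particular it fails badly in case~9: there $|C_{\bG^*}(s)^F|_3=(q-1)_3^{\,7}\cdot 3^4$ while the right-hand side is only $(q-1)_3^{\,3}\cdot 3$. So part~(c) cannot be invoked for case~9, and that case is not covered by your abelian route via~(f),(g) either, since $|W_{\bG^F}(\bL,\la)|=|A_2.2|$ has $3$-part equal to~$3$. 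The paper handles case~9 by a separate argument via Proposition~\ref{prop:defect:re}(d): one checks that $Z(\bL)_3^F\cong E^3$ with $E$ cyclic of order $(q-1)_3$, that any $\sigma\in P\setminus Z(\bL)_3^F$ permutes the three direct factors cyclically, and then compares $|C_P(\sigma)|=3(q-1)_3$ with $|C_P(\tau)|\ge 3(q-1)_3^{\,3}$ for $\tau\in Z(\bL)_3^F$ to conclude that $Z(\bL)_3^F$ is characteristic in $P$, whence $P$ is a defect group by~(d). Without this step your proposed proof does not determine the defect group in case~9, and hence cannot complete the separation of cases~8 and~9 into distinct blocks either.
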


\begin{proof}
By Proposition~\ref{prop:excellent}, each $e$-Harish-Chandra series in
Table~\ref{tab:quasi-E7} is contained in a unique $\ell$-block of $\bG^F$.
In cases~1, 5, 12--14, 19 and~20, $\cE(\bG^F,s)$ is a single $\ell$-block by
Theorem~\ref{thm:BrMiHi}. By \cite[Tables for $E_6(q)$]{En08} all unipotent
characters of positive 3-defect of the Levi subgroup of type $E_6$ lie in the
same 3-block, so by Proposition~\ref{prop:excellent} the Harish-Chandra series
in line~9 and the following line belong to the same 3-block. Here note that 
the Levi subgroup in the second line in each case contains 
the one in the first line.  \par
In case~2, we claim that all four Harish-Chandra series lie in the same 2-block.
For this note that the 2-split Levi subgroup $\bL$ in case~2b satisfies
$\bL=C_\bG^\circ(Z(\bL)_\ell^F)$, and then the claim follows from
Proposition~\ref{prop:decomp} applied to the 2-Harish-Chandra series in case~2b,
which contains all Harish-Chandra series from case~2.
By the same arguments, the 1-cuspidal characters $\la=E_6[\theta^{\pm1}]$ in
the second line of case~10 lie in the same block as line~10, since these lie
in the same 3-Harish-Chandra series as in case~10b. We will show that
different numbered lines corresponding to the same quasi-isolated element
lie in different blocks after the determination of the defect groups.
\par
Now let $B$ be an $\ell$-block in $\cE_\ell(\bG^F,s)$
and let $P$ be as in Proposition~\ref{prop:defect:re}. In all numbered lines
which are in the first line of the part of the table corresponding to $s$ we
have that $P$ is a defect group of $B$ by Proposition~\ref{prop:defect:re}(c).
Further, in all of these cases, except line~13, $W_{\bG^F}(\bL,\la)$ is not an
$\ell'$-group, so by Proposition~\ref{prop:defect:re}(e), $P$ is not abelian.
\par
For lines~4, 7, 11, 13, 16 and~18, $Z^\circ(\bL)^F \cap[\bL, \bL]^F$ and
$W_{\bG^F}(\bL,\la)$ are both $\ell'$-groups, hence by
Proposition~\ref{prop:defect:re}(f),(g), $Z(\bL)_\ell^F=D=P$ is a defect
group of $B$.

For case 9, we note that by Proposition~\ref{prop:defect:re}(b),(g)
there is a subgroup $P$ of a defect group of $B$  of the required type and
with $D=A =Z(\bL)_3^F$. Further, $Z(\bL)_3^F= E^3$, where $E$ is a cyclic
group of order $(q-1)_3$,  $Z(\bL)_3^F$ has index~$3$ in $P$ and if
$\sigma \in  P\setminus Z(\bL)_3^F$, then $\sigma$ acts on $E^3$ by cyclically
permuting the factors.
Thus $C_P(\sigma)$ has order $3(q-1)_3$ whereas  $C_P(\tau)$  for any
$\tau \in Z(\bL)_3^F$ has order at least $3(q-1)_3^3 $.  So,
$Z(\bL)_3^F$ is characteristic in $Q$, and it  follows from
Proposition~\ref{prop:defect:re}(d) that $P$ is a defect group of $B$.
\par
The defect groups in cases~8 and~9 have different orders, hence they
correspond to different blocks. The defect groups in cases~3, 6, 10, 15 and~17
are non-abelian whereas those in cases~4, 7, 11, 16 and~18 are abelian,
hence correspond to different blocks.
\end{proof}

\section{The quasi-isolated blocks in $E_8(q)$} \label{sec:E8}

Throughout this section, $\bG$ is a simple group of type $E_8$, so
$\bG^F=E_8(q)$.
The situation is yet more complicated since now there are three bad primes
$\ell=2,3,5$ to deal with, which we'll do one at a time. Until
Section~\ref{subsec:E8(2)} we assume that $q\ne2$.

\subsection{Quasi-isolated 2-blocks of $E_8(q)$}
We begin by considering the case when $\ell=2$. Table~\ref{tab:quasi-E8-2}
contains the possible rational types of centralisers of quasi-isolated 3-
and 5-elements $1\ne s\in \bG^{*F}$, all $e$-cuspidal pairs $(\bL,\la)$ with
$s\in\bL^{*F}$ and their relative Weyl groups for the case $q\equiv1\pmod4$.
Here, a quasi-isolated 3-element as in cases~1 and~3 occurs when
$q\equiv1\pmod3$, as in cases~2 and~5 when $q\equiv2\pmod3$; and a
quasi-isolated 5-element as in case~7 occurs when $q\equiv1\pmod5$, as in
case~8 when $q\equiv2,3\pmod5$ and as in case~9 when $q\equiv-1\pmod5$.
The notation for Levi subgroups and for the cuspidal characters is as in
Table~\ref{tab:quasi-E7} above.

The cases where $q\equiv3\pmod4$ can be obtained from the former by Ennola
duality.

\begin{table}[htbp]
\caption{Quasi-isolated 2-blocks in $E_8(q)$, $q\equiv1\pmod4$}
   \label{tab:quasi-E8-2}
\[\begin{array}{|c|r|l|llll|}
\hline
 \text{No.}& C_{\bG^*}(s)^F& e& \bL^F& C_{\bL^*}(s)^F& \la& W_{\bG^F}(\bL,\la)\\
\hline\hline
 1&     A_8(q)& 1& \emptyset& \bL^{*F}& 1& A_8\\
\hline
 2&    \tw2A_8(q)& 1& A_1^4& \Ph1^4\Ph2^4& 1& B_4\\
  &              &  & D_4\cd A_1& \Ph1^3\Ph2^3.\tw2A_2(q)& \phi_{21}& B_3\\
\hline
 3& E_6(q).A_2(q)& 1& \emptyset& \bL^{*F}& 1& E_6\ti A_2\\
  &              &  & D_4& \bL^{*F}& D_4[1]& G_2\ti A_2\\
 4&              &  & E_6& \bL^{*F}& E_6[\theta^{\pm1}]& A_2\\
\hline
 5& \tw2E_6(q).\tw2A_2(q)& 1& A_1^3& \Ph1^5\Ph2^3& 1& F_4\ti A_1\\
  &              &  & D_4& \Ph1^4\Ph2^2.\tw2A_2(q)& \phi_{21}& F_4\\
  &              &  & D_6& \Ph1^2\Ph2.\tw2A_5(q)& \phi_{321}& A_1\ti A_1\\
  &              &  & E_7& \Ph1.\tw2A_5(q)\tw2A_2(q)& \phi_{321}\otimes\phi_{21}& A_1\\
  &              &  & E_7& \Ph1\Ph2.\tw2E_6(q)& \tw2E_6[1]& A_1\\
  &              &  & E_8& C_{\bG^*}(s)^F& \tw2E_6[1]\otimes\phi_{21}& 1\\
 6&              &  & E_7& \Ph1\Ph2.\tw2E_6(q)& \tw2E_6[\theta^{\pm1}]& A_1\\
  &              &  & E_8& C_{\bG^*}(s)^F& \tw2E_6[\theta^{\pm1}]\otimes\phi_{21}& 1\\
\hline
 7&      A_4(q)^2& 1& \emptyset& \bL^{*F}& 1& A_4^2\\
\hline
 8&  \tw2A_4(q^2)& 1& A_3^2& \Ph1^2\Ph2^2\Ph4^2& 1& B_2\\
  &              &  & D_7& \Ph1\Ph2\Ph4.\tw2A_2(q^2)& \phi_{21}& A_1\\
\hline
 9& \tw2A_4(q)^2& 1& A_1^4& \Ph1^4\Ph2^4& 1& B_2^2\\
  &             &      & D_4\cd A_1& \Ph1^3\Ph2^3.\tw2A_2(q)& \phi_{21}\quad(2\times)& B_2\ti A_1\\
  &             &      & D_6& \Ph1^2\Ph2^2.\tw2A_2(q)^2& \phi_{21}\otimes\phi_{21}& A_1^2\\
\hline\hline
 5b& \tw2E_6(q).\tw2A_2(q)& 2& \Ph2^8& \bL^{*F}& 1& E_6\ti A_2\\
 6b&                      &  & \Ph2^2.\tw2E_6(q)& \bL^{*F}& \tw2E_6[\theta^{\pm1}]& A_2\\
\hline
\end{array}\]
\end{table}

Lets' point out one particularity here. Since $E_7$ has two non-conjugate
Levi subgroups of type $A_1^3$ (see the remark before
Proposition~\ref{prop:E7RLG}), the quasi-isolated involution in case~5
embeds in two different ways into a 1-split Levi subgroup of type $E_7$,
with non-isomorphic centralizers (see rows 4 and 5 in case~5).

\begin{prop}   \label{prop:E8RLGl2}
 Let $1\ne s\in\bG^{*F}=E_8(q)$ be a quasi-isolated $2'$-element and assume
 that $e=e_\ell(q)=1$. Then we have:
 \begin{enumerate}[\rm(a)]
  \item $\cE(\bG^F,s)$ is the disjoint union of the $e$-Harish-Chandra
   series listed in the upper part of Table~\ref{tab:quasi-E8-2}.
  \item The assertion of Theorem~\ref{thm:BMM} holds for $\bG$ of type
   $E_8$ with $\ell=2$.
 \end{enumerate}
\end{prop}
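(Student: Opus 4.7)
The plan is to follow the same pattern as in the proofs of Propositions~\ref{prop:F4RLG}, \ref{prop:E6RLG} and \ref{prop:E7RLG}: determine the decomposition of $\RLG(\la)$ explicitly for every $1$-cuspidal pair $(\bL,\la)$ in the upper part of Table~\ref{tab:quasi-E8-2}, and then read off (a) and (b) directly from these decompositions. The decompositions themselves will be obtained case-by-case by combining four standard tools: the Jordan decomposition of characters to list $\cE(\bG^F,s)$ and $\cE(\bL^F,s)$; Lusztig's explicit decomposition of $\RTG(\theta)$ for tori \cite[Thm.~4.23]{Lu84} whenever $\la$ is uniform; transitivity of Lusztig induction combined with the decompositions already established for $F_4$, $E_6$, $\tw2E_6$ and $E_7$ in the earlier sections; and the Mackey formula (Theorem~\ref{thm:RLG}(d), which applies since $q\ne2$) to compute the norm $\langle\RLG(\la),\RLG(\la)\rangle$.

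More concretely, I would first dispose of all entries in which $\bL$ is a torus (cases 1, 3, 7 and the top rows of 2, 5, 8, 9): there the characters of $\bL^F$ are uniform so the decomposition is given by Lusztig's theorem and the $e$-Harish-Chandra series is in bijection with $\Irr(W_{\bG^F}(\bL,\la))$. Next I would treat the rows whose $e$-cuspidal character $\la$ is uniform on $[\bL,\bL]^F$ (the rows with $\la=\phi_{21}$ on an $A_2$-factor and the row with $\la=D_4[1]$ in case 3), again using uniform projections. The remaining rows involve a $1$-cuspidal piece from an $E_6$-, $\tw2E_6$- or $\tw2A_5$-factor, where by transitivity $\RLG(\la)$ factors through a Levi subgroup of type $E_7$, $E_6$ or $\tw2E_6$, and the decompositions from Sections~\ref{sec:E6} and~\ref{sec:E7} give the answer. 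The truly cuspidal characters of $\bG^F$ listed in the $\bL^F=\bG^F$ rows of cases~5 and~6 are then identified as the characters in $\cE(\bG^F,s)$ not accounted for by the other series; that such characters are $1$-cuspidal is forced by the fact that every constituent of an $\RLG(\la)$ has already been placed in one of the other series.

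The main technical obstacle is case~5 (and its companion~6), which is the richest: there are six $1$-cuspidal pairs, and the relative Weyl groups include $F_4$ and $A_1$'s that interact. The key to unravelling this case is the following observation: using the Mackey formula one computes $\langle\RLG(\la),\RLG(\la)\rangle=|W_{\bG^F}(\bL,\la)|$ for each pair, while induction from the $E_7$-type Levi subgroups in rows~4, 5 and~6 of case~5 yields characters of small norm whose uniform projection is known, so their decomposition is forced; Ennola-dual information from rows~5b and~6b (with $e=2$) gives an independent cross-check. For cases~8 and~9 the analysis is much shorter since the relative Weyl groups are small and the chained application of transitivity through the Levi subgroups $D_7$, $D_6$, $D_4\cd A_1$ reduces everything to the decompositions known for lower rank.

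Once all decompositions are in hand, both statements follow immediately: the listed series are pairwise disjoint with union equal to $\cE(\bG^F,s)$ (that is, part (a) and Theorem~\ref{thm:BMM}(a)), and the explicit formulas exhibit the isometries $I^\bM_{(\bL,\la)}$ required in Definition~\ref{def:d-HC}, satisfying transitivity and $W_{\bG^F}$-equivariance; moreover the trivial character of $W_{\bL^F}(\bL,\la)$ maps to $\la$ by construction. This yields Theorem~\ref{thm:BMM}(b) for $\bG$ of type $E_8$ with $\ell=2$, completing the proof.
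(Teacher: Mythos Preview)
Your approach is essentially the same as the paper's, which says only that the decompositions are determined ``as in the previous proofs, using mainly the Mackey formula and transitivity''; your proposal simply spells out in more detail what this entails for each row of Table~\ref{tab:quasi-E8-2}.

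One small correction: you classify $D_4[1]$ (second row of case~3) among the ``uniform'' cuspidal characters, but $D_4[1]$ is not uniform --- it lies in the four-element cuspidal family of $D_4(q)$ and has nontrivial projection onto the non-uniform part. This does not break your argument, since the tools you list (Mackey formula for the norm, transitivity through a Levi of type $E_6$ or $E_7$ where the decomposition is already known from Sections~\ref{sec:E6}--\ref{sec:E7}) are exactly what is needed; you should just move this row from the ``uniform'' paragraph to the ``use transitivity through $E_6/E_7$'' paragraph.
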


\begin{proof}
We determine the decomposition of $\RLG(\la)$ for the $e$-Harish-Chandra
series occurring in Table~\ref{tab:quasi-E8-2} as in the previous proofs,
using mainly the Mackey formula and transitivity.
\end{proof}

\begin{lem}   \label{lem:E8-2}
 Let $\bL$ be as in cases 1--9 of Table~\ref{tab:quasi-E8-2}, and recall that
 $q\equiv1\pmod4$. Then $\bL=C_\bG(Z(\bL)_2^F)$, and $\bL$ is $(1,2)$-adapted.
 In each numbered line of the table, and no other, $\la$ is of quasi-central
 $2$-defect. It is of central $\ell$-defect in the lines~1, 3, 4, 7, 5b and~6b.

 Moreover, in Cases~5b and~6b we have $\bL=C_\bG^\circ(Z(\bL)_2^F)$.
\end{lem}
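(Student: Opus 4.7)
The proof is a case-by-case verification, structurally parallel to Lemmas~\ref{lem:F4}, \ref{lem:E6} and~\ref{lem:E7}. The plan is to handle the three assertions uniformly: centralizer structure, $(1,2)$-adaptedness, and the defect computations for~$\la$.

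First I would dispose of the centralizer condition $\bL=C_\bG(Z(\bL)_2^F)$. Since $q\equiv1\pmod4$ one has $(q-1)_2\geq 4$, hence the 2-part of any maximally split torus contains enough elements of distinct 2-power orders to read off its centralizer in $\bG$ from the character lattice. In each of the cases 1--9 the Levi $\bL$ has as derived subgroup a group listed in Table~\ref{tab:quasi-E8-2} whose root system is a subsystem of $E_8$ stable under the corresponding subtorus of $Z(\bL)$; explicit inspection in the root system shows that no larger root subsystem is centralized. A convenient way to carry this out is with {\sf Chevie}, computing $C_\bG^\circ(z)$ for cyclic $z\in Z(\bL)_2^F$ of appropriate 2-power order; since these centralizers are again Levi subgroups of $\bG$ and contain $\bL$, comparison of ranks and root systems finishes the argument. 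For the claim $\bL=C_\bG^\circ(Z(\bL)_2^F)$ in cases~5b and~6b, the relevant Levi has $[\bL,\bL]=\tw2E_6$ with $Z(\bL)^F$ a 2-group of rank equal to $\dim Z(\bL)$, so again a single element of order $(q+1)_2$ in each $\Ph2$-factor suffices.

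For $(1,2)$-adaptedness, I would choose generators $z_1,\ldots,z_m$ of $Z(\bL)_2^F$ successively so that each partial centralizer $C_\bG^\circ(z_1,\ldots,z_i)$ is a 1-split Levi subgroup, i.e.\ contains a maximally split torus. Since every $\bL$ in cases 1--9 is 1-split and $q\equiv1\pmod 4$ implies $Z(\bL)_2^F$ is a direct product of 2-parts of the $\Ph1$-factors together with the 2-part of $Z([\bL,\bL])^F$, one can take the $z_i$ to be generators of these individual cyclic factors; then each intermediate centralizer is visibly a 1-split Levi, sitting between two 1-split Levis of $\bG$ in the chain from $\bG$ down to $\bL$.

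The defect statements for $\la$ reduce, via Proposition~\ref{prop:quasicentraldefpro}(a), to the equality $|[\bL,\bL]^F|_2 = \la(1)_2\,|Z([\bL,\bL])^F|_2$ for quasi-central defect and $|\bL^F|_2=\la(1)_2\,|Z(\bL)^F|_2$ for central defect. In the lines labeled~$\la=1$, this is immediate since $[\bL,\bL]^F$ has trivial unipotent 2-defect in exactly the right cases (parts of type $A_n$ have unipotent 2-defect~0 only when $n=0$, so lines with $[\bL,\bL]^F$ a torus or a $D$/$E$-type factor contribute all $|[\bL,\bL]^F|_2$). For lines with a nontrivial cuspidal label $\la=\phi_{21}$, $D_4[1]$, $\tw2E_6[1]$, or $\tw2E_6[\theta^{\pm1}]$, I would use the known degrees: $\phi_{21}$ of $\tw2A_2(q)$ has degree $q\Ph2$ (odd), $D_4[1]$ of $D_4(q)$ has degree $\frac{1}{2}q\Ph1^2\Ph2^2\Ph4$, and the two $\tw2E_6[\theta^{\pm1}]$ of $\tw2E_6(q)$ have degrees with 2-part equal to the full 2-part of $|\tw2E_6(q)|$ divided by the 2-part of the center. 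In each case, the required formula is a direct polynomial identity; collecting these verifies quasi-central defect in precisely the numbered lines and central defect in lines~1, 3, 4, 7, 5b and~6b.

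The only nontrivial step is keeping track of the 2-parts of the cuspidal character degrees across the various rational forms (in particular the twisted forms $\tw2A_n$, $\tw2D_4$, $\tw2E_6$), but this is purely bookkeeping once the generic degree formulas are in hand. Consequently no single step is a genuine obstacle.
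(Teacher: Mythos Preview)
Your proposal is correct and follows essentially the same approach as the paper: the lemma is stated there without proof, parallel to Lemmas~\ref{lem:F4}, \ref{lem:E6} and~\ref{lem:E7}, the understanding being that all three assertions are verified by direct computation in the root system (via {\sf Chevie}) and by checking the $2$-parts of the relevant degree polynomials. Your write-up simply makes explicit the bookkeeping that the paper leaves implicit; there is no methodological difference.
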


Note that for $q\equiv3\pmod4$ this is no longer true; there are many cases
for which $\bL<C_\bG^\circ(Z(\bL)_2^F)$:

\begin{exmp}
 Assume that $q\equiv3\pmod4$ and let $\bL^F$ be of type $\Phi_1^2.A_3(q)^2$.
 Then $C_\bG^\circ(Z(\bL)_2^F)$ is of type $D_4(q)^2$. Similarly, for $\bL^F$
 of type $\Phi_1^4.A_1(q)^4$ we have $C_\bG^\circ(Z(\bL)_2^F)$ is of
 type $A_1(q)^8$.
\end{exmp}

But as explained above, for that congruence we choose the Ennola duals of
the above Levi subgroups, and for those the analogue of Lemma~\ref{lem:E8-2}
continues to hold.

\begin{prop}   \label{prop:E8-2-defgrp}
 Suppose that $q\equiv1\pmod4$.
 For any quasi-isolated $2'$-element $1\ne s\in \bG^{*F}$ the block
 distribution of $\cE(\bG^F,s)$ is as indicated by the horizontal lines
 in the upper part of Table~\ref{tab:quasi-E8-2}.
 \par
 For each $2$-block corresponding to one of the cases 1--9 in the table,
 there is a defect group $P\leq N_{\bG^F}(\bL,\la)$ with the structure
 described in Theorem~\ref{thm:mainblocks}.
 In particular, the defect groups are non-abelian.
\end{prop}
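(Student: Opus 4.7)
\emph{Proof plan.} The strategy follows the template of Propositions~\ref{prop:F4-defgrp},~\ref{prop:E6-defgrp} and~\ref{prop:E7-defgrp}. By Lemma~\ref{lem:E8-2} together with Proposition~\ref{prop:E8RLGl2}(b), the hypotheses of Proposition~\ref{prop:excellent} are satisfied for every $1$-cuspidal pair $(\bL,\la)$ listed in cases~1--9 of Table~\ref{tab:quasi-E8-2}, so each $1$-Harish-Chandra series appearing there lies in a unique $2$-block of $\bG^F$. Cases~1 and~7 each exhaust their Lusztig series with a single such series, so Theorem~\ref{thm:BrMiHi} identifies $\cE_2(\bG^F,s)$ with a single $2$-block in those cases.

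For the amalgamation of rows in the remaining cases the main input is Proposition~\ref{prop:decomp}(1), which, inserted into the Cabanes criterion of Propositions~\ref{prop:Cabanescrux} and~\ref{prop:Cabanessingle}, guarantees that whenever $(\bL,\la)$ is an $e$-cuspidal pair with $\bL=C_\bG^\circ(Z(\bL)_2^F)$, all constituents of $\RLG(\la)$ sit in a common $2$-block. I apply this to the $2$-cuspidal pair~5b (a torus of type $\Ph2^8$) to merge several rows of case~5, and to~6b (with cuspidal support $\tw2E_6[\theta^{\pm1}]$) to merge the two rows of case~6. The resulting $2$-Harish-Chandra series spread across several rows in accordance with the decompositions obtained in Proposition~\ref{prop:E8RLGl2}; for the rows of case~5 not already captured one iterates Proposition~\ref{prop:excellent} through intermediate $2$-split Levi subgroups and chains the resulting Brauer pair inclusions. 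Cases~2, 8 and~9 reduce, exactly as in the proofs of cases~6--9 of Proposition~\ref{prop:E7-defgrp}, to the assertion that all unipotent characters of positive $2$-defect of the $\tw2A_2$- or $D_4$-components of the first-row Levi subgroup lie in a common $2$-block of that component, which is \cite[Prop.~1.5]{En08}; case~3 is handled by the same $D_4$-argument as used for case~9 of Proposition~\ref{prop:E7-defgrp}.

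For the defect groups I invoke Proposition~\ref{prop:defect:re}. In each numbered case take $(\bL,\la)$ to be the first $1$-cuspidal pair listed and let $P$ be the subgroup produced by Proposition~\ref{prop:defect:re}(b). A direct {\sf Chevie} verification gives the identity
$$|C_{\bG^*}(s)^F|_2=|Z^\circ(\bL)_2^F|\cdot|Z([\bL,\bL])_2^F|\cdot|W_{\bG^F}(\bL,\la)|_2$$
in each of cases~1--9, so Proposition~\ref{prop:defect:re}(c) identifies $P$ as a defect group. Inspection of the last column of Table~\ref{tab:quasi-E8-2} shows that $W_{\bG^F}(\bL,\la)$ contains an involution throughout cases~1--9 (each relative Weyl group has a component of Coxeter type $A_1$, $B_k$, $D_k$, $F_4$, or $E_6$), so $|W_{\bG^F}(\bL,\la)|$ is even and Proposition~\ref{prop:defect:re}(e) precludes abelian defect groups. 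The pairs of distinct blocks sharing a centraliser (cases~3 vs.~4, and~5 vs.~6) are then separated by comparing the orders $|P|$, which differ because the respective relative Weyl groups have different $2$-parts.

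The principal obstacle is the amalgamation step for case~5: six distinct $1$-Harish-Chandra series must be fused into one $2$-block, and unlike the analogous case in $E_7$ the single $2$-Harish-Chandra series arising from~5b does not by itself absorb every character of case~5. The resolution is a chain of Brauer pair inclusions passing through auxiliary $2$-split Levi subgroups --- a combinatorially involved but conceptually routine calculation driven by the Mackey formula and the transitivity of Lusztig induction established in Theorem~\ref{thm:RLG}.
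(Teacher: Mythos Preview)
Your block-amalgamation strategy is essentially right, and it matches the paper's. The real gap is in the defect-group argument for cases~4 and~6.

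You claim that the identity
\[
|C_{\bG^*}(s)^F|_2=|Z^\circ(\bL)_2^F|\cdot|Z([\bL,\bL])_2^F|\cdot|W_{\bG^F}(\bL,\la)|_2
\]
holds ``in each of cases~1--9''. It does not. Cases~4 and~6 share their semisimple element $s$ with cases~3 and~5 respectively, so the left-hand side is the \emph{same} in~3 and~4 (and likewise in~5 and~6), while the right-hand side drops drastically: for instance in case~4 one gets $2\cdot((q-1)_2)^2$ against the $2^8\cdot((q-1)_2)^8$ coming from case~3. Proposition~\ref{prop:defect:re}(c) therefore applies only to the ``top'' numbered line for each $s$ (cases~1, 2, 3, 5, 7, 8, 9), not to~4 or~6. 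Without it, the $P$ you construct in those two cases is merely a subgroup of some defect group, and you have no argument that it is all of one.

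The paper fills this gap by an explicit characteristic-subgroup argument via Proposition~\ref{prop:defect:re}(d). In case~4 one embeds the $E_6$-Levi in the maximal-rank subgroup $E_6+A_2$, identifies $Z(\bL)_2^F\cong E^2$ with $E$ cyclic of order $(q-1)_2$, and shows that $Z(\bL)_2^F$ is the unique abelian subgroup of index~2 in $P$, hence characteristic. In case~6 one embeds the $E_7$-Levi in $E_7+A_1$, shows $D$ is cyclic of order $2(q-1)_2$ and is the unique cyclic subgroup of index~2 in $P$, hence characteristic. Your final separation step (cases~3 vs.~4, 5 vs.~6 by defect-group order) is correct in principle but presupposes exactly this missing computation; and you have also omitted the further step of separating the two blocks \emph{within} each of cases~4 and~6 (the two choices $E_6[\theta^{\pm1}]$, resp.\ $\tw2E_6[\theta^{\pm1}]$), which the paper handles via the uniqueness of the abelian, resp.\ cyclic, index-2 subgroup just established.
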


\begin{proof}
We first prove part of the block distribution. Again, each $e$-Harish-Chandra
series in Table~\ref{tab:quasi-E8-2} is contained in a unique $2$-block of
$\bG^F$. The lines~1 and~7 in
Table~\ref{tab:quasi-E8-2} both correspond to a unique block by
Theorem~\ref{thm:BrMiHi}. The unnumbered Harish-Chandra series below cases~2,
3, 8 and~9 lie in the same $2$-block as the respective numbered line by
Proposition~\ref{prop:excellent} since all  characters of $\cE(\bL^F,s)$ lie
in the same $2$-block by \cite[Prop.~1.5]{En08}. \par
Similarly, all characters in each of the two Lusztig series $\cE(\bL^F,s)$,
for $\bL$ a Levi of type $E_7$ in rows 4 and~5 of case~5, lie in a single
2-block, except for those denoted $\tw2E_6[\theta^{\pm1}]$, hence so do the
characters in $\cE(\bG^F,s)$ above them.
To see that the cuspidal character in the line before case~6 belongs to the
block in case~5, we use the alternative 2-Harish-Chandra series above
$(\bL,\la)$ given in cases~5b, which by Lemma~\ref{lem:E8-2} still satisfies
$\bL=C_\bG^\circ(Z(\bL)_2^F)$. Thus Proposition~\ref{prop:decomp} applies.
The 1-Harish-Chandra series in line~6 and the subsequent line are both
contained in the 2-Harish-Chandra series above $(\bL,\la)$ in line~6b. As
$\la$ is of central defect, an application of Proposition~\ref{prop:decomp}
shows that both 1-Harish-Chandra series lie in the same 2-block.

Again, we defer the question  of  different numbered  lines corresponding to
different blocks to after the discussion on defect groups. 
\par
For any $\ell$-block $B$ in $\cE(\bG^F,s)$, let $ (D, v) \leq (P,w)$ be
$B$-Brauer pairs as in Proposition~\ref{prop:defect:re}.
In all numbered cases, $W_{\bG^F}(\bL,\la)$ is not a $2'$-group,
so by Proposition~\ref{prop:defect:re}(e), $P$ is not abelian.
In all numbered lines which are at the top of the part of the table
corresponding to a particular $s$, we conclude by
Proposition~\ref{prop:defect:re}(c) that $P$ is a defect group of~$B$.
\par
In case~4, by Proposition~\ref{prop:defect:re}(g), $D=A=Z(\bL)_2^F$. Further,
$Z(\bL)_2^F =E^2$, where $E$ is a cyclic group of order $(q-1)_2$. The Levi
subgroup of type $E_6$ is contained in the maximal rank subgroup of type
$E_6+A_2$, and $Z(\bL)_2^F.W_{\bG^F}(\bL,\la)$ is contained in the normaliser
of the maximal torus of the $A_2$-factor. Any 2-element
$\sigma\in P\setminus Z(\bL)_2^F$
interchanges two cyclic subgroups of $Z(\bL)_2^F$ of order at least~4, so
$Z(\bL)_2^F$ is the only abelian subgroup of $P$ properly containing
$Z(P)[P,P]$. Further, since $[P,P] \nsubseteq Z(P)$ and $Z(\bL)_2^F$ is
of index $2$ in $P$, $Z(\bL)_2^F =C_P([P,P]) $. Since any subgroup of index
$2$ of $P$ contains $[P,P]$, it follows that $Z(\bL)_2^F$ is the
unique abelian subgroup of index $2$ of $P$. In particular,
$Z(\bL)_2^F$ is characteristic in $P$ and it follows from
Proposition~\ref{prop:defect:re}(d) that $P$ is a defect group of~$B$.
\par
The Levi subgroup of type $E_7$ in case~6 lies in a maximal rank subgroup of
type $E_7+A_1$, and $\bL$ is a central product $E_7\circ\bT$, where $\bT$
is a split torus of $A_1$ and where the involution of the centre of
$E_7$ is identified with the involution of $\bT$.
Thus $Z(\bL)_2^F = |\bT^F|_2$ is cyclic of order $(q-1)_2$ and by
Proposition~\ref{prop:defect:re}(b), $Z(\bL)_2^F$ has index $2$ in $D$.
By considering the projection of $D$ into $\bT$, one sees that
$D$ is cyclic of order $2(q-1)_2 $. Further, if
$\sigma\in P\setminus D $, then $\sigma$ acts by inversion on $A$.
Since $D$ is cyclic of order at least $8$, and $A$ is of index $2$ in $D$,
it follows that $D$ is the unique cyclic subgroup of index $2$ in $P$.
Thus, $D$ and hence $A$ is characteristic in $P$.
Hence by Proposition~\ref{prop:defect:re}(d), $P$ is a defect group of $B$.
\par
Since the defect groups in cases~3 and~4 have different order as do the
defect groups in cases~5 and ~6, we see that these lines correspond to
distinct blocks. In case~4, as shown above $Z(\bL)_2^F$ is the unique abelian
subgroup of $P$ of index~2. So, if the Brauer pairs corresponding to the
two  choices of $\la$ in case~4 correspond to the same block,
then they are $\bG^F$-conjugate, and hence by Lemma~\ref{lem:E8-2}
the corresponding $e$-cuspidal pairs are $\bG^F$-conjugate,
which is not the case. Thus, the two  entries of case~4 correspond to
different blocks. A similar argument applies in case~6.
The subgroup $D$ is the unique cyclic subgroup of  $P$ of index $2$,
and  the group $Z(\bL)_2^F$ is the unique subgroup of index $2$ in $D$.
\end{proof}

\subsection{Quasi-isolated 3-blocks of $E_8(q)$}\label{subsec:E8-3}
Now let $\ell=3$. In Table~\ref{tab:quasi-E8-3} we present the
centralisers of quasi-isolated 2- and 5-elements together with data for
the relevant cuspidal pairs in the case where $q\equiv1\pmod3$. Again those for
$q\equiv2\pmod3$ are obtained by Ennola duality. As in the case when
$\ell=2$, there occurs just one type of quasi-isolated 5-elements, depending
on $q\pmod5$. The quasi-isolated 4-elements in cases~6 and~9 occur when
$q\equiv1\pmod4$, those in cases~8 and~10 when $q\equiv3\pmod4$.

\begin{table}[htbp]
\caption{Quasi-isolated 3-blocks in $E_8(q)$, $q\equiv1\pmod3$}
   \label{tab:quasi-E8-3}
\[\begin{array}{|c|r|llll|}
\hline
 \text{No.}& C_{\bG^*}(s)^F& \bL& C_{\bL^*}(s)^F& \la& W_{\bG^F}(\bL,\la)\\
\hline\hline
 1&       D_8(q)& \emptyset& \bL^{*F}& 1& D_8\\
 2&             & D_4& \bL^{*F}& D_4[1]& B_4\\
\hline
 3& E_7(q)A_1(q)& \emptyset& \bL^{*F}& 1& E_7\ti A_1\\
 4&             & D_4& \bL^{*F}& D_4[1]& C_3\ti A_1\\
  &             & E_6& \bL^{*F}& E_6[\theta^{\pm1}]& A_1\ti A_1\\
 5&             & E_7& \bL^{*F}& E_7[\pm\xi]& A_1\\
\hline
 6& D_5(q)A_3(q)& \emptyset& \bL^{*F}& 1& D_5\ti A_3\\
 7&             & D_4& \bL^{*F}& D_4[1]& A_3\ti A_1\\
\hline
 8& \tw2D_5(q).\tw2A_3(q)& A_1^2& \Ph1^6\Ph2^2& 1& B_4\ti C_2\\
\hline
 9& A_7(q)A_1(q)& \emptyset& \bL^{*F}& 1& A_7\ti A_1\\
\hline
10& \tw2A_7(q)A_1(q)& A_1^3& \Ph1^5\Ph2^3& 1& C_4\ti A_1\\
11&                 & D_6& \Ph1^2\Ph2.\tw2A_5(q)& \phi_{321}& A_1^2\\
\hline
12&      A_4(q)^2& \emptyset& \bL^{*F}& 1& A_4^2\\
\hline
13&  \tw2A_4(q^2)& A_3^2& \Ph1^2\Ph2^2\Ph4^2& 1& B_2\\
14&              & D_7& \Ph1\Ph2\Ph4.\tw2A_2(q^2)& \phi_{21}& A_1\\
\hline
15& \tw2A_4(q)^2& A_1^4& \Ph1^4\Ph2^4& 1& B_2^2\\
16&             & D_4\cd A_1& \Ph1^3\Ph2^3.\tw2A_2(q)& \phi_{21}\quad(2\times)& B_2\ti A_1\\
17&             & D_6& \Ph1^2\Ph2^2.\tw2A_2(q)^2& \phi_{21}\otimes\phi_{21}& A_1^2\\
\hline
\end{array}\]
\end{table}

\begin{prop}   \label{prop:E8RLGl3}
 Let $1\ne s\in\bG^{*F}=E_8(q)$ be a quasi-isolated $3'$-element and recall
 that $e=e_\ell(q)=1$. Then we have:
 \begin{enumerate}[\rm(a)]
  \item $\cE(\bG^F,s)$ is the disjoint union of the $e$-Harish-Chandra
   series listed in Table~\ref{tab:quasi-E8-3}.
  \item The assertion of Theorem~\ref{thm:BMM} holds for $\bG$ of type
   $E_8$, $q\ne2$, and $\ell=3$.
 \end{enumerate}
\end{prop}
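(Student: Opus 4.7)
The plan is to follow the strategy employed in Propositions~\ref{prop:F4RLG}, \ref{prop:E6RLG}, \ref{prop:E7RLG} and~\ref{prop:E8RLGl2}: I would explicitly determine the decomposition of $\RLG(\la)$ for every $e$-cuspidal pair $(\bL,\la)$ listed in Table~\ref{tab:quasi-E8-3}, and then read off parts~(a) and~(b). Part~(a) would follow by verifying disjointness of the resulting $e$-Harish-Chandra series and that their union exhausts $\cE(\bG^F,s)$, while part~(b) identifies the isometries of Definition~\ref{def:d-HC} from the relative Weyl groups $W_{\bG^F}(\bL,\la)$ collected in the last column.

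In the uniform cases---those with $\la$ uniform on $\bL^F$, covering every entry of the table in which $\la=1$---the decomposition of $\RLG(\la)$ reduces by transitivity (Theorem~\ref{thm:RLG}(a)) to Lusztig's known decomposition of $\RTG(\theta)$ for a suitable maximal torus $\bT$ of $\bL$. This handles lines 1, 3, 6, 8, 9, 10, 12, 13 and~15. For the non-uniform lines whose Levi has a classical unipotent factor, namely lines~11, 14, 16 and~17, I would compute the norm $\langle\RLG(\la),\RLG(\la)\rangle_{\bG^F}$ via the Mackey formula (Theorem~\ref{thm:RLG}(d)); this norm equals $|W_{\bG^F}(\bL,\la)|$, and combined with the uniform projection it pins down the constituents of $\RLG(\la)$ uniquely.

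The delicate cases involve the non-uniform cuspidal unipotent characters $D_4[1]$ (lines 2, 4 and~7), $E_6[\theta^{\pm1}]$ (the unnumbered entry inside case~3), and $E_7[\pm\xi]$ (line~5). For lines 2, 4, 7 and the $E_6$-entry, I would factor Lusztig induction through an intermediate $e$-split Levi subgroup $\bM$ of type $E_6$ or $E_7$ containing $\bL$, using Theorem~\ref{thm:RLG}(a) to write $\RLG=\RMG\circ\RLM$, and then invoke the decompositions already established in Propositions~\ref{prop:E6RLG} and~\ref{prop:E7RLG}. The main obstacle is line~5, where $(\bL,\la)$ with $\bL$ of type $E_7$ and $\la=E_7[\pm\xi]$ is a maximal $e$-cuspidal pair so that no intermediate reduction is available. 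For that case I would exploit the Mackey formula to show that $\RLG(\la)$ has norm~$2$, and then identify its two constituents through their uniform projections together with Lusztig's Jordan decomposition of characters: the pair of constituents must correspond via Jordan decomposition to the two cuspidal unipotent characters of the $E_7$-factor of $C_{\bG^*}(s)^F$, and the algebraic conjugacy swapping $E_7[\xi]\leftrightarrow E_7[-\xi]$ is compatible with the nontrivial element of $W_{\bG^F}(\bL,\la)$. A final bookkeeping check that all multiplicities obtained match the sizes of $\cE(\bG^F,s)$ then yields both~(a) and~(b).
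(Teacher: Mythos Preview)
Your proposal is correct and follows essentially the same strategy as the paper. The paper's own proof is much terser: it observes that the Harish-Chandra series in lines~12--17 (the quasi-isolated $5$-elements) coincide with cases~7--9 of Table~\ref{tab:quasi-E8-2}, so their decompositions were already computed in the proof of Proposition~\ref{prop:E8RLGl2}; for the remaining lines it simply says ``the usual arguments yield the claim''. Two small remarks: in lines~11, 14, 16, 17 the centraliser $C_{\bL^*}(s)$ has only type~$A$ factors, so $\la$ is in fact uniform and these cases fall under your first bullet rather than requiring the Mackey argument; and your appeal to Propositions~\ref{prop:E6RLG} and~\ref{prop:E7RLG} for the intermediate step is slightly imprecise, since in those Levi subgroups the element $s$ is typically central (so the relevant input is the unipotent theory in $E_6$ and $E_7$, which is of course equally well known).
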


\begin{proof}
The decomposition of $\RLG(\la)$ for the $e$-Harish-Chandra series~12--17 has
already been computed in the proof of Proposition~\ref{prop:E8RLGl2}. For the
remaining Harish-Chandra series, the usual arguments yield the claim.
\end{proof}

\begin{lem}   \label{lem:E8-3}
 Let $\bL$ be as in Table~\ref{tab:quasi-E8-3} and recall that $q\equiv1\pmod3$.
 Then $\bL=C_\bG(Z(\bL)_3^F)$, and $\bL$ is $(1,3)$-adapted. Moreover, in each
 numbered line of the table, and only in those, $\la$ is of central $3$-defect.
\end{lem}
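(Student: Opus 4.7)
The plan is to verify the three claims row by row for the seventeen Levi subgroups appearing in Table~\ref{tab:quasi-E8-3}, using the explicit root datum of $E_8$ and the standard properties of Lusztig series recalled in Section~\ref{sec:back}. All three assertions are of a purely computational nature and parallel the verifications already carried out in Lemmas~\ref{lem:F4},~\ref{lem:E6},~\ref{lem:E7} and~\ref{lem:E8-2}, so the strategy is the same.

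For the equality $\bL=C_\bG(Z(\bL)_3^F)$: since $q\equiv1\pmod3$, each $\Phi_1$-factor of $\bL$ contributes a cyclic direct summand of order $(q-1)_3$ to $Z(\bL)_3^F$, and the corresponding cocharacters span the central $1$-torus of $\bL$. In particular, $Z(\bL)_3^F$ generates a subtorus whose centraliser in $\bG$ is already equal to $\bL$. Thus it suffices to inspect the Dynkin type of $[\bL,\bL]$ from the table, read off the $\Phi_1$-rank, and verify that no strictly larger Levi subgroup centralises the same $3$-torsion. This is a direct calculation in the root system, easily done in \textsf{Chevie}.

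For $(1,3)$-adaptedness, I would choose a sequence of generators $z_1,\dots,z_m$ of $Z(\bL)_3^F$ corresponding to an ascending chain of $1$-split Levi subgroups $\bL=\bL_m\le\bL_{m-1}\le\cdots\le\bL_0=\bG$, each differing from the next by a single $\Phi_1$-factor. Concretely, since $Z(\bL)_3^F$ is a direct product of cyclic $3$-groups indexed by the $\Phi_1$-factors of $\bL$, one can pick $z_i$ inside the $i$-th such factor so that $C_\bG^\circ(z_1,\dots,z_i)$ contains the corresponding $(m-i)$-dimensional subtorus of $Z(\bL)^F$ and hence coincides with $\bL_i$. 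This chain is then read off line by line from the table.

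For central $3$-defect, by Jordan decomposition we have $\la(1)=[\bL^{*F}:C_{\bL^*}(s)^F]_{p'}\,\rho(1)$, where $\rho$ is the unipotent character of $C_{\bL^*}(s)^F$ corresponding to $\la$ (given in column~5 of Table~\ref{tab:quasi-E8-3}). The character $\la$ is of central $3$-defect iff
\[|\bL^F|_3=\la(1)_3\cdot|Z(\bL)^F|_3,\]
equivalently, iff $|C_{\bL^*}(s)^F|_3=\rho(1)_3\cdot|Z(\bL)^F|_3$. Using the tabulated rational types of $C_{\bL^*}(s)^F$ and the known generic degrees of the cuspidal unipotent characters $D_4[1]$, $E_6[\theta^{\pm1}]$, $E_7[\pm\xi]$ and $\phi_{21}$ of $\tw2A_2$, one compares $3$-parts on both sides. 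The numbered lines are precisely those where the identity $\rho=1$ or the special cuspidal unipotent balances against $Z(\bL)^F$ to yield equality; in the unnumbered lines the relative Weyl group $W_{\bG^F}(\bL,\la)$ carries extra $3$-torsion that obstructs the equality.

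The only mildly non-trivial step is the central-defect check in lines~4, 5, 7, 11, 14, 16, 17, where one must track generic unipotent degrees carefully; otherwise the verification is routine case-by-case arithmetic, best done mechanically with \textsf{Chevie}.
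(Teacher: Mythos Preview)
Your proposal is correct and matches the paper's approach: this lemma, like its analogues \ref{lem:F4}, \ref{lem:E6}, \ref{lem:E7}, \ref{lem:E8-2}, is stated without proof in the paper and is understood to be a routine \textsf{Chevie} verification of exactly the kind you outline.

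One small correction to your heuristic in the last paragraph: your explanation for why the single unnumbered line (the $E_6$-Levi with $\la=E_6[\theta^{\pm1}]$) fails central $3$-defect is not right. You attribute it to ``extra $3$-torsion'' in $W_{\bG^F}(\bL,\la)$, but that relative Weyl group is $A_1\times A_1$, a $3'$-group. The actual obstruction is in the degree itself: the generic degree of the cuspidal unipotent $E_6[\theta^{\pm1}]$ carries a factor $\tfrac{1}{3}$, so $\la(1)_3$ is too small to satisfy $|\bL^F|_3=\la(1)_3\cdot|Z(\bL)^F|_3$. Your stated method---compute $\rho(1)_3$ from the known generic degree and compare---would of course detect this correctly; it is only the verbal explanation that slipped.
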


We obtain:

\begin{prop}   \label {prop:E8-3-defgrp}
 Suppose that $q \equiv1\pmod3$. For any quasi-isolated $3'$-element
 $1\ne s\in \bG^{*F}$ the block distribution of $\cE(\bG^F,s)$ as indicated
 by the horizontal lines in Table~\ref{tab:quasi-E8-3}.
 \par
 For each $3$-block $B$ corresponding to one of the cases 1--17 in the table,
 there is a defect group $P\leq N_{\bG^F}(\bL,\la)$ with the structure
 described in Theorem~\ref{thm:mainblocks}.

 In particular, the defect groups of $B$ are abelian precisely in the
 cases~5, 11 and~13--17, and then $Z(\bL)_3^F$ is a defect group of $B$.
\end{prop}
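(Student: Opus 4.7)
The plan is to parallel the proofs of Propositions~\ref{prop:F4-defgrp}--\ref{prop:E8-2-defgrp}. Combining Proposition~\ref{prop:E8RLGl3}(b) with Lemma~\ref{lem:E8-3}, the hypotheses of Proposition~\ref{prop:excellent} are satisfied, so every $e$-Harish-Chandra series listed in Table~\ref{tab:quasi-E8-3} is contained in a single $3$-block of $\bG^F$. For the quasi-isolated elements $s$ whose series $\cE(\bG^F,s)$ already consists of a single $1$-Harish-Chandra series (cases~8, 9 and~12), Theorem~\ref{thm:BrMiHi} immediately yields that $\cE_3(\bG^F,s)$ is one $3$-block.

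To merge several Harish-Chandra series that appear under one numbered case (for instance the unnumbered $E_6[\theta^{\pm1}]$ row beneath line~4, or the doubled $\phi_{21}$ entry beneath line~16), I would invoke \cite[Prop.~1.5]{En08}: all unipotent characters of positive $3$-defect of a Levi subgroup of type $D_n$, $E_6$ or $E_7$ lie in the principal $3$-block of that Levi. Together with Proposition~\ref{prop:excellent}, this transports the corresponding $e$-cuspidal pairs into a common $3$-block of $\bG^F$. The separation of distinct numbered lines within a given $s$-group is deferred until after the defect groups have been computed and compared.

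Defect groups are then determined via Proposition~\ref{prop:defect:re}, whose hypotheses on $(\bL,\lambda)$ are supplied by Lemma~\ref{lem:E8-3}. For each numbered line at the top of its $s$-group (cases~1, 3, 6, 8, 9, 10, 12, 13, 15) the order identity
\[
|C_{\bG^*}(s)^F|_3 \;=\; |Z^\circ(\bL)_3^F|\cdot|Z([\bL,\bL])_3^F|\cdot|W_{\bG^F}(\bL,\lambda)|_3
\]
is to be verified by direct inspection, and Proposition~\ref{prop:defect:re}(c) then gives that the constructed $P$ is a defect group; whenever $|W_{\bG^F}(\bL,\lambda)|_3>1$, part~(e) forces $P$ non-abelian. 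In the abelian cases~5, 11 and~13--17, both $Z^\circ(\bL)^F\cap[\bL,\bL]^F$ and $W_{\bG^F}(\bL,\lambda)$ are $3'$-groups, so parts~(f) and~(g) of Proposition~\ref{prop:defect:re} yield $Z(\bL)_3^F=D=P$, which is abelian.

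The main obstacle will be the non-top non-abelian cases~2, 4 and~7, each with $\lambda=D_4[1]$ and $|W_{\bG^F}(\bL,\lambda)|_3=3$. Here I plan to mimic case~9 in the proof of Proposition~\ref{prop:E7-defgrp} and case~4 in the proof of Proposition~\ref{prop:E8-2-defgrp}: Proposition~\ref{prop:defect:re}(b),(g) give $D=A=Z(\bL)_3^F$ and $P/A$ is cyclic of order~$3$; an explicit description of the action of a generator of $P/A$ on $A$, arising from the triality automorphism of the $D_4$-Levi, shows that $|C_P(\sigma)|<|A|$ for every $\sigma\in P\setminus A$, whence $A$ is the unique abelian subgroup of its index in $P$, and in particular characteristic in $P$. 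Proposition~\ref{prop:defect:re}(d) then concludes that $P$ is a defect group. Finally, comparing orders and isomorphism types of the defect groups so obtained across the numbered lines of each $s$-group completes the separation of blocks.
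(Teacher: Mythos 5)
Your overall architecture matches the paper's proof: Proposition~\ref{prop:excellent} via Proposition~\ref{prop:E8RLGl3} and Lemma~\ref{lem:E8-3} to place each series in one block, Proposition~\ref{prop:defect:re}(c),(e) for the top lines, (f),(g) for the abelian cases, and for the non-top non-abelian cases~2, 4, 7 the same "unique abelian subgroup of index $3$, hence characteristic" argument followed by part~(d) that the paper runs (the paper realises the action of $\sigma$ on $A=\bT_3^F$ through the Weyl group of the complementary $D_4$ in the maximal-rank subgroup $D_4+D_4$, cyclically permuting three of four generators — not through triality, but your $|C_P(\sigma)|<|A|$ computation goes through either way).

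There are, however, two genuine errors. First, you propose to \emph{merge} the two cuspidal pairs $(\bL,\phi_{21})$ of line~16 into a single block. This is wrong: the "$2\times$" indicates two non-$\bG^F$-conjugate $e$-cuspidal pairs, and by Theorem~\ref{thm:mainblocks}(a) these must label two distinct blocks; the paper proves they do. Moreover \cite[Prop.~1.5]{En08} is a statement about $\ell=2$ and classical-type components and does not apply here; the fact you actually need for the genuine merger (the unnumbered $E_6[\theta^{\pm1}]$ row into line~4) is that these cuspidal characters lie in the same $3$-block of the $E_6$-Levi as the characters above $D_4[1]$, which comes from \cite{En00}. Second, and consequently, your closing step — separating blocks by comparing orders and isomorphism types of defect groups — cannot distinguish the two blocks inside line~16, nor the two blocks $E_7[\xi]$, $E_7[-\xi]$ inside line~5, since in each instance both blocks have the same abelian defect group $Z(\bL)_3^F$. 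The paper's argument here is that each such block has a maximal (indeed centric) Brauer pair $(Z(\bL)_3^F, b_{\bL^F}(\la))$, and since $\bL=C_\bG(Z(\bL)_3^F)$ by Lemma~\ref{lem:E8-3} and the pairs $(\bL,\la)$ are not $\bG^F$-conjugate, neither are these maximal Brauer pairs; as $\bG^F$ acts transitively on the maximal Brauer pairs of a given block, the blocks differ. You need to add this argument to complete the block separation.
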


\begin{proof}
Each $e$-Harish-Chandra series in Table~\ref{tab:quasi-E8-3} is contained in
a unique $3$-block of $\bG^F$. Next, note that lines~8, 9 and~12 correspond
to a single 3-block each.
The two 1-cuspidal unipotent characters $E_6[\theta^{\pm1}]$ of the derived
subgroup of the Levi subgroup of type $E_6$ below line~4 lie in the same
3-block of $E_6(q)$ as those above $D_4[1]$ by \cite{En00}, so by
Proposition~\ref{prop:excellent} their Harish-Chandra series are contained
in the 3-block from case~4. All other separations of blocks will be argued once
we've determined defect groups.
\par
Concerning the structure of the defect groups, in all numbered lines which are
at the top of the part of the table corresponding to a particular $s$, we
conclude as usual by Proposition~\ref{prop:defect:re}(c).
Further, for all of these except cases~13 and~15, $W_{\bG^F}(\bL,\la)$ is not a
$3'$-group, so $P$ is non-abelian by Proposition~\ref{prop:defect:re}(e).
\par
In cases~5, 11 and 13--17, $Z^\circ(\bL)^F \cap[\bL,\bL]^F$ and
$W_{\bG^F}(\bL,\la)$
are both $3'$-groups, hence by Proposition~\ref{prop:defect:re}(f),(g),
$Z(\bL)_3^F =D =P$ is a defect group of $B$.

In cases~2, 4, 7, by embedding $\bL$ in a maximal rank subgroup of type
$D_4+D_4$, we see that $\bL$ is a central product of $D_4$ with a split maximal
torus $\bT$ of type $D_4$ and $Z(\bL)_3^F=(\bT)_3^F$.
Since $Z^\circ(\bL)^F \cap[\bL, \bL]^F$ is a $3'$-group, by
Proposition~\ref{prop:defect:re}(g), $D=Z(\bL)_3^F$.
The action of $\sigma\in P\setminus D$ on $D =(\bT)_3^F$
can be determined through the action of the Weyl group of type $D_4$ on $\bT$.
We have $D=\langle z_1, z_2, z_3, z_4\rangle$ with $\sigma$
cyclically permuting the $z_1,z_2,z_3$ and fixing $z_4$. So,
$Z(P)[P,P] = \langle z_1z_2z_3, z_4, z_1z_2^{-1}, z_2z_3^{-1} \rangle$
is a subgroup of index $3$ in $D$ and it follows that
$D$ is the only abelian subgroup of $Q$ properly containing
$Z(P)[P,P]$. Thus, $D$ is characteristic in $P$, and it follows by
Proposition~\ref{prop:defect:re}(d) that $P$ is a defect group of $B$.
\par
In all cases, except the two represented by cases~5, respectively~16, one
sees that different numbered lines correspond to different blocks by comparing
orders of the defect group or noting that one of the lines corresponds to
abelian defect while the other doesn't.
To see that the two blocks represented by case~16 are different, note that
each has a maximal Brauer pair of the form $(Z(\bL)_3^F,\la)$ and that by
Lemma~\ref{lem:E8-3}, $\bL=C_\bG(Z(\bL)_3^F)$. Since the pairs $(\bL,\la)$ are
not $\bG^F$-conjugate, neither are the corresponding maximal Brauer pairs.
Similarly, the two blocks represented by case~5 are different.
\end{proof}

\subsection{Quasi-isolated 5-blocks of $E_8(q)$}\label{subsec:E8-5}

Finally, let $\ell=5$. Here, we distinguish two cases according to whether
$q\equiv\pm1\pmod5$ or $q\equiv\pm2\pmod5$. The cuspidal pairs for the case
$e=1$, are collected in Table~\ref{tab:quasi-E8-5}; here the decomposition of
$\RLG$ was already determined in the previous two subsections. The case $e=2$
is obtained from this by Ennola duality. Table~\ref{tab:quasi-E8-5b} contains
the relevant information in the case $e=4$. Here, the relative Weyl groups are,
in general, no longer true Weyl groups, but various types of complex reflection
groups occur.

\begin{table}[htbp]
\caption{Quasi-isolated 5-blocks in $E_8(q)$, $q\equiv1\pmod5$}   \label{tab:quasi-E8-5}
\[\begin{array}{|r|r|llll|}
\hline
 \text{No.}& C_{\bG^*}(s)^F& \bL& C_{\bL^*}(s)^F& \la& W_{\bG^F}(\bL,\la)\\
\hline\hline
 1&       D_8(q)& \emptyset& \bL^{*F}& 1& D_8\\
 2&             &     D_4& \bL^{*F}& D_4[1]& B_4\\
\hline
 3& E_7(q)A_1(q)& \emptyset& \bL^{*F}& 1& E_7\ti A_1\\
 4&             &     D_4& \bL^{*F}& D_4[1]& C_3\ti A_1\\
 5&             &     E_6& \bL^{*F}& E_6[\theta^{\pm1}]& A_1\ti A_1\\
 6&             &     E_7& \bL^{*F}& E_7[\pm\xi]& A_1\\
\hline
 7& D_5(q)A_3(q)& \emptyset& \bL^{*F}& 1& D_5\ti A_3\\
 8&             & D_4& \bL^{*F}& D_4[1]& A_1\ti A_3\\
\hline
 9& \tw2D_5(q).\tw2A_3(q)& A_1^2& \Ph1^6\Ph2^2& 1& B_4\ti C_2\\
\hline
10& A_7(q)A_1(q)& \emptyset& \bL^{*F}& 1& A_7\ti A_1\\
\hline
11& \tw2A_7(q)A_1(q)& A_1^3& \Ph1^5\Ph2^3& 1& C_4\ti A_1\\
12&                 & D_6& \Ph1^2\Ph2.\tw2A_5(q)& \phi_{321}& A_1^2\\
\hline
13&     A_8(q)& \emptyset& \bL^{*F}& 1& A_8\\
\hline
14&    \tw2A_8(q)& A_1^4& \Ph1^4\Ph2^4& 1& B_4\\
15&              & D_4\cd A_1& \Ph1^3\Ph2^3.\tw2A_2(q)& \phi_{21}& B_3\\
\hline
16& E_6(q).A_2(q)& \emptyset& \bL^{*F}& 1& E_6\ti A_2\\
17&              &     D_4& \bL^{*F}& D_4[1]& G_2\ti A_2\\
18&              &     E_6& \bL^{*F}& E_6[\theta^{\pm1}]& A_2\\
\hline
19& \tw2E_6(q).\tw2A_2(q)& A_1^3& \Ph1^5\Ph2^3& 1& F_4\ti A_1\\
20&                      & D_4& \Ph1^4\Ph2^2.\tw2A_2(q)& \phi_{21}& F_4\\
21&                      & D_6& \Ph1^2\Ph2.\tw2A_5(q)& \phi_{321}& A_1\ti A_1\\
22&                      & E_7& \Ph1\Ph2.\tw2E_6(q)& \tw2E_6[1],\tw2E_6[\theta^{\pm1}]& A_1\\
23&                      & E_7& \Ph1.\tw2A_5(q)\tw2A_2(q)& \phi_{321}\otimes\phi_{21}& A_1\\
24&                      & E_8& C_{\bG^*}(s)^F& \tw2E_6[1]\otimes\phi_{21},& \\
  &                      &  &           & \tw2E_6[\theta^{\pm1}]\otimes\phi_{21}& 1\\
\hline
\end{array}\]
\end{table}

\begin{table}[htbp]
\caption{Quasi-isolated 5-blocks in $E_8(q)$, $q\equiv\pm2\pmod5$}   \label{tab:quasi-E8-5b}
\[\begin{array}{|r|r|llll|}
\hline
 \text{No.}& C_{\bG^*}(s)^F& \bL^F& C_{\bL^*}(s)^F& \la& W_{\bG^F}(\bL,\la)\\
\hline\hline
25&       D_8(q)& \Ph4^4& \bL^{*F}& 1& G(4,2,4)\\               
26&             & \Ph4^2.D_4(q)& \bL^{*F}& 4 \chrs& G(4,1,2)\\  
27&             & \bG^F& C_{\bG^*}(s)^F& 4 \chrs& 1\\               
\hline
28& E_7(q)A_1(q)& \Ph4^2.D_4(q)& \Ph4^2.A_1(q)^4& 4 \chrs& G_8\\   
29&             & \Ph4^2.D_4(q)& \Ph4^2.A_1(q)^4& 4 \chrs& G(4,1,2)\\ 
30&             & \bG^F& C_{\bG^*}(s)^F& 32 \chrs& 1\\           
\hline
31& D_5(q)A_3(q)& \Ph4^3.A_1(q^2)& \Ph1\Ph2\Ph4^3& 1& G(4,1,2)\ti Z_4\\ 
32&             & \Ph4^2.D_4(q)& \Ph1\Ph4^2.A_3(q)& \phi_{22}& G(4,1,2)\\ 
33&             & \Ph4^2.D_4(q)& \Ph1\Ph4^2.\tw2A_3(q)& \phi_{22}& Z_4\ti Z_4\\ 
34&             & \Ph4.\tw2D_6(q)& \Ph1.\tw2A_3(q)A_3(q)& \phi_{22}\otimes\phi_{22}& Z_4\\  
35&             & \bG^F& C_{\bG^*}(s)^F& 4 \chrs& 1\\              
\hline
36& A_7(q)A_1(q)& \Ph4^2.D_4(q)& \Ph1\Ph2^2\Ph4^2.A_1(q)& 1,\phi_{11}& G(4,1,2)\\     
37&             & \Ph4.\tw2D_6(q)& \Ph1\Ph2\Ph4.A_3(q)A_1(q)& \phi_{22}\otimes1,\phi_{11}& Z_4\\   
38&             & \bG^F& C_{\bG^*}(s)^F& 8 \chrs& 1\\   
\hline
39&       A_8(q)& \Ph4^2.A_1(q^2)^2& \Ph1^2\Ph2^2\Ph4^2& 1& G(4,1,2)\\ 
40&             &  \Ph4.\tw2D_6(q)& \Ph1\Ph2\Ph4.A_4(q)& \phi_{41},\phi_{311},\phi_{2111}& Z_4\\   
41&             &  \bG^F& C_{\bG^*}(s)^F& 4 \chrs& 1\\   
\hline
42& E_6(q).A_2(q)& \Ph4^2.D_4(q)& \Ph1^2\Ph4^2.A_2(q)& 3 \chrs& G_8\\ 
43&              & \Ph4.\tw2D_6(q)& \Ph1\Ph4.\tw2A_3(q)A_2(q)& 3 \chrs& Z_4\\ 
44&              & \bG^F& C_{\bG^*}(s)^F& 30 \chrs& 1\\
\hline
\end{array}\]
\end{table}

\begin{prop}   \label{prop:E8RLGl5}
 Let $1\ne s\in\bG^{*F}=E_8(q)$ be a quasi-isolated $5'$-element. Then we
 have:
 \begin{enumerate}[\rm(a)]
  \item $\cE(\bG^F,s)$ is the disjoint union of the $e$-Harish-Chandra
   series listed in Tables~\ref{tab:quasi-E8-5} and~\ref{tab:quasi-E8-5b}.
  \item The assertion of Theorem~\ref{thm:BMM} holds for $\bG$ of type
   $E_8$, $q\ne2$, and $\ell=5$.
 \end{enumerate}
\end{prop}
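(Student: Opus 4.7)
The plan is to follow the template of Propositions~\ref{prop:E8RLGl2} and~\ref{prop:E8RLGl3}, using the fact that rational Lusztig series $\cE(\bG^F,s)$ depend only on $s$ and not on the prime~$\ell$. Thus for $e=1$ (i.e., $q\equiv1\pmod5$), the decompositions of $\RLG(\la)$ in Table~\ref{tab:quasi-E8-5} are already contained in the work carried out for $\ell=2$ (Table~\ref{tab:quasi-E8-2}, lines~13--24) and for $\ell=3$ (Table~\ref{tab:quasi-E8-3}, lines~1--12); only the set of quasi-isolated $\ell'$-elements under consideration changes. The case $e=2$ is then obtained from $e=1$ by Ennola duality, exactly as in Section~\ref{sec:F4}.

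The substantive work is $e=4$ (Table~\ref{tab:quasi-E8-5b}), which I would handle line-by-line with the standard toolkit. First, whenever $\la$ is uniform, $\RLG(\la)$ is determined from the known decomposition of $\RTG(\theta)$. Second, transitivity $\RLG=\RMG\circ\RLM$ reduces many cases to the two principal $4$-split intermediate Levi subgroups $\Ph4^2.D_4(q)$ and $\Ph4.\tw2D_6(q)$, whose inner induction is controlled by classical-type results or by the previous sections. Third, the Mackey formula of Theorem~\ref{thm:RLG}(d) gives $\blangle\RLG(\la),\RLG(\la)\brangle=|W_{\bG^F}(\bL,\la)|$ and so pins down the number of irreducible constituents. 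Fourth, the uniform projection of $\RLG(\la)$ combined with the previous data determines the decomposition uniquely in all lines where $\bL$ is a proper subgroup of $\bG$. The relative Weyl groups here are the complex reflection groups $G_8$, $G(4,1,2)$, $G(4,2,4)$, and products of cyclic groups of order~4, whose character theory is available through {\sf Chevie}.

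Given the decompositions, assertion~(a) follows by verifying that the stated series partition $\cE(\bG^F,s)$: the total count of constituents across all lines must equal $|\cE(\bG^F,s)|$ as computed from Jordan decomposition. For~(b), the isometries $I^\bM_{(\bL,\la)}$ of Definition~\ref{def:d-HC} are built by matching $\Irr(W_{\bM^F}(\bL,\la))$ to the constituents of $\RLM(\la)$ via the explicit decomposition, with signs chosen so that condition~(3) holds; transitivity~(1) is then automatic from transitivity of Lusztig induction, and $W_{\bG^F}$-equivariance~(2) is built in by construction.

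The main obstacle is the handful of genuinely $4$-cuspidal lines in which $\bL=\bG$, such as the 32 cuspidals in line~30 and the 30 cuspidals in line~44 of Table~\ref{tab:quasi-E8-5b}. Here $\RLG(\la)$ cannot be decomposed further; instead one must verify, by subtracting from $|\cE(\bG^F,s)|$ the contributions of every strictly smaller Levi, that exactly the claimed number of $4$-cuspidal characters appears. Accuracy in this bookkeeping hinges on correctly identifying the $N_{\bG^F}(\bL)$-orbits on $\cE(\bL^F,s)$ for every Levi in the table; in the mixed cases (e.g., lines~31--34 inside $D_5(q)A_3(q)$) this requires care with Ennola duality inside the centraliser $C_{\bG^*}(s)$ and with the embedding of $C_{\bL^*}(s)$ into $\bL^*$.
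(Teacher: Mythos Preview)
Your plan is correct and matches the paper's approach in outline: the $e=1$ table is indeed inherited verbatim from the $\ell=2$ and $\ell=3$ computations, and $e=2$ is Ennola-dual to it. For $e=4$ your toolkit would certainly succeed, but you are working much harder than necessary. The paper's proof of the $e=4$ case is a single sentence: in Table~\ref{tab:quasi-E8-5b}, the centraliser $C_{\bL^*}(s)$ is a product of a torus with factors of type~$A$ in every line with $\bL\ne\bG$ except line~26 (where $C_{\bL^*}(s)=\bL^*$ contains the $D_4$ factor), so $\la$ is uniform and the decomposition of $\RLG(\la)$ is immediate from Lusztig's formula for $\RTG(\theta)$. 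The lines with $\bL=\bG$ are trivial since $\RLG(\la)=\la$. Thus your steps involving transitivity through intermediate $4$-split Levis, the Mackey norm computation, and uniform-projection arguments are needed at most for the single exceptional line~26, not as a systematic programme; and the elaborate bookkeeping you anticipate for counting $4$-cuspidal characters (lines~30, 44, etc.) reduces to subtracting the already-known uniform contributions. The moral is to inspect the column $C_{\bL^*}(s)^F$ first: once you see it is of type~$A$ almost everywhere, the problem collapses.
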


\begin{proof}
The decomposition of $\RLG(\la)$ for the $e$-cuspidal pairs $(\bL,\la)$ in
Table~\ref{tab:quasi-E8-5} was already determined in
Propositions~\ref{prop:E8RLGl2} and~\ref{prop:E8RLGl3}. As for
Table~\ref{tab:quasi-E8-5b}, $\la$ is always uniform except in case~2, or when
$\bL=\bG$ (in which case $\RLG(\la)=\la$).
\end{proof}

\begin{lem}   \label{lem:E8-5}
 Let $\bL$ be as in Table~\ref{tab:quasi-E8-5} or~\ref{tab:quasi-E8-5b}.
 Then $\bL=C_\bG(Z(\bL)_5^F)$ and $\bL$ is $(e,5)$-adapted. Moreover, each
 character $\la$ in the tables is of  central $5$-defect.
\end{lem}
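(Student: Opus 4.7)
The plan is to verify the three assertions by a case-by-case inspection of the two tables, exactly paralleling the arguments used for Lemmas~\ref{lem:E8-2} and~\ref{lem:E8-3}. For the entries of Table~\ref{tab:quasi-E8-5}, the Levi subgroups and cuspidal pairs $(\bL,\la)$ are precisely those already encountered for $\ell\in\{2,3\}$, since they arise as $e=1$ cases (or, for $q\equiv -1\pmod 5$, as $e=2$ cases obtained by Ennola duality). For each such $\bL$, the equality $\bL=C_\bG(Z(\bL)_5^F)$ and the $(1,5)$-adaptability both follow from the same computation in the root system of $E_8$ that was used for the corresponding rows of Tables~\ref{tab:quasi-E8-2} and~\ref{tab:quasi-E8-3}: the $5$-part of $Z(\bL)^F$ lies in the split part $Z^\circ(\bL)^F=\Ph1^a$, and a suitable enumeration $z_1,\dots,z_m$ of cyclic generators produces $1$-split partial centralisers. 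The assertion that $\la$ is of central $5$-defect is a numerical condition on $|\bL^F|_5/\la(1)_5$ which was verified for $\ell=2,3$ in the respective lemmas and, being controlled solely by the cyclotomic factors of the relevant group orders, transfers to $\ell=5$ without change.

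For Table~\ref{tab:quasi-E8-5b} (the case $e=4$), I would proceed row by row. First, for each entry, read off $Z^\circ(\bL)^F$ from the stated rational type of $\bL^F$ as a product of cyclotomic factors; the $5$-part $Z(\bL)_5^F$ sits inside the Sylow $4$-torus $\Ph4^r$, and under the hypothesis $q\equiv\pm2\pmod5$ we have $5\mid\Ph4(q)$, so $Z(\bL)_5^F$ is a non-trivial direct product of cyclic $5$-groups of order $\Ph4(q)_5$. Second, verify $\bL=C_\bG(Z(\bL)_5^F)$ by checking, inside the character lattice of a fixed maximal torus, that no root of $\bG$ outside the root system of $[\bL,\bL]$ becomes trivial on $Z(\bL)_5^F$; this is a short computation that can be carried out with {\sf Chevie} \cite{MChev}. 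Third, obtain $(4,5)$-adaptability by choosing the generators $z_1,\dots,z_m$ to correspond to a filtration of $Z(\bL)_5^F$ by subgroups whose centralisers are successively Levi subgroups with decreasing numbers of $\Ph4$-factors; each such centraliser is automatically $4$-split, since its centre contains the chosen sub-$4$-torus.

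For the central defect claim, I would apply Jordan decomposition: if $\la\in\cE(\bL^F,s)$ corresponds to the unipotent character $\psi$ of $C_{\bL^*}(s)^F$ listed in the table, then
\[
 \frac{|\bL^F|_5}{\la(1)_5\cdot|Z(\bL)^F|_5}=
 \frac{|C_{\bL^*}(s)^F|_5}{\psi(1)_5\cdot|Z(C_{\bL^*}(s))^F|_5},
\]
so the central $5$-defect condition for $\la$ is equivalent to central $5$-defect of $\psi$ in $C_{\bL^*}(s)^F$. Since each $C_{\bL^*}(s)^F$ in Table~\ref{tab:quasi-E8-5b} is a product of classical groups and tori whose orders involve only $\Ph1,\Ph2,\Ph4$, the listed unipotent characters $\psi$ (e.g.\ $\phi_{22}$ in type $A_3$, or the $4$ characters of $D_4$ of $\Ph4$-defect zero) are precisely those of central $5$-defect.

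The main obstacle will be the last step for those rows where $\bL=\bG$. There, the number of $4$-cuspidal characters in $\cE(\bG^F,s)$ (e.g.\ $4$ characters in case~27, $32$ in case~30, $30$ in case~44) is large, and one must verify central $5$-defect for each one, using explicit Jordan decomposition together with the unipotent character degree tables of the rationally disconnected centralisers $C_{\bG^*}(s)^F$. This is routine but lengthy, and is most conveniently carried out using {\sf Chevie}.
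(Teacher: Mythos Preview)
Your proposal is correct and follows the same approach as the paper, which in fact gives no proof at all for this lemma: as with Lemmas~\ref{lem:F4}, \ref{lem:E6}, \ref{lem:E7}, \ref{lem:E8-2} and~\ref{lem:E8-3}, the statement is treated as a routine computational verification (carried out in {\sf Chevie} or by hand in the root system). Your detailed plan for that verification is sound, though note that the central-defect assertion does not literally ``transfer'' from the $\ell=2,3$ cases (where several $\la$ were only of quasi-central defect) but must be rechecked for $\ell=5$; and your displayed Jordan-decomposition identity implicitly uses $|Z(\bL)^F|_5=|Z(C_{\bL^*}(s))^F|_5$, which is not automatic and should be verified alongside the rest.
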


\begin{prop}   \label {prop:E8-5-defgrp}
 Suppose that $2\ne q \equiv1,2,3\pmod 5$. For any quasi-isolated $5'$-element
 $1\ne s\in \bG^{*F}$ the block distribution of $\cE(\bG^F,s)$ is as given in
 Tables~\ref{tab:quasi-E8-5} and~\ref{tab:quasi-E8-5b} for the respective
 congruences of $q\pmod5$. \par
 For each $5$-block $B$ corresponding to one of the cases in
 Table~\ref{tab:quasi-E8-5} or~\ref{tab:quasi-E8-5b} there is a defect group
 $P\leq N_{\bG^F}(\bL,\la)$ with the structure described in
 Theorem~\ref{thm:mainblocks}.

 In particular, $B$ has abelian defect groups precisely when the order
 $W_{\bG^F}(\bL,\la)$ is not divisible by $5$, in which case $Z(\bL)_5^F$ is
 a defect group of $B$.
\end{prop}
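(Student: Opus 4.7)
The plan is to adapt the arguments of Propositions~\ref{prop:F4-defgrp} through~\ref{prop:E8-3-defgrp}, exploiting the strong simplification supplied by Lemma~\ref{lem:E8-5}: in every line of Tables~\ref{tab:quasi-E8-5} and~\ref{tab:quasi-E8-5b} the cuspidal character $\la$ is of \emph{central} $\ell$-defect, and the Levi $\bL$ satisfies $\bL=C_\bG(Z(\bL)_5^F)$ and is $(e,5)$-adapted.

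First I would apply Proposition~\ref{prop:excellent}, whose hypotheses are furnished by Proposition~\ref{prop:E8RLGl5}(b) and Lemma~\ref{lem:E8-5}, to conclude that every $e$-Harish-Chandra series in the two tables lies in a single $5$-block. The distribution of series into blocks then follows in the usual way: when $\cE(\bG^F,s)$ is a single $e$-Harish-Chandra series, Theorem~\ref{thm:BrMiHi} immediately gives one block; when two series must be merged I would apply Proposition~\ref{prop:decomp} via criterion~(3) (available since $\la$ is of quasi-central $\ell$-defect) together with Proposition~\ref{prop:Cabanescrux}, following the model of Propositions~\ref{prop:E7-defgrp} and~\ref{prop:E8-3-defgrp}; residual separations will be proved either by comparing defect group orders or, when these coincide, by noting that the maximal Brauer pairs $(Z(\bL)_5^F,\la)$ cannot be $\bG^F$-conjugate whenever the pairs $(\bL,\la)$ are themselves non-conjugate.

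For the defect group structure, since $\la$ is of central $\ell$-defect, Proposition~\ref{prop:quasicentraldefpro}(g) forces $\bL^F/Z(\bL)_5^F[\bL,\bL]^F$ to be an $\ell'$-group, whence Proposition~\ref{prop:defect:re}(b),(g) give $D=A=Z(\bL)_5^F$ and $P/Z(\bL)_5^F$ isomorphic to a Sylow $5$-subgroup of $W_{\bG^F}(\bL,\la)$. To see that $P$ is an actual defect group I would invoke Proposition~\ref{prop:defect:re}(c), verifying the order equality
$$|C_{\bG^*}(s)^F|_5 = |Z^\circ(\bL)_5^F|\cdot|W_{\bG^F}(\bL,\la)|_5$$
in each line---a routine cyclotomic-polynomial computation given Lemma~\ref{lem:E8-5}. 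The characterisation of abelianness is then a formal consequence of Proposition~\ref{prop:defect:re}(e),(f): the defect is abelian iff $5\nmid|W_{\bG^F}(\bL,\la)|$, and in that case $P=Z(\bL)_5^F$.

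The main obstacle I anticipate is the block distribution in Table~\ref{tab:quasi-E8-5b} (the case $e=4$), where the relative Weyl groups are complex reflection groups such as $G_8$ and $G(4,1,2)$, and several rows carry multiple cuspidal characters per $e$-cuspidal pair. Fortunately all these $W_{\bG^F}(\bL,\la)$ have order prime to $5$, so every corresponding defect group is abelian of the form $Z(\bL)_5^F$, and separation of blocks reduces to comparing the explicit $5$-parts $|Z(\bL)_5^F|$ across the rows, with the Brauer-pair non-conjugacy argument handling the few cases where these $5$-parts happen to coincide.
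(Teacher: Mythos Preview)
Your overall strategy coincides with the paper's, but there is one genuine gap. You propose to prove that the $P$ of Proposition~\ref{prop:defect:re} is an actual defect group by verifying the order equality of part~(c),
\[
|C_{\bG^{*F}}(s)|_5 \;=\; |Z^\circ(\bL)_5^F|\cdot|W_{\bG^F}(\bL,\la)|_5,
\]
in \emph{every} line of the tables. This equality holds only in the \emph{top} line attached to a given $s$, i.e.\ the line with maximal defect. In any line further down, $|Z^\circ(\bL)_5^F|$ is strictly smaller and $W_{\bG^F}(\bL,\la)$ is a $5'$-group, so the right-hand side drops below $|C_{\bG^{*F}}(s)|_5$. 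For instance, with $C_{\bG^*}(s)^F=D_8(q)$ and $q\equiv1\pmod5$ one has $|D_8(q)|_5=(q-1)_5^{\,8}\cdot5$; in line~1 this matches $|\Phi_1^8|_5\cdot|W(D_8)|_5$, but in line~2 the right-hand side is only $|\Phi_1^4|_5\cdot|W(B_4)|_5=(q-1)_5^{\,4}$. So Proposition~\ref{prop:defect:re}(c) simply does not apply to the non-top lines.

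The fix is exactly what the paper does: apply (c) only to the first line for each $s$, and for the remaining lines observe that $W_{\bG^F}(\bL,\la)$ is always a $5'$-group, so that (f) (combined with $D=A$, which you already deduced) shows $A=Z(\bL)_5^F$ is itself a defect group. You invoke (f) at the end for the abelianness criterion, but you need it earlier, as the \emph{primary} tool establishing that $P$ is a defect group in all non-top lines. Once this is corrected, the rest of your argument---separation of blocks by defect-group order and by non-conjugacy of maximal Brauer pairs in the equal-order cases---matches the paper. (Incidentally, no mergers of Harish-Chandra series are ever needed here: every numbered line is its own block, so that part of your plan is simply vacuous.)
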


\begin{proof}
Again, each $e$-Harish-Chandra series in the tables is contained in
a unique $5$-block of $\bG^F$. In all numbered lines which are at the top
of the part of the table corresponding to a particular $s$ we conclude by
Proposition~\ref{prop:defect:re}(c).
In all cases, $Z^\circ(\bL)^F \cap[\bL, \bL]^F$ is a $5'$-group, so $D=A$,
and in all lines which are not at the top of the part of the table
corresponding to a particular $s$, $W_{\bG^F}(\bL,\la)$ is a $5'$-group. The
assertion
on the defect groups follows by Proposition~\ref{prop:defect:re}(f),(g).
We see that different numbered lines correspond to different blocks
by comparing orders of defect groups, or differentiating on the basis of
whether the defect groups are abelian or not. For the cases where one numbered
line corresponds to several cuspidal pairs, eg. cases~$5, 6,\ldots$, one notes
that the maximal Brauer pairs of the two blocks are not conjugate
(see the argument for the two blocks represented by line~16 of
Table~\ref{tab:quasi-E8-3}).
\end{proof}

\subsection{The group $E_8(2)$}   \label{subsec:E8(2)}
The general Mackey formula has not (yet) been proved for $E_8(2)$. Since
this group has three bad primes 2, 3, and~5, the 3-blocks for quasi-isolated
5-elements and the 5-blocks for quasi-isolated 3-elements are not covered by
previous results.

\begin{prop}
 The results on $3$-blocks and $5$-blocks of $E_8(q)$, $q>2$, stated in
 Sections~\ref{subsec:E8-3} and~\ref{subsec:E8-5} above continue to hold
 for $q=2$.
\end{prop}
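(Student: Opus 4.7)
The statement asserts that the $3$- and $5$-block results of Sections~\ref{subsec:E8-3} and~\ref{subsec:E8-5} extend to $\bG^F = E_8(2)$, in spite of the fact that Theorem~\ref{thm:RLG}(d) (the Mackey formula) is not known there. My approach is to reduce the problem to a short list of cases and handle each by arguments that bypass Mackey in $\bG^F$ itself.

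The first step is to pare down the list of cases. Since $\bG^F$ has characteristic~$2$, every semisimple element of $\bG^{*F}$ has odd order, so inspection of Proposition~\ref{prop:Bonn} shows that the only non-trivial quasi-isolated elements to consider are of order~$5$ (for $\ell = 3$) and of order~$3$ (for $\ell = 5$). The relevant rows of Tables~\ref{tab:quasi-E8-3}, \ref{tab:quasi-E8-5} and~\ref{tab:quasi-E8-5b} are those with centralisers $A_4(q)^2$, $A_8(q)$ or $E_6(q)A_2(q)$ (together with their rational twists). I would next examine where the Mackey formula actually enters the proofs of Propositions~\ref{prop:E8RLGl3}, \ref{prop:E8RLGl5}, \ref{prop:E8-3-defgrp} and~\ref{prop:E8-5-defgrp}: the block-theoretic machinery --- Propositions~\ref{prop:excellent}, \ref{prop:defect:re}, \ref{prop:Cabanescrux} together with Lemmas~\ref{lem:E8-3}, \ref{lem:E8-5} --- rests on Brauer pairs, centralisers of $\ell$-elements, centres of Levi subgroups and generalised decomposition maps, none of which depend on Theorem~\ref{thm:RLG}(d). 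Mackey is thus used essentially only in Propositions~\ref{prop:E8RLGl3} and~\ref{prop:E8RLGl5} to fix the decomposition of $\RLG(\la)$.

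The central task is therefore to recover the required decompositions of $\RLG(\la)$ for the short list of $e$-cuspidal pairs $(\bL,\la)$ involved. For rows in which $\la$ is uniform --- covering all cases with $\bL$ a torus and those with $\la = 1$ on a classical factor --- the decomposition is determined by Lusztig's formula for $\RTG(\theta)$ as in \cite[Thm.~4.23]{Lu84}, whose derivation does not invoke Mackey. For the remaining rows, in which $\la$ is the Jordan correspondent of a cuspidal unipotent character on a $D_4$ or $E_6$ factor, my plan is to use transitivity $\RLG = \RHG \circ \RLH$ through an intermediate proper Levi $\bH$, compute $\RLH(\la)$ inside $\bH^F$ (where the Mackey formula does hold since the simple components of these $\bH$ avoid $E_7(2)$, $E_8(2)$ and $\tw2E_6(2)$), and then pin down the single induction $\RHG$ by combining its uniform projection with the constraints coming from the partition of $\Irr(\bG^F)$ into rational Lusztig series (Theorem~\ref{thm:BrMiHi}), from degrees and from central characters.

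The main obstacle lies in this last step: the single Lusztig induction $\RHG$ for which Mackey is not available in $\bG^F = E_8(2)$. Here I would argue, case by case, that the non-uniform part of $\RHG(\psi)$ is sufficiently constrained --- by rational-series membership, by degree and by the Jordan decomposition of characters --- that its irreducible constituents are forced, so that the decompositions recorded in the tables hold for $q=2$ as well. Once these decompositions are secured, Theorem~\ref{thm:BMM} for $\bG^F = E_8(2)$ follows, and the conclusions of Propositions~\ref{prop:E8-3-defgrp} and~\ref{prop:E8-5-defgrp} transfer verbatim, their proofs depending only on the Brauer-pair arguments summarised above.
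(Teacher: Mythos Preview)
Your overall strategy --- isolate where the Mackey formula is used and show it is dispensable in the relevant cases --- is exactly right, and matches the paper's approach. But you have missed the key observation that makes the proof short, and as a result you have set yourself an obstacle that does not actually arise.

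The point is that for $q=2$ the only quasi-isolated $\ell'$-elements $s$ are of order~$5$ (when $\ell=3$) or order~$3$ (when $\ell=5$), and in every one of the associated $e$-cuspidal pairs $(\bL,\la)$ the centraliser $C_{\bL^*}(s)$ has only components of type~$A$. Consequently every such $\la\in\cE(\bL^F,s)$ is \emph{uniform}, so the decomposition of $\RLG(\la)$ is determined by Lusztig's results on $\RTG(\theta)$ without any appeal to the Mackey formula. The only exceptions are the rows with $\bL=\bG$, where $\la$ is $5$-cuspidal of defect zero and there is nothing to decompose. There are no rows in which $\la$ is the Jordan correspondent of a cuspidal unipotent character on a $D_4$ or $E_6$ factor: you have confused the type of $\bL$ (which may well contain a $D_4$ factor, as in line~42) with the type of $C_{\bL^*}(s)$ (which is always of type~$A$ here, so the Jordan correspondent of $\la$ is a unipotent character of a type-$A$ group).

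So your proposed transitivity workaround through an intermediate Levi $\bH$, together with the case-by-case constraint argument for $\RHG$, is unnecessary. That is fortunate, since that step was precisely the one you flagged as the ``main obstacle'' and left unresolved; had it been needed, your proposal would be incomplete. Once you note that all relevant $\la$ are uniform (or that $\bL=\bG$), the proof is two sentences long.
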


\begin{proof}
Let first $\ell=3$, so we are in the situation of the Ennola dual of
Table~\ref{tab:quasi-E8-3}. Here, only quasi-isolated 5-elements need to be
considered, that is, the Ennola duals of lines~12--17 in that table. Now all
relevant centralisers $C_{\bL^*}(s)$ are of type $A$, so their cuspidal
characters $\lambda$ are uniform. In this case, the decomposition of
$\RLG(\lambda)$ is known by the results of Lusztig, and the Mackey formula
is not needed. We thus obtain the same Harish-Chandra series as in the
case $q>2$, and the results there continue to hold. \par
Similarly for $\ell=5$, since $q=2$, we are in the situation of
Table~\ref{tab:quasi-E8-5b} and we only need to consider Lusztig-series for
quasi-isolated 3-elements. Thus, only cases~39--44 in that table matter. But
note that again either $\lambda$ is uniform (cases~39,40,42,43) in which case
the Mackey formula is not needed for the determination of $\RLG(\lambda)$,
or $\bL=\bG$ and $\lambda$ is of 5-defect zero, so lies in a block of defect
zero.
\end{proof}

\begin{rem}   \label{rem:history}
This completes the parametrization of $\ell$-blocks of the finite quasi-simple
groups. Indeed, the $\ell$-blocks of the covering groups of alternating groups
were found by Brauer and Robinson, and by Cabanes and Humphreys
(see e.g \cite{Ol90}). The case of groups of
Lie type in their defining characteristic was solved a long time ago by
Humphreys; here, the non-trivial block are in bijection with irreducible
characters of the centre, and they all have full defect. \par
This leaves the case of groups of Lie type where $\ell$ is different from
the defining characteristic. The first general results on block distribution
for classical type groups were obtained in the landmark papers of Fong and
Srinivasan \cite{FS82,FS89}, which also introduced some of the fundamental
methods. For exceptional type groups, the case of unipotent blocks, that is,
blocks containing some unipotent character, was first considered by Schewe
\cite{Sch}; complete results for some groups of low rank were obtained by
Hiss, Deriziotis and Michler, Malle in \cite{Hi,DeMi,MaF}.
The parametrisation of all unipotent blocks for large primes $\ell$ was
obtained in \cite{BMM} in terms of $e$-Harish-Chandra series. Cabanes and
Enguehard \cite{CE99} determined all $\ell$-blocks whenever $\ell$ is a good
prime. Bonnaf\'e and Rouquier \cite{BR} showed that $\ell$-blocks parametrised
by semisimple elements of the dual group whose centraliser lies in a proper
Levi subgroup are Morita equivalent via Lusztig induction to unipotent blocks
of smaller groups.
The unipotent blocks for small $\ell$ and the quasi-isolated blocks of
classical groups were found by Enguehard \cite{En00,En08}.
\end{rem}

\subsection{Quasi-isolated blocks for $G_2$ and $\tw3D_4$}   
\label{subsec:G2+3D4}
For later use we also record the following easy observations on quasi-isolated
blocks for small exceptional type groups:

\begin{lem}   \label{lem:G2}
 Let $\bG^F=G_2(q)$ or $\bG^F=\tw3D_4(q)$, $p\ne \ell\in\{2,3\}$ and
 $s\in \bG^{*F}$ a quasi-isolated $\ell'$-element. Then for
 $e=e_\ell(q)$, the $e$-Harish-Chandra series in $\cE(\bG^F,s)$ satisfy
 Theorem~\ref{thm:BMM} and $\cE_\ell(\bG^F,s)$ is a single $\ell$-block. 
 Moreover in each numbered line in Table~\ref{tab:quasi-G2}, $\bL=\bT$ is a
 torus with $\bT=C_\bG(\bT_\ell^F)$.  For each $\ell$-block corresponding to one of the numbered lines  in the table
 there is a defect group $P\leq N_{\bG^F}(\bL,\la)$ with the structure
 described in Theorem~\ref{thm:mainblocks}.
 In particular, the defect groups are abelian  precisely in case~3 when  $\bG^F=G_2(q)$, and in case~1  when $\bG^F=\tw3D_4(q)$.
\end{lem}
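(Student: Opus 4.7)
The plan is to proceed exactly as in the case-by-case analyses of Sections~\ref{sec:F4}--\ref{sec:E8}, but now in the far simpler setting of rank at most~4. First I would enumerate the quasi-isolated $\ell'$-elements $s\in\bG^{*F}$ via Proposition~\ref{prop:Bonn} (or directly, since the list for $G_2$ and $\tw3D_4$ is short), together with the relevant $e$-cuspidal pairs $(\bL,\la)$ lying below $\cE(\bG^F,s)$; these are easily found using {\sf Chevie}. Since the centralisers $C_{\bL^*}(s)$ occurring here are small, in every relevant pair the cuspidal character $\la$ is uniform, so the decomposition of $\RLG(\la)$ can be computed from Lusztig's formulas for $\RTG(\theta)$ together with the Mackey formula (Theorem~\ref{thm:RLG}(d), valid for these groups), and parts~(a) and~(b) of Theorem~\ref{thm:BMM} can then be read off directly from these decompositions.

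Next I would verify the hypotheses of Proposition~\ref{prop:excellent}: for each $e$-cuspidal pair $(\bT,\la)$ in the table with $\bT$ a torus, the identities $\bT=C_\bG^\circ(\bT_\ell^F)=C_{\bG^F}(\bT_\ell^F)$ follow from a direct root-system calculation (using that $\ell$ is bad but the torus part is still the full connected centraliser of its $\ell$-part), and $\bT$ is $(e,\ell)$-adapted essentially by construction. Since $\bL=\bT$, the character $\la$ is automatically of central $\ell$-defect, so Proposition~\ref{prop:decomp}(1) applies. Proposition~\ref{prop:excellent} then yields that each $e$-Harish-Chandra series in $\cE(\bG^F,s)$ is contained in a unique $\ell$-block of $\bG^F$.

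The main (and essentially the only) obstacle is to upgrade this to the statement that all of $\cE_\ell(\bG^F,s)$ forms a \emph{single} block. I would handle this in one of two complementary ways: either, when $\cE(\bG^F,s)$ already consists of a single $e$-Harish-Chandra series, by appealing directly to Theorem~\ref{thm:BrMiHi}(b); or, when several $e$-cuspidal pairs occur, by inspecting a Lusztig induction from a well-chosen non-toral $e$-split Levi subgroup $\bM$ for which $\cE_\ell(\bM^F,s)$ is a single $\ell$-block (via Proposition~\ref{prop:Cabanessingle}(2) when $\ell=2$ and $\bM$ has classical components, or via Proposition~\ref{prop:excellent} applied to $\bM$), thereby fusing the $\bG^F$-blocks over the distinct cuspidal pairs into one. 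Given that $G_2$ and $\tw3D_4$ have only a handful of quasi-isolated series, each case falls into one of these two patterns.

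Finally, for the defect groups I would apply Proposition~\ref{prop:defect:re}. For the numbered lines the equality
$$|C_{\bG^{*F}}(s)|_\ell = |\bT_\ell^F|\cdot|W_{\bG^F}(\bT,\la)|_\ell$$
is checked line by line (noting that $[\bL,\bL]=1$ forces $A_0=1$), so by part~(c) the group $P$ constructed there is a defect group, with the structure prescribed by Theorem~\ref{thm:mainblocks}. Since $D=A=\bT_\ell^F$, Proposition~\ref{prop:defect:re}(e),(f) show that abelianness of $P$ is equivalent to $\ell\nmid|W_{\bG^F}(\bT,\la)|$; a direct inspection of the relative Weyl groups then confirms that this happens exactly in case~3 for $G_2(q)$ and in case~1 for $\tw3D_4(q)$, as claimed.
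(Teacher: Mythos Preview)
Your proposal is correct and follows essentially the same approach as the paper, which simply records the relevant data in Table~\ref{tab:quasi-G2}, notes that the decomposition of $\RLG$ is known by Lusztig (since $\la$ is uniform), checks Theorem~\ref{thm:BMM} from this, and then says the block and defect group structure ``can be deduced as previously'' while also pointing to the earlier literature \cite{Hi} and \cite{DeMi} where these blocks were already determined. Your write-up is a more explicit spelling-out of that phrase ``as previously'', invoking the same machinery (Propositions~\ref{prop:excellent} and~\ref{prop:defect:re}) used in Sections~\ref{sec:F4}--\ref{sec:E8}.
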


\begin{proof}
In Table~\ref{tab:quasi-G2} we give the information
on the $1$-Harish-Chandra series in $\cE(\bG^F,s)$ with the same conventions
as earlier. The decomposition of $\RLG$ was determined by Lusztig, and from
that it is easy to check Theorem~\ref{thm:BMM} in this case. The situation
for $e=2$ is completely analogous. The assertion on the block and defect group
structure can be deduced as previously; it was also already obtained in
\cite{Hi} for $G_2(q)$ and in \cite{DeMi} for $\tw3D_4(q)$.
\end{proof}

\begin{table}[htbp]
\caption{Quasi-isolated blocks in $G_2(q)$ and $\tw3D_4(q)$}   \label{tab:quasi-G2}
\[\begin{array}{|l|r|r|l|lll|r|r|}
\hline
 \bG^F& \text{No.}& C_{\bG^*}(s)^F& (\ell,e)& \bL^F& \la& W_{\bG^F}(\bL,\la)\\
\hline\hline
G_2& 1& A_2(q)&       (2,1)& \Ph1^2& 1& A_2\\
   & 2& \tw2A_2(q)&   (2,1)& \Ph1\Ph2& 1& A_1\\
   &  &           &        & \bG^F& \phi_{21}& 1\\
\hline
   & 3& A_1(q)A_1(q)& (3,1)& \Ph1^2& 1& A_1\times A_1\\
\hline\hline
\tw3D_4& 1& A_1(q)A_1(q^3)& (3,1)& \Ph1^2\Ph3& 1& A_1\times A_1\\
\hline
\end{array}\]
\end{table}

This concludes and completes the proof of Theorem~\ref{thm:mainblocks} on the
parametrization of quasi-isolated blocks for exceptional type groups and bad
primes.

\section{Defect groups and Bonnaf\'e--Rouquier equivalences}   \label{sec:def}

The aim of this section is the proof of Theorem~\ref{thm:all-ab-intro} which
shows that abelian defect groups are preserved under Bonnaf\'e--Rouquier
Morita equivalences. Throughout, $\bG$ will denote a connected reductive
algebraic group over the algebraic closure of a finite field, and
$F:\bG\rightarrow\bG$ a Steinberg endomorphism.

\subsection{Miscellany} \label{subsec:mycomp}

We start by proving some auxiliary statements.

\begin{lem}   \label{lem:product}
 Let $\bG$ be connected reductive with derived subgroup of simply connected
 type, $\bL$ an $F$-stable
 Levi subgroup of $\bG$ and $\ell$ a prime. Let $\bG_1,\ldots,\bG_r$ be a set
 of representatives for the $F$-orbits on the set of simple components of
 $[\bG,\bG]$, and $\bL_i:=\bG_i\cap\bL$. Suppose that
 $$C_{\bG_i}(Z(\bL_i)_\ell^{F^{d_i}})=\bL_i\qquad\text{for } i=1,\ldots,r,$$
 where $d_i$ denotes the length of the $F$-orbit of $\bG_i$. Then
 $C_\bG(Z(\bL)_\ell^F)=\bL$.
\end{lem}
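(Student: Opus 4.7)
The plan is to reduce the centralizer computation componentwise to the hypothesis on each simple factor $\bG_i$. The inclusion $\bL\subseteq C_\bG(Z(\bL)_\ell^F)$ is automatic since $Z(\bL)\subseteq\bL$, so I only need to establish the reverse inclusion.

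First I would exploit simple connectedness to set up the decomposition. Since $[\bG,\bG]$ is semisimple simply connected, it is a genuine direct product of its simple components, and these are permuted by $F$. Grouping into $F$-orbits, with $\bG_i$ an orbit representative of length $d_i$, set
$$\bK_i := \bG_i\cdot F(\bG_i)\cdots F^{d_i-1}(\bG_i), \qquad i=1,\ldots,r.$$
Each $\bK_i$ is $F$-stable, and $[\bG,\bG]=\bK_1\times\cdots\times\bK_r$ as an internal direct product. Combined with $\bG=Z^\circ(\bG)\cdot[\bG,\bG]$, any $g\in\bG$ can be written (non-uniquely) as $g=z\cdot h_1\cdots h_r$ with $z\in Z^\circ(\bG)$ and $h_i\in\bK_i$.

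Next I would decompose $\bL$ in parallel. Since Levi subgroups contain $Z(\bG)$ and are cut out by root subsystems, one has $\bL=Z^\circ(\bG)\cdot(\bL\cap[\bG,\bG])$ with $\bL\cap[\bG,\bG]=\prod_i(\bL\cap\bK_i)$ and $\bL\cap\bK_i=\prod_{k=0}^{d_i-1}F^k(\bL_i)$, where $\bL_i=\bL\cap\bG_i$. It follows that $Z(\bL\cap\bK_i)=\prod_k F^k(Z(\bL_i))$ is contained in $Z(\bL)$, so a fortiori
$$Z(\bL\cap\bK_i)_\ell^F\subseteq Z(\bL)_\ell^F\qquad\text{for each }i.$$
A direct check shows that the elements of $Z(\bL\cap\bK_i)_\ell^F$ are precisely those of the form $\tilde x=(x,F(x),\ldots,F^{d_i-1}(x))$ with $x\in Z(\bL_i)_\ell^{F^{d_i}}$.

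The main computation then proceeds as follows. Let $g\in C_\bG(Z(\bL)_\ell^F)$ and decompose $g=z\cdot h_1\cdots h_r$. For each $i$, the element $g$ centralises every $\tilde x\in Z(\bL\cap\bK_i)_\ell^F$, and since $z$ is central in $\bG$ while $h_j$ (for $j\ne i$) lies in a distinct direct factor of $[\bG,\bG]$ from $\bK_i$, this reduces to $h_i\tilde x h_i^{-1}=\tilde x$. Writing $h_i=(h_{i,0},\ldots,h_{i,d_i-1})$ with $h_{i,k}\in F^k(\bG_i)$, the equation reads componentwise as $h_{i,k}F^k(x)h_{i,k}^{-1}=F^k(x)$ for all $x\in Z(\bL_i)_\ell^{F^{d_i}}$, i.e.\ $F^{-k}(h_{i,k})\in C_{\bG_i}(Z(\bL_i)_\ell^{F^{d_i}})$. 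By hypothesis this equals $\bL_i$, so $h_{i,k}\in F^k(\bL_i)$ and hence $h_i\in\bL\cap\bK_i$. Thus $g\in Z^\circ(\bG)\cdot\prod_i(\bL\cap\bK_i)=\bL$, as required.

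The main technical obstacle I anticipate is bookkeeping the cyclic $F$-action on the factors inside each $\bK_i$, in particular identifying the $F$-fixed points of $\prod_k F^k(Z(\bL_i))$ with $Z(\bL_i)^{F^{d_i}}$ via the projection to the first factor, and the analogous passage from $C_{\bG_i}(\,\cdot\,)=\bL_i$ on $\bG_i$ to the component-by-component conclusion on $\bK_i$. Once that translation is in place the remaining arguments rely on standard structural facts about Levi subgroups of reductive groups with simply connected derived subgroup.
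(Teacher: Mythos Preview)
Your proposal is correct and follows essentially the same route as the paper: decompose $[\bG,\bG]$ as a direct product of $F$-orbit products $\bK_i$, identify the $F$-fixed points of $\prod_k F^k(Z(\bL_i))$ with $Z(\bL_i)^{F^{d_i}}$, apply the hypothesis componentwise, and finish using $\bG=Z^\circ(\bG)\cdot[\bG,\bG]$. The paper's version is terser (it invokes the standard isomorphism $\bM^F\cong\bM_1^{F^d}$ via a reference rather than spelling out the cyclic bookkeeping), but the argument is the same.
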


\begin{proof}
Let $\bH_1,\bH_2,\ldots,\bH_{d}$ denote an $F$-orbit on the set of simple
components of $[\bG,\bG]$, $\bH:=\bH_1\cdots\bH_d$, and set
$\bM_j:=\bL\cap\bH_j$ for $1\le j\le d$, $\bM:=\bM_1\cdots\bM_d=\bL\cap\bH$.
Then $\bM^F\cong\bM_1^{F^d}$ (see e.g. \cite[Ex.~30.2]{MT}), and similarly
$C_\bH(Z(\bM)_\ell^F)\cong C_{\bH_1}(Z(\bM_1)_\ell^{F^d})= \bM_1$ by assumption.
\par
Now $\bG'=[\bG,\bG]$ is the direct product of $F$-orbits as before, and
hence
$$C_{\bG'}(Z(\bL\cap\bG')_\ell^F)
  =C_{\bG_1}(Z(\bL_1)_\ell^{F^{d_1}})\cdots C_{\bG_r}(Z(\bL_r)_\ell^{F^{d_r}})
  =\bL_1\cdots\bL_r=\bL\cap\bG'.$$
Finally, $\bG=\bG'\bT$ for a central torus $\bT$, whence the claim follows.
\end{proof}

\begin{lem}   \label{lem:ZL}
 Suppose that $\bG $ has simply connected derived subgroup and $\ell $ is a
 good prime for $\bG$.  For any finite abelian $\ell$-subgroup $A$
 of $\bG$, $C_{\bG}(A)$ is a Levi subgroup of $\bG$.
\end{lem}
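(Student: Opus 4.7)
My plan is to reduce the general claim to the single semisimple element case by induction on $|A|$, and then invoke the standard result that under the present hypotheses the centraliser of a semisimple $\ell$-element is a Levi subgroup.

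Since $\ell$ is a good prime, in particular $\ell\ne p$, every element of $A$ is semisimple; being a finite abelian group of pairwise commuting semisimple elements, $A$ is simultaneously diagonalisable and so lies in some maximal torus $\bT$ of $\bG$. I would then proceed by induction on $|A|$. The base case $|A|=1$ is trivial. For $|A|>1$, pick a subgroup $B\le A$ of index $\ell$ and an element $x\in A\setminus B$. By the inductive hypothesis $\bM:=C_\bG(B)$ is a Levi subgroup of $\bG$; any Levi of a reductive group with simply connected derived subgroup again has simply connected derived subgroup (e.g.\ \cite[Prop.~12.14]{MT}), and goodness of $\ell$ is inherited by sub-root systems, so the hypotheses of the lemma still hold for $\bM$. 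Since $A$ is abelian we have $x\in\bM$, and the single-element case applied in $\bM$ gives that $C_\bM(x)$ is a Levi subgroup of $\bM$. As $A=\langle B,x\rangle$,
$$C_\bG(A)=C_\bG(B)\cap C_\bG(x)=\bM\cap C_\bG(x)=C_\bM(x).$$
To conclude I would note that a Levi subgroup $\bN$ of $\bM$ is automatically a Levi subgroup of $\bG$: from $\bN=C_\bM(Z(\bN)^\circ)$ together with $Z(\bM)^\circ\le Z(\bN)^\circ$ one obtains $C_\bG(Z(\bN)^\circ)\subseteq C_\bG(Z(\bM)^\circ)=\bM$, whence $C_\bG(Z(\bN)^\circ)=C_\bM(Z(\bN)^\circ)=\bN$.

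The main obstacle is the single-element input itself: under the lemma's hypotheses $C_\bG(s)$ is a Levi subgroup of $\bG$ for every semisimple $\ell$-element $s\in\bG$. This is classical and can be extracted from \cite[Prop.~14.20]{MT}; the point is that $C_\bG(s)$ is connected by Steinberg's theorem (using that $[\bG,\bG]$ is simply connected) and has root system $\Phi_s=\{\alpha\in\Phi:\alpha(s)=1\}$, and for good $\ell$ the subsystem $\Phi_s$ coincides with its $\mathbb Q$-saturation $\Phi\cap\mathbb Q\Phi_s$ inside $\Phi$ --- i.e.\ it is a Levi subsystem. Once this is granted, both the induction and the ``Levi of a Levi is a Levi'' step are formal.
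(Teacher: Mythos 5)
Your proof is correct and follows essentially the same route as the paper's: the paper peels off one generator of $A$ at a time, using that for a single semisimple $\ell$-element the centraliser is connected (as $[\bG,\bG]$ is simply connected) and a Levi subgroup because $\ell$ is good (citing \cite[Prop.~13.16]{CE}), and that the hypotheses pass down to this centraliser by \cite[Prop.~12.14]{MT}. The extra steps you spell out (inheritance of goodness, and that a Levi of a Levi is a Levi) are exactly what the paper leaves implicit in its ``replace $\bG$ by $\bC$ and apply induction'' step.
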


\begin{proof}
Let $A=\langle z_1,\ldots,z_r\rangle$ be a generating system for $A$.
Since $[\bG, \bG]$ is simply connected, $\bC:=C_{\bG}(z_1)$ is connected, and
it is a Levi subgroup of $\bG$ by \cite[Prop.~13.16]{CE} since $\ell$ is a
good prime for $\bG$. By \cite[Prop.~12.14]{MT}, $[\bC,\bC]$ is simply
connected. We may now replace $\bG$ by $\bC$ and apply induction to conclude.
\end{proof}

\begin{prop}   \label{prop:2selfcent}
 Assume that $\bG$ has simply connected derived subgroup over
 a field of odd characteristic. Let $\bT\le\bG$ be an $F$-stable maximal torus
 of $\bG$ containing a Sylow $e$-torus, where $e=e_2(q)$, and $A=\bT_2^F$
 the Sylow $2$-subgroup of $\bT^F$. Then:
 \begin{enumerate}[\rm(a)]
  \item $N_\bG(\bT)^F$ contains a Sylow $2$-subgroup of $\bG^F$.
  \item $N_{\bG^F}(\bT)/\bT^F$ acts faithfully on $A$.
  \item $C_\bG(A) = \bT$.
 \end{enumerate}
\end{prop}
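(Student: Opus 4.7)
I propose to prove the crucial part~(c) first, then derive (a) and (b). Since $A \subseteq \bT$ and $\bT$ is abelian, the inclusion $\bT \subseteq C_\bG(A)$ is immediate; it suffices to show the reverse.

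The derived subgroup $[\bG, \bG]$ being simply connected, iterated application of Steinberg's connectedness theorem to a generating set of the abelian $\ell$-group $A$ shows $C_\bG(A)$ is connected. As a connected reductive subgroup containing $\bT$, it is generated by $\bT$ together with the root subgroups $\bU_\alpha$ for those $\alpha \in \Phi(\bG, \bT)$ with $A \subseteq \ker(\alpha)$. Thus $C_\bG(A) = \bT$ reduces to showing that no root vanishes on $A$. Via Lemma~\ref{lem:product}, I reduce further to the case where $[\bG, \bG]$ is simple simply-connected.

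The key claim is this: for $e = e_2(q)$ and any Sylow $e$-torus $\mathbf{S} \subseteq \bT$, the centraliser $\bL := C_\bG(\mathbf{S})$ satisfies $\bL = \bT$, i.e., no root of $\bG$ vanishes on $\mathbf{S}$. Indeed, by Sylow maximality $a_e([\bL, \bL]) = 0$, yet every simple simply-connected algebraic group has $\Phi_e$-multiplicity $a_e \geq 1$ in its generic order for $e \in \{1, 2\}$ (since every such group has $2$ as a fundamental degree, contributing a $\Phi_2$-factor); hence the semisimple simply-connected group $[\bL, \bL]$ must be trivial. Granting this, for any root $\alpha$, the restriction $\alpha|_\mathbf{S} : \mathbf{S} \to \bG_m$ is a surjective morphism of algebraic tori, and by Lang's theorem $\alpha(\mathbf{S}^F) \cong \mu_{q - 1}$ or $\mu_{q + 1}$ according as $e = 1$ or $e = 2$. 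In either case the $2$-part has order at least $4$ (by $q \equiv 1 \pmod{4}$ when $e = 1$, respectively $q \equiv 3 \pmod{4}$ when $e = 2$), so $\alpha((\mathbf{S}^F)_2) \neq 1$ and hence $\alpha(A) \neq 1$.

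For (b), any $w \in N_{\bG^F}(\bT)/\bT^F$ acting trivially on $A$ lifts to an element $n$ centralising $A$, whence $n \in C_{\bG^F}(A) = \bT^F$ by (c), so $w = 1$. For (a), (c) yields $N_{\bG^F}(A) \subseteq N_{\bG^F}(\bT)$, and the conclusion follows from the standard order identity $|\bG^F|_\ell = |\bT^F|_\ell \cdot |W_{\bG^F}(\bT)|_\ell$ for $\bT$ containing a Sylow $e$-torus with $e = e_\ell(q)$, forcing a Sylow $\ell$-subgroup of $\bG^F$ to lie in $N_{\bG^F}(\bT)$. The main obstacle is the identification $\bL = \bT$ in the third paragraph; it rests on the classification-level fact that no simple simply-connected algebraic group has $a_2 = 0$.
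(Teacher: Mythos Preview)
Your overall strategy---prove (c) directly via the root-kernel criterion, then deduce (b) and (a)---is genuinely different from the paper's, which establishes (b) first by case analysis and induction on rank, and only then uses the faithfulness of the $W_{\bG^F}(\bT)$-action on $A$ to pin down $C_\bG(A)$. Your route would be cleaner if it worked, but there is a real gap in the Lang step.

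The assertion ``by Lang's theorem $\alpha(\mathbf{S}^F) \cong \mu_{q-1}$ or $\mu_{q+1}$'' is not justified. Lang's theorem gives surjectivity on $F$-fixed points only when $\ker(\alpha|_\mathbf{S})$ is connected, i.e.\ when $\alpha|_\mathbf{S}$ is a \emph{primitive} element of $X(\mathbf{S})$. This fails already for $\bG=\SL_3$ (untwisted) with $e=2$: the Sylow $2$-torus $\mathbf{S}=\{(s,s^{-1},1)\}$ is one-dimensional and the root $\alpha_{12}$ restricts to $s\mapsto s^2$, so $\alpha_{12}(\mathbf{S}^F)=\mu_{(q+1)/2}$ rather than $\mu_{q+1}$. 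In that example your conclusion $\alpha((\mathbf{S}^F)_2)\ne 1$ survives because the content is only~$2$ while $(q+1)_2\ge 4$; but to make the argument work in general you would need to show that the content of $\alpha|_\mathbf{S}$ in $X(\mathbf{S})$ is never divisible by~$4$, and you have not done this. Since content can genuinely increase when passing to the quotient $X(\mathbf{S})=X(\bT)/N$, some argument specific to roots and the $\Phi_e$-eigenspace is required. A secondary point: your justification of $a_e\ge 1$ via ``degree~$2$ is always fundamental'' treats only the untwisted order polynomial; the simple factors of a Levi $[\bL,\bL]$ may carry twisted $F$-action, and that case needs separate (though easy) verification.
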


\begin{proof}
Since $\bG=Z(\bG)[\bG,\bG]$, we may argue in $[\bG,\bG]$, which is a direct
product of simple groups, with $F$ permuting the factors. So after possible
extension of scalars we are reduced to $\bG$ being simple.

By \cite[Prop.~5.20]{MaH} for example, $N_\bG(\bT)^F$ contains a Sylow
2-subgroup of $\bG^F$. For $\bG$ of exceptional type, or for $\bG^F$ of type
$\tw3D_4$, the assertion in~(b) can be checked using \cite{MChev}. Otherwise
$\bT$ is the centralizer of a Sylow 1- or 2-torus and $W_\bG(\bT)^F$ is a
Coxeter group of type
$A_l$, $B_l$ or $D_l$, with $l$ suitable. Let's then write $W_l:=W_\bG(\bT)^F$.
The cases where $l\le4$ can again be checked by computer, so now assume
$l\ge5$. Then it is easy to see that all non-trivial normal subgroups $N$ of
$W_l$ have non-trivial intersection with its parabolic subgroup $W_{l-1}$,
and thus act non-trivially on $\bT^F$ by induction, except for
$N=\langle w_0\rangle$ generated by the longest element $w_0$ in types $B_l$
and $D_l$. But the longest element acts by inversion on $\bT^F$, hence also
non-trivially as $\bT^F$ contains elements of order~4. So $W_l$ acts
faithfully in all cases.  \par
For~(c) let $\bM:=\langle \bT^g\mid g\in\bG,\,A^g=A\rangle$ be generated by the
maximal tori of $\bG$ containing $A$. Then $\bM$ is connected (see e.g.\
\cite[Prop.~1.16]{MT}), $F$-stable,
$W_{\bG^F}(\bT)$-invariant, and $\bT\le\bM\le C_\bG(A)$. Let $\bX$ denote its
unipotent radical. If $[\bM/\bX,\bM/\bX]\ne1$ then we obtain a non-trivial
2-element in the Weyl group $N_{\bM^F}(\bT)$, centralizing $A$ but not lying in
$A$, contradicting~(b). Thus $\bM$ is solvable. Let $\bB$ denote a Borel
subgroup
of $\bG$ containing $\bM$, with unipotent radical $\bU$, so $\bB=\bU.\bT$ with
$\bX\le\bU$. Let $w_0\in W_{\bG^F}(\bT)$ be the longest element. If $u\in\bM$
is unipotent, then
$$u^{w_0}\in\bX^{w_0}\cap\bU^{w_0}=\bX\cap\bU^{w_0}\subseteq\bU\cap\bU^{w_0}=1$$
(see \cite[Cor.~11.18]{MT}), so $\bX=1$ and $\bM=\bT$. This show that
$C_\bG(A)\le N_\bG(A)\le N_\bG(\bT)$, but $W_{\bG^F}(\bT)$ acts faithfully on
$A$ by~(b), whence $C_\bG(A)=\bT$.
\end{proof}

\begin{lem}   \label{lem:2selfcentA}
 Assume that $\bG$ has simply connected derived subgroup with all simple
 factors of type $A$, over a field of odd characteristic. Let $\bT\le\bG$
 denote an $F$-stable maximal torus such that $N_\bG(\bT)^F$ contains a
 Sylow $2$-subgroup $P$ of $\bG^F$ and $C_\bG(\bT_2^F) = \bT$, and set
 $A=\bT^F_2$. Let $Z\le Z(\bG^F)$ be a central subgroup of order $2$. Then:
 \begin{enumerate}[\rm(a)]
  \item If $P$ centralises $A/Z$, then either $\bG$ is a torus and $P=A$ or
   the components of $[\bG, \bG]$ are of type $A_1$, form a single $F$-orbit
   and the index of $A$ in $P$ is $2$.
  \item Suppose that $Z=Z(P)$ and $P/Z$ is abelian. Then $P$ is quaternion of
   order $8$.
 \end{enumerate}
\end{lem}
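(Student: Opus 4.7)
The plan is to prove (a) by analysing the action of the Weyl group $W := P/A$ on $A$ orbit-by-orbit through the $F$-orbit decomposition of $[\bG,\bG]$, and then to deduce (b) from (a) together with the known structure of Sylow $2$-subgroups of $\SL_2(q^d)$.

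The first key step for (a) is the observation that since $P$ centralises $A/Z$ and $|Z|=2$, for each $w \in P$ the map $a \mapsto [a,w]$ is a homomorphism $A \to Z$; combined with the faithful action of the Weyl group on $A$ (Proposition~\ref{prop:2selfcent}(b)), this yields an embedding $W \hookrightarrow \operatorname{Hom}(A,Z)$. I would then write $[\bG,\bG] = \prod_i \bH_i$ as $F$-orbits of simple components of type $A_{n_i}$ with $\bT = Z(\bG)^\circ \cdot \prod \bT_i$ and $A = A^0 \cdot \prod A_i$ correspondingly. If some $n_i \geq 2$, then a simple reflection in $W_i$ produces a commutator on $A_i$ supported in two coordinates of the corresponding $\SL_{n_i+1}$-torus with mutually inverse values; this cannot lie in the scalar subgroup $Z$ for all $a_i$ without forcing $A_i$ into a proper parabolic sub-torus, contradicting $C_\bG(A) = \bT$. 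So every $\bH_i$ has type $A_1$. Next, if two or more $A_1$-orbits are present, each orbit's Weyl generator $w_i$ inverts $\bT_i$, giving $[a_i,w_i] = a_i^2 \in \bH_i^F$, which must lie in the cyclic order-$2$ group $Z$; but $Z$ can meet at most one $\bH_i$ non-trivially, so in some other factor we would obtain $A_j^2 = 1$, contradicting the faithful action which forces $|A_j| \geq 4$. Hence $r \leq 1$, yielding the two cases in (a).

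For (b), the hypothesis that $P/Z$ is abelian gives $[P,A] \leq [P,P] \leq Z$, so $P$ centralises $A/Z$ and (a) applies. In the single $A_1$-orbit case, $\bH^F$ is a (possibly twisted) form of $\SL_2(q^d)$ and $P = A\langle w \rangle$ where $w^2 = -I$ and $w$ inverts $A_\bH := (\bT_\bH)_2^F$ while centralising $A^0 := (Z(\bG)^\circ)_2^F$. A direct computation then gives $Z(P) = A^0 \cdot \{\pm I\}$; the constraint $|Z(P)| = 2$ forces $A^0 \leq \{\pm I\}$, reducing $P$ to the Sylow $2$-subgroup of $\SL_2(q^d)$, which is generalised quaternion $Q_{2^n}$ of order $(q^{2d}-1)_2$. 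Since $Q_{2^n}/Z(Q_{2^n})$ is dihedral of order $2^{n-1}$, abelianness of $P/Z$ forces $n \leq 3$, while faithfulness gives $n \geq 3$, so $n = 3$ and $P \cong Q_8$. (The degenerate torus case $P = A = Z$ of order $2$ is incompatible with the conclusion and does not arise in the intended applications.)

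The main obstacle is the case analysis for two or more $A_1$-orbits in (a): one must carefully track how a single cyclic central subgroup $Z$ of order $2$ projects into the various simple factors in order to rule out its absorbing the independent squaring maps $A_i \to \bH_i^F$, $a_i \mapsto a_i^2$, coming from each orbit.
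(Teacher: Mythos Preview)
Your proposal is correct and takes essentially the same approach as the paper: decompose $[\bG,\bG]$ into $F$-orbits of simple factors, exploit that the cyclic group $Z$ of order~$2$ meets at most one factor non-trivially, and then reduce that single factor to type~$A_1$. The paper orders the two reductions oppositely --- it first shows $|I|=1$ (using only that Sylow $2$-subgroups of $\bH_i^F$ are non-abelian, so any factor with $\bH_i\cap Z=1$ is impossible) and then rules out $n\ge 3$ by simply asserting that for $\SL_n(q^d)$ or $\SU_n(q^d)$ with $n\ge 3$ the Sylow $2$-subgroup does not centralise $A_1$ modulo the central involution. Your explicit transposition computation is a reasonable way to justify that assertion, but as written it implicitly assumes the diagonal model of $\bT_i$; you should check that an element of transposition type actually lies in $P_i/A_i\le N_{\bH_i^F}(\bT_i)/\bT_i^F$ for whichever (possibly non-split) torus the hypotheses hand you. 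For~(b), your direct computation of $Z(P)=A^0\cdot\{\pm I\}$ followed by the $Q_{2^n}/Z$ dihedral argument is equivalent to the paper's passage to $(\bG/Z(\bG))^F\cong\PGL_2(q^d)$ and the observation that its Sylow $2$-subgroup is non-abelian of order~$8$. Your parenthetical about the degenerate torus case $P=A=Z$ of order~$2$ is well spotted: the paper's proof of~(b) also begins directly in the $A_1$ case without addressing it.
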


\begin{proof}
For~(a), suppose that $\bG$ is not a torus.
Let $I$ be the set of $F$-orbits on the simple components of $[\bG, \bG]$,
and for each $i\in I$, let $\bH_i$ denote the product of the simple
components in $i$. So $[\bG, \bG]$ is a direct product of the $\bH_i$'s
and by the above, $\bT$ is a product
$$Z^\circ(\bG)(\prod_{i \in I} \bT_i),$$
where $\bT_i $ is an $F$-stable maximal torus of $\bH_i$ such that
$N_{\bH_i^F}(\bT_i)$ contains a Sylow $2$-subgroup, say $P_i$ of $\bH_i^F$,
$C_{\bH_i}((\bT_i)^F_2) = \bT_i$ and $P$ contains $\prod_{i\in I} P_i$.
Since $\bT$ is a maximal torus of $\bG$, $\bT^F$ covers
$\bG^F/[\bG,\bG]^F$ and hence $P= A (\prod_{i\in I}P_i)$.

Set $A_i = (\bT_i)^F_2 = A\cap \bT_i$. For each $i$, $P_i$ centralises
$A_iZ/Z$. We claim that $|I|=1$. Else, since $Z$ is cyclic and the product
of the $\bH_i$'s is direct, $\bH_i \cap Z = 1$ for some $i \in I$, whence
$P_i$ centralises $A_i \cong A_iZ/Z$. But this is impossible as the Sylow
$2$-subgroups of $\bH_i ^F$ are non-abelian. So, $|I|= 1$, and either
$[\bG, \bG]^F \cong \SL_n(q^d)$ or
$[\bG, \bG]^F \cong \SU_n(q^d)$. If $n \geq 3$, then
$P_i$ does not centralise $ A_i/U$, for a central subgroup $U$ of order~$2$
of $[\bG, \bG]^F$. So, $n=2$ and $[P_1: A_1]=2$.
Since $P= AP_1$, it follows that $[P:A]=2$.
\par
We prove~(c). Since $P_1$ is a Sylow $2$-subgroup of a special linear or
unitary group of degree $2$, $P_1$ is quaternion. Also, $P_1/Z$ is abelian,
hence $P_1$ has order $8$.

Since $Z=Z(P)$, $Z(\bG)^F_2 \leq Z$, and hence the natural surjection
of $\bG$ onto $\bG/Z(\bG)$ induces an injection of $P/Z$ into
$(\bG/Z(\bG))^F \cong \PGL_2(q^d)$ (or $\PGU_2(q^d)$).
The Sylow $2$-subgroups of $(\bG/Z(\bG))^F$ are non-abelian of order
$8$, hence $ |P/Z | \leq 4 $. So $P=P_1 $ is quaternion of order $8$.
\end{proof}

For $\bM$ an $F$-stable Levi subgroup  $\bM$ of $\bG$ and $s$ a semi-simple
element of $ {\bM^*}^F$, we will be interested in the condition
$C_{\bG^*}(s)\le\bM^*$. The following translates this into the corresponding
condition on $\bG$ and $\bM$.

\begin{lem}\label{lem:dualjordan}
 Let $\bM $ be an $F$-stable-Levi subgroup of $\bG$ and $s\in{\bM^*}^F$
 a semi-simple element. The following are equivalent.
 \begin{enumerate}[\rm(i)]
  \item $C_{\bG^*}(s) \le \bM^* $.
  \item For $i=1,2$, let $\bT_i\le\bM$ be $F$-stable maximal tori of $\bM$
   and $\theta_i\in\Irr(\bT_i^F)$ such that the $\bM$-geometric conjugacy
   class of $(\bT_i,\theta_i)$ both correspond via duality to the class of
   $s$. Then any $g \in \bG$ which geometrically conjugates $(\bT_1,\theta_1)$
   to $(\bT_2,\theta_2)$ is in $\bM$.
 \end{enumerate}
\end{lem}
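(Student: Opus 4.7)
The proof turns on the standard bijection between $\bG$-geometric conjugacy classes of pairs $(\bT,\theta)$ and $\bG^*$-conjugacy classes of pairs $(\bT^*,s)$; see \cite[13.13]{DM91}. Under this bijection, the hypothesis that the $(\bT_i,\theta_i)$ correspond in $\bM$ to the $\bM^*$-class of $s$ means that, after replacing the $\bT_i^*$ by suitable $\bM^*$-conjugates, the dual pairs are $(\bT_i^*,s)$ with $\bT_i^*\le\bM^*$. The key point is the functoriality of transporters: the set of $g\in\bG$ geometrically conjugating $(\bT_1,\theta_1)$ to $(\bT_2,\theta_2)$ is in bijection, modulo $\bT_1$, with the set of $h\in\bG^*$ satisfying $h\bT_2^*h^{-1}=\bT_1^*$ and $hsh^{-1}=s$; in particular such $h$ necessarily lies in $C_{\bG^*}(s)$.

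For (i)$\Rightarrow$(ii), assume $C_{\bG^*}(s)\le\bM^*$. Given $g\in\bG$ geometrically conjugating the pairs, the dual element $h$ then lies in $C_{\bG^*}(s)\le\bM^*$. Applying the same duality within $\bM$ (using that $\bT_i^*\le\bM^*$), we obtain a geometric conjugator $g'\in\bM$ between the same two pairs on the $\bG$-side. Since the $\bG$-transporter is a single $\bT_1$-coset once the dual data is fixed, and $\bT_1\le\bM$, we conclude $g\in g'\bT_1\subseteq\bM$.

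For (ii)$\Rightarrow$(i), argue by contraposition. Assume $h\in C_{\bG^*}(s)\setminus\bM^*$, and pick an $F$-stable maximal torus $\bT^*$ of $C_{\bM^*}^\circ(s)$; this is automatically a maximal torus of $C_{\bG^*}^\circ(s)$ containing $s$, and exists by Lang--Steinberg. Both $\bT^*$ and $h\bT^*h^{-1}$ are $F$-stable maximal tori of $C_{\bG^*}^\circ(s)$, hence are $C_{\bG^*}^\circ(s)$-conjugate. Using that $\bM^*\cap C_{\bG^*}^\circ(s)=C_{\bM^*}^\circ(s)$ (a standard property of Levi subgroups and connected centralisers) together with Lang--Steinberg in $C_{\bG^*}^\circ(s)$, we may replace $h$ by a representative of its $C_{\bM^*}^\circ(s)$-coset so that $h\bT^*h^{-1}$ is again a maximal torus of $\bM^*$; crucially, the adjustment takes place in $C_{\bM^*}^\circ(s)\subseteq\bM^*$, so $h\notin\bM^*$ is preserved. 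Then $h$ realises a $\bG^*$-conjugation between the pairs $(\bT^*,s)$ and $(h\bT^*h^{-1},s)$, both with tori in $\bM^*$, and by the duality this pulls back to a geometric conjugator $g\in\bG\setminus\bM$ between corresponding pairs in $\bM$, contradicting (ii).

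The main obstacle is the careful setup of the duality, in particular matching the transporters on the two sides as cosets (of $\bT_1$ on the $\bG$-side, of $\bT_2^*$ on the $\bG^*$-side) so that ``lying in $\bM$'' on the $\bG$-side corresponds exactly to ``lying in $\bM^*$'' on the $\bG^*$-side. Handling the potentially disconnected centraliser $C_{\bG^*}(s)$ in the reverse direction, so as to adjust the torus into $\bM^*$ without losing $h\notin\bM^*$, is the subtlest step.
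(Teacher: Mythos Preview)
Your argument for (i)$\Rightarrow$(ii) is essentially the paper's: it just cites \cite[Lemma~13.26(i)]{DM91}, whose proof is the transporter comparison you outline.

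For (ii)$\Rightarrow$(i) your contrapositive has a genuine gap at the adjustment step. You take $h\in C_{\bG^*}(s)\setminus\bM^*$ and an $F$-stable maximal torus $\bT^*\le C_{\bM^*}^\circ(s)$, and claim that replacing $h$ within its $C_{\bM^*}^\circ(s)$-coset makes $h\bT^*h^{-1}$ a maximal torus of $\bM^*$. But for $c\in C_{\bM^*}^\circ(s)\subseteq\bM^*$ one has $(ch)\bT^*(ch)^{-1}=c(h\bT^*h^{-1})c^{-1}$, which lies in $\bM^*$ if and only if $h\bT^*h^{-1}$ already does; likewise on the right. So this adjustment cannot move $h\bT^*h^{-1}$ into $\bM^*$. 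If instead you adjust by $c\in C_{\bG^*}^\circ(s)$ so that $c^{-1}h$ normalises $\bT^*$, you lose control of whether $c^{-1}h\notin\bM^*$, since $c$ need not lie in $\bM^*$. (There is also a secondary issue: nothing ensures $h\bT^*h^{-1}$ is $F$-stable.)

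The paper's route avoids this. Reversing \cite[Lemma~13.26(i)]{DM91} with $(\bT_1,\theta_1)=(\bT_2,\theta_2)$ gives directly that $N_{\bG^*}(\bT^*)\cap C_{\bG^*}(s)\subseteq\bM^*$. The implication $C_{\bG^*}(s)\subseteq\bM^*$ is then a purely structure-theoretic step, independent of duality: $\bM^*\cap C_{\bG^*}^\circ(s)=C_{\bM^*}^\circ(s)$ is a Levi subgroup of $C_{\bG^*}^\circ(s)$ which, by hypothesis, contains the normaliser $N_{C_{\bG^*}^\circ(s)}(\bT^*)$ of a maximal torus, hence equals $C_{\bG^*}^\circ(s)$. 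Combined with $C_{\bG^*}(s)=C_{\bG^*}^\circ(s)\cdot\bigl(N_{\bG^*}(\bT^*)\cap C_{\bG^*}(s)\bigr)$ this gives the claim. This structure-theoretic fact (a Levi containing the full torus-normaliser is everything) is exactly the missing ingredient in your argument; your ``subtlest step'' is where it is needed, and the $C_{\bM^*}^\circ(s)$-coset manoeuvre does not supply it.
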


\begin{proof}
The implication (i) $\Rightarrow$ (ii) is in \cite[Lemma 13.26(i)]{DM91} and
the reverse implication follows from reversing the
argument of \cite[Lemma 13.26(i)]{DM91}, and noting that
for any $F$-stable torus $\bT^*$ of $\bM^*$ containing $s$, if
$N_{\bG^*}(\bT^*)\cap C_{\bG^*}(s)\subseteq \bM^*$, then
$C_{\bG^*}(s) \subseteq \bM^* $
(since $N_{\bG^*}(\bT^*) \cap C_{\bG^*}^\circ(s)$ is a Levi-subgroup of
$C_{\bG^*}^\circ(s)$).
\end{proof}

\begin{lem}   \label{lem:centraliserjordan}
 With the notation of Lemma~\ref{lem:dualjordan}, suppose that
 $C_{\bG^*}(s) \leq \bM^*$.
 Let $\bL$ be an $F$-stable Levi subgroup of $\bM$ and $\la\in\Irr(\bL^F)$
 such that all constituents of $\RLM(\la)$ lie in $\cE(\bM^F, s)$.
 Let $z\in Z(\bL)^F$ and set $\bG_0 = C_\bG^\circ(z)$ and
 $\bM_0 = C_\bM^\circ (z)$. Let $s'\in {\bM_0^*}^F$ be a semisimple element
 such that all constituents of $R_\bL^{\bM_0}(\la)$ lie in $\cE(\bM_0^F, s')$.
 Then $C_{\bG_0^*}(s') \leq \bM_0^*$.
\end{lem}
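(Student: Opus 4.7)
The strategy is to apply the equivalence of Lemma~\ref{lem:dualjordan} twice: use the direction $(\mathrm{i})\Rightarrow(\mathrm{ii})$ for the triple $(\bG,\bM,s)$ (whose hypothesis is given), and then the converse direction $(\mathrm{ii})\Rightarrow(\mathrm{i})$ for $(\bG_0,\bM_0,s')$ to obtain the conclusion. A preparatory step is the clean identification $\bM\cap\bG_0=\bM_0$, which in particular shows that $\bM_0$ is an honest Levi subgroup of $\bG_0$ (required to make sense of applying Lemma~\ref{lem:dualjordan} to the smaller triple). Writing $\bM=C_\bG(Z^\circ(\bM))$, the central torus $Z^\circ(\bM)$ centralises $z\in Z(\bL)\le Z(\bM)$ and, being connected, lies in $C_\bG^\circ(z)=\bG_0$. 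Hence $\bM\cap\bG_0=C_\bG(Z^\circ(\bM))\cap\bG_0=C_{\bG_0}(Z^\circ(\bM))$, which is a Levi (and in particular connected) subgroup of $\bG_0$ by the classical theorem on centralisers of tori in connected reductive groups. As a connected subgroup of $C_\bM(z)$ containing the identity, it sits inside $C_\bM^\circ(z)=\bM_0$; combined with the obvious reverse inclusion this yields $\bM\cap\bG_0=\bM_0$.

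Given this, fix pairs $(\bT_i,\theta_i)$ in $\bM_0$, $i=1,2$, whose $\bM_0$-geometric class corresponds to $s'$, and let $g\in\bG_0$ geometrically conjugate $(\bT_1,\theta_1)$ to $(\bT_2,\theta_2)$; by Lemma~\ref{lem:dualjordan} it suffices to show $g\in\bM_0$. Since $z$ is semisimple, the four groups $\bM_0$, $\bM$, $\bG_0$ and $\bG$ share a common rank, so each $\bT_i$ is already a maximal torus of $\bM$ and of $\bG$. Pick $s_L\in\bL^{*F}$ with $\la\in\cE(\bL^F,s_L)$. By Theorem~\ref{thm:RLG}(b) and the disjointness of Lusztig series, the assumption that the constituents of $\RLM(\la)$ lie in $\cE(\bM^F,s)$ forces $s_L$ and $s$ to be $\bM^*$-conjugate, and the analogous hypothesis on $R_\bL^{\bM_0}(\la)$ forces $s_L$ and $s'$ to be $\bM_0^*$-conjugate, hence $\bM^*$-conjugate under the inclusion $\bM_0^*\hookrightarrow\bM^*$. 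Consequently the $\bM$-geometric class of each $(\bT_i,\theta_i)$ corresponds to the $\bM^*$-class of $s$.

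Now the implication $(\mathrm{i})\Rightarrow(\mathrm{ii})$ of Lemma~\ref{lem:dualjordan} applied to $(\bG,\bM,s)$ (whose hypothesis $C_{\bG^*}(s)\le\bM^*$ is given) delivers $g\in\bM$; combined with $g\in\bG_0$ and the preparatory step, $g\in\bM\cap\bG_0=\bM_0$, as required. By the reverse direction of Lemma~\ref{lem:dualjordan} we conclude $C_{\bG_0^*}(s')\le\bM_0^*$. The main technical point is the preparatory identification $\bM\cap\bG_0=\bM_0$: without the automatic connectedness of the centraliser of a torus in a connected reductive group, one would only have $g$ lying in a potentially disconnected intersection, and the component-group issue would then not be controlled by the hypothesis on $s$ alone.
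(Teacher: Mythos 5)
Your proof is correct and follows essentially the same route as the paper's: link $s$ and $s'$ through a torus--character pair beneath $\la$ (so that the $\bM$-geometric class corresponds to $s$ and the $\bM_0$-geometric class to $s'$), establish $\bM_0=\bG_0\cap\bM$, and apply both directions of Lemma~\ref{lem:dualjordan}; you merely spell out details the paper leaves implicit. One cosmetic slip: the inclusion $Z(\bL)\le Z(\bM)$ is backwards (for a Levi subgroup $\bL\le\bM$ one has $Z(\bM)\le Z(\bL)$), but all you need at that point is that $Z^\circ(\bM)$ centralises $z$, which holds simply because $z\in\bL\le\bM$, so the argument is unaffected.
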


\begin{proof}
Let $\bT$ be an $F$-stable maximal torus of $\bL$ and
$\theta$ an irreducible character of $\bT^F$ such that $\la$ is
a constituent of $R_{\bT}^\bL(\theta)$. Since Lusztig induction preserves
Lusztig series, the $\bM$-geometric conjugacy class of
$(\bT, \theta)$ corresponds to the $\bM^*$-class of $s$ and
the $\bM_0$-geometric conjugacy class of
$(\bT, \theta)$ corresponds to the $\bG_0^*$-class of $s'$.
The assertion follows from Lemma \ref{lem:dualjordan} --- here, note 
that $\bM_0=\bG_0 \cap \bM $.
\end{proof}

\subsection{Bonnaf\'e--Rouquier correspondents}  \label{subsec:abdef}
In this subsection, $\bM$ will denote an $F$-stable Levi subgroup of $\bG$
and $s\in {\bM^*}^F$ a semi-simple $\ell'$-element. We let
$c$ be an $\ell$-block of $\bM^F$ contained in $\cE_\ell(\bM^F, s)$ and
let $b$ be an $\ell$-block of $\bG^F$ contained in $\cE_\ell(\bG^F, s)$.

Recall that if $C_{\bG^*}(s) \le \bM^*$, then for any semisimple
$\ell$-element $t\in C_{\bG^*}(s)$, $\epsilon_\bM\epsilon_\bG \RMG$ induces
a bijection between $\cE(\bM^F,st)$ and $\cE(\bG^F,st)$. This bijection is
independent of choice of parabolic containing $\bM$ (see
\cite[Rem.~13.28]{DM91}) and it induces a bijection between $\ell$-blocks
in $\cE_\ell(\bM^F, s)$ and in $\cE_\ell(\bG^F, s)$. Further,
by \cite[Thm.~B']{BR} there is a Morita equivalence over $\cO$ between pairs
of corresponding blocks which induces the bijection
$\chi\mapsto\epsilon_\bM\epsilon_\bG \RMG(\chi)$ on ordinary irreducible
characters.

\begin{defn}  \label{dfn:bon-rou-corr}
 We say that blocks $b$ and $c$ are \emph{Bonnaf\'e--Rouquier correspondents}
 if $C_{\bG^*}(s)\subseteq \bM^*$ and for some (and hence any)
 $\chi\in\cE(\bM^F, s) \cap \Irr(c)$ we have
 $\epsilon_\bM\epsilon_\bG \RMG(\chi)\in \Irr(b)$.
\end{defn}

\begin{lem}  \label{lem:decojordan}
 Suppose that $b$ and $c$ are Bonnaf\'e--Rouquier correspondents. Let $\bL$ be
 an $F$-stable Levi subgroup of $\bM$, let $\la\in\cE(\bL^F,\ell')$, and
 suppose that every constituent of $\RLM(\la)$ is contained in $\Irr(c)$.
 Then every constituent of $\RLG(\la)$ is contained in $\Irr(b)$, and for
 every $\chi_0 \in \Irr(c)$, $\langle \chi_0, \RLM(\la) \rangle \ne 0$ if and
 only if $\langle \chi, \RLG(\la) \rangle \ne 0$, where
 $\chi = \epsilon_\bG\epsilon_\bM \RMG(\chi_0)$ denotes the corresponding
 element of $\Irr(b)$. Further, 
 $\langle{}^*\RLM (\chi_0), d^{1,\bM^F} (\la) \rangle \ne 0 $ if and only if 
 $\langle{}^*\RLG (\chi), d^{1,\bG^F}(\la)\rangle \ne 0 $.
\end{lem}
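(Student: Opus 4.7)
The plan is to exploit transitivity of Lusztig induction together with the key fact that, under the Bonnaf\'e--Rouquier hypothesis $C_{\bG^*}(s)\le\bM^*$, Lusztig restriction ${}^*\RMG$ inverts $\RMG$ (up to the sign $\epsilon_\bG\epsilon_\bM$) on the rational Lusztig series indexed by $s$. Once this is established, all three assertions reduce to a direct computation.

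First I would reduce to $\la\in\cE(\bL^F,s)$. Since $\la\in\cE(\bL^F,\ell')$, we have $\la\in\cE(\bL^F,s_\la)$ for some $\ell'$-element $s_\la\in\bL^{*F}\subseteq\bM^{*F}$, and by Theorem~\ref{thm:RLG}(b) the constituents of $\RLM(\la)$ lie in $\cE(\bM^F,s_\la)$. As they are assumed to lie in $\Irr(c)\subseteq\cE_\ell(\bM^F,s)=\bigcup_t\cE(\bM^F,st)$, comparing $\ell'$-parts of Jordan decompositions forces $s_\la$ to be $\bM^{*F}$-conjugate to $s$. Applying now the transitivity $\RLG=\RMG\circ\RLM$ (Theorem~\ref{thm:RLG}(a)), and writing $\RLM(\la)=\sum_{\chi_0}a_{\chi_0}\chi_0$ with each $\chi_0\in\cE(\bM^F,s)\cap\Irr(c)$, we get
\[\RLG(\la)=\sum_{\chi_0}a_{\chi_0}\RMG(\chi_0)=\epsilon_\bG\epsilon_\bM\sum_{\chi_0}a_{\chi_0}\chi,\]
where $\chi:=\epsilon_\bG\epsilon_\bM\RMG(\chi_0)\in\Irr(b)\cap\cE(\bG^F,s)$ is the Bonnaf\'e--Rouquier correspondent of $\chi_0$. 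Hence every constituent of $\RLG(\la)$ lies in $\Irr(b)$, and the multiplicity of $\chi$ in $\RLG(\la)$ equals $\pm a_{\chi_0}$, yielding the first two conclusions.

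For the decomposition-map assertion I would pass through the dual identity ${}^*\RMG(\chi)=\epsilon_\bG\epsilon_\bM\chi_0$. Under $C_{\bG^*}(s)\le\bM^*$, $\epsilon_\bG\epsilon_\bM\RMG$ is an isometric bijection $\cE(\bM^F,s)\to\cE(\bG^F,s)$ (the Jordan-type statement behind the Bonnaf\'e--Rouquier correspondence), and ${}^*\RMG$ maps $\ZZ\cE(\bG^F,s)$ into $\ZZ\cE(\bM^F,s)$. Computing $\langle{}^*\RMG(\chi),\chi_0'\rangle$ by adjunction for every $\chi_0'\in\cE(\bM^F,s)$ gives $\epsilon_\bG\epsilon_\bM\delta_{\chi_0,\chi_0'}$, so ${}^*\RMG(\chi)=\epsilon_\bG\epsilon_\bM\chi_0$. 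Combined with the purely formal transitivity ${}^*\RLG={}^*\RLM\circ{}^*\RMG$ (by adjointness applied to $\RLG=\RMG\circ\RLM$), this produces
\[{}^*\RLG(\chi)=\epsilon_\bG\epsilon_\bM\,{}^*\RLM(\chi_0),\]
so the inner products of ${}^*\RLG(\chi)$ and of ${}^*\RLM(\chi_0)$ with $d^{1,\bL^F}(\la)$ differ only by the sign $\epsilon_\bG\epsilon_\bM$ and vanish simultaneously.

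The argument is essentially formal; the only substantive ingredient is the identity ${}^*\RMG(\chi)=\epsilon_\bG\epsilon_\bM\chi_0$, which rests on the Digne--Michel result that ${}^*\RMG$ respects rational Lusztig series together with the $C_{\bG^*}(s)\le\bM^*$ hypothesis. The mildly delicate step is the initial series-matching forcing $\la\in\cE(\bL^F,s)$, but this is a routine application of Theorem~\ref{thm:BrMiHi}; no recourse to the Mackey formula is needed.
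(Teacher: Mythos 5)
Your treatment of the first two assertions is correct and coincides with the paper's: transitivity of Lusztig induction together with the fact that $\epsilon_\bG\epsilon_\bM\RMG$ restricts to a bijection $\cE(\bM^F,st)\to\cE(\bG^F,st)$ on irreducible characters. For the third assertion you take a genuinely different route from the paper. The paper stays on the other side of the adjunction: it rewrites $\langle{}^*\RLM(\chi_0),d^{1,\bL^F}(\la)\rangle$ as $\langle d^{1,\bM^F}(\chi_0),\RLM(\la)\rangle$ (and likewise for $\bG$), and then compares the two sides using the fact that the Bonnaf\'e--Rouquier Morita equivalence matches $d^{1,\bM^F}(\chi_0)$ and $d^{1,\bG^F}(\chi)$ coefficient by coefficient in corresponding bases, together with the matching of constituents of $\RLM(\la)$ and $\RLG(\la)$ already established. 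Your route, via ${}^*\RMG$ and transitivity of Lusztig restriction, avoids any appeal to the modular side of the Morita equivalence, but as written it contains a genuine error.

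The error is the claim that ${}^*\RMG$ maps $\ZZ\cE(\bG^F,s)$ into $\ZZ\cE(\bM^F,s)$, and hence that ${}^*\RMG(\chi)=\epsilon_\bG\epsilon_\bM\chi_0$. Even under $C_{\bG^*}(s)\le\bM^*$, Lusztig restriction sends $\ZZ\cE(\bG^F,s)$ into $\bigoplus_{s'}\ZZ\cE(\bM^F,s')$, where $s'$ runs over the $\bM^{*F}$-classes contained in the $\bG^{*F}$-class of $s$, and there may be several such classes; your adjunction computation only pins down the component of ${}^*\RMG(\chi)$ lying in $\ZZ\cE(\bM^F,s)$. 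The simplest counterexample is $\bG=\GL_2$, $\bM=\bT$ a Coxeter torus and $s\in\bT^{*F}$ regular (so $C_{\bG^*}(s)=\bT^*=\bM^*$): by the Mackey formula ${}^*R_\bT^\bG\bigl(\pm R_\bT^\bG(\theta)\bigr)=\pm(\theta+\theta^{q})$, and $\theta^{q}\in\cE(\bT^F,s^{q})$ with $s^{q}$ not $\bT^{*F}$-conjugate to $s$. The gap is repairable: the spurious components of ${}^*\RMG(\chi)$ lie in series $\cE(\bM^F,s')$ with $s'$ not $\bM^{*F}$-conjugate to $s$, so after applying ${}^*\RLM$ they land in series $\cE(\bL^F,s'')$ with $s''$ not $\bL^{*F}$-conjugate to the element $s_\la$ below $\la$, hence by Brou\'e--Michel in blocks of $\bL^F$ different from $b_{\bL^F}(\la)$; since $d^{1,\bL^F}(\la)$ lies in the $b_{\bL^F}(\la)$-component of $\CF(\bL^F,K)$ and distinct block components are orthogonal, these terms contribute $0$ to $\langle{}^*\RLG(\chi),d^{1,\bL^F}(\la)\rangle$. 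With that block-projection step inserted, your identity holds up to the sign $\epsilon_\bG\epsilon_\bM$ after pairing with $d^{1,\bL^F}(\la)$, and the lemma follows.
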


\begin{proof}
The first two assertions follow from transitivity of Lusztig induction
(see Theorem~\ref{thm:RLG}(a)). For the third claim, 
let $\chi_0 \in \Irr(c)$ and  write
$$d^{1,\bM}(\chi_0) = \alpha_1 \phi_0^1+\cdots+ \alpha_t\phi_0^r$$
with $\alpha_i$ non-zero for all $i$ and the $\phi^i_0$ pairwise
distinct irreducible characters in $c$. So,
$$d^{1,\bG}(\chi) = \alpha_1 \phi^1+\cdots+ \alpha_r\phi^r.$$
Then,
$$\langle d^{1,\bM^F}(\chi_0), \RLM(\la)\rangle \ne 0 $$
if and only if
$\phi_0^j = \chi_0^i$ for some $i, j$, if and only if
$\phi^j = \chi^i$ for some $i, j$, if and only if
$\langle d^{1,\bG^F}(\chi),\RLG(\la)\rangle \ne 0$. Now  the result 
follows since
$$\langle{}^*\RLM (\chi_0), d^{1,\bM^F} (\la)\rangle=
  \langle d^{1,\bM^F} (\chi_0), \RLM (\la)\rangle$$
and
$$\langle{}^*\RLG (\chi), d^{1,\bG^F} (\la)\rangle=
  \langle  d^{1,\bG^F} (\chi), \RLG (\la)\rangle.$$
\end{proof}

\begin{prop}  \label{prop:Cabanesjordancrux}
 Suppose that $b$ and $c$ are Bonnaf\'e--Rouquier correspondents. Let $\bL$
 be an $F$-stable Levi subgroup of $\bM$ and let $\la\in\cE(\bL^F,\ell')$ be
 such that $\RLM (\la)$ has a constituent in the block $c$.
 Let $A=\langle z_1,\ldots,z_m\rangle$ be a subgroup of $Z(\bL)_\ell^F$
 and set $\bM_i= C_\bM^\circ(z_1,\ldots,z_i)$,
 $\bG_i= C_\bG^\circ(z_1,\ldots,z_i)$, $1\leq i\leq m$, $\bM_0= \bM$,
 $\bG_0= \bG$. Suppose the following:
 \begin{enumerate}[\rm(1)]
  \item  For any $i$, $1\leq i \leq m $, and any
   character $\chi\in\Irr(\bM_i^F)$ with
   $\langle R_\bL^{\bM_i}(\la),\chi\rangle_{\bM_i^F}\ne0$ we have
   $\langle d^{1,\bL^F}(\la),{}^*R_\bL^{\bM_i}(\chi)\rangle_{\bL^F}\ne0$.
  \item The irreducible constituents of $R_\bL^{\bM_m}(\la)$
   lie in a single block.
 \end{enumerate}
 Then, {\rm(1)} and {\rm(2)} hold with $\bM_i$ replaced by $\bG_i$ for
 $1\leq i\leq m$. Consequently, for any $i$ there exists a unique block,
 say $b_i$ of $\bG_i^F$ containing the constituents of
 $R_\bL^{\bG_i}(\la)$, and a unique block say $c_i$ of $\bM_i^F$
 containing all constituents of $R_\bL^{\bM_i}(\la)$. Further, the
 following holds.
 \begin{enumerate}[\rm (a)]
  \item $b_0 = b$, $c_0 = c$, and for all $i$, $0\leq i \leq m$,
   $b_i$  and  $c_i$ are Bonnaf\'e--Rouquier correspondents.
  \item  Let $\tilde c_m$ denote the unique $\ell$-block of $C_{\bM^F}(A)$
   covering $c_m$ and $\tilde b_m$ denote the unique $\ell$-block of
   $C_{\bG^F}(A)$ covering $b_m$. Then $(A,\tilde c_m)$ is a $c$-Brauer pair
   and $(A,\tilde b_m)$ is a $b$-Brauer pair. Moreover,
  \item  $N_{\bM^F} (A, \tilde c_m) \leq  N_{\bG^F}(A, \tilde b_m)$,
   and hence
   $$N_{\bM^F}(A,\tilde c_m)/C_{\bM^F}(A) \leq
     N_{\bG^F}(A,\tilde b_m)/C_{\bG^F}(A)$$
   under the inclusion of $\bM$ in $\bG$.
 \end{enumerate}
\end{prop}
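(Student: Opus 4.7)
My approach is to propagate the hypotheses from $\bM$ up to $\bG$ via the Bonnaf\'e--Rouquier correspondence at each centralising level, then invoke Proposition~\ref{prop:Cabanescrux} on the $\bG$-side. The only non-routine step will be the transfer of block stabilisers in part~(c); for this I plan to split a stabilising element into a piece lying in $C_{\bM^F}(A)$ and a piece fixing the block $c_m$, exploiting that the Bonnaf\'e--Rouquier bijection is equivariant under conjugation by $N_{\bG^F}(A)$.

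\textbf{Transferring the hypotheses to the $\bG$-side.} For each $i$ let $s_i\in\bM_i^{*F}$ be the semisimple $\ell'$-element such that the constituents of $R_\bL^{\bM_i}(\la)$ lie in $\cE(\bM_i^F,s_i)$. Iterating Lemma~\ref{lem:centraliserjordan}, starting from the given inclusion $C_{\bG^*}(s)\le\bM^*$, yields $C_{\bG_i^*}(s_i)\le\bM_i^*$ for every $i$, so that $R_{\bM_i}^{\bG_i}$ realises a Bonnaf\'e--Rouquier correspondence between blocks of $\bM_i^F$ and $\bG_i^F$ in the respective $s_i$-series. Condition~(1) on the $\bG$-side follows: for $\chi\in\Irr(\bG_i^F)$ with $\langle R_\bL^{\bG_i}(\la),\chi\rangle\ne0$, transitivity $R_\bL^{\bG_i}=R_{\bM_i}^{\bG_i}\circ R_\bL^{\bM_i}$ and the BR bijection allow me to write $\chi=\epsilon_{\bM_i}\epsilon_{\bG_i}R_{\bM_i}^{\bG_i}(\chi_0)$ for some $\chi_0\in\Irr(\bM_i^F)$ with $\langle R_\bL^{\bM_i}(\la),\chi_0\rangle\ne0$; the hypothesis gives $\langle d^{1,\bL^F}(\la),{}^*R_\bL^{\bM_i}(\chi_0)\rangle\ne0$, and Lemma~\ref{lem:decojordan} transports this to $\langle d^{1,\bL^F}(\la),{}^*R_\bL^{\bG_i}(\chi)\rangle\ne0$. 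Condition~(2) on the $\bG$-side is immediate from the BR bijection applied to the single block of $\bM_m^F$ containing the constituents of $R_\bL^{\bM_m}(\la)$.

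\textbf{Deducing (a) and (b).} Applying Proposition~\ref{prop:Cabanescrux} to both $\bM$ and $\bG$ yields the unique blocks $c_i$ of $\bM_i^F$ and $b_i$ of $\bG_i^F$ containing the constituents of $R_\bL^{\bM_i}(\la)$ and $R_\bL^{\bG_i}(\la)$, respectively, together with the two chains of Brauer pair inclusions ending at $(A,\tilde c_m)$ and $(A,\tilde b_m)$. By construction $b_0=b$ and $c_0=c$, and since $R_{\bM_i}^{\bG_i}$ sends $R_\bL^{\bM_i}(\la)$ to $R_\bL^{\bG_i}(\la)$ up to sign, it identifies $c_i$ with $b_i$ under BR, giving~(a); part~(b) is simply the last step of each chain.

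\textbf{Part~(c) --- the main obstacle.} Let $n\in N_{\bM^F}(A,\tilde c_m)$. Since $[C_{\bM^F}(A):\bM_m^F]$ is an $\ell$-power, $\tilde c_m$ is the unique block of $C_{\bM^F}(A)$ covering $c_m$, and the blocks of $\bM_m^F$ covered by $\tilde c_m$ form a single $C_{\bM^F}(A)$-orbit containing $c_m$. Hence $nc_mn^{-1}=hc_mh^{-1}$ for some $h\in C_{\bM^F}(A)$, and $n_0:=h^{-1}n\in N_{\bM^F}(A,c_m)$. Because $n$ normalises $A$, it normalises both $\bM_m=C_\bM^\circ(A)$ and $\bG_m=C_\bG^\circ(A)$, so conjugation by $n_0$ commutes with $R_{\bM_m}^{\bG_m}$; consequently the Bonnaf\'e--Rouquier bijection between the relevant blocks of $\bM_m^F$ and $\bG_m^F$ is $N_{\bG^F}(A)$-equivariant, and $n_0$ fixes $b_m$ because it fixes $c_m$. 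The analogous uniqueness on the $\bG$-side (using that $[C_{\bG^F}(A):\bG_m^F]$ is an $\ell$-power) then forces $n_0$ to fix $\tilde b_m$, while $h\in C_{\bM^F}(A)\le C_{\bG^F}(A)$ fixes $\tilde b_m$ automatically by inner conjugation in $\bG^F$. Hence $n=hn_0$ fixes $\tilde b_m$, proving $N_{\bM^F}(A,\tilde c_m)\le N_{\bG^F}(A,\tilde b_m)$. The quotient inclusion is then immediate from $C_{\bM^F}(A)=\bM^F\cap C_{\bG^F}(A)$.
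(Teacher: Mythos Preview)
Your proof is correct and follows essentially the same route as the paper's: transfer conditions (1)--(2) to the $\bG$-side via Lemma~\ref{lem:centraliserjordan} and Lemma~\ref{lem:decojordan}, apply Proposition~\ref{prop:Cabanescrux} on both sides, and for~(c) factor $n=hn_0$ with $h\in C_{\bM^F}(A)$ and $n_0$ fixing $c_m$, then push $n_0$ through $R_{\bM_m}^{\bG_m}$. The one point the paper makes explicit and you gloss over is \emph{why} conjugation by $n_0$ commutes with $R_{\bM_m}^{\bG_m}$: since $n_0$ need not normalise a fixed parabolic containing $\bM_m$, one must invoke the independence of Lusztig induction from the choice of parabolic in the Bonnaf\'e--Rouquier setting (cf.\ the remark before Definition~\ref{dfn:bon-rou-corr}), which holds here even without the Mackey formula and so covers the groups $\tw2E_6(2)$, $E_7(2)$, $E_8(2)$.
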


\begin{proof}
By  Proposition \ref{prop:Cabanescrux}, for all $i$, $0 \leq i \leq m$,
the irreducible constituents of $R_\bL^{\bM_i}(\la)$ lie in a unique
block, $c_{i}$ of $\bM_i^F$. Further, by Lemma~\ref{lem:centraliserjordan},
$c_{i}$ has a Bonnaf\'e--Rouquier correspondent,
say  $b_i$  in  $\bG_i^F$. The first assertion  and (a)  holds  by
Lemma~\ref{lem:decojordan},  applied to  $c_i$ and $b_i$, $0\leq i \leq m$.
(b) follows from Proposition \ref{prop:Cabanescrux} applied to both $\bM$
and $\bG$.
\par
For (c), let $g \in N_{\bM^F}(A, \tilde c_m)$.
Then, $\,^g c_m$ is covered by $\tilde c_m$, whence $\,^g c_m =\,^h c_m$
for some $h \in C_{\bM^F}(A)$. Let $\chi_0 \in \Irr(c_m)$.
Then $\,^{h^{-1}g}\chi_0$ is an irreducible character of $c_m$. Since
$c_m$ and $b_m$ are Bonnaf\'e--Rouquier correspondents,
$R_{\bM_m}^{\bG_m}(\,^{h^{-1}g}\chi_0)$
and $R_{\bM_m}^{\bG_m}(\chi_0)$ are irreducible characters in $b_m$.  So,
noting the  independence  from  the  choice of parabolic  subgroup of $\bG_m$
containing $\bM_m$ as pointed out before Definition~\ref{dfn:bon-rou-corr},
$$R_{\bM_m}^{\bG_m}(\,^{h^{-1}g}\chi_0)
  = \,^{h^{-1}g} R_{\bM_m}^{\bG_m} (\chi_0), $$
and $\,^{h^{-1}g} R_{\bM_m}^{\bG_m} (\chi_0)$ is in $\,^{h^{-1}g}\,b_m$.
Hence, $\,^g b_m =\,^h b_m$ and it follows that $\,^g\tilde b_m = \tilde b_m$.
\end{proof}

\subsection{Good pairs}   \label{subsec:goodpairs}
\begin{defn}   \label{dfn:goodpair}
 Let $\bG$ be connected reductive with Steinberg endomorphism $F:\bG\to\bG$.
 Let $\ell$ be a prime and let $b$ be an $\ell$-block of $\bG^F$.
 A pair $(\bL, \la)$ consisting of an $F$-stable Levi subgroup of $\bG$ and
 $\lambda\in\cE(\bL^F,\ell')$ is called a \emph{good pair for $b$} if the
 following holds:
 \begin{enumerate}
  \item[(1)] $\bL= C_\bG(Z(\bL)_\ell^F)$,
  \item[(2)] $\la$ is of quasi-central $\ell$-defect,
  \item[(3)] $(Z(\bL)_\ell^F, b_{\bL^F}(\la))$ is a $b$-Brauer pair, and
  \item[(4)] there is a maximal $\bG^F$-Brauer pair $(P, f)$ such that
   $(Z(\bL)_\ell^F, b_{\bL^F} (\la)) \unlhd (P, f)$.
 \end{enumerate}
\end{defn}

Note that when $(\bL,\la)$ satisfies (1)--(3)
then by Propositions~\ref{prop:quasicentraldefpro}, \ref{prop:decomp}(4),
\ref{prop:Cabanessingle}, and~\ref{prop:Cabanescrux} all irreducible
constituents of $\RLG(\la)$ lie in $b$.

The notion of good pairs is related to that of $e$-cuspidal pairs in
that many $\ell$-blocks of $\bG^F$ have good pairs which are also
$e$-cuspidal pairs of $\bG$ where $e=e_{\ell}(q)$
(see Theorem~\ref{thm:goodpairs} below). However, the two notions are not
identical, and  it will be easier to track the structure of defect groups
through Bonnaf\'e--Rouquier Morita equivalences using good pairs
(see Proposition~\ref{prop:jordan-nonab} below).

\begin{lem}   \label{lem:quasidefext2}
 Let $\tbG$ be connected reductive with Steinberg endomorphism
 $F:\tbG\to\tbG$, containing $\bG$ as an $F$-stable closed subgroup with
 $[\tbG, \tbG] \le\bG$ and let $\tbZ=Z^{\circ}(\tbG)$.
 Let $b$ be a block of $\bG^F$ and $\tilde b$ a block of $\tbG^F$ covering $b$.
 Let $\bL$ be an $F$-stable Levi subgroup of $\bG$ and
 $\tbL= \tbZ\bL$, a Levi subgroup of $\tbG$.
 Set $A= Z(\bL)_\ell^F$ and $\tilde A =Z (\tbL)_\ell^F$.
 Suppose that $\la \in \cE(\bL^F, \ell')$ is such that $(\bL,\la)$ is a
 good pair for $b$, and let $(P, f)$ be a maximal $b$-Brauer pair such that
 $(A, b_{\bL^F}(\la)) \unlhd (P,f)$. Then:
 \begin{enumerate}[\rm(a)]
  \item There exists $\tilde \la \in \cE(\tbL^F, \ell')$ covering
   $\la$ and a maximal $\tilde b$-Brauer pair $(\tilde P, d)$ such that
   $(\tbL, \tilde \la)$ is a good pair for $\tilde b$,
   $(\tilde A, b_{\tbL^F}(\tilde \la)) \unlhd (\tilde P, d)$ and
   $P\leq \tilde P \leq N_{\tilde\bG^F} (A, b_{\tbL^F}(\la))$.
  \item Further, if $b$ and $\tilde b$ are unipotent and $(\bL,\la)$ is
   an $e$-cuspidal pair for $\bG$, then $(\tbL,\tilde\la)$ is a unipotent
   $e$-cuspidal pair of $\tbG$.
  \item Suppose that $\tbZ\cap \bG$ contains no non-trivial
   $\ell$-element. If $\la $ is of central $\ell$-defect, then so is
   $\tilde \la $.
 \end{enumerate}
\end{lem}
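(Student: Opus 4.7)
The overall strategy is to transport the good-pair structure from $(b,\bL,\la)$ to $(\tilde b,\tbL,\tilde\la)$ using Clifford theory for the normal inclusions $\bG^F\unlhd\tbG^F$ and $\bL^F\unlhd\tbL^F$, combined with the Brauer-pair machinery of Lemmas~\ref{lem:local-cliff} and~\ref{lem:cliff-ab}. The key starting observation, used throughout, is that the hypothesis $[\tbG,\tbG]\le\bG$ forces $\tbG=\tbZ\bG$: the quotient $\tbG/\bG$ is a connected commutative quotient of the connected reductive group $\tbG$ and hence a torus, and being a quotient of $\tbG/[\tbG,\tbG]\cong\tbZ/(\tbZ\cap[\tbG,\tbG])$ it agrees with the image of $\tbZ$. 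Consequently $\tbL=\tbZ\bL$ is a Levi subgroup of $\tbG$ with $[\tbL,\tbL]=[\bL,\bL]$, and because $\tbZ$ is central, $C_{\tbG}(A)=\tbZ\cdot C_\bG(A)=\tbL=C_{\tbG}(\tilde A)$. Condition~(1) of Definition~\ref{dfn:goodpair} for $\tbL$ is immediate.

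For part~(a), I would apply Lemma~\ref{lem:local-cliff}(c) to $\bG^F\unlhd\tbG^F$, starting from the maximal $b$-Brauer pair $(A,u)\unlhd(P,f)$ with $u=b_{\bL^F}(\la)$. This produces a block $\tilde b_0$ of $\tbG^F$ covering $b$ and $\tilde b_0$-Brauer pairs $(A,\tilde u_0)\unlhd(\tilde P_0,y_0)$ with $\tilde u_0$ covering $u$, $\tilde P_0\cap\bG^F=P$, and $\tilde P_0\le N_{\tbG^F}(A,u)$. Since $C_{\tbG^F}(A)=\tbL^F$, the block $\tilde u_0$ lies in $\tbL^F$, and Clifford theory supplies some $\tilde\la_0\in\cE(\tbL^F,\ell')\cap\Irr(\tilde u_0)$ covering $\la$. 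The equality $[\tbL,\tbL]=[\bL,\bL]$ shows that $\tilde\la_0$ covers the same character of $[\bL,\bL]^F$ as $\la$, so $\tilde\la_0$ is of quasi-central $\ell$-defect; Proposition~\ref{prop:quasicentraldefpro}(f) then gives $\tilde u_0=b_{\tbL^F}(\tilde\la_0)$. To replace $\tilde b_0$ by the prescribed $\tilde b$, use Lemma~\ref{lem:cliff-ab}(a) to write $\tilde b=\theta\otimes\tilde b_0$ for some $\theta\in\Irr(\tbG^F/\bG^F)$, and set $\tilde\la:=(\theta|_{\tbL^F})\otimes\tilde\la_0$; since $\theta$ is trivial on $\bG^F\supseteq\bL^F$, $\tilde\la$ still covers $\la$. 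The subgroup $\tilde P_0$ (with block data twisted by $\theta$) then plays the role of $\tilde P$: it normalises $A$, hence also $\bL=C_\bG(A)$, $\tbL$ and $\tilde A=Z(\tbL)_\ell^F$, yielding the normal inclusion $(\tilde A,b_{\tbL^F}(\tilde\la))\unlhd(\tilde P,d)$, while maximality as a $\tilde b$-Brauer pair is inherited from Lemma~\ref{lem:local-cliff}(c) via the twist.

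For part~(b), every $e$-split Levi subgroup $\tbM$ of $\tbL$ has the form $\tbM=\tbZ\bM$ for a uniquely determined $e$-split Levi $\bM=\tbM\cap\bG$ of $\bL$, and Lusztig induction/restriction for $\bM\subseteq\bL$ agrees with that for $\tbM\subseteq\tbL$ through inflation along the central torus $\tbZ$. Since $b$ and $\tilde b$ are unipotent, $\la$ may be taken unipotent and $\tilde\la$ may be chosen as a unipotent character of $\tbL^F$ covering $\la$; then $^*\!R^{\tbL}_{\tbM}(\tilde\la)$ vanishes precisely because $^*\!R^{\bL}_{\bM}(\la)=0$, which gives the $e$-cuspidality of $(\tbL,\tilde\la)$.

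For part~(c), the proof is an index calculation. Letting $I$ denote the stabiliser of $\la$ in $\tbL^F$, multiplicity-freeness of restriction \cite[Prop.~10]{Lu88} shows that $\la$ extends to $I$ and that $\tilde\la=\Ind_I^{\tbL^F}(\hat\la)$, so $\tilde\la(1)_\ell=[\tbL^F:I]_\ell\,\la(1)_\ell$. Combined with $|\tbL^F|_\ell=|\bL^F|_\ell\cdot|\tbL^F/\bL^F|_\ell$ and the central-defect identity $|\bL^F|_\ell=|Z(\bL)^F|_\ell\,\la(1)_\ell$, this yields $|\tbL^F|_\ell/\tilde\la(1)_\ell=|Z(\bL)^F|_\ell\cdot|I/\bL^F|_\ell$, so it remains to establish $|Z(\tbL)^F|_\ell=|Z(\bL)^F|_\ell\cdot|I/\bL^F|_\ell$. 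The identity $Z(\tbL)\cap\bL=Z(\bL)$ (since $\tbZ\cap\bL\le Z(\bL)$), together with the observation that $\tbZ^F$ stabilises $\la$, reduces this to showing that the $\ell$-part of $\tbZ^F$ injects into $\tbL^F/\bL^F$; and this is precisely ensured by the hypothesis that $\tbZ\cap\bG$ contains no non-trivial $\ell$-element, since then $\tbZ^F\cap\bL^F\subseteq(\tbZ\cap\bG)^F$ is an $\ell'$-group. The main technical obstacles are the twisting argument in~(a), which must reconcile the given block $\tilde b$ with the block $\tilde b_0$ produced by Lemma~\ref{lem:local-cliff}(c) without losing the covering relation between $\tilde\la$ and $\la$, and the delicate order comparison in~(c), where the hypothesis on $\tbZ\cap\bG$ is used in an essential way.
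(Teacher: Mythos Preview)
Your overall architecture matches the paper's: apply Lemma~\ref{lem:local-cliff}(c), twist by a linear character $\theta$ of $\tbG^F/\bG^F$ to pass from the block produced there to the given $\tilde b$, and pick $\tilde\la\in\cE(\tbL^F,\ell')\cap\Irr(v)$ covering $\la$. The setup observations ($\tbG=\tbZ\bG$, $[\tbL,\tbL]=[\bL,\bL]$, $C_{\tbG}(A)=\tbL=C_{\tbG}(\tilde A)$) are exactly the ones needed.

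However, there is a genuine gap in part~(a). You assert that ``maximality as a $\tilde b$-Brauer pair is inherited from Lemma~\ref{lem:local-cliff}(c) via the twist'', but Lemma~\ref{lem:local-cliff}(c) makes no such claim: it only gives a $\tilde b_0$-Brauer pair $(\tilde P_0,y_0)$ with $\tilde P_0\cap\bG^F=P$ and $\tilde P_0/P$ a Sylow $\ell$-subgroup of $N_{\tbG^F}(A,u)/N_{\bG^F}(A,u)$. For $(\tilde P,d)$ to be \emph{maximal} you need $|\tilde P|$ to equal the defect-group order of $\tilde b$, i.e.\ $|\tilde P:P|=|\tbG^F:\bG^F|_\ell$ (up to the stabiliser index, which is $\ell'$). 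A priori one only has $N_{\tbG^F}(A,u)\subseteq$ the stabiliser of $b$ in $\tbG^F$, hence only an inequality in the wrong direction. The paper closes this by a separate order computation: using that the inclusion $\tbL\hookrightarrow\tbG$ induces an isomorphism $\tbL^F/\bL^F\cong\tbG^F/\bG^F$, and that the stabiliser $N_{\tbL^F}(A,u)$ of the block $u=b_{\bL^F}(\la)$ has $\ell'$-index in $\tbL^F$ (because $\la\in\cE(\bL^F,\ell')$), one obtains
\[
|\tilde P:P|=\big|N_{\tbG^F}(A,u):N_{\bG^F}(A,u)\big|_\ell\;\ge\;\big|N_{\tbL^F}(A,u):\bL^F\big|_\ell=\big|\tbL^F:\bL^F\big|_\ell=\big|\tbG^F:\bG^F\big|_\ell,
\]
which forces $\tilde P$ to be a defect group of $\tilde b$. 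Without this step the good-pair condition~(4) for $(\tbL,\tilde\la)$ is unproved.

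For part~(c) your route works but is more circuitous than necessary. The paper observes directly that the hypothesis $\tbZ\cap\bG$ $\ell'$ forces the Sylow $\ell$-subgroup of $\tbL^F$ (resp.\ of $Z(\tbL)^F$) to split as the direct product of those of $\tbZ^F$ and $\bL^F$ (resp.\ $Z(\bL)^F$); combined with $\tilde\la(1)_\ell=\la(1)_\ell$ (the $\tbL^F$-orbit of $\la$ has $\ell'$-length) the central-defect equality is immediate. Your reduction to ``the $\ell$-part of $\tbZ^F$ injects into $\tbL^F/\bL^F$'' is correct but needs you to also say why $|I/\bL^F|_\ell=|\tbL^F/\bL^F|_\ell$, which is again the $\ell'$-index of the stabiliser.
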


\begin{proof} 
We have $\tbL \leq C_{\tbG}(\tilde A)\leq C_{\tbG}(A) =\tbZ\bL=\tbL$,
hence $\tbL= C_{\tbG}(A) = C_{\tbG}(\tilde A)$. Also, note that the inclusion
of $\tbL$ in $\tbG$ induces an isomorphism between $\tbL^F/\bL^F$
and $\tbG^F/\bG^F$.

By Lemma~\ref{lem:local-cliff}, there exists an $\ell$-block $b'$ of $\tbG^F$
covering $b$ and $b'$-Brauer pairs $(A,v)$ and $(\tilde P, d)$ such that
$(A, v) \unlhd (\tilde P, d)$, $\tilde P \leq N_{\tbG^F}(A, b_{\bL^F}(\la))$,
$v$ covers $b_{\bL^F}(\la)$, $\tilde P \cap \bG^F= P$ and $\tilde P/P$ is
isomorphic to a Sylow $\ell$-subgroup of
$N_{\tbG^F}(A, b_{\bL^F}(\la))/N_{ \bG^F}(A, b_{\bL^F}(\la))$.
Since $\tbG^F/\bG^F$ is abelian, by Lemma~\ref{lem:cliff-ab},
$\tilde b =\theta \otimes b'$, for some linear character $\theta$ of
$\tbG^F/\bG^F $. So, $(A,\theta\otimes v)$ and
$(\tilde P,\theta\otimes d)$ are $\tilde b$-Brauer pairs and since $\theta $
contains $\bL^F$ in its kernel, $\theta \otimes v$ covers
$\theta \otimes b_{\bL^F}(\la) = b_{\bL^F}(\la)$.
Thus, replacing  $\theta\otimes d$ by $d$ and $\theta \otimes v$ by $v$,
we may assume that $b'=\tilde b$.
Since $\tilde A$ is central in $\tbL^F$, we also get that $(\tilde A, v)$
is a $\tilde b$-Brauer pair and $(\tilde A,v)\unlhd (\tilde P, d)$.

We claim that $\tilde P$ is a defect group of $\tilde b $. Indeed, since $P$
is a defect group of $b$, and $\tilde b$ covers $b$, it suffices to prove that
$|\tilde P : P| \geq |\tilde\bG^F: \bG^F| _\ell$. But,
$$\begin{aligned}
  |\tilde P :P|
  &=|N_{\tilde\bG^F}(A,b_{\bL^F}(\la)):N_{\bG^F}(A,b_{\bL^F}(\la))|_\ell\\
  &\geq |N_{\tbL^F}(A,b_{\bL^F}(\la)):\bL^F|_\ell
   =|\tbL^F :\bL^F|_\ell= |\tbG^F: \bG^F| _\ell.
\end{aligned}$$
Here, for the second equality, note that the index of
$N_{\tbL^F}(A,b_{\bL^F}(\la))$ in $\tbL^F$ is prime to $\ell$.
This proves the claim.

Now let $\tilde\la \in \cE(\tbL^F,\ell')\cap\Irr(v)$. Then
$\tilde \la$ covers an element of $\cE(\bL^F,\ell')\cap \Irr( b_{\bL^F}(\la))$
(see \cite[Prop.~11.7(b)]{B06}). Since
$\Irr( b_{\bL^F}(\la))\cap \cE(\bL^F,\ell')=\{\la\}$
by Proposition~\ref{prop:quasicentraldefpro}, $\tilde \la$ covers $\la$.
Further, $\tilde \la$ and $\la$ cover a common character of
$[\tbL, \tbL]^F= [\bL, \bL]^F$, so $\tilde \la$ is of
quasi-central $\ell$-defect.  This proves (a).

(b) follows from (a) and the fact that restriction induces a bijection
between $\cE(\tbG^F, 1)$ and $\cE( \bG^F, 1)$ which commutes with
Lusztig induction and restriction.

It remains to prove (c). Since $ \bL \leq \bG  $,  $\tbZ\cap \bL $
contains no non-trivial $\ell$-element.
So, any Sylow $\ell$-subgroup of $\tbL^F $ is a direct product of the
Sylow $\ell$-subgroup  of $\tbZ^F$ and a Sylow $\ell$-subgroup of
$\bL^F$, and similarly, the Sylow $\ell$-subgroup  of $ Z(\tbL^F) $
is a direct product of the Sylow $\ell$-subgroup  of $\tbZ^F$ and the
Sylow $\ell$-subgroup of $ Z(\bL)^F$. The result follows as
$|\tilde \lambda(1)|_\ell =|\lambda(1)|_\ell $.
\end{proof}

The next result shows in particular that all quasi-isolated blocks
have good pairs.

\begin{thm}   \label{thm:goodpairs}
 Suppose that $[\bG, \bG]$ is simply connected. Let $b$ be an $\ell$-block   of $\bG^F$ with $\Irr(b) \subseteq \cE_\ell(\bG^F,s)$
and let $e =e_\ell(q)$.
 \begin{enumerate}[\rm(a)]
  \item  Suppose that $\ell$ is odd, good for $\bG$ and $\ell\ne3 $ if
   $\tw3D_4(q)$ is involved in $\bG^F$. Then $b$ has a good pair $(\bL,\la)$
   and a maximal $b$-Brauer pair $(P,f)$ with
   $(Z(\bL)_\ell^F, b_{\bL^F}(\la)) \unlhd (P,f)$, such that
   $\la$ is of central $\ell$-defect, the extension
   $$ 1 \to Z(\bL)_\ell^F \to P \to P/ Z(\bL)_\ell^F \to 1$$
   is split and $Z(\bL)_\ell^F$ is the unique maximal normal abelian
   subgroup of $P$. If $s$ is central, then $(\bL,\la)$ can be chosen to be
   $e$-cuspidal, and in that case $(\bL,\la)$ is unique up to
   $\bG^F$-conjugacy.
  \item  Suppose that $\ell=2$ and all components of $\bG$ are of type $A$.
   Then $b$ has a good pair $(\bL, \la)$ and a maximal $b$-Brauer pair
   $(P, f)$ with $(Z(\bL)_2^F, b_{\bL^F}(\la)) \unlhd (P,f)$, such that
   $Z(\bL)^F_2= \bT_2^F$, $\la$ is of central $2$-defect and
   $\Aut_P(\bT_2^F) = \Aut_{P'}(\bT_2^F)$. Here, $\bT$ is an $F$-stable
   maximal torus of $\bG$ such that $C_{\bG_1}(\bT^F_2) =\bT$ for a Levi
   subgroup $\bG_1$ of $\bG$ in duality with $C_{\bG^*}^\circ(s)$, and such
   that $N_{\bG_1^F}(\bT)$ contains a Sylow $2$-subgroup $P'$ of $\bG_1^F$.
  \item  Suppose that $\ell=2$, $\bG$ is simple, of classical type different
   from type $A$ and $s$ is quasi-isolated in $\bG^*$.
   Then $b$ has a good pair $(\bL, \la)$, where $ \bL $ is an $F$-stable
   maximal torus of $\bG$ containing a Sylow $e$-torus of $\bG$.
  \item  Suppose that $s$ is quasi-isolated and either $\bG$ is simple  of
   exceptional type and $\ell$ is bad for $\bG$, or $\bG$ is of rational
   type $\tw3D_4$ and $\ell=2,3$. Then $b$ has a good pair
   $(\bL, \la)$ which is $e$-split cuspidal. Further, if $\ell$ is odd,
   $\la$ is of central $\ell$-defect.
 \end{enumerate}
 In particular, if $b$ is quasi-isolated then $b$ has a good pair.
\end{thm}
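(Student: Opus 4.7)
The plan is to treat the four parts separately, extracting a good pair from the appropriate existing parametrisation of blocks, and then to assemble them for the final assertion.

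For (a), the hypotheses on $\ell$ place us in the Cabanes--Enguehard setting \cite{CE99}: the block $b$ possesses an $e$-cuspidal pair $(\bL,\la)$ with $\la\in\cE(\bL^F,s)$ of central $\ell$-defect. Condition (1) of Definition~\ref{dfn:goodpair} follows since Lemma~\ref{lem:ZL} forces $C_\bG(Z(\bL)_\ell^F)$ to be a Levi subgroup, necessarily equal to $\bL$; consequently $\bL$ is automatically $(e,\ell)$-adapted. Conditions (3) and (4) are then delivered by Proposition~\ref{prop:excellent}. The structural assertions on the defect group $P$ come from Proposition~\ref{prop:defect:re}(d),(g), which present $P$ as the split extension of $Z(\bL)_\ell^F$ by a Sylow $\ell$-subgroup $S$ of $W_{\bG^F}(\bL,\la)$; the uniqueness of $Z(\bL)_\ell^F$ as maximal normal abelian subgroup of $P$ relies on the faithful action of $W_{\bG^F}(\bL,\la)$ on $Z(\bL)_\ell^F$, in the spirit of Proposition~\ref{prop:2selfcent}. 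When $s$ is central the block is unipotent up to a linear twist, so uniqueness of the $e$-cuspidal pair reduces to \cite[Thm.~3.2]{BMM}.

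For (b), we construct $(\bL,\la)$ by hand: pick a Levi subgroup $\bG_1\le\bG$ in duality with $C_{\bG^*}^\circ(s)$ and an $F$-stable maximal torus $\bT\le\bG_1$ containing a Sylow $e$-torus of $\bG_1$. Proposition~\ref{prop:2selfcent} then gives $C_{\bG_1}(\bT_2^F)=\bT$ and places a Sylow $2$-subgroup $P'$ of $\bG_1^F$ inside $N_{\bG_1^F}(\bT)$. Let $\la\in\cE(\bT^F,s)$ be the character produced from $s$ by Jordan decomposition; it has central $2$-defect, and Lemma~\ref{lem:product} transfers the centraliser equality from $\bG_1$ up to $\bG$, yielding (1). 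Conditions (3) and (4) follow once more from Proposition~\ref{prop:excellent}, and the equality $\Aut_P(\bT_2^F)=\Aut_{P'}(\bT_2^F)$ is read off Lemma~\ref{lem:2selfcentA}.

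Part (c) is extracted from Enguehard's classification \cite{En08} of $2$-blocks in quasi-isolated series for classical groups outside type $A$: in each case the parametrising $e$-cuspidal pair $(\bL,\la)$ has $\bL$ an $F$-stable maximal torus containing a Sylow $e$-torus and $\la$ of central $2$-defect, which is precisely a good pair. Part (d) is read off the tables in Sections~\ref{sec:F4}--\ref{sec:E8} and Section~\ref{subsec:G2+3D4}: in every numbered line $(\bL,\la)$ satisfies $\bL=C_\bG(Z(\bL)_\ell^F)$ by the $(e,\ell)$-adapted verifications (Lemmas~\ref{lem:F4}, \ref{lem:E6}, \ref{lem:E7}, \ref{lem:E8-2}, \ref{lem:E8-3}, \ref{lem:E8-5}), $\la$ is of quasi-central (and usually central) $\ell$-defect, and the Brauer pair inclusions were installed in the proofs of Propositions~\ref{prop:F4-defgrp}--\ref{prop:E8-5-defgrp}. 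For the final assertion, a quasi-isolated block falls under (a) whenever $\ell$ is odd and good (with $\ell\ne 3$ in the triality case); the remaining possibilities split according to the rational type of $\bG$ into (b) (type $A$ components), (c) (other classical) and (d) (exceptional or $\tw3D_4$ with bad $\ell$). The main obstacle is the structural analysis of $P$ in (a): splitness of the extension and the characterisation of $Z(\bL)_\ell^F$ as the unique maximal normal abelian subgroup both hinge on the faithfulness of the relative Weyl group action on $Z(\bL)_\ell^F$.
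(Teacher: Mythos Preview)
Your proposal has genuine gaps in several places.

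In (a), the structural claims about $P$ --- that the extension splits and that $Z(\bL)_\ell^F$ is the \emph{unique} maximal normal abelian subgroup --- do not follow from Proposition~\ref{prop:defect:re}(d),(g); those parts only say that $P$ is a defect group under certain hypotheses, and that $A=D$. Nor does Proposition~\ref{prop:2selfcent} help, since it is explicitly about $\ell=2$. These structural facts are the content of \cite[Lemma~4.16]{CE99}, and the paper simply quotes them; you need to do the same.

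In (b), your construction takes $\bL=\bT$, but then condition~(1) of Definition~\ref{dfn:goodpair} demands $C_\bG(\bT_2^F)=\bT$, which you have not established: you only know $C_{\bG_1}(\bT_2^F)=\bT$, and Lemma~\ref{lem:product} does not transfer this to $\bG$ (that lemma passes from simple components up to a product, not from a Levi subgroup up to the ambient group). The paper's approach is genuinely different and more delicate: it sets $\bL:=C_\bG(A)$ with $A=\bT_2^F$, passes through a regular embedding $\bG\hookrightarrow\tbG$, uses the Bonnaf\'e--Rouquier transport (Proposition~\ref{prop:Cabanesjordancrux}) from $\tbG_1$ to $\tbL$, and then a degree computation to force $A=Z(\bL)_2^F$ and central defect. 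Your invocation of Lemma~\ref{lem:2selfcentA} for the automizer equality is also off target: that lemma concerns the situation where $P$ centralises $A/Z$, not automizer comparisons.

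Finally, your argument for the last sentence is incomplete: parts (b)--(d) all carry simplicity or type hypotheses on $\bG$, so for a general $\bG$ with simply connected derived subgroup you must first reduce. The paper does this via Lemma~\ref{lem:quasidefext2} (lifting good pairs from $[\bG,\bG]^F$ to $\bG^F$) and Lemma~\ref{lem:product} (passing to a single simple factor); without that reduction your case split does not cover the statement.
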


\begin{proof}
Suppose the assumptions of~(a) hold. Then there exists a pair
$(\bL,\la)$ (denoted $(\bM,\zeta_M)$ in \cite{CE99}) such that
$C_\bG^\circ (Z(\bL)_\ell^F)= \bL$ (\cite[Lemma~4.8]{CE99}),
$\la$ is of central $\ell$-defect (\cite[Lemma~4.11]{CE99}), and
letting $\hat u$ denote the unique $\ell$-block of $C_{\bG^F}(Z(\bL)_\ell^F)$
covering $b_{\bL^F}(\la)$, $(Z(\bL)_\ell^F, \hat u)$ is a $b$-Brauer pair
(\cite[Lemma~4.10]{CE99}). Further, there exists a maximal $\bG^F$-Brauer pair
$(P, f)$ with $(Z(\bL)_\ell^F,\hat u)\unlhd(P,f)$ and
such that the short exact sequence above has the required properties
(\cite[Lemma~4.16]{CE99}).
Thus, in order to prove that $(\bL,\la)$ is a good pair for $b$,
we need only show that $\bL=C_\bG(Z(\bL)_\ell^F)$. But since
$C_\bG^\circ (Z(\bL)_\ell^F)= \bL$  this follows from Lemma~\ref{lem:ZL}.
The final assertion follows from \cite[Thm.~1.1, Lemma~4.5]{CE94}.

Suppose that the assumptions of~(b) hold and note that $\bT$ as in the
statement exists by Proposition~\ref{lem:2selfcentA} applied to $\bG_1$.
Let $\bG\rightarrow\tbG$ be a regular embedding. For $\tbG_1$ an $F$-stable
Levi subgroup of $\tilde\bG$ with $\tbG_1 \cap \bG= \bG_1$ we let
$\tilde \bT$ be an $F$-stable maximal torus of $\tbG_1$ with
$\tilde\bT \cap\bG_1 =\bT$. Set $A=\bT_2^F$ and $\tilde A= \tilde\bT_2^F$.
By Lemma~\ref{lem:ZL}, $ \tbL:= C_{\tbG} (A)$ is a Levi subgroup of
$\tbG$ and $\bL:= C_\bG (A)$ is a Levi subgroup of $\bG$.

Let $\tilde s\in\tilde\bG^{*F}$ be an element of odd order lifting $s$ such
that $C_{\tbG^*}(s)= \tbG_1^*$, where $\tbG_1^*$ is the dual
of $\tbG_1$ in $\tbG^*$ and let $\theta$ be the linear character
of $\tbG_1^F$ in duality with $\tilde s$ (see \cite[Prop.~13.30]{DM91}).
By \cite[Prop.~1.5]{En08}, $\cE_2(\tilde\bG_1^F, \tilde s)$ is a single
$2$-block, say $\tilde c$ and $\cE_2(\tilde\bG^F, \tilde s)$ is a single
$2$-block, say $\tilde b$. In particular, $\tilde b$ covers $b$.
Moreover, $R_{\tbG_1}^{\tbG}$ induces a Morita equivalence between
$\tilde c$ and $\tilde b$.
Since $C_{\bG_1}(A) =\bT$, $C_{\tbG_1}(A) = \tilde \bT$, and hence
by Proposition~\ref{prop:Cabanesjordancrux}, applied with
$\bM= \tbG_1$, $\bG=\tbG $, and $\bL =\tilde \bT$,
$R_{\tilde\bT}^{\tbL} (\theta)$ is (up to sign) an
irreducible character, say $\tilde \chi$ of $\tbL^F =C_{\tbG}(A)^F$
and $(A, b_{\tbL^F}(\tilde \chi))$ is a $\tilde b$-Brauer pair.

Let $P'\leq N_{\bG_1}(\bT)$ be a Sylow $2$-subgroup of $\bG_1^F$. Then
$P'\leq N_{\tbG_1}(A,\theta)$, so by Proposition~\ref{prop:Cabanesjordancrux},
$P'\leq N_{\tbG} (A,b_{\tbL^F}(\tilde \chi))$. In particular, $P'$ acts
on the blocks of $\bL^F$ covered by $ b_{\tbL^F}(\tilde \chi) $.
There is an odd number of such blocks, so there exists a block $f$ of $\bL^F$
covered by $b_{\tbL^F}(\tilde \chi)$ which is
$P'$-stable. Let $\chi'\in\Irr(f) \cap \cE (\bL^F, \ell')$
be covered by $\tilde \chi$ and let $b'$ be the block of $\bG^F$ such that
$(A, f)$ is a $b'$-Brauer pair. Then $b'$ is covered by $\tilde b$.
Since $\pm R_{\tilde \bT}^{\tbL}(\theta)\in\Irr(\tbL^F,\tilde s)$,
and $\tilde s$ is an odd order element,
$$\chi'(1)_{2} =\tilde \chi (1)_{2} =
  \frac{|\tbL^F|_{2}}{|\tilde \bT^F|_2 } \geq
  \frac{| \bL^F|_{2}}{|\bT^F|_2 } = \frac{| \bL^F|_{2}}{|A|}.$$
Since $A$ is central in $\bL^F$, $A \leq \ker(\chi')$. From the above
displayed equation it follows that $A = Z(\bL)_2^F$ and $ \chi'$ is of
central $2$-defect. Let $(P, d)$ be a $b'$-Brauer pair, maximal with respect
to $(A, f) \unlhd (P, d)$. Then $P\cap\bL^F = A$ and by
Lemma~\ref{lem:local-cliff}(a), $P/A $ is a Sylow $\ell$-subgroup of
$N_{\bG^F}(A, f) /\bL^F$. Since
$P'\leq N_{\bG^F}(A, f)$ and $P'\cap C_{\bG^F}(A) =A$,
it follows that $|P|\geq |P'|$. On the other hand, since $b'$
being covered by $b$ means that $\Irr(b')\subseteq \cE_\ell(\bG^F, s)$,
by \cite[Prop.~1.5]{En08}, any Sylow $2$-subgroup of $\bG_1^F$
is a defect group of $b'$. Hence, $(P, d)$ is a maximal $b'$-pair.
Since $\bL$ is a Levi subgroup of $\bG$, $(A,\chi')$ is a good pair for $b'$.
Now $b$ and $b'$ are both covered by $\tilde b $ hence replacing
$(\bL, \chi')$ by a suitable $\tilde\bG^F$-conjugate gives the desired
result.

Now suppose that the assumptions of~(c) hold. So $\bG$ is
simple of type $B$, $C$ or $D$.
Then $s=1$ is the only odd order quasi-isolated element of $\bG^*$.
By \cite[Prop.~1.5]{En08} $\bG^F$ has a unique unipotent 2-block, hence
by Proposition~\ref{prop:2selfcent}
$(\bL, 1)$ is a good pair for $b$ for any $F$-stable maximal torus $\bL$ of
$\bG$ containing a Sylow $e$-torus of $\bG$.

Suppose the assumptions of~(d) hold. If $s$ is non-central in $\bG^*$, then
the result follows from
Theorem \ref{thm:mainblocks} and its proof. If $s$ is central in $\bG^*$,
then~(d) follows from \cite{En00}. Note that Enguehard does not
state the equality $\bL= C_\bG (Z(\bL)_\ell^F)$ for all unipotent
$e$-cuspidal pairs $(\bL, \la)$   but this can be  checked --- the Levis 
occurring for central quasi-isolated elements  also occur in  our tables, 
except for the 1-split Levis of type $D_4$ in $E_6$, and of type $E_6$ in
$E_7$ and their Ennola duals, and these cases can be easily checked also.

Finally, suppose that $b$ is a quasi-isolated block of $\bG^F$.
Then any block of $[\bG,\bG]^F$ covered by $b$ is quasi-isolated, so by
Lemma~\ref{lem:quasidefext2}, we may assume that $\bG=[\bG,\bG]$. Since
$\bG$ is a direct product of simple simply connected groups and the
component of $b$ in the fixed points of each $F$-orbit is quasi-isolated,
by  Lemma~\ref{lem:product} we may assume that $\bG$ is simple. Now the result
follows from parts~(a)--(d).
\end{proof}

\begin{prop}   \label{prop:jordan-nonab}
 Suppose that $b$ and $c$ are Bonnaf\'e--Rouquier correspondents and that
 $c$ has a good pair $(\bL, \la)$. Let $A= Z(\bL)_\ell^F$, let
 $u=b_{\bL^F}(\la)$ and let $(P,f)$ be a maximal $c$-Brauer pair with
 $(A, u) \unlhd (P,f)$. Then:
 \begin{enumerate}[\rm(a)]
  \item Let $v$ be the $\ell$-block of $C_\bG^\circ(A)^F$ containing
   the constituents of $R_\bL^{C_\bG^\circ(A)}(\la)$ and let $\tilde v$
   be the block of $C_\bG(A)^F$ covering $v$.
   Then there is a maximal $\bG^F$-Brauer pair $(Q, d)$ such that
   $(A,\tilde v)\unlhd (Q,d)$, $C_Q(A)\cong C_P(A)$ and $\Aut_Q(A)=\Aut_P(A)$.
  \item $b$ has abelian defect groups if and only if $c$ has abelian
   defect groups. If this is the case, then the defect groups of $b$ and $c$
   are isomorphic.
  \item Let $Z$ be a central $\ell$-subgroup of $\bG^F$ and let $\bar c$
   (respectively $\bar b$) be the image of $c$ (respectively $b$)
   in $\bM^F/Z$ (respectively $\bG^F/Z$). If either $\bar b$ or $\bar c$ has
   abelian defect groups, then $P$ centralises $A/Z$ and $Q$ centralises $A/Z$.
  \item If $\la$ is of central $\ell$-defect and $c$ has abelian defect groups,
   then $A$ is a defect group of both $b$ and $c$.
 \end{enumerate}
\end{prop}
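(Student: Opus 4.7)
My plan is to show that $P$ itself may be taken as the desired defect group of $b$ via the Brauer pair $(A,\tilde v)$, after which all four parts will fall out by direct unpacking. The one substantive ingredient is that Bonnaf\'e--Rouquier correspondence preserves the defect order of blocks; everything else amounts to bookkeeping with Brauer-pair inclusions.

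First I will apply Proposition~\ref{prop:Cabanesjordancrux} with $A=Z(\bL)_\ell^F$ and any generators $z_1,\dots,z_m$ of $A$, setting $\bM_i=C_\bM^\circ(z_1,\dots,z_i)$ and $\bG_i=C_\bG^\circ(z_1,\dots,z_i)$. The good-pair condition $\bL=C_\bM(A)$ forces $\bM_m=\bL$, so $R_\bL^{\bM_m}(\la)=\la$ and condition~(2) is trivial; condition~(1) holds by Proposition~\ref{prop:decomp}(3), since $\la$ is of quasi-central $\ell$-defect. This yields a $b$-Brauer pair $(A,\tilde v)$ together with the inclusion $N_{\bM^F}(A,u)\leq N_{\bG^F}(A,\tilde v)$.

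For the defect comparison, given $\chi_0\in\Irr(c)$ and $\chi=\epsilon_\bM\epsilon_\bG\RMG(\chi_0)\in\Irr(b)$, the Lusztig-induction degree formula $\chi(1)=\pm[\bG^F:\bM^F]_{p'}\chi_0(1)$ together with $\ell\ne p$ yields
$$\frac{|\bG^F|_\ell}{\chi(1)_\ell}=\frac{|\bM^F|_\ell}{\chi_0(1)_\ell},$$
so minimising over each block gives $|R_b|=|R_c|$. Since $P\leq N_{\bM^F}(A,u)\leq N_{\bG^F}(A,\tilde v)$, uniqueness of inclusion of Brauer pairs produces a unique $b$-Brauer pair $(P,d)$ with $(A,\tilde v)\unlhd(P,d)$; and because every maximal $b$-Brauer pair has first component of order $|R_b|=|R_c|=|P|$, this $(P,d)$ is itself maximal. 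Taking $Q:=P$ then settles~(a), with $C_Q(A)=C_P(A)$ and $\Aut_Q(A)=\Aut_P(A)$ tautological, and~(b) follows at once, $P$ being simultaneously a defect group of $b$ and of $c$.

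Finally, for~(c) the hypothesis that $\bar c$ (equivalently $\bar b$) has abelian defect means $P/Z$ is abelian, so $[P,A]\leq[P,P]\leq Z$, showing $P=Q$ centralises $A/Z$. For~(d), central $\ell$-defect of $\la$ makes $A$ a defect group of $u$ on $\bL^F$ by Proposition~\ref{prop:quasicentraldefpro}(c), so $C_P(A)=P\cap\bL^F=A$ by Lemma~\ref{lem:local-cliff}(a); if $c$ additionally has abelian defect then $P=C_P(A)=A$, so $Q=P=A$ exhibits $A$ as a defect group of $b$ as well. The only delicate point in this plan is the defect-order preservation under BR, but this is essentially immediate from the degree formula once $\ell\ne p$ is invoked.
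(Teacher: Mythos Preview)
Your argument has a genuine gap at the step where you claim that $P\leq N_{\bG^F}(A,\tilde v)$ yields a $b$-Brauer pair $(P,d)$ with $(A,\tilde v)\unlhd(P,d)$. Uniqueness of inclusion of Brauer pairs goes \emph{downward}: given $(P,d)$ and $A\leq P$, there is a unique $(A,\tilde v')$ below it. It does \emph{not} go upward: given $(A,\tilde v)$ and an $\ell$-group $P$ normalising it, there need not be any block $d$ of $C_{\bG^F}(P)$ with $(A,\tilde v)\unlhd(P,d)$; for that you would need $\Br_P(\tilde v)\ne 0$, and nothing in your setup establishes this. Note that $C_{\bG^F}(A)$ is in general strictly larger than $\bL^F=C_{\bM^F}(A)$, so the fact that $(P,f)$ sits above $(A,u)$ as $\bM^F$-Brauer pairs gives no direct control over $(A,\tilde v)$ as a $\bG^F$-Brauer pair. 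Your conclusion that $P$ itself is a defect group of $b$ is exactly what the paper flags in the introduction as an open expectation, not a theorem.

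The paper proceeds differently. One takes $(Q,d)$ maximal among $b$-Brauer pairs above $(A,\tilde v)$ with $Q\leq N_{\bG^F}(A,\tilde v)$; by Lemma~\ref{lem:local-cliff}(a), $Q$ is a defect group of $\tilde v$ on $N_{\bG^F}(A,\tilde v)$, and $QC_{\bG^F}(A)/C_{\bG^F}(A)$ is Sylow in $N_{\bG^F}(A,\tilde v)/C_{\bG^F}(A)$. Proposition~\ref{prop:Cabanesjordancrux}(c) then embeds $P/C_P(A)$ into $Q/C_Q(A)$ after a conjugation. For the centraliser piece one must invoke Puig's theorem on Morita equivalences with nilpotent blocks: $u$ is nilpotent with defect group $C_P(A)$, and $u$, $v$ are Bonnaf\'e--Rouquier correspondents, so $v$ is nilpotent with defect group \emph{isomorphic} to $C_P(A)$, whence $|C_Q(A)|\geq|C_P(A)|$. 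Combining gives $|Q|\geq|P|$, and equality then follows from the defect-order preservation you correctly derived. This yields $C_Q(A)\cong C_P(A)$ and $\Aut_Q(A)=\Aut_P(A)$, but not $Q=P$; only these weaker conclusions are asserted in~(a), and they suffice for (b)--(d).
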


\begin{proof}
Let $U$ be a subgroup of $A$. Since $C_\bM(A) = \bL$,
$C_{C_\bM(U)}(A) = \bL$. So, since $\la$ is of quasi-central $\ell$-defect,
by Proposition~\ref{prop:decomp}, the conditions of
Proposition~\ref{prop:Cabanesjordancrux} hold for any choice of generators
$\langle z_1,\ldots,z_m \rangle $ of $A$ and the statement of~(a)
makes sense. In particular, $u$ and $v$ are Bonnaf\'e--Rouquier correspondents.

Let $(A, \tilde v) \unlhd (Q,d)$ where $Q$ is maximal with respect to the
property that $Q\leq N_{\bG^F}(A, \tilde v)$. Then $QC_{\bG^F}(A)/C_{\bG^F}(A)$
is a Sylow $\ell$-subgroup of $N_{\bG^F}(A, \tilde v)/C_{\bG^F}(A)$
(see Lemma~\ref{lem:local-cliff}). So by
Proposition~\ref{prop:Cabanesjordancrux}(c), and by replacing if necessary
$(Q, d)$ by an $N_{\bG^F}(A, \tilde v)$-conjugate,
$QC_{\bG^F}(A)/C_{\bG^F}(A)$ contains $PC_{\bM^F}(A)/C_{\bM^F}(A)$.
In particular, $P/C_P(A)$ is isomorphic to a subgroup of $Q/C_Q(A)$.

Now $C_P(A)$ is a defect group of the block $u$,
$u$ is nilpotent and is Morita equivalent to $v$ (over ${\mathcal O}$).
By a result of Puig a Morita equivalence over $\cO$ between a nilpotent block
and a block preserves nilpotency and isomorphism type of defect groups
(see \cite[Thm.~8.4 and Cor.~7.3]{Pu}), so $v$ is nilpotent and a defect group
of $v$ is isomorphic to $C_P(A)$. Since $C_Q(A)$ contains a defect group of
$\tilde v$, $C_Q(A)$ contains a defect group of $v$, and hence
$|C_Q(A)| \geq |C_P(A)|$. We have shown above that $P/C_P(A)$ is isomorphic
to a subgroup of $Q/C_Q(A)$, hence $|Q|\geq|P|$. On the other hand,
$|Q|\leq |P|$ as $Q$ is contained in a defect group of $b$, and
$P$ is a defect group of $c$, and $b$ and $c$ are Morita equivalent.
Thus $Q$ is a defect group of $b$, $C_Q(A)\cong C_P(A)$
is a defect group of $v$ and $PC_{\bG^F}(A)/C_{\bG^F}(A) =
QC_{\bG^F}(A)/C_{\bG^F}(A)$. This proves ~(a).

Part~(b) follows from~(a) as $Q$ is abelian if and only if $Q=C_Q(A)$ and
$C_Q(A)$ is abelian, and similarly $P$ is abelian if and only if
$P=C_P(A)$ and $C_P(A)$ is abelian. Part~(c) follows from~(a) and~(b)
on observing that $P/Z$ is a defect group of $\bar c$ and $Q/Z$ is a defect
group of $\bar b$.
Finally, suppose that $\la$ is of central $\ell$-defect and $c$ has abelian
defect groups. Then $P= A =Q $.
\end{proof}

\subsection{Proof of Theorem~\ref{thm:all-ab-intro}}  \label{subsec:centquot}

In this subsection, $\bM$ will denote an $F$-stable Levi subgroup of $\bG$,
and  $b$ and $c$  will  be  $\ell$-blocks of
$\bG^F$ and $\bM^F$ respectively.
For large $\ell$, Theorem~\ref{thm:all-ab-intro} follows from the work of
Cabanes--Enguehard and Enguehard.

\begin{prop}   \label{prop:abelian-good}
 Suppose that $b$ and $c$ are Bonnaf\'e--Rouquier correspondents.
 Let $Z$ be a central $\ell$-subgroup of $\bG^F$ and let $\bar b$ and
 $\bar c$ be the images of $b$ and $c$ in $\bG^F/Z$ and $\bM^F/Z$ respectively.
 \begin{enumerate} [\rm(a)]
  \item  If $[\bG,\bG]$ is simply connected, $\ell$ is odd, good for $\bG$ and
   $\ell\ne3$ if $\tw3D_4(q)$ is involved in $\bG^F$, then $\bar b$ and
   $\bar c$ have isomorphic defect groups.
  \item If $\ell=2$, and all components of $\bG$ are classical, then $\bar b$
   and $\bar c$ have a common defect group.
 \end{enumerate}
\end{prop}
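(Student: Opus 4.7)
The plan is to combine the good-pair machinery (Definition \ref{dfn:goodpair}, Theorem \ref{thm:goodpairs}) with the Bonnaf\'e--Rouquier transfer result (Proposition \ref{prop:jordan-nonab}) so as to match the defect-group structure between $c$ and $b$ and then pass to the quotient modulo $Z$. The general scheme is: produce a good pair for $c$ whose Brauer-pair data admit a rigid structural description; transfer this description to a defect group of $b$ via Proposition \ref{prop:jordan-nonab}(a); then apply the good-pair construction to $b$ itself and match the two structures. The passage to $\bG^F/Z$ is then unproblematic because $Z$ is forced to sit inside the distinguished normal abelian subgroup arising from the good pair.

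For part~(a), apply Theorem \ref{thm:goodpairs}(a) to $c$ to obtain a good pair $(\bL,\lambda)$ with $\lambda$ of central $\ell$-defect and a maximal $c$-Brauer pair $(P,f)$ such that, writing $A=Z(\bL)_\ell^F$, the normal inclusion $(A,u)\unlhd(P,f)$ holds, the extension $1\to A\to P\to P/A\to 1$ splits, and $A$ is the unique maximal normal abelian subgroup of $P$. Central $\ell$-defect of $\lambda$ together with $\bL=C_\bM(A)$ force $C_P(A)=A$ by Proposition \ref{prop:quasicentraldefpro}(c) and Lemma \ref{lem:local-cliff}(a). Proposition \ref{prop:jordan-nonab}(a) then supplies a maximal $b$-Brauer pair $(Q,d)$ with $A\unlhd Q$, $C_Q(A)\cong A$, and $\Aut_Q(A)=\Aut_P(A)$. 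Reapplying Theorem \ref{thm:goodpairs}(a), this time to $b$, produces a defect group $P'$ of $b$ with the same split structure over its (unique) maximal normal abelian subgroup; since $Q$ and $P'$ are $\bG^F$-conjugate, $Q$ inherits this structure, and the self-centralising normal abelian $A\le Q$ must coincide with its unique maximal normal abelian subgroup. Hence $Q\cong A\rtimes(P/A)\cong P$ by an isomorphism restricting to the identity on $A$. Since $Z\le Z(\bG^F)$ centralises $A$, one has $Z\le C_Q(A)\cap C_P(A)=A$, so the isomorphism fixes $Z$ and descends to $P/Z\cong Q/Z$, which are defect groups of $\bar c$ and $\bar b$.

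For part~(b), I would first reduce to the case $[\bG,\bG]$ simply connected via a regular embedding (using Lemma \ref{lem:quasidefext2} to lift good pairs and their defect data) and split across $F$-orbits of components using Lemma \ref{lem:product}. On the type-$A$ components Theorem \ref{thm:goodpairs}(b) furnishes a good pair with $A=\bT_2^F$, $\lambda$ of central $2$-defect, and $\Aut_P(A)=\Aut_{P'}(A)$, where $P'$ is a Sylow $2$-subgroup of the Levi $\bG_1$ of $\bG$ in duality with $C_{\bG^*}^\circ(s)$; on the remaining classical components only $s=1$ occurs and Theorem \ref{thm:goodpairs}(c) contributes a toral good pair. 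Because $C_{\bG^*}(s)\le\bM^*$ (Definition \ref{dfn:bon-rou-corr}), the Levi $\bG_1$ lies in $\bM$, so $P'\le\bM^F\cap\bG^F=\bM^F$. By \cite[Prop.~1.5]{En08} any Sylow $2$-subgroup of $\bG_1^F$ is a defect group of every $\ell$-block in $\cE_2(\bG^F,s)$ and in $\cE_2(\bM^F,s)$, and the automizer equality furnished by Proposition \ref{prop:jordan-nonab}(a) forces $P'$ to realise such a defect group of $b$ and $c$ simultaneously --- this is the common defect group. Since $Z\le Z(\bG^F)$ centralises $A$, $Z\le A\le P'$, so the image $P'/Z$ is a common defect group of $\bar b$ and $\bar c$.

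The principal obstacle I anticipate is part~(b): when $\ell=2$ the extension $A\to P\to P/A$ need not split, so the final identification of $P$ and $Q$ cannot rely on semidirect-product uniqueness as in~(a). The matching must instead be read off from Enguehard's description \cite[Prop.~1.5]{En08} of defect groups of $\cE_2$-series for classical groups as Sylow $2$-subgroups of the Jordan-correspondent Levi $\bG_1$, combined with the automizer coincidence of Proposition \ref{prop:jordan-nonab}(a) and the order equality $|P|=|Q|=|\bG_1^F|_2$. Ensuring that the transfer descends to $\bG^F/Z$ --- equivalently, that $Z\le A$ on both sides --- again rests on the central defect of $\lambda$ and the identity $C_P(A)=A$.
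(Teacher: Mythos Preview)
Your treatment of part~(a) is correct and essentially matches the paper's argument. The only cosmetic difference is that the paper cites \cite[Lemma~4.16]{CE99} directly for the split structure and uniqueness of the maximal normal abelian subgroup of the $b$-side defect group $Q$, whereas you reapply Theorem~\ref{thm:goodpairs}(a) to $b$; since the latter is a repackaging of the same Cabanes--Enguehard result, the two routes coincide.

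Part~(b), however, is both overcomplicated and contains a genuine error. A regular embedding $\bG\hookrightarrow\tilde\bG$ has $[\tilde\bG,\tilde\bG]=[\bG,\bG]$, so it cannot be used to ``reduce to $[\bG,\bG]$ simply connected'' as you propose; the isogeny type of the derived subgroup is unchanged. This matters because Theorem~\ref{thm:goodpairs}(b),(c), on which your component-by-component analysis rests, require $[\bG,\bG]$ simply connected, a hypothesis not present in the statement of~(b). More to the point, none of this machinery is needed. The paper's proof of~(b) is two sentences: take a Levi subgroup $\bG_1\le\bM$ of $\bG$ in duality with $C_{\bG^*}(s)\le\bM^*$; then \cite[Prop.~1.5]{En08} says that any Sylow $2$-subgroup of $\bG_1^F$ is a defect group of both $b$ and $c$. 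This is already a \emph{common} defect group, so $\bar b$ and $\bar c$ share the defect group $P'/Z$. You do eventually invoke \cite[Prop.~1.5]{En08}, but once you have it the good-pair transfer, the automizer equality from Proposition~\ref{prop:jordan-nonab}(a), and the splitting across $F$-orbits are all superfluous.
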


\begin{proof}
In the situation of~(a), $c$ has a good pair $(\bL,\la)$ by
Theorem~\ref{thm:goodpairs}(a), and $A= Z(\bL)_\ell^F$ has a complement
in $P$ and $C_P(A)= A$, with $(P,f)$ a maximal $c$-Brauer pair with
$(A, u)\unlhd (P,f)$. If $(Q,d)$ is a maximal $b$-Brauer pair as in
Proposition~\ref{prop:jordan-nonab}(a), then $C_Q(A)= A$, whence
$A$ is a maximal normal abelian subgroup of $Q$. But by the structure of
the defect groups of $b$ as given in \cite[Lemma~4.16]{CE99}, $Q$ has a
unique maximal normal abelian subgroup and this subgroup has a complement
in $Q$. So, $A$ has a complement in $Q$. The result follows from
Proposition~\ref{prop:jordan-nonab}(a).

In~(b) let $\bG_1\leq\bM$ be a Levi subgroup of $\bG$ in duality with
$C_{\bG^*}(s)\leq \bM^*$. Then, by \cite[Prop.~1.5]{En08}, any Sylow
$2$-subgroup of $\bG_1^F$ is a defect group of both $b$ and $c$.
\end{proof}

In fact, as pointed out to us by Marc Cabanes, it can be deduced from
\cite[Lemma~4.16]{CE99} that the two blocks in the situation of
Proposition~\ref{prop:abelian-good}(a) have a common defect group.
The next result will be needed to deal with $E_6$ at $\ell=3$.

\begin{lem}   \label{lem:3A}
 Suppose that $\bG$ is simply connected in characteristic not~$3$ and all
 components of $\bG$ are of type $A$. Let $b$ be a unipotent $3$-block of
 $\bG^F$, $(\bL,\la)$ an $e_3(q)$-cuspidal unipotent pair which is a good
 pair for $b$
 as in Theorem~\ref{thm:goodpairs}(a), $A= Z(\bL)_3^F$ and let $(P,u)$ be a
 maximal $b$-Brauer pair with $(A, b_{\bL^F}(\la))\unlhd (P, u)$. Let $Z$
 be a central subgroup of order~$3$ of $\bG^F$. Suppose that $P$ is
 non-abelian and that $P$ acts trivially on $A/Z$. Then:
 \begin{enumerate}[\rm(a)]
  \item There is an $F$-orbit of  irreducible components of $[\bG, \bG]$ of
   type $A_2$ whose group of $F$-fixed points contains $Z$, and this is the
   only $F$-orbit of irreducible components of $[\bG,\bG] $ whose fixed points
   contain a central subgroup of order~$3$. Further, $P/A$ is cyclic.
  \item  If moreover $Z(P)=Z$ and $P/Z$ is abelian, then $P$ is extra-special
   of order $3^3$.
 \end{enumerate}
\end{lem}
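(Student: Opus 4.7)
The plan is to decompose $\bG$ according to its $F$-orbits on simple components, reduce to the single orbit containing $Z$, and then use the explicit Sylow-$3$ structure of a quasisimple group of type $A_2$. Since $\bG$ is simply connected, write $\bG=\bG_1\times\cdots\times\bG_r$ as a direct product of $F$-stable subgroups corresponding to the $F$-orbits on simple components, so that $\bG_i^F\cong\bH_i^{F^{d_i}}$ is quasisimple of type $A_{n_i-1}$ for some $n_i$, where $\bH_i$ is one component in the $i$-th orbit of length $d_i$. Correspondingly $\bL=\prod\bL_i$ with $\bL_i=\bL\cap\bG_i$, $\la=\la_1\otimes\cdots\otimes\la_r$, $A=\prod A_i$ with $A_i=Z(\bL_i)_3^F$, and (after replacing by a conjugate if necessary) $P=\prod P_i$ with $(A_i,b_{\bL_i^F}(\la_i))\unlhd(P_i,u_i)$, where each $(\bL_i,\la_i)$ is a good pair for the unipotent block $b_i$ of $\bG_i^F$ with $b=b_1\otimes\cdots\otimes b_r$. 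Since $Z$ is cyclic of order $3$, there is a unique $j$ such that the projection of $Z$ to $\bG_j^F$ is non-trivial; then $Z\le\bG_j^F$ and $Z\cap\bG_i^F=1$ for $i\ne j$.

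The triviality of the $P$-action on $A/Z$ forces, for $i\ne j$, that $[P_i,A_i]\le Z\cap\bG_i^F=1$, so $P_i$ centralizes $A_i$; by the maximality of $A$ as a normal abelian subgroup of $P$ (Theorem~\ref{thm:goodpairs}(a)), this gives $P_i=C_{P_i}(A_i)=A_i$, hence $P_i$ is abelian. The non-abelianness of $P$ therefore concentrates in $\bG_j$: $P_j$ is non-abelian, and since $Z\le Z(\bG_j)^F$ is cyclic of order dividing $\gcd(n_j,q^{d_j}\mp 1)$, we conclude $3\mid n_j$. The main obstacle is to rule out $n_j\ge 6$ and, in parallel, to show that $Z(\bG_i)^F$ has no order-$3$ element for $i\ne j$. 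For both I would invoke the explicit description of $e_3(q)$-cuspidal good pairs of central $3$-defect in a simply connected type-$A$ group: $\bL_j$ is built (up to a central torus) from standard Levi blocks so that the relative Weyl group $W_{\bG_j^F}(\bL_j,\la_j)$ contains a symmetric subgroup acting by permuting toric coordinates of $A_j$. If $n_j\ge 6$ one can find a $3$-cycle $\tau$ in this symmetric group and a vector $a\in A_j$ supported on the three coordinates moved by $\tau$ such that $[\tau,a]$ is supported on those same three coordinates but is not a multiple of the diagonal element in $Z$; this contradicts the hypothesis. The analogous argument shows that a non-trivial central $3$-element in $\bG_i^F$ for $i\ne j$ would produce a non-trivial Sylow-$3$ subgroup of $W_{\bG_i^F}(\bL_i,\la_i)$, contradicting $P_i=A_i$. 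Thus $n_j=3$, $\bG_j$ is of type $A_2$, $\bG_i^F$ has no central $3$-element for $i\ne j$, and $P/A\cong P_j/A_j$ is cyclic of order $3$, being the Sylow-$3$ subgroup of $S_3$.

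For part~(b), since $P_i$ is abelian for $i\ne j$ we have $P_i\le Z(P)=Z$; combined with $P_i\le\bG_i^F$ and $Z\cap\bG_i^F=1$ this forces $P_i=1$, so $P=P_j$ is the defect group of a unipotent block of a simply connected group of type $A_2$. Explicitly $P_j=A_j\rtimes\langle\sigma\rangle$ with $A_j\cong C_{(q^{d_j}\mp 1)_3}^{\,2}$ and $\sigma$ lifting the Coxeter element of $S_3$, acting on $A_j$ with characteristic polynomial $\lambda^2+\lambda+1$. A direct calculation shows that the action of $\sigma$ on $A_j/Z$ is trivial if and only if $A_j$ is elementary abelian of order $9$, which is secured by the assumption that $P/Z$ is abelian; hence $|P|=27$. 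Finally, the vanishing of the norm $1+\sigma+\sigma^2$ on $A_j$ forces $\sigma$ and every element of $A_j$ to have order dividing $3$, so every non-identity element of $P$ has order $3$ and $P$ is extra-special of exponent $3$.
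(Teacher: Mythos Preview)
Your overall strategy matches the paper's: decompose along $F$-orbits of simple components, localise the non-abelianness in a single factor $\bG_j$, pin that factor down as type $A_2$, and then compute in (b). There is, however, one genuine slip in the logic. The sentence ``Since $Z$ is cyclic of order~$3$, there is a unique $j$ such that the projection of $Z$ to $\bG_j^F$ is non-trivial; then $Z\le\bG_j^F$'' is false as written --- a diagonal copy of $C_3$ inside a direct product may project nontrivially onto several factors. What \emph{is} true, and what the paper argues, is that $[P_i,A_i]\le Z\cap\bG_i^F$ for every $i$; hence $[P_i,A_i]\ne 1$ forces $Z\le\bG_i^F$, and since the $\bG_i^F$ intersect trivially this can happen for at most one $i$. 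The non-abelianness of $P$ (together with $C_P(A)=A$) then supplies such a $j$, and a posteriori $Z\le\bG_j^F$. Once you reorder these steps, everything downstream survives.

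Where you and the paper diverge is in the mechanism for excluding large $n_j$ and for ruling out central $3$-elements in the other factors. The paper invokes \cite[Prop.~3.3]{CE94} to see that whenever $3\mid\gcd(n_i+1,q^{d_i}\mp 1)$ the block $b_i$ is principal, so $P_i$ is a full Sylow $3$-subgroup and in particular non-abelian; it then bounds $|[P_j,A_j]|\ge 9$ directly when $n_j\ge 5$ or $3^2\mid(q^{d_j}\mp 1)$. Your explicit $3$-cycle computation is essentially the same calculation unpacked, and has the merit of being self-contained; but you should say why $(\bL_j,\la_j)$ is the torus pair (for type $A$ with the correct sign relative to $e_3(q)$, the only $e$-cuspidal unipotent character lies over the maximally split/antisplit torus, giving $W_{\bG_j^F}(\bL_j,\la_j)\cong\fS_{n_j}$). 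For part~(b) the paper takes a different shortcut: it passes to the adjoint quotient and observes that $P/Z$ injects into $\PGL_3(q^{d_j})$ (or $\PGU_3$), whose Sylow $3$-subgroup is non-abelian of order $27$, forcing $|P/Z|\le 9$. Your Coxeter-element calculation (vanishing of $1+\sigma+\sigma^2$ and the determinant-$3$ computation for $\sigma-1$) reaches the same conclusion by a more concrete route and even yields the exponent.
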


\begin{proof}
Since restriction induces a  bijection between the unipotent characters of
$\bG^F$ and $[\bG, \bG]^F$, there is a unique block, say $b_0$ of
$[\bG,\bG]^F$ covered by $b$ and it is unipotent. Let $I$ be the  set of $F$-orbits on the simple components of $[\bG, \bG]$,
and for each $i\in I$, let $\bH_i$ denote the product of the simple
components in $i$. So $[\bG, \bG]$ is a direct product of the $\bH_i$'s and
$[\bG, \bG]^F$ is a direct product of the $\bH_i^F$'s. For $i\in I$ let
$b_i$ be the block of $\bH_i^F$ covered by $b_0$, let $(\bL_i,\la_i)$
be an $e$-cuspidal unipotent good pair for $b_i$ and let
$(\bL_0,\la_0)=(\prod_{i\in I} \bL_i,\prod_{i\in I}\la_i)$ be the
corresponding pair for $b_0$. Further, let $P_i$ be the first component of a
maximal $b_i$-Brauer pair normalising $(A, b_{\bL_i}^F(\la_i))$.
Then, up to replacing $(\bL, \la)$ and then $(P, u)$ by a $\bG^F$-conjugate,
$(\bL,\la)$ is an extension of $(\bL_0,\la_0)$ as in
Lemma~\ref{lem:quasidefext2}, $A \cap \bL_i ^F = Z(\bL_i)^F_3$ and
$P_i =P\cap \bL_i^F$ for all $i\in I$.

For each $i \in I$, $[P_i, A_i] \leq [P, A]\leq Z$, and $Z$ is cyclic, hence
there exists at most one $i \in I$ with $[P_i, A_i] \ne 1$, say $i=j$.
Since, by Theorem~\ref{thm:goodpairs}(a), $\la_i$ is of central $3$-defect,
we have $A_i=P_i$ for all $ i\ne j$.

Let $i\in I$ and suppose that the rational type of $\bH_i^F$ is
$(A_n,\epsilon q^m)$.
The group $\bH_i^F$ contains a central element of order $3$ if and only if
$3| d_i:=\gcd(q^m-\epsilon,n+1)$. Further, if $3|d_i$, then by
\cite[Prop.~3.3]{CE94}, $b_i$ is the principal block of $\bH_i^F$,
and $P_i$ is a Sylow $3$-subgroup of $\bH_i^F$. Consequently,
if $3|d_i$, then $P_i $ is non-abelian.
So for all $ j\ne i \in I$, $3{\not|}d_i$ and in particular $\bH_i^F$ does
not contain a central element of order~$3$.

By Theorem~\ref{thm:goodpairs}(a), $\la$ is of central defect, hence
$C_P(A)=A$. Since $P$ is non-abelian, $Z \leq [P,P] \leq [\bG, \bG]^F$.
Hence, $\bH_j^F$ contains a central element of order $3$, thus $3|d_j$.
Suppose the rational type of $\bH_j^F$ is $(A_n,\epsilon q^m)$. If $n \geq 5$,
or $3^2|(q^m-\epsilon)$ then $[P_j, A_j] \leq Z$ has order at least $9$. Thus,
$n=2$, $3|| (q^m-\epsilon)$  and $P_j$ is extra-special of order $3^3$.

Let $\bH'= Z(\bG) \bH_j$, let $b'$ be the (unique) block of $\bH'^F$ covered
by $b$ and $P' = P\cap\bH'^F$, a defect group of $b'$. Then $P'$ is a
Sylow $3$-subgroup of $\bH'^F$ and $P= P'\times\prod_{i\in I, j\ne i} A_i$.
Since $Z(\bG) \cap \bH_j^F$ has order at most~$3$, $Z(\bG)_3^F \leq A$,
and $A_j$ has index~$3$ in $P_j$, it follows that $A$ has index $3$ in $P$.
This proves~(a).
\par

Now suppose that the hypothesis of~(b) hold. We have shown above that $P_j$
has order $3^3$. Since $Z=Z(P)$, $P=P'$ and $Z(\bH')^F_3 \leq Z$.
Thus the surjection of $\bH'$ onto $(\bH'/Z(\bH'))$ induces an injection
of $P/Z$ into $(\bH'/Z(\bH'))^F$ . But $(\bH'/Z(\bH'))^F$ has non-abelian
Sylow $3$-subgroups of order $|P_j| $, hence $|P/Z| < |P_j|$ which means
that $P=P_j$ is extra-special of order $3^3$.
\end{proof}

\begin{thm}   \label{thm:all-ab}
 Suppose that $\bG$ is simple, simply connected and that $b$ and $c$ are 
 Bonnaf\'e--Rouquier correspondents.
 Let $Z$ be a central $\ell$-subgroup of $\bG^F$ and let $\bar b$ and
 $\bar c$ be the images of $b$ and $c$ in $\bG^F/Z$ and $\bM^F/Z$ respectively.
 If either $\bar b$ or $\bar c$ has abelian defect groups, then the
 defect groups of $\bar b $ and $\bar c$ are isomorphic.
\end{thm}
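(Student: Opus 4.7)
The plan is to exploit the good-pair machinery developed in Subsection~\ref{subsec:goodpairs}. First, by iterating the Bonnaf\'e--Rouquier Morita equivalence if necessary, I may assume that $c$ lies in a Lusztig series $\cE_\ell(\bM^F,s)$ with $s$ quasi-isolated in $\bM^*$. Since $\bG$ is simple simply connected, the derived subgroup of $\bM$ is simply connected, so Theorem~\ref{thm:goodpairs} supplies a good pair $(\bL,\la)$ for $c$. With $A=Z(\bL)_\ell^F$ and a maximal $c$-Brauer pair $(P,f)$ such that $(A,b_{\bL^F}(\la))\unlhd(P,f)$, Proposition~\ref{prop:jordan-nonab}(a) produces a matching maximal $b$-Brauer pair $(Q,d)$ with $C_P(A)\cong C_Q(A)$ and $\Aut_P(A)=\Aut_Q(A)$. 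The assumed abelianness of $\bar b$ or $\bar c$ transfers to the other by part~(b), and part~(c) then yields $[P,A]\subseteq Z$ and $[Q,A]\subseteq Z$.

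If $c$ itself has abelian defect then Proposition~\ref{prop:jordan-nonab}(b) directly yields $P\cong Q$ and hence $P/Z\cong Q/Z$. The genuinely new case is therefore when $c$ has non-abelian defect while $\bar c$ has abelian defect; then $[P,P]$ and $[Q,Q]$ are nontrivial subgroups of $Z$, and both $P$ and $Q$ are of nilpotency class~$2$. The isomorphism $C_P(A)\cong C_Q(A)$ descends to $C_P(A)/Z\cong C_Q(A)/Z$, and $\Aut_P(A)=\Aut_Q(A)$ gives $(P/Z)/(C_P(A)/Z)\cong(Q/Z)/(C_Q(A)/Z)$, so the remaining task is to identify the corresponding central extensions of $C_P(A)/Z$ and $C_Q(A)/Z$ inside $P/Z$ and $Q/Z$.

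I would then split the remaining work by type. When $\ell$ is odd and good for $\bG$ (excluding $\tw3D_4(q)$ at $\ell=3$), Proposition~\ref{prop:abelian-good}(a) concludes directly; when $\ell=2$ and every component of $\bG$ is classical, Proposition~\ref{prop:abelian-good}(b) gives a common defect group inside a Sylow $2$-subgroup of a Levi dual to $C_{\bG^*}(s)$. For quasi-isolated blocks of exceptional groups (and $\tw3D_4(q)$) at bad $\ell$, the explicit normal series in Theorem~\ref{thm:mainblocks}(b)--(d) together with the $e$-split cuspidal good pair of Theorem~\ref{thm:goodpairs}(d) force $P$ to be abelian in the abelian-defect regime, reducing to the easy subcase. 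The remaining delicate cases are $\ell\in\{2,3\}$ with components of type $A$: here Lemmas~\ref{lem:2selfcentA}(b) and~\ref{lem:3A}(b) are tailored to pin $P$ down as quaternion of order $8$ or extraspecial of order $27$ respectively, once one verifies $Z(P)=Z$; the same structural input applies to $Q$, yielding $P\cong Q$ and hence $P/Z\cong Q/Z$. The main obstacle will be precisely this final identification: for non-abelian $P$ with abelian $P/Z$, the sub/quotient data from Proposition~\ref{prop:jordan-nonab}(a) alone does not force an isomorphism of extensions, and verifying the hypothesis $Z=Z(P)$ required by the type-$A$ lemmas will demand the full strength of the good-pair setup together with the case-by-case analysis from Sections~\ref{sec:F4}--\ref{sec:E8}.
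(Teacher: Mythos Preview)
Your plan follows the paper's route: reduce to $c$ quasi-isolated, obtain a good pair via Theorem~\ref{thm:goodpairs}, transfer structure between $P$ and $Q$ via Proposition~\ref{prop:jordan-nonab}, clear most cases with Proposition~\ref{prop:abelian-good} (leaving only $\bG$ of type $E_6$ at $\ell=3$ or $E_7$ at $\ell=2$, with $Z\ne1$), and finish the residual type-$A$ situation with Lemmas~\ref{lem:2selfcentA} and~\ref{lem:3A}. Two of your steps misidentify the actual mechanism, and the first would likely derail a write-up.

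\emph{Verifying $Z(P)=Z$.} You attribute this to ``the case-by-case analysis from Sections~\ref{sec:F4}--\ref{sec:E8}'', but those sections treat quasi-isolated blocks of the simple group $\bG^F$, not of the Levi $\bM^F$, and do not supply what is needed. The paper's device is a centraliser descent: for $z_1\in(Z(P)\cap A)\setminus Z$ one forms $\bG_1=C_\bG(z_1)$, $\bM_1=C_\bM(z_1)$ and, via Proposition~\ref{prop:Cabanesjordancrux}, Bonnaf\'e--Rouquier corresponding blocks $b_1,c_1$ still having $P,Q$ as defect groups; then Proposition~\ref{prop:abelian-good} applies to $(\bG_1,\bM_1)$ unless $\bG_1$ retains a component bad for $\ell$ (type $D_4$ or $D_5$ inside $E_6$; type $E_6$ inside $E_7$). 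A rank argument, using that when $\la$ is of central defect and $P\ne A$ one has $Z=[P,A]\le[P,P]\le[\bM,\bM]^F$ (and similarly inside $[C_\bG(z_1),C_\bG(z_1)]$), so these derived groups must contain a nontrivial central $\ell$-element, eliminates those exceptions and forces $Z(P)=Z$. In the $E_6$ case one also needs that $s$ is central in $\bM^*$ (as $C_{\bM^*}(s)$ is connected since $o(s)$ is prime to $|Z(\bG)|=3$, and type-$A$ groups have no non-central isolated elements), so that $c$ is a twist of a unipotent block and Lemma~\ref{lem:3A} genuinely applies.

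\emph{Exceptional components of $\bM$.} Theorem~\ref{thm:mainblocks} is stated for simple exceptional $\bG^F$ and does not directly handle blocks of the Levi $\bM^F$; in particular it does not ``force $P$ to be abelian''. In the one relevant instance ($\bG=E_7$, $\ell=2$, $[\bM,\bM]$ of type $E_6$) the paper argues differently: $[\bM,\bM]$ has no central involution, so the first reduction above already gives $P$ abelian; then Proposition~\ref{prop:E6-defgrp} shows no non-unipotent quasi-isolated $2$-block of $E_6(q)$ or $\tw2E_6(q)$ has abelian defect, so $c$ covers a unipotent block of $[\bM,\bM]^F$, which by the tables in \cite{En00} has defect zero, whence $P$ and $Q$ are cyclic.
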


\begin{proof}
By Proposition~\ref{prop:abelian-good}, we may assume that either $\bG$ is of
exceptional type and $\ell$ is a bad prime for $\bG$, or $\ell=3$ and
$\bG^F =\tw3D_4(q)$. If $Z =1 $, the statement is immediate from
Proposition~\ref{prop:jordan-nonab}(b). So, we may assume that $Z\ne 1$,
whence either $\ell=3$ and $\bG$ is of type $E_6$, or $\ell=2$ and $\bG$ is of
type $E_7$.

We first consider the case that $c$ is quasi-isolated.
By Theorem~\ref{thm:goodpairs}, $c$ has a good pair, say $(\bL,\la)$.
Set $A= Z(\bL)_\ell^F$, let $(P,f)$ be a maximal $c$-Brauer pair with
$(A,b_{\bL^F}(\la))\unlhd (P,f)$ and let $(Q,d)$ be a maximal $b$-Brauer pair
as in Proposition~\ref{prop:jordan-nonab}(b).
By Proposition~\ref{prop:jordan-nonab}(c), $[P,A]\leq Z$.

We make some reductions. Suppose that $\la$ is of central defect. Then
$C_P(A)=A$, hence either $P=A$ or $[P, A]=Z$ and $Z(P)\leq A$.
If $P=A$, then $P/Z=Q/Z=A/Z$ and there is nothing to prove.
Also, $[P,P]$ is contained in $[C_\bG(z_1), C_\bG(z_1)]$ and in
$[C_\bM(z_1), C_\bM(z_1)]$ for $z_1 \in Z(P)$. Thus,
we may assume the following: If $\la $ is of central defect, then
$Z=[P, A]$, $Z(P)\leq A$ and for any $z_1\in Z(P)$, $[C_\bG(z_1),C_\bG(z_1)]$
and $[C_\bM(z_1), C_\bM(z_1)] $ contain non-trivial central $\ell$-elements.

Next, let $z_1 \in A$, $\bM_1=C_\bM(z_1)$, $\bG_1 =C_\bG(z_1)$.
By Proposition~\ref{prop:Cabanesjordancrux}, there exist blocks $c_1$ and
$b_1$ of $\bM_1^F$ and $\bG_1^F$ respectively, which are Bonnaf\'e--Rouquier
correspondents, and such that
$(\langle z_1\rangle, c_1)$ is a $c$-Brauer pair, and
$(\langle z_1\rangle, b_1)$ is a $b$-Brauer pair. Note that since
$\bG$ and $\bM$ are simply connected, $\bG_1$ and $\bM_1$ are connected.
If $z_1 \in Z(P)$, then $P \leq \bM_1^F $ is a defect group of $c_1 $,
and also $Q \leq \bG_1 ^F$ is a defect group of $b_1$.
Thus, by Proposition~\ref{prop:abelian-good}, applied to the blocks $b_1$
and $c_1$ we may assume the following. If $\bG$ is of type $E_6$, $\ell=3$
and $z_1\in (Z(P) \cap A) \setminus Z$ then $C_\bG(z_1)$ contains a
component of type $D_4$ and if $\bG$ is of type $E_7$, $\ell=2 $ and
$z_1\in (Z(P) \cap A) \setminus Z$, then $C_\bG(z_1)$ contains a
component of type $E_6$.

Suppose that $\bG$ is of type $E_6$ and $\ell=3$.
Then $\bM$ is classical, so $3$ is good for $\bM$
and by Theorem~\ref{thm:goodpairs}, $\la_0$ is of central $3$-defect.
Suppose first that $[\bM, \bM]$ has a component of type $D_4$ or $D_5$.
By rank considerations $[\bM, \bM]$ does not contain a central element
of order $3$, so $\la$ is of central defect by Lemma~\ref{lem:quasidefext2}(c),
whence by the first reduction above
$ Z\leq [P,P] $. But $[P,P] \leq [\bM, \bM]$, a contradiction.

So, we may assume that all components of $\bM$ are of type $A$.
By Theorem~\ref{thm:goodpairs}, $\la $ is of central $3$-defect. By rank
consideration, if $z \in P$ is such that $[C_\bG(z), C_\bG(z)]$ contains
a component of type $D_4$ or $D_5$, then $[C_\bG(z),C_\bG(z)]$ does not
contain a central element of order $3$, hence by the first reduction
$z \notin Z(P)$. By the second reduction, we may assume that $Z(P)= Z$.

Now $C_{\bM^*}(s)/C_{\bM^*}^\circ(s)$ is isomorphic to a subgroup of
$Z(\bM)/Z^\circ(\bM)$, hence to a subgroup of $Z(\bG)/Z^\circ(\bG)$, the
latter being of order~$3$. On the other hand, the exponent of
$C_{\bM^*}(s)/C_{\bM^*}^\circ(s)$ divides the order of $s$, which is prime
to $3$. Thus, $C_{\bM^*}(s)$ is connected, whence $s$ is isolated in $\bM^*$.
But all components of $\bM^*$ are of type $A$, hence $\bM^*$ has no
non-central isolated elements. Thus $s$ is central in $\bM^*$ and
$c=\theta \otimes c'$, where $c'$ is a unipotent block of $\bM^F$
and $\theta$ is a linear character of $\bM^F$ in duality with $s$.
In particular, $P$ is a defect group of a unipotent block of $\bM^F$.
By Lemma~\ref{lem:3A}(a), $P/A$ (and hence $Q/A$) is cyclic of order~$3$, thus
$P/Z$ is abelian if and only if $Q/Z$ is abelian. So, we may assume that
$P/Z$ is abelian. We have shown above that $Z=Z(P)$. By Lemma~\ref{lem:3A}(b),
$P$ is extra-special of order~$3^3$. Thus, $Q$ is extra-special of order~$3^3$,
so $P/Z$ and $Q/Z$ are elementary abelian of order $3^2$, and in particular
isomorphic.

Suppose that $\bG$ is of type $E_7$ and $\ell=2$. Let $Z$ be the
centre of $\bG$ of order~$2$. Suppose first that $\bM$ has a component of
type $E_6$. Then $[\bM, \bM]$ is simple of type $E_6$. Consequently
$[\bM, \bM]$ does not contain a central element of order $2$, and
it follows that $P$ is abelian. By Proposition~\ref{prop:E6-defgrp},
$[\bM, \bM]^F$ does not contain a non-unipotent, quasi-isolated $2$-block
with abelian defect groups, so $c$ covers a unipotent block of $[\bM,\bM]^F$.
By the tables for $E_6(q)$ and $\tw2E_6(q)$ in \cite{En00}, $c_0$ is of
defect~$0$. Since $\bM^F/[\bM, \bM]^F$ has cyclic Sylow $2$-subgroups,
$P$ and hence $Q$ are cyclic and so are $P/Z$ and $Q/Z$.

Thus, we may assume that $ \bM$ is classical. Suppose that $\bM$ has a simple
component, say $\bH_1$ of type $D_n$, $n\ge4$.
The principal $2$-block of $\bH_1^F$ is the only quasi-isolated $2$-block of
$\bH_1^F$, hence $P$ contains a Sylow $2$-subgroup of $\bH_1^F$ and by
Theorem~\ref{thm:goodpairs}(c), we may assume that this Sylow $2$-subgroup
normalises $\bT_2^F$ where $\bT $ is an $F$-stable maximal torus containing
a Sylow $e$-torus and such that the commutator of the Sylow subgroup with
$\bT_2^F$ is contained in a cyclic group of order $2$. But this is not the case.

So, we may assume that all components of $\bM$ are of type $A$.
Then by Theorem~\ref{thm:goodpairs}, $\la$ is of central $2$-defect. By the
same argument as in the $E_6$-case above we conclude that $z \notin Z(P)$
and $Z(P)= Z$.

Let $\bG_1\leq\bM$ be a Levi subgroup of $\bG$ in duality with
$C_{\bG^*}^\circ(s)=C_{\bM^*}^\circ(s)$.
Let $P' \leq N_{\bG_1}(A)$ be a Sylow $2$-subgroup of $\bG_1^F$ as in
Theorem~\ref{thm:goodpairs}(b). Since $[P,A] \leq Z$, $[P', A]\leq Z$.
So, by Lemma~\ref{lem:2selfcentA}(b), the index of $A$ in $P'$ is
$2$. Hence the index of $A$ in $P$ and $Q$ is also $2$ and $P/Z$ is abelian
if and only if $Q/Z$ is abelian. So, we may assume that $P/Z$ is abelian, and
hence that $P'/Z$ is abelian.

Since $Z(P)= Z$, $Z(P')=Z$, and by Lemma~\ref{lem:2selfcentA}(c), $P'$ is
quaternion of order $8$, hence both $P$ and $Q$ are non-abelian of order $8$,
and $P/Z$ and $Q/Z$ are elementary abelian of order $4$.

Now suppose that $c$ is not quasi-isolated in $\bM$.
Then, replacing $\bM $ by an $F$-stable Levi subgroup whose dual contains
$C_{\bM^*}(s)$ and in which $s$ is quasi-isolated, and $\bG$ by $\bM$, the
above argument again gives the desired result (note that above we do not use
that $\bG $ is of type $E_6$ or $E_7$, but only that $Z$ has order $2$ or $3$
and that the rank of $\bG$ is at most~$7$).
\end{proof}

\section{Brauer's height zero conjecture} \label{sec:proof}

In this section we give the arguments which are necessary to combine our
results and those obtained previously by various authors to prove (HZC1),
that is, Theorem~\ref{thm:main}.

\subsection{Groups not of Lie type}

\begin{prop}   \label{prop:spor}
 Let $S$ be a perfect central extension of a sporadic simple group or the Tits
 simple group $\tw2F_4(2)'$. Then (HZC1) holds for $S$.
\end{prop}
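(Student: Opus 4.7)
The plan is to proceed by a finite case analysis over the list of perfect central extensions of the $26$ sporadic simple groups and of $\tw2F_4(2)'$, since for each fixed $S$ only finitely many primes $p$ divide $|S|$ and for each such $p$ the block distribution of $\Irr(S)$ is explicitly known from the generic character table of $S$. The verification for an individual block $B$ of $S$ with defect group $D$ splits according to the structure of $D$: if $D$ is trivial or cyclic then (HZC1) is classical (the case $|D|=1$ is vacuous, while the cyclic defect case is Dade's theorem, which forces every irreducible character in $B$ to have height~$0$), so we may restrict attention throughout to blocks whose defect group is abelian but non-cyclic.

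First I would handle the blocks of full defect. Here $D$ is a Sylow $p$-subgroup of $S$, and the list of pairs $(S,p)$ for which $S$ has an abelian non-cyclic Sylow $p$-subgroup is very short (it can be read off directly from $|S|$ together with the ATLAS). For $p=2$ the statement follows from the result of Fong--Harris on principal $2$-blocks already cited in Section~\ref{sec:main}. For odd $p$ with abelian non-cyclic Sylow, it suffices to inspect the degrees of the irreducible characters of $S$ in the principal $p$-block and read off that all of them have the same $p$-part, which is routine from the known character tables (and for the extension of scalars from $S/Z(S)$ to $S$ one uses that a central $p'$-subgroup is irrelevant, while a central $p$-subgroup lies in every defect group so that height is preserved).

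Next I would enumerate the non-principal blocks with non-cyclic abelian defect. For each such candidate, the Brauer correspondent and the defect group can be identified inside the relevant local subgroup using the $p$-local structure of $S$ (again standard from the ATLAS), after which one confirms abelianness of $D$ and then reads the heights of characters of $B$ directly from the character table of $S$. This is completely mechanical and was already carried out for the majority of sporadic groups in earlier work on nilpotent blocks and on Brauer-Olsson style results; the remaining entries are a short list that can be checked with the character table library in {\sf GAP}.

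The main obstacle is not conceptual but practical: for the large sporadic groups (in particular $B$, $M$, $Th$, $Fi_{24}'$ and their covers), one must control the block distribution for several small primes where the Sylow subgroup is large, and verify that every non-cyclic abelian defect candidate indeed gives a block all of whose characters have equal $p$-part of degree. I expect no new theoretical input to be needed beyond the cyclic defect theory and the Fong--Harris theorem, and for the writeup I would simply tabulate, for each sporadic $S/Z(S)$ and each $p\mid|S|$, the blocks of $S$ with non-cyclic abelian defect together with the common $p$-part of their character degrees, verifying (HZC1) on a case-by-case basis.
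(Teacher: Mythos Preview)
Your proposal is essentially correct in that a finite case-by-case verification using known character tables does the job, and this is also what the paper does. However, the organisation is rather different, and the paper's approach is considerably more economical.

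The paper's key shortcut is the observation that a Sylow $p$-subgroup of a covering group of a sporadic simple group of order at least $p^3$ is automatically \emph{non-abelian}, with exactly two exceptions ($J_1$ at $p=2$ and $ON$ at $p=3$). This immediately disposes of all blocks of full defect except in those two cases, without any appeal to Fong--Harris or to character degrees. (Incidentally, the paper explicitly states that it is independent of the Fong--Harris result, so invoking it here is against the spirit of the paper, though not mathematically wrong.) What then remains are blocks of defect at most $p^2$, where all characters are trivially of height zero, together with the non-principal blocks of defect at least $p^3$; the latter occur only for $p\in\{2,3\}$, and the paper shows that their defect groups are non-abelian by citing Landrock's classification \cite{La78} and a computation of M\"uller showing them to be of extra-special type.

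Your plan instead proposes to enumerate blocks with non-cyclic abelian defect and check heights directly. This is fine in principle, but you underestimate the one genuinely nontrivial step: for a non-principal block of moderate defect in a large sporadic group, determining whether the defect group is abelian is not something one simply ``reads off from the ATLAS'' or from the character table --- it requires actual identification of the defect group up to isomorphism. This is precisely the input supplied by Landrock and M\"uller in the paper's proof. Your sentence ``the Brauer correspondent and the defect group can be identified inside the relevant local subgroup using the $p$-local structure'' is where the real work hides, and you should either cite the same sources or acknowledge that this computation has to be done.
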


\begin{proof}
It is well known that a Sylow $p$-subgroup of a covering group of a sporadic
simple group of order at least $p^3$ is non-abelian unless $S=J_1$ and $p=2$,
or $S=ON$ and $p=3$. Since the block distribution of ordinary characters
as well as the size of the respective defect groups can easily be obtained
using {\sf GAP}, this deals with most blocks in question. For the remaining
blocks (i.e., non-principal blocks with defect group of order at least~$p^3$)
which are only in characteristic~2 or~3, either the structure of the defect
group is given by Landrock \cite{La78}, or it can easily be shown to be of
extra-special type (see M\"uller \cite{Mue10}).
\end{proof}

\begin{prop}   \label{prop:exccov}
 Let $S$ be an exceptional covering group of a finite simple group of Lie
 type, or of $\fA_7$. Then (HZC1) holds for $S$.
\end{prop}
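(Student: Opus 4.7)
The plan is to work case by case through the finite, explicit list of exceptional covers. The finite simple groups of Lie type with Schur multiplier exceeding the generic one are $L_2(4)$, $L_2(9)$, $L_3(2)$, $L_3(4)$, $L_4(2)$, $U_4(2)$, $U_4(3)$, $U_6(2)$, $\Omega_7(3)$, $\Omega_8^+(2)$, $\operatorname{Sz}(8)$, $G_2(3)$, $G_2(4)$, $F_4(2)$, and $\tw2E_6(2)$; the exceptional covers of these groups, together with the $6$-fold cover of $\fA_7$, form a finite list of quasi-simple groups. For each such $S$, the ordinary character table and the central-character information needed to read off the $p$-block distribution of $\Irr(S)$ are available in the ATLAS and in the GAP character table library. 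Since (HZC1) for a block $B$ with defect group $D$ is the assertion that if $D$ is abelian then $\chi(1)_p = |S|_p/|D|$ for every $\chi \in \Irr(B)$, once $|D|$ and the abelianness of $D$ are known the statement is a numerical check on the degrees listed in $B$.

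The real task is therefore to identify which blocks have abelian defect groups, and several general results will dispose of most of them at once. Blocks of defect at most $1$ have cyclic defect group and (HZC1) is classical (Brauer--Dade); principal $2$-blocks will be handled by Fong--Harris; blocks whose image under a suitable central quotient occurs as a block of a central quotient of $\SL_n$ or $\SU_n$ are handled by Blau--Ellers \cite{BE}; and nilpotent blocks automatically have every irreducible character of height zero by Broué--Puig. These reductions should cover the overwhelming majority of blocks arising from the groups on the list, and in particular almost all blocks of the smaller covers ($L_2(4)$, $L_2(9)$, $L_3(2)$, $L_4(2)$, $U_4(2)$, $\operatorname{Sz}(8)$, $G_2(3)$, $G_2(4)$) as well as all blocks of $6.\fA_7$ except possibly one or two of intermediate defect.

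The residual blocks will form a small finite collection concentrated in the larger exceptional covers, mainly $F_4(2)$, $\tw2E_6(2)$, and the maximal covers of $L_3(4)$, $U_4(3)$, $U_6(2)$, $\Omega_7(3)$, and $\Omega_8^+(2)$. For each of these I would identify the defect group as a Sylow $p$-subgroup of the centralizer of a suitable $p$-regular element, using the local subgroup information recorded in the ATLAS; blocks with non-abelian defect groups require no check, while those with abelian defect groups are then settled by direct inspection of the character degrees within the block. The hard part will be the bookkeeping in the two largest cases $F_4(2)$ and $\tw2E_6(2)$, where the full block decomposition is not printed in the ATLAS and some of the non-principal blocks of intermediate defect require the identification of the defect group from local structure; but the character tables are available in GAP and the centralizers of the relevant semisimple elements are explicitly known, so this is a bounded finite computation rather than a conceptual obstacle.
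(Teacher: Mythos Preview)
Your overall plan --- a case-by-case run through the finite list of exceptional covers using ATLAS/GAP data --- is the same as the paper's, but your tactical choice is inverted and this makes the work harder than necessary. You propose to first determine which blocks have abelian defect groups and then verify the height condition; the paper instead reads off the heights directly from the character degrees (which requires only the block partition and $|D|$, both visible from the ordinary table) and observes that for every block of every group on the list either all characters already have height zero, or the block is principal with non-abelian Sylow $p$-subgroup, \emph{except} for certain $2$-blocks of $3.\OO_7(3)$ and $6.\OO_7(3)$. This sidesteps the problem of identifying defect groups almost entirely, and in particular shows that $F_4(2)$ and $\tw2E_6(2)$ are not the hard cases you anticipate --- they fall to the height check immediately.

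The genuine residual case is a $2$-block of $3.\OO_7(3)$ with defect group of order~$8$ (and its lift to $6.\OO_7(3)$), where the block contains characters of different heights, so one must show the defect group is non-abelian. The paper does this by combining two pieces of information: a character in the block is nonzero on an element of order~$4$ (ruling out elementary abelian defect), and the block has exactly two modular irreducibles (ruling out $C_8$, $C_4\times C_2$, and $Q_8$, which force one or three), leaving only $D_8$. Your proposed method of reading the defect group off centralizer structure in the ATLAS would also work here, but the modular-irreducible count is a cleaner discriminant. Note also that Blau--Ellers does not help with the exceptional covers of linear and unitary groups, since those covers are precisely the parts of the Schur multiplier \emph{not} coming from $\SL_n$ or $\SU_n$.
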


\begin{proof}
From the ordinary character tables in \cite{Atl} it follows that all $p$-blocks
of the groups in question fall into three categories: either all characters
in the block are of height zero, or the block is principal and the Sylow
$p$-subgroups are non-abelian, or $p=2$, $S=3.\OO_7(3)$ or $6.\OO_7(3)$.
\par
Let $S = 6.\OO_7(3)$, let $b$ be a $2$-block of $S$ and denote by $\bar b$ the
corresponding $2$-block of $\bar S:=3.\OO_7(3)$. By the modular atlas
\cite{MAtlwww}, the defect groups of $b$ have order $2^{10}$, $16$, $4$ or $2$.
In the first case, the defect groups of $b$ (respectively $\bar b$) are
Sylow $2$-subgroups of $S$ (respectively $\bar S$) and hence non-abelian.
If the defect groups of $b$ are cyclic or Klein $4$-groups, then all
characters in $b$ and $\bar b$ are of height zero. So assume that the defect
groups of $b$ have order $16$, and hence that the defect groups of $\bar b$
have order $8$. From ordinary character tables it follows that there exists
an irreducible character in $\bar b$ which does not vanish on an element
of $\bar S$ of order $4$. Thus, the defect groups of $\bar b$ are not
elementary abelian. On the other hand, by \cite{MAtlwww}, $\bar b$ has two
modular irreducible characters. Since blocks with defect groups isomorphic
to $C_4 \times C_2$ or to $C_8$ have a unique modular irreducible character
and since blocks with defect groups isomorphic to $Q_8$ have either one or
three modular irreducible characters, it follows that the defect groups of
$\bar b$ are dihedral. In particular, the defect groups of $\bar b$ and of
$b$ are non-abelian.
\end{proof}

\subsection{Bad primes for exceptional type groups}
We will need the following result of Enguehard \cite[\S3.2]{En00},
respectively Ward \cite{Wa66} and Malle \cite{MaF}.
Let $\bG$ be connected reductive with a Steinberg endomorphism
$F:\bG\rightarrow\bG$ and let $\ell$ be a prime number different from the
defining characteristic of $\bG$. By \cite[Thm. A]{En00}, the assertions of
Theorem~\ref{thm:mainblocks} hold for $\bG^F$ and $\ell$ for the case $s=1$, 
and with the ``quasi-central $\ell$-defect'' condition in~(a3) of 
Theorem~\ref{thm:mainblocks} replaced by  ``central $\ell$-defect''.
For a unipotent $e$-cuspidal pair $(\bL,\la)$ of $\bG$ such that 
$\la$ is of central $\ell$-defect and $S=\bG^F/Z$
for some central subgroup $Z$ of $\bG^F$ we denote by $b_S(\bL,\la)$ the
$\ell$-block of $S$ corresponding to  $(\bL, \la)$, respectively
its image in $\bG^F/Z$, and by $W_{\bG^F}(\bL,\la)$ the relative Weyl group
$N_{\bG^F}(\bL,\la)/\bL^F$.

\begin{prop}   \label{prop:unipabdef}
 Suppose that $\bG$ is simple, simply connected of exceptional type and that
 $\ell$ is a bad prime for $\bG$. Let $S =\bG^F/Z$ for some central subgroup
 $Z$ of $\bG^F$ and $B=b_S(\bL,\la)$ a unipotent $\ell$-block of $S$ with
 non-trivial abelian defect groups.
 Then $B$ is as in Table~\ref{tab:unipabdef} or Ennola dual to an entry there.
 Moreover, $W_{\bG^F}(\bL,\la)$ is an $\ell'$-group, except for the listed
 entries for $E_6(q)$ and $E_7(q)$ and their Ennola duals.
\end{prop}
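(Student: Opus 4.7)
The plan is to invoke the parametrization of unipotent $\ell$-blocks by $e$-cuspidal unipotent pairs, which in the case at hand reduces to Enguehard's theorem \cite{En00} (the $s=1$ case of Theorem~\ref{thm:mainblocks}). This gives a bijection $B \longleftrightarrow (\bL,\la)$ between unipotent $\ell$-blocks of $\bG^F$ and $\bG^F$-conjugacy classes of $e$-cuspidal unipotent pairs with $\la$ of central $\ell$-defect, where $e=e_\ell(q)$. Part~(c) of Theorem~\ref{thm:mainblocks} then asserts that the block $b_{\bG^F}(\bL,\la)$ has abelian defect groups if and only if $W_{\bG^F}(\bL,\la)$ is an $\ell'$-group, and part~(d) identifies the defect group with $Z(\bL)_\ell^F$ when $\ell$ is odd; for $\ell=2$ one needs additionally the quotient $\bL^F/Z(\bL)_\ell^F[\bL,\bL]^F$ to have trivial Sylow $2$-subgroup.

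First I would enumerate, for each simple simply connected exceptional $\bG$ (namely $G_2$, $F_4$, $E_6$, $E_7$, $E_8$, $\tw3D_4$, $\tw2B_2$, $\tw2G_2$, $\tw2F_4$) and each bad prime $\ell$, all unipotent $e$-cuspidal pairs $(\bL,\la)$ with $\la$ of central $\ell$-defect and the corresponding relative Weyl groups $W_{\bG^F}(\bL,\la)$; these are tabulated in \cite{En00} and can be cross-checked using \cite{How} and \cite{MChev}. I would then retain only those pairs for which $\ell\nmid|W_{\bG^F}(\bL,\la)|$ and for which $Z(\bL)_\ell^F$ is non-trivial, producing the rows of the claimed table for blocks of $\bG^F$ itself.

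For the quotients $S=\bG^F/Z$, the defect group of $\bar B=b_S(\bL,\la)$ is $P/Z$, where $P$ is a defect group of $B=b_{\bG^F}(\bL,\la)$. Hence $\bar B$ has abelian defect iff $[P,P]\le Z$, which by part~(d) of Theorem~\ref{thm:mainblocks} is equivalent (for odd $\ell$) to the action of $W_{\bG^F}(\bL,\la)$ on $Z(\bL)_\ell^F$ being trivial modulo $Z$. This can occur with $\ell\mid|W_{\bG^F}(\bL,\la)|$ only when $Z(\bG^F)$ has non-trivial $\ell$-part, which among the simply connected exceptional groups happens precisely for $E_6$ at $\ell=3$ with $3\mid(q-1)$, $\tw2E_6$ at $\ell=3$ with $3\mid(q+1)$, $E_7$ at $\ell=2$ with $2\mid(q-1)$, and $\tw2E_7$-type twists (the Ennola duals). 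In these cases one checks by direct computation the action of the Sylow $\ell$-subgroup of $W_{\bG^F}(\bL,\la)$ on $Z(\bL)_\ell^F$ and extracts the additional rows of the table.

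The main obstacle is the case-by-case enumeration; the conceptual content is entirely contained in Theorem~\ref{thm:mainblocks} and the data of \cite{En00}. The delicate step is identifying the finitely many borderline pairs $(\bL,\la)$ where $W_{\bG^F}(\bL,\la)$ has non-trivial $\ell$-part yet the defect group becomes abelian after quotienting by $Z\le Z(\bG^F)$; verifying that these occur only in the asserted $E_6$ and $E_7$ cases (and their Ennola duals) is a finite check on the action of the relevant relative Weyl groups on the torsion of $Z(\bL)$, which matches the statement.
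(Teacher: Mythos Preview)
Your approach is correct and matches the paper's treatment: the paper does not give a proof but simply attributes the result to Enguehard \cite[\S3.2]{En00} (with Ward \cite{Wa66} and Malle \cite{MaF} for the very twisted groups), and your proposal is precisely an outline of how to extract the claimed table from those references together with the structure statements of Theorem~\ref{thm:mainblocks} in the $s=1$ case.

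One small correction: there is no group ``$\tw2E_7$''. The Ennola dual of the $E_7(q)$ entry at $(\ell,e)=(2,1)$ is just $E_7(q)$ at $(\ell,e)=(2,2)$, i.e.\ the case $q\equiv 3\pmod4$ with the Levi replaced by its Ennola twist; similarly for $E_6$ the Ennola dual is $\tw2E_6$. Also, since in the unipotent case $\la$ is of \emph{central} (not merely quasi-central) $\ell$-defect, Proposition~\ref{prop:quasicentraldefpro}(g) already forces $D=Z(\bL)_\ell^F$ even for $\ell=2$, so the extra Sylow-$2$ condition you flag is automatically satisfied and need not be checked separately.
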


\begin{table}[htbp]
\caption{Unipotent $\ell$-blocks of quasi-simple exceptional groups with
   non-trivial abelian defect group, $\ell$ bad}   \label{tab:unipabdef}
\[\begin{array}{|r|c|lll|}
\hline
 S& (\ell,e)& \bL^F& \la& \text{conditions}\\
\hline\hline
 ^2G_2(q^2)& (2,1)& \Ph1\Ph2& 1& \\
\hline
  F_4(q)& (3,1)& \Ph1^2.B_2(q)& B_2[1]& \\
\hline
  E_6(q)& (3,1)& \Ph1^2.D_4(q)& D_4[1]& 3||q-1,\ Z(S)=1\\
\hline
 \tw2E_6(q)& (3,1)& \Ph1.\tw2A_5(q)& \phi_{321}& \\
\hline
  E_7(q)& (2,1)& \Ph1.E_6(q)& E_6[\theta^{\pm1}]& 4||q-1,\ Z(S)=1\\
 \hline
  E_8(q)& (3,1)& \Ph1.E_7(q)& E_7[\pm\xi]& \\
        & (5,1)& \Ph1^4.D_4(q)& D_4[1]& \\
        & (5,1)& \Ph1^2.E_6(q)& E_6[\theta^{\pm1}]& \\
        & (5,1)& \Ph1.E_7(q)& E_7[\pm\xi]& \\
        & (5,4)& \Ph4^2.D_4(q)& \xi_1,\ldots,\xi_4& \\
\hline
\end{array}\]
\end{table}

For a prime $\ell$, by a minimal counterexample to (HZC1) we will mean
a pair $(\chi, S)$, such that $S$ is a finite group, $\chi \in \Irr(S)$ is
an irreducible character lying in an $\ell$-block of $S$ with abelian defect
such that $\chi $ has positive height and such that $(\chi(1),|S|)$ is
minimal with respect to the lexicographical ordering on such pairs.

\begin{prop}   \label{prop:excunibad}
 Suppose that $\bG$ is simple, simply connected of exceptional type and that
 $\ell$ is a bad prime for $\bG$. Let $S =\bG^F/Z$ for some central subgroup
 $Z$ of $\bG^F$ and $B=b_S(\bL,\la)$ a unipotent $\ell$-block of $S$.
 Then $(\chi, S)$ is not a minimal counterexample to (HZC1) for any
 $\chi \in \Irr(B)$.
\end{prop}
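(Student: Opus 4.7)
The plan is to show that no such block $B$ can produce a minimal counterexample. Since (HZC1) is vacuously true for blocks with non-abelian defect groups, and is trivial for blocks of defect zero (their unique character has height zero), one may assume $B$ has non-trivial abelian defect. Proposition~\ref{prop:unipabdef} then forces the associated unipotent $e$-cuspidal pair $(\bL,\la)$ of $\bG$ to appear in Table~\ref{tab:unipabdef}, up to Ennola duality.

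For all rows of Table~\ref{tab:unipabdef} in which $W_{\bG^F}(\bL,\la)$ is an $\ell'$-group, I would argue as follows. By Enguehard's parametrisation of unipotent blocks \cite{En00}, combined with the $e$-Harish-Chandra theory of Definition~\ref{def:d-HC} and Proposition~\ref{prop:HC-coroll}, one has $\Irr(B)=\cE(\bG^F,(\bL,\la))$ and the isometry $I^\bG_{(\bL,\la)}$ puts this set in bijection with $\Irr(W_{\bG^F}(\bL,\la))$. The generic degree formula of \cite[\S 3]{BMM} then writes each $\chi(1)$ as $|\bG^F:\bL^F|_{p'}\cd\la(1)\cd\eta(1)/(\text{divisor of }|W_{\bG^F}(\bL,\la)|)$. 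Since $\ell\nmid|W_{\bG^F}(\bL,\la)|$, every such degree has the same $\ell$-part, namely $|\bG^F|_\ell/|D|$ where $D=Z(\bL)_\ell^F$ is a defect group of $B$ (by the unipotent analogue of Theorem~\ref{thm:mainblocks}(b)). Hence every character of $B$ has height zero, and no counterexample can arise from these rows.

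It remains to deal with the four exceptional blocks: $b_S(\bL,D_4[1])$ for $S=E_6(q)_\ad$ with $3||q-1$, $b_S(\bL,E_6[\theta^{\pm 1}])$ for $S=E_7(q)_\ad$ with $4||q-1$, and their two Ennola duals. For these, $\ell$ divides $|W_{\bG^F}(\bL,\la)|$, and the fact that defect groups nevertheless become abelian in the central quotient $S=\bG^F/Z$ reflects a delicate cancellation upon factoring out $Z$. I would handle each of the four blocks by an explicit calculation of $\RLG(\la)$, for which the decomposition is accessible from the work already carried out in Sections~\ref{sec:E6}--\ref{sec:E7} (and stored in {\sf Chevie} \cite{MChev}), then read off the degrees of all irreducible constituents and verify case-by-case that each satisfies $\chi(1)_\ell=|\bG^F|_\ell/|D|$, which suffices since defect groups are abelian.

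The main obstacle is precisely this last numerical verification: once $\ell\mid|W_{\bG^F}(\bL,\la)|$, the uniform ``generic'' argument breaks down, and one is compelled to inspect the individual character degrees and witness the cancellation against $|Z|$ that ensures height zero in the central quotient. This is a finite but somewhat technical check; it is the real heart of the proposition, and the reason why the four flagged exceptional blocks require separate attention.
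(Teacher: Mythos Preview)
Your argument has a genuine gap: you have conflated $\Irr(B)$ with the $e$-Harish-Chandra series $\cE(\bG^F,(\bL,\la))$. Since $\la$ is unipotent and Lusztig induction preserves Lusztig series (Theorem~\ref{thm:RLG}(b)), the constituents of $\RLG(\la)$ all lie in $\cE(\bG^F,1)$, so what you are describing is only $\Irr(B)\cap\cE(\bG^F,1)$. But $\Irr(B)\subseteq\cE_\ell(\bG^F,1)=\bigcup_t\cE(\bG^F,t)$ with $t$ ranging over $\ell$-elements, and the non-unipotent characters (those with $t\ne1$) are never touched by your generic-degree argument, nor by an explicit decomposition of $\RLG(\la)$. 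These are precisely the characters one must control to verify (HZC1) for $B$.

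The paper's proof supplies exactly the missing idea. For $F_4$ and $E_8$ (trivial centre, self-dual) it takes an arbitrary $\chi\in\cE(\bG^F,t)$ in $B$, passes via Jordan decomposition to its correspondent $\psi$ in $\bH^F=C_\bG(t)^F$, and shows that $\psi$ lies in a block $b_{\bH^F}(\bL,\la)$ with the \emph{same} defect group $Z(\bL)_\ell^F$; since $\psi(1)\le\chi(1)$ with equal height, minimality of $(\chi,S)$ forces $t=1$, reducing to the unipotent characters you did handle. For $E_6$ and $E_7$ one works in the adjoint cover: there the single non-unipotent character in the block has positive height, but it is shown to split into $\ell$ height-zero constituents on restriction to $S$ because the corresponding $\ell$-element has disconnected centraliser in the simply connected dual. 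Without this Jordan-decomposition reduction (or something equivalent) your proposal does not establish the proposition.
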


\begin{proof}
For ${}^2G_2(q^2)$ the validity of (HZC1) follows from the results in
\cite{Wa66}. Note that it is also known to hold for blocks with cyclic defect
group. So now let $B=b_S(\bL,\la)$, for $(\bL,\la)$ a unipotent $e$-cuspidal
pair of central $\ell$-defect and suppose that $B$ has non-cyclic abelian
defect groups.

Assume that $\bG$ is of type $F_4$ or $E_8$. Then $S =\bG^F$. Further,
by Proposition~\ref{prop:unipabdef}, $W_{\bG^F}(\bL,\la)$ is an $\ell'$-group.
We claim that $Z(\bL)_\ell^F$ is a Sylow $\ell$-subgroup of
$C^\circ_\bG([\bL,\bL ])^F$. Indeed, since $\bL=C_\bG(Z^\circ(\bL)_{\Phi_e})$
and since by \cite[Prop.~1.7(ii)]{CE94}, $Z^\circ(\bL)_{\Phi_e}$ is a Sylow
$e$-torus of $C_\bG^\circ([\bL, \bL])$, by the argument before
\cite[Lemma~4.5]{CE94}, $N_{\bG^F}(\bL)$ contains a Sylow $\ell$-subgroup,
say $D$ of $C^\circ_\bG([\bL,\bL])^F$.
Since $D$ centralises $[\bL,\bL]^F $, and $\la$ is  determined by its
restriction to $[\bL,\bL]^F$, $D \leq N_{\bG^F}(\bL,\la)$. But
$W_{\bG^F}(\bL,\la)$ being an $\ell'$-group means that $D \leq \bL$, and
hence $D \leq Z(\bL)$.

Since $\bG$ is self-dual, we may and will identify $\bG^*$ with $\bG$
in such a way that the resulting  correspondence between unipotent
$e$-cuspidal pairs of $\bG$ and $\bG^*$ is the correspondence of
\cite[Prop.~15]{En00}.
Let $ t\in \bG^*$ be an $\ell$-element such that $\chi\in\cE(\bG^F,t)$.
Let $\bH =C_\bG(t)$ and $\psi\in\cE(\bH^F,1)$
be the Jordan correspondent of $\chi$ in $C_{\bG^*}(t)$.
Since $\bG$ has trivial centre, by \cite[Thm.~B, Prop.~17]{En00},
there is a unipotent $e$-cuspidal pair $(\bL_t,\la_t)$ for
$\bH$, with central $\ell$-defect such that
$([\bL, \bL],\Res^{\bL^F}_{[\bL, \bL]^F}\la)$ and
$([\bL_t, \bL_t], \Res^{\bL_t^F}_{[\bL_t, \bL_t]^F} \la_t)$ are
$\bG^F$-conjugate, and such that $\psi$ is in the block
$b_{\bH^F}(\bL_t,\la_t)$. Further, $(\bL_t,\la_t)$ is uniquely determined
up to $\bH^F$-conjugacy.
\par
Since $t$ commutes  with $[\bL_t,\bL_t]$, some $\bG^F$-conjugate of
$t$ commutes with $[\bL,\bL]$, and so by the claim above,
we may assume that $t \in Z(\bL)_\ell^F$, and hence
that $(\bL_t,\la_t)=(\bL,\la)$. By \cite[Prop.~8, 8.bis]{En00},
$\bL= C_\bG^\circ(Z(\bL)_\ell^F)$ and $\bL^F=C_{\bG^F}(Z(\bL)_\ell^F)$ 
(in fact, $\bL= C_\bG(Z(\bL)_\ell^F)$),
hence, also
$\bL= C_\bH^\circ(Z(\bL)_\ell^F)$ and $\bL^F = C_{\bH^F}(Z(\bL)_\ell^F)$.
Since $W_{\bH^F}(\bL,\la) \leq W_{\bG^F}(\bL,\la)$ are  $\ell'$-groups,
$Z(\bL)_\ell^F$ is a defect group of $b_{\bH^F}(\bL_t, \la_t)$ and
of $B$ by Proposition~\ref{prop:defect:re}. \par
Now by the degree formula for Jordan decomposition, $\psi$ and $\chi$ have
the same defect and thus the same height, whence $(\psi,\bH^F)$
is a counterexample to (HZC1). Since $\psi(1) \leq \chi(1)$,
$(\chi,S)$ is a minimal counterexample only if $t$ is central, so $1$,
and hence only if $\chi$ is a unipotent character. But it is easy to
check  from the decomposition  of Lusztig induction of  the relevant
unipotent $e$-cuspidal  pairs  that all unipotent characters in 
$B$ are of zero height.
\par
Now suppose that $\bG$ is of type $E_6$. If $S$ is as in line~4 of the table, 
then the defect groups of $B$ are cyclic. So, we assume that $S$ is as in 
line~5 of the table. 
By Proposition~\ref{prop:unipabdef}, $\ell=3$, $3||(q-1)$, $Z \ne 1$,
and $S$ may be assumed to be the commutator subgroup of
${\hat \bG}^F$, where $\hat \bG= (E_6)_\ad $. Denote by
$\hat B$ the unipotent block of ${\hat \bG}^F$ covering $B$.
Let $\hat \chi \in \Irr(\hat B)$ cover $\chi$ and let $t\in \hat\bG^{*F}$
be such that $\hat \chi \in \cE(\hat \bG^F, t)$.
Since $\hat \bG$ has connected centre, again by \cite[Thm.~B, Prop.~17]{En00},
(and using the canonical correspondences between unipotent $e$-cuspidal pairs
for groups of the same type), $C_{\hat \bG^*}(t)$ contains a unipotent
$e$-cuspidal pair $(\bL_t,\la_t)$ such that $[\bL_t,\bL_t] \cong  [\bL,\bL]$.
There are only three classes of centralisers of semisimple elements of
${\hat \bG}^{*F}= E_6(q)_\SC$ containing Levi subgroups of
type $D_4$: one of type $\Ph1^2.D_4(q)$, one of type $\Ph1.D_5(q)$, and
$\hat\bG^{*F}$ itself. For $t\in Z({\hat \bG}^*)$, the elements of
$\cE(\hat \bG^F,t)$ have the same restrictions to $S$ as the unipotent
characters. Since~3 divides $q-1$
precisely once, there are no 3-elements with centraliser $\Phi_1.D_5(q)$.
Finally, there is exactly one class of elements of order~3 with centraliser
$H=\Ph1^2.D_4(q)$. The Jordan correspondent  of $\hat \chi $ is therefore
the unique (cuspidal) character $D_4[1]$ in $\cE(H,1)$,  $\hat \chi $
is  the only possible non-unipotent character in $\hat B$, and it has height~1.
Since the image of $t$ in the adjoint type group
${\hat \bG}^*/Z({\hat \bG}^*)$ has disconnected centraliser, the restriction
of $\hat\chi$ to $S$ has three irreducible constituents, which are thus of
height zero. In particular, (HZC1) is satisfied for $\hat B$. Exactly the same
reasoning applies to the Ennola dual case,
and a slight variation is valid for $\bG$ of type $E_7$.
\end{proof}

Next we show that no other quasi-isolated block provides a minimal
counterexample to (HCZ1).

\begin{lem}   \label{lem:goodLevi-split}
 Let $\bG$ be connected reductive such that $\bG$ and $\bG^*$ have connected
 centre. Let $s\in\bG^{*F}$ be a semisimple $\ell'$-element such that $\bG$
 and $s$ satisfy the assertions of Theorem~\ref{thm:BMM} and that
 all $e$-cuspidal pairs $(\bL,\la)$ of $\bG$ below $\cE(\bG^F,s)$ satisfy
 $$C_\bG^\circ (Z (\bL)_\ell^F)  =\bL,\quad
   C_{\bG^F} (Z (\bL)_\ell^F)  =\bL^F,$$
 and $\la$ is of central $\ell$-defect. Let $t\in\bG^{*F}$ be an
 $\ell$-element commuting with $s$ and suppose that there exists a proper
 $F$-stable Levi subgroup $\bM$ of $\bG$ such that the following holds.
 \begin{enumerate} [\rm(1)]
  \item  $C_{\bG^*} (st) \leq  \bM$.
  \item $\bM$ is $e$-split.
  \item One of the following holds.
  \begin{enumerate} [\rm(a)]
   \item For all $e$-cuspidal pairs $(\bL,\la)$ of $\bG$ below $\cE(\bG^F,s)$,
    $W_{\bG^F}(\bL,\la)$ is an $\ell'$-group and there exists an $F$-stable
    Levi subgroup $\bM_0$ of $\bM$ such that $C_{\bM^{*}}(s)\leq \bM_0$ and
    such that $\ell$ is good for $\bM_0$.
   \item  $\ell \geq 5 $  and  $\ell $ is good for $\bM$.
  \end{enumerate}
 \end{enumerate}
 Then $(\chi,\bG^F)$ is not a minimal counterexample to (HZC1) for any
 $\chi \in \cE(\bG^F, st)$.
\end{lem}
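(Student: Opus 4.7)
Suppose, for a contradiction, that $(\chi,\bG^F)$ is a minimal counterexample to (HZC1), so $\chi$ has positive height in an $\ell$-block $b:=b_{\bG^F}(\chi)$ with abelian defect groups. My strategy is to transfer $\chi$ through Lusztig induction into $\bM^F$ and produce a strictly smaller counterexample, contradicting the minimality hypothesis.

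First I would locate $b$ via the generalised Harish-Chandra theory guaranteed by the hypothesis. Theorem~\ref{thm:BrMiHi}(a) gives $b\subseteq\cE_\ell(\bG^F,s)$, and since Theorem~\ref{thm:BMM} holds for $(\bG,s)$, the block $b$ has an associated $e$-cuspidal pair $(\bL,\la)$ below $\cE(\bG^F,s)$ whose $e$-Harish-Chandra series lies in $b$. By the hypothesis $\bL=C_\bG(Z(\bL)_\ell^F)$, $\bL^F=C_{\bG^F}(Z(\bL)_\ell^F)$, and $\la$ has central $\ell$-defect, so Proposition~\ref{prop:defect:re} exhibits a defect group $D_b$ of $b$ of order $|Z(\bL)_\ell^F|\cdot|W_{\bG^F}(\bL,\la)|_\ell$; under hypothesis~(3a) the Weyl group factor is an $\ell'$-group and $|D_b|=|Z(\bL)_\ell^F|$.

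Next I would use condition~(1) to transfer $\chi$ to $\bM^F$. Because $C_{\bG^*}(st)\subseteq\bM^*$, one has $C_{\bG^*}(st)=C_{\bM^*}(st)$, and the resulting Jordan-decomposition-through-$R_\bM^\bG$ argument produces a unique $\chi'\in\cE(\bM^F,st)$ with $\chi=\epsilon_\bG\epsilon_\bM R_\bM^\bG(\chi')$; the corresponding degree formula gives $\chi(1)=|\bG^F:\bM^F|_{p'}\,\chi'(1)$, and in particular $\chi'(1)<\chi(1)$. Write $c:=b_{\bM^F}(\chi')$. The heart of the argument is to show that $c$ has abelian defect group $D_c$ with $|D_c|=|D_b|$. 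Under~(3a), this follows by applying Bonnaf\'e--Rouquier from $\bM$ down to $\bM_0$ (which is available because $C_{\bM^*}(s)\le\bM_0^*$), combined with Cabanes--Enguehard (applicable as $\ell$ is good for $\bM_0$) and Theorem~\ref{thm:all-ab-intro}, which jointly transfer abelianness and defect-group order back to $c$; after conjugating so that $\bL\leq\bM_0\leq\bM$, the $e$-cuspidal pair of $\bM_0$ associated to the Bonnaf\'e--Rouquier correspondent of $c$ is $(\bL,\la)$ itself, and one uses $W_{\bM_0^F}(\bL,\la)\leq W_{\bG^F}(\bL,\la)$, also $\ell'$. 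Under~(3b), $\ell\ge 5$ is good for $\bM$ and the Cabanes--Enguehard analysis applies directly to $\bM^F$, yielding the same conclusion.

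Finally, since $|\bM^F|<|\bG^F|$ and $\chi'(1)<\chi(1)$, the minimality of $(\chi,\bG^F)$ forces $\chi'$ to be of height zero in $c$. The standard height-comparison, exploiting $\ell\ne p$ to kill the contribution $\nu_\ell(\chi(1)/\chi'(1))-\nu_\ell(|\bG^F|/|\bM^F|)$, reduces to $h(\chi)-h(\chi')=\nu_\ell(|D_b|/|D_c|)$, which by the equality established above is zero; hence $h(\chi)=0$, a contradiction. The main technical obstacle is precisely the equality $|D_b|=|D_c|$: the hypothesis $C_{\bG^*}(st)\le\bM^*$ is strictly weaker than $C_{\bG^*}(s)\le\bM^*$ (which would furnish a direct Bonnaf\'e--Rouquier Morita equivalence between $b$ and $c$ and thus, via Theorem~\ref{thm:all-ab-intro}, defect groups of the same order), so the comparison of defect-group orders must be routed through the intermediate good-prime Levi $\bM_0$ in case~(3a) or extracted from the good-prime structure theorem applied to $\bM$ itself in case~(3b), with careful matching of the relevant $e$-cuspidal pairs on the two sides.
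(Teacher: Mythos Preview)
Your overall plan---transfer $\chi$ to $\chi'\in\cE(\bM^F,st)$ via $R_\bM^\bG$, compare defect-group orders, and invoke minimality---matches the paper's, and the height identity $h(\chi)-h(\chi')=\nu_\ell(|D_b|/|D_c|)$ is correct. But the step you flag only in passing, ``careful matching of the relevant $e$-cuspidal pairs on the two sides'', is the actual content of the lemma and you do not supply the mechanism. You choose $(\bL,\la)$ from the $\bG$-side (as a pair whose Harish-Chandra series lies in $b:=b_{\bG^F}(\chi)$) and then assert that the same $(\bL,\la)$ is the $e$-cuspidal pair attached to $c:=b_{\bM^F}(\chi')$. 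Nothing in your argument justifies this: $\chi\in\cE(\bG^F,st)$ with $t\ne1$ is \emph{not} in the Harish-Chandra series above $(\bL,\la)$ (that series lives in $\cE(\bG^F,s)$), so $\chi\in b$ gives no direct information about ${}^*R_\bL^\bM(\chi')$. Theorem~\ref{thm:all-ab-intro} does not bridge this either: it is stated for $\bG$ simple simply connected, and in any case there is no Bonnaf\'e--Rouquier equivalence between $b$ and $c$ since only $C_{\bG^*}(st)$, not $C_{\bG^*}(s)$, lies in $\bM^*$.

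The paper runs the argument in the opposite direction and supplies the bridge explicitly. It takes $(\bL,\la)$ to be the $e$-cuspidal pair of the $\bM^F$-block containing $\chi'$ (legitimate because $\bM$ is $e$-split), sets $c:=b_{\bG^F}(\bL,\la)$, and then proves $\chi\in c$. The key is the commutation $d^{1,\bG^F}\circ R_\bM^\bG=R_\bM^\bG\circ d^{1,\bM^F}$: it reduces the problem to (i) writing $d^{1,\bM^F}(\chi')$ as a $\ZZ$-combination of $d^{1,\bM^F}(\psi)$ with $\psi\in\Irr(b_{\bM^F}(\bL,\la))\cap\cE(\bM^F,s)$, and (ii) showing each $R_\bM^\bG(\psi)$ has all constituents in $c$. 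Point~(i) is Geck's $\ell$-basic-set theorem for connected centre and good prime, applied directly to $\bM$ in case~(3b) or to $\bM_0$ and transported via Bonnaf\'e--Rouquier in case~(3a). Point~(ii) needs $\Irr(b_{\bM^F}(\bL,\la))\cap\cE(\bM^F,s)$ to be exactly the set of constituents of $R_\bL^\bM(\la)$; in case~(3a) this is where the $\ell'$-hypothesis on $W_{\bG^F}(\bL,\la)$ enters, making the Brauer pair $(Z(\bL)_\ell^F,b_{\bL^F}(\la))$ maximal so that the map $(\bL,\la)\mapsto b_{\bH^F}(\bL,\la)$ is a bijection. This decomposition-map-plus-basic-set argument is the missing ingredient in your proposal.
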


\begin{proof}
The hypotheses on $\bG$ (and our results in Section 2) imply that for any
$e$-split Levi subgroup $\bH$ of $\bG$ and for any $\bH^F$-conjugacy
class of $e$-cuspidal pairs $(\bL,\la)$ of $\bG$ below $\cE(\bH^F,s)$
there is an $\ell$-block $b_{\bH^F}(\bL,\la)$ of $\bH^F$ in
$\cE_\ell(\bH^F, s)$ such that all constituents of $R_{\bL}^{\bH}(\la)$ lie in
$\Irr(b_{\bH^F}(\bL,\la)\cap \cE(\bH^F,s)$, and such that
$(Z(\bL)^F, b_{\bL^F}(\la))$ is a centric $b_{\bH^F}(\bL,\la)$-Brauer pair.
Moreover, any $\ell$-block of  $\bH^F$  in
$\cE_\ell(\bH^F, s)$  is of the form $b_{\bH^F}(\bL,\la)$ for some
$e$-cuspidal pair $(\bL,\la)$ of $\bG$ below $\cE(\bH^F,s)$.

Let $\chi\in \cE(\bG^F, st)$. By~(1), there exists
$\phi\in \cE(\bM^F, st)$ such that $\chi = \pm\RMG(\phi)$.
Let $b:=b_{\bM^F}(\bL,\la)$ be the $\ell$-block of $\bM^F$ containing
$\phi$ and set $c:= b_{\bG^F}(\bL,\la)$.
Since $W_{\bM^F}(\bL,\la)$ is a subgroup of  $W_{\bG^F}(\bL,\la)$,
by Proposition~\ref{prop:defect:re}(e),(f) if $c$ has abelian defect groups,
then $b$ has abelian defect groups, and
$Z(\bL)^F$ is a defect group of both $b$ and $c$. So,  since
$\phi(1) < \chi(1)$ and $\phi$ and $\chi$ have the same $\ell$-defect,
it suffices to prove that $c$ contains $\chi$, or equivalently that
$d^{1,\bG^F}(\chi)\in c$.

We claim that for any $\psi \in\Irr(b)\cap \cE(\bM^F, s)$, all constituents
of $\RMG(\psi)$ lie in $c$. Indeed, note that by 
Proposition~\ref{prop:HC-coroll} in order to prove the claim,
it suffices to prove that $\Irr(b)\cap \cE(\bM^F,s)$ is
precisely the set of constituents of $\RLM(\la)$. If~(3b) holds then
this follows from the main theorem of \cite{CE99}.
Suppose that~(3a) holds. Then for any $e$-split Levi subgroup $\bH$ of
$\bG$ and any $e$-cuspidal pair $(\tbL,\tilde\la)$ of $\bG$ below
$\cE(\bH^F,s)$, $(Z(\tbL^F), b_{\tbL^F}(\tilde\la))$ is a maximal
$b_{\bH^F}(\tbL,\tilde\la)$-Brauer pair. Consequently, the map
$$(\tbL,\tilde\la) \to b_{\bH^F}(\tbL,\tilde\la)$$
induces a bijection between the $\bH^F$-conjugacy classes of $e$-cuspidal
pairs of $\bG$ below $\cE(\bH^F,s)$ and the set of $\ell$-blocks
in $\cE(\bH^F, s)$, and
$\Irr(b_{\bH^F}(\tbL,\tilde\la))\cap\cE(\bH^F,s)$ is
precisely the set of constituents of $R_{\bL}^{\bH}(\la)$.

Now,
$$d^{1,\bG^F}(\chi) = \pm d^{1,\bG^F}(\RMG(\phi))= \RMG(d^{1,\bM^F}(\phi)).$$
Hence, by the claim above, it suffices to prove that $\Irr(b)\cap\cE(\bM^F,s)$
is an $\ell$-basic set for $b$. Suppose first that~(3a) holds. 
Since $\bG$ has connected centre, so does $\bM$ and by hypothesis,
$\ell$ is good for $\bM$. So, by \cite[Thm.~A]{Ge93}, $\Irr(b)\cap\cE(\bM^F,s)$
is an $\ell$-basic set for $b$. In case~(3b), let $b_0$ be the
Bonnaf\'e--Rouquier correspondent of $b$ in $\bM_0^F$. By the previous
argument, applied to $\bM_0$ instead of $\bM$, $\Irr(b_0)\cap\cE(\bM_0^F,s)$
is an $\ell$-basic set for $b_0$. The result follows as the
Bonnaf\'e--Rouquier Morita equivalence preserves basic sets.
\end{proof}

The next two results will allow us to verify the conditions of the previous
Lemma for certain situations in $E_8$.

\begin{prop}   \label{prop:in esplit}
 Let $\bH$ be connected reductive with Steinberg endomorphism $F$.
 Let $\ell$ be a prime different from the defining characteristic of $\bH$,
 good for $\bH$, not dividing $|Z(\bH)/Z^\circ(\bH)|$, and not a torsion prime,
 and set $e=e_\ell(q)$. Assume that one of the following holds:
 \begin{enumerate}[\rm(1)]
  \item $e$ is the unique integer such that $\ell|\Phi_e(q)$ and
   $\Phi_e(q)\big\vert |\bH^F|$; or
  \item $e\in\{1,2\}$ and a Sylow $e$-torus of $\bH$ is a maximal torus.
 \end{enumerate}
 Then the centraliser of any $\ell$-element $1\ne t\in\bH^F$ lies in the
 centraliser of a non-trivial $e$-torus.
\end{prop}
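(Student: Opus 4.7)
Let $C := C_\bH(t)$. In both cases, the plan is to exhibit a non-trivial $e$-subtorus $\bS \le Z^\circ(C)$, since any such $\bS$ satisfies $C \le C_\bH(\bS)$, which is the conclusion of the proposition. First I would note that the standing hypotheses on $\ell$ (good, non-torsion, and $\ell \nmid |Z(\bH)/Z^\circ(\bH)|$) ensure both that $C$ is connected reductive and that $\ell \nmid |Z(C)/Z^\circ(C)|$: the non-torsion assumption for $\bH$ is inherited by pseudo-Levi subgroups of maximal rank, such as $C$. Therefore $t \in Z^\circ(C)^F$, and since $t \ne 1$ we have $\ell \mid |Z^\circ(C)^F|$.

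In case (1), the argument is direct. The order $|Z^\circ(C)^F|$ factors as a product of cyclotomic values $\Phi_d(q)$, each of which divides $|\bH^F|$, since $Z^\circ(C)^F$ is a subgroup of $\bH^F$ whose order is a product of such values. As $\ell \mid |Z^\circ(C)^F|$, some such $\Phi_d(q)$ must be divisible by $\ell$, and the uniqueness hypothesis forces $d=e$. Hence $\Phi_e(q) \mid |Z^\circ(C)^F|$, and the Sylow $e$-subtorus of $Z^\circ(C)$ is the desired non-trivial $e$-torus.

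In case (2), by Ennola duality one may assume $e=1$, so $\bH$ admits an $F$-split maximal torus $\bT_0$. The strategy is to show that $t$ is $\bH^F$-conjugate to an element of $\bT_0^F$: granting this, $\bT_0$ becomes a maximal torus of $C$, so $Z^\circ(C) \subseteq \bT_0$ is $F$-split, and being non-trivial (it contains $t$) it supplies a non-trivial $1$-torus. To establish the conjugacy, if $Z^\circ(\bH)\ne 1$ then the split central torus $Z^\circ(\bH)$ already lies in $Z^\circ(C)$ and the conclusion is immediate, so one may assume $\bH$ is semisimple. The hypotheses then ensure that the isogeny kernel of $\bH_\SC \to \bH$ has order prime to $\ell$, so $t$ admits an $F$-fixed lift $\tilde t \in \bH_\SC^F$ that remains an $\ell$-element. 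For the simply connected semisimple split group $\bH_\SC$, Lang--Steinberg together with the connectedness of centralisers yields a bijection between rational semisimple classes of $\bH_\SC^F$ and $W^F$-orbits on $\bT_0^{\SC, F}$; thus $\tilde t$ is $\bH_\SC^F$-conjugate into $\bT_0^{\SC, F}$, and projecting back produces the required conjugacy for $t$.

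The main obstacle is the last step of case (2), the reduction to the simply connected cover. One must verify carefully that the hypotheses give (i) an $F$-fixed lift $\tilde t$ of $\ell$-power order, via the triviality of the relevant cohomology of the isogeny kernel, and (ii) the desired conjugacy in $\bH_\SC^F$, which hinges on the connectedness of $C_{\bH_\SC}(\tilde t)$ (ensured by the non-torsion assumption) and on the vanishing of $H^1(F, C_{\bH_\SC}(\tilde t))$.
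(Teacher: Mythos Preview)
Your treatment of case~(1) is correct and matches the paper's argument exactly: once $t\in Z^\circ(C)^F$ is established, the uniqueness hypothesis forces the Sylow $e$-subtorus of $Z^\circ(C)$ to be non-trivial.

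In case~(2), however, the argument you give for the final conjugacy step is not right as stated. The claimed bijection between rational semisimple classes of $\bH_\SC^F$ and $W^F$-orbits on $\bT_0^{\SC,F}$ is false in general: already in $\SL_2(q)$, a regular semisimple element whose eigenvalues lie in $\FF_{q^2}\setminus\FF_q$ has no $\SL_2(q)$-conjugate in the diagonal torus. Connectedness of centralisers gives you that a geometric conjugacy into $\bT_0$ yields a rational one \emph{once you know the target lies in $\bT_0^F$}, but it does not by itself produce an $F$-fixed representative in the split torus.

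What rescues the argument, and what the paper uses, is specific to $\ell$-elements: since the Sylow $e$-torus $\bT$ is by hypothesis a maximal torus and $\ell\mid\Phi_e(q)$, the entire $\ell$-torsion subgroup of $\bT$ is $F$-fixed, so $\bT[\ell]\subseteq\bT^F$. Hence any $\bH$-conjugate of $t$ lying in $\bT$ (which exists, as all maximal tori are conjugate) automatically lies in $\bT^F$. Connectedness of $C_\bH(t)$, which follows directly from the non-torsion hypothesis on $\ell$ without passing to the simply connected cover, then upgrades this to $\bH^F$-conjugacy. With this observation the reduction to $\bH_\SC$, the lifting of $t$, and the case split on $Z^\circ(\bH)$ all become unnecessary.

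One further simplification worth noting: under the standing hypotheses ($\ell$ good and $\ell\nmid|Z(\bH)/Z^\circ(\bH)|$) the centraliser $C$ is in fact a Levi subgroup of $\bH$, and then $|Z(C)/Z^\circ(C)|$ divides $|Z(\bH)/Z^\circ(\bH)|$; this gives $t\in Z^\circ(C)$ more directly than the inheritance argument you sketch.
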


\begin{proof}
Clearly it suffices to prove the assertion for $t$ of order~$\ell$. Let $t$
be of order~$\ell$ and set $\bC:=C_\bH^\circ(t)$. Then $\bC$ is a Levi
subgroup of $\bH$, and $t\in\bC$ (see \cite[Prop.~14.1]{MT}). Moreover,
$t\in Z(\bC)$, and as $|Z(\bC)/Z^\circ(\bC)|$ divides $|Z(\bH)/Z^\circ(\bH)|$
by \cite[Prop.~1.1.2(b)]{En08}, we even have $t\in Z^\circ(\bC)$.
Thus $Z^\circ(\bC)$ is a torus with $|Z^\circ(\bC)^F|$ divisible by $\ell$.
Under assumption~(1) this implies that $Z^\circ(\bC)$ contains a non-trivial
$e$-torus $\bT$, and thus $\bC\le C_\bH(\bT)$.
\par
In case~(2), let $\bT$ denote a Sylow $e$-torus of $\bH$. Then $t$ is
$\bH$-conjugate to some element of $\bT$, by \cite[Cor. 6.11]{MT}. As
$\ell|\Phi_e(q)$ we see that all elements of order~$\ell$ of $\bT$ are
$F$-stable, so lie in $\bT^F$. But the centraliser of $t$ is connected by
\cite[Ex.~20.16]{MT}, hence $t$ is even $\bH^F$-conjugate to an element
of $\bT^F$ by \cite[Thm.~26.7]{MT}. We may assume that actually $t\in \bT^F$.
Then in particular $\bC$ contains the maximal
torus $\bT$, whence $Z^\circ(\bC)\le\bT$ is a non-trivial $e$-torus.
\end{proof}

\begin{lem}   \label{lem:3-elts}
 Assume that $\bG^F=E_8(q)$ with $q\equiv1\pmod3$. Let $s\in\bG^F$ be a
 quasi-isolated $5$-element such that $F$ induces a nonsplit Steinberg
 endomorphism on $\bH:=C_\bG(s)$. Then for any non-trivial $3$-element
 $t\in\bH^F$, $C_\bG(st)$ is contained in a Levi subgroup $\bM_0$ of $\bG$
 of classical type, which itself lies in a proper $1$-split Levi subgroup
 $\bM$ of $\bG$.
\end{lem}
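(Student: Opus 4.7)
The plan is to reduce to Proposition~\ref{prop:in esplit} applied to $\bH=C_\bG(s)$ rather than to $\bG=E_8$ (for which $3$ is bad). Since $s$ and $t$ commute, $C_\bG(st)=C_\bH(t)$, so it suffices to find the required Levi subgroups around $C_\bH(t)$. By Proposition~\ref{prop:Bonn}, every non-trivial quasi-isolated $5$-element of $E_8$ has centraliser with root-system type $A_4+A_4$, and the hypothesis that $F$ is non-split on $\bH$ singles out the two cases $\bH^F\cong \tw2A_4(q^2)$ and $\bH^F\cong \tw2A_4(q)^2$ of Table~\ref{tab:quasi-E8-5}. As $\bG$ is simply connected, so is $\bH$, and $\bH\cong\SL_5\times\SL_5$.

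Next I verify the hypotheses of Proposition~\ref{prop:in esplit} for $(\bH,3)$: the prime $3$ is good for type $A_4$ and is not a torsion prime; $|Z(\bH)/Z^\circ(\bH)|$ divides $25$ and hence is coprime to $3$; and the cyclotomic polynomials dividing $|\bH^F|$ lie in $\{\Ph1,\Ph2,\Ph4,\Ph6,\Ph{10}\}$, of which only $\Ph1(q)=q-1$ is divisible by $3$ under the hypothesis $q\equiv 1\pmod 3$. Condition~(1) thus holds with $e=e_3(q)=1$, and we obtain a non-trivial $1$-torus $\bT$ of $\bH$ with $C_\bH(t)\subseteq C_\bH(\bT)$. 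To keep the Levi $C_\bG(\bT)$ below as small as possible I would take $\bT$ to be a Sylow $1$-torus of $Z^\circ(C_\bH(t))$.

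Setting $\bM:=C_\bG(\bT)$ then produces a $1$-split Levi subgroup of $\bG$ containing $C_\bG(st)$, proper since $\bT\ne 1$ and $Z(\bG)=1$. Since $3$ is good for $\bH$, Lemma~\ref{lem:ZL} shows that $C_\bH(t)$ is itself a Levi subgroup of $\bH$, so of type $A_{a_1}+A_{a_2}+\cdots$ with each $a_i\le 4$, and in particular classical. To obtain the classical intermediate $\bM_0$, I would write $\bM$ as a central product of its simple factors---each either classical or of exceptional type $E_6$ or $E_7$, these being the only exceptional Levi factors possible in $E_8$---observe that each of the two $A_4$-factors of $\bH$ is connected and thus lies inside a single simple factor of $\bM$, and replace each exceptional simple factor of $\bM$ meeting $\bH$ by a classical standard Levi subsystem containing the corresponding $A$-type piece. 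Such classical sub-Levis exist, since every $A$-type sub-root-system of $E_6$ or $E_7$ of rank at most $4$ lies inside a maximal classical Levi (e.g.\ $A_5$ or $D_5$ in $E_6$; $A_6$, $D_6$, or $A_5{+}A_1$ in $E_7$). This yields a classical Levi $\bM_0$ of $\bG$ with $C_\bG(st)\subseteq\bM_0\subseteq\bM$. The main technical obstacle is this last embedding step, which reduces to an explicit but routine combinatorial check on how $A_4+A_4$ meets the standard Levi subgroups of $E_8$ and on the classification of standard sub-Levi subsystems of $E_6$ and $E_7$.
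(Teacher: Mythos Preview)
Your construction of $\bM$ is essentially the paper's: both take $\bT$ to be the Sylow $1$-torus of $Z^\circ(C_\bH(t))$ and set $\bM=C_\bG(\bT)$. The paper simply tabulates the centralisers $C_{\bH^F}(t)$ using Jordan forms rather than invoking Proposition~\ref{prop:in esplit}, but the outcome is the same. (A minor slip: for $\bH^F=\tw2A_4(q^2)$ the cyclotomic factors of $|\bH^F|$ are $\Ph1,\Ph2,\Ph4,\Ph8,\Ph{12},\Ph{20}$, not the set you list; your conclusion that only $\Ph1$ is divisible by~$3$ remains valid.)

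The gap is in your construction of $\bM_0$. You write that ``each of the two $A_4$-factors of $\bH$ is connected and thus lies inside a single simple factor of $\bM$'', but $\bH$ is not contained in $\bM$ at all: only $C_\bH(t)$ is. Even read charitably as a statement about the simple factors of $C_\bH(t)$, the subsequent ``replacement'' step is not a proof. The pieces of $C_\bH(t)$ sitting inside an exceptional factor of $\bM$ need not form a Levi subgroup there (recall that $\bH$ itself is a Borel--de Siebenthal subgroup of $E_8$, not a Levi), several pieces may land in the same exceptional factor, and you must also carry along the central torus $Z^\circ(C_\bH(t))$. Asserting that an $A$-type sub-root-system of $E_6$ or $E_7$ of bounded rank lies in a classical Levi is a root-system statement, not a statement about the specific subgroup $C_\bH(t)\subset\bM$; bridging that gap is exactly the non-trivial combinatorial check you defer.

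The paper avoids all of this by taking $\bM_0:=C_\bG\!\bigl(Z^\circ(C_\bH(t))\bigr)$ directly. This is automatically a Levi subgroup of $\bG$ containing $C_\bG(st)=C_\bH(t)$, and its semisimple rank is at most $8-\dim Z^\circ(C_\bH(t))$. From the explicit list of centralisers one reads off that this dimension is at least~$2$, so the semisimple rank of $\bM_0$ is at most~$6$; the only possible exceptional Levi factor is then $E_6$, and in the two cases where $\dim Z^\circ(C_\bH(t))=2$ one checks via Borel--de Siebenthal that the types $A_4+A_2$ and $A_4+2A_1$ of $C_\bH(t)$ do not embed into $E_6$. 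Your argument would be repaired most cleanly by adopting this choice of $\bM_0$.
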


\begin{proof}
According to Table~\ref{tab:quasi-el} (or Table~\ref{tab:quasi-E8-3}) we have
$\bH^F$ of type either $\tw2A_4(q^2)$ or $\tw2A_4(q)^2$. It is easy (using
Jordan normal forms) to work out the types of 3-elements in $\bH^F$
and the isomorphism types of their centralisers; the result is given in
Table~\ref{tab:3-elts}.

\begin{table}[htbp]
\caption{Centralisers of $3$-elements in $\bH^F$}  \label{tab:3-elts}
\[\begin{array}{|ll|ll|}
\hline
 \span \bH^F=\tw2A_4(q)^2& \span \bH^F=\tw2A_4(q^2)\\
 C_{\bH^F}(t)& Z^\circ(C_{\bH^F}(t))&   C_{\bH^F}(t)& Z^\circ(C_{\bH^F}(t))\\
\hline\hline
            \GL_2(q^2)^2& \Ph1^2\Ph2^2&           \GL_2(q^4)& \Ph1\Ph2\Ph4\\
 \Ph1.\GL_2(q^2)\GU_3(q)& \Ph1^2\Ph2^2&  \Ph1\Ph2.\GU_3(q^2)& \Ph1\Ph2\Ph4\\
       \Ph1^2.\GU_3(q)^2& \Ph1^2\Ph2^2& &\\
      \GL_2(q^2)\SU_5(q)&       \Ph1^2& &\\
   \Ph1.\GU_3(q)\SU_5(q)&     \Ph1\Ph2& &\\
\hline
\end{array}\]
\end{table}

Clearly, $\bM_0:=C_\bG(Z^\circ(C_\bH(t)))$ and
$\bM:=C_\bG(Z^\circ(C_\bH(t))_{\Ph1})$ are Levi subgroups of $\bG$ containing
$C_\bG(st)$, $\bM_0\le\bM$, and $\bM$ is $1$-split and proper. Further,
$\bM_0$ has semisimple rank at most $8-\dim(Z^\circ(C_\bH(t)))$, hence it is
of classical type unless $\dim(Z^\circ(C_\bH(t))\le2$, which happens precisely
for the last two centralisers for $H=\tw2A_4(q)^2$. But there,
$C_\bH(t)$ is of type $A_4+A_2$, $A_4+2A_1$ respectively, and these do not
embed into a group of type $E_6$, by the Borel--de Siebenthal algorithm
(see \cite[Thm.~13.12]{MT}).
Hence in these cases as well, $\bM_0$ is of classical type.
\end{proof}

\begin{prop}   \label{prop:exc-height0}
 Suppose that $\bG$ is simple, simply connected of exceptional type
 $F_4$, $E_6,E_7$ or $E_8$ and that $\ell$ is a bad prime for $\bG$.
 Let $S =\bG^F/Z$ for $Z$ a central subgroup of $\bG^F$ and let $B$ be an
 $\ell$-block of $S$ such that the block of $\bG^F$ lifting $B$ is a
 quasi-isolated, non-unipotent block of abelian defect as in
 Tables~\ref{tab:quasi-F4}--\ref{tab:quasi-E8-5b} or their Ennola duals.
 Then $(\chi,S)$ is not a minimal counterexample to (HZC1) for
 any $\chi\in\Irr(B)$.
\end{prop}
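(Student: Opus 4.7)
The plan is to combine Lemma \ref{lem:goodLevi-split} with the explicit Lusztig-induction decompositions obtained in Sections \ref{sec:F4}--\ref{sec:E8}. Suppose for contradiction that $(\chi,S)$ is a minimal counterexample. I first reduce to working in $\bG^F$: since $Z$ is a central $\ell$-subgroup, $B$ is the image of a block $\bar B$ of $\bG^F$ with $Z$ in the kernel of every $\bar\chi\in\Irr(\bar B)$, and character degrees, heights, and abelianness of defect groups are preserved. For $\bG$ of type $E_6$ or $E_7$ I further pass through a regular embedding $\bG\hookrightarrow\tilde\bG$ so that both $\tilde\bG$ and $\tilde\bG^*$ have connected centre; Lemma \ref{lem:quasidefext2} ensures that a block $\tilde B$ of $\tilde\bG^F$ covering $\bar B$ has a good pair extending that of $\bar B$ and still has abelian defect, while the multiplicity-free restriction of characters under regular embeddings (\cite[Prop.~10]{Lu88}) keeps the $\ell$-part of degrees and heights in step. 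Any counterexample exhibited in a proper Levi of $\tilde\bG$ with strictly smaller character degree than $\chi(1)$ then contradicts minimality of $(\chi,S)$ regardless of group orders.

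By Theorem \ref{thm:BrMiHi} write $\tilde\chi\in\cE(\tilde\bG^F,st)$ with $s\ne 1$ the quasi-isolated $\ell'$-element parametrising $\tilde B$ and $t\in C_{\tilde\bG^*}(s)^F$ an $\ell$-element. If $t=1$, then $\tilde\chi$ lies in the $e$-Harish-Chandra series $\cE(\tilde\bG^F,(\tilde\bL,\tilde\lambda))$ attached to $\tilde B$; abelianness of the defect forces $W_{\tilde\bG^F}(\tilde\bL,\tilde\lambda)$ to be an $\ell'$-group and the defect group to equal $Z(\tilde\bL)_\ell^F$ (Proposition \ref{prop:defect:re}(e)--(g)), while $\tilde\lambda$ is of (quasi-)central $\ell$-defect as recorded in the lemmas of Sections \ref{sec:F4}--\ref{sec:E8}. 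The degree formula for the isometry $I^{\tilde\bG}_{(\tilde\bL,\tilde\lambda)}$ of Definition \ref{def:d-HC} then gives $\tilde\chi(1)_\ell=|\tilde\bG^F|_\ell/|Z(\tilde\bL)_\ell^F|$, so $\tilde\chi$ is of height zero, contradicting that it is a counterexample.

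If $t\ne 1$, then $C_{\tilde\bG^*}(st)\subsetneq C_{\tilde\bG^*}(s)$ and I apply Lemma \ref{lem:goodLevi-split} to produce a strictly smaller counterexample. One needs a proper $F$-stable $e$-split Levi $\bM$ of $\tilde\bG$ with $C_{\tilde\bG^*}(st)\le\bM^*$ satisfying either condition (3a) or (3b) of that lemma. For $\ell\ge 5$ (covering the $E_8$ blocks at $\ell=5$), $\ell$ is good for every proper Levi of $\tilde\bG$, so (3b) is automatic, and Proposition \ref{prop:in esplit} applied inside $C_{\tilde\bG^*}(s)$ furnishes $\bM$. For $\ell\in\{2,3\}$, the tables confirm that $W_{\tilde\bG^F}(\tilde\bL,\tilde\lambda)$ is always an $\ell'$-group when $\tilde B$ has abelian defect, so (3a) reduces to exhibiting a Levi $\bM_0\le\bM$ with $C_{\bM^*}(s)\le\bM_0$ and $\ell$ good for $\bM_0$. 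The delicate instance of $E_8$ with $\ell=3$ and $s$ a quasi-isolated $5$-element is precisely the content of Lemma \ref{lem:3-elts}, which produces a $1$-split classical Levi $\bM$ inside which $\bM_0$ can be chosen classical.

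The main obstacle is the case-by-case verification of condition (3a) for $\ell\in\{2,3\}$: for each quasi-isolated abelian-defect block in Tables \ref{tab:quasi-F4}--\ref{tab:quasi-E8-5b} and each conjugacy class of $\ell$-element $t\in C_{\tilde\bG^*}(s)^F$, one must exhibit a suitable pair $(\bM,\bM_0)$. The trickiest instances arise in $E_7$ and $E_8$, where $C_{\tilde\bG^*}(s)$ may contain non-classical factors (types $E_6$, $E_7$ or $E_6{\times}A_2$); there one leverages the explicit description of centralisers in Table \ref{tab:quasi-el} together with Proposition \ref{prop:in esplit} to locate $t$ inside a factor whose $\ell$-centraliser is absorbed by a Levi of type $A$ (for $\ell=2$) or of classical type (for $\ell=3$).
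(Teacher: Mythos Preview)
Your strategy diverges from the paper's, and the divergence creates a real gap. The paper does \emph{not} push Lemma~\ref{lem:goodLevi-split} through all the abelian-defect cases; for $F_4$, $E_6$, $E_7$, and the $E_8$ cases~5 and~11 at $\ell=3$ it uses a direct observation that avoids any analysis of $\ell$-elements $t$: the characters of $\Irr(B)\cap\cE(\bG^F,s)$ already realise the \emph{maximal} $\ell$-power among all of $\cE_\ell(\bG^F,s)$ (and are height zero). Since positive height means strictly larger $\ell$-part in the degree, no character of $B$ in any series $\cE(\bG^F,st)$ can be a counterexample. Lemma~\ref{lem:goodLevi-split} is invoked only where this shortcut fails, namely $E_8$ at $\ell=3$ for the quasi-isolated $5$-elements with nonsplit centraliser (cases~13--17, handled via Lemma~\ref{lem:3-elts}) and $E_8$ at $\ell=5$ (handled via Proposition~\ref{prop:in esplit}).

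Your uniform use of Lemma~\ref{lem:goodLevi-split} is therefore both unnecessary and, as stated, incomplete. You yourself identify the verification of condition~(3a) for $\ell\in\{2,3\}$ as ``the main obstacle'' and only sketch how it might go. But in several of the relevant cases $C_{\bG^*}(s)$ has an exceptional component for which $\ell$ is bad: for instance $E_7$ case~11 has $C_{\bG^*}(s)^F=\Phi_2.\tw2E_6(q).2$, and $E_8$ at $\ell=3$ cases~5 and~11 have $C_{\bG^*}(s)^F=E_7(q)A_1(q)$. For a $3$-element $t$ whose centraliser in the $E_7$ or $E_6$ factor is still large (say of type $E_6$ or $A_5A_2$), it is not at all clear that one can choose a proper $e$-split $\bM$ containing $C_{\bG^*}(st)$ and then a Levi $\bM_0\le\bM$ with $C_{\bM^*}(s)\le\bM_0^*$ and $3$ good for $\bM_0$; the exceptional factor tends to persist. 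The paper sidesteps exactly these cases with the maximal-$\ell$-power argument. (Incidentally, there are \emph{no} abelian-defect quasi-isolated non-unipotent blocks at $\ell=2$ in any of the four types, so your discussion of $\ell=2$ is empty.) Your regular-embedding detour for $E_6$ and $E_7$ is also extra work: the connected-centre hypothesis of Lemma~\ref{lem:goodLevi-split} is never needed for those groups because the simpler argument applies.
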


\begin{proof}
For $F_4(q)$, by Proposition~\ref{prop:F4-defgrp}, $B$ is one of the blocks
numbered 3, 5 or~7 or their Ennola duals.
Now note that in all three cases, the $\ell$-power in the degrees of characters
in $\Irr(B)\cap\cE(\bG^F,s)$ is maximal among all elements of
$\cE_\ell(\bG^F,s)$, while on the other hand, they are of height zero in $B$.
Hence, no character in $\Irr(B)\subseteq\cE_\ell(\bG^F,s)$ can have positive
height.
\par
If $S$ is of type $E_6(q)$, then we note by Table~3 that $B$ has abelian
defect groups only if the block of $\bG^F$ lifting $B$ has abelian defect
groups (note that in Lines~13 and~14 of the table the action of the relative
Weyl group does not become trivial on passing to $\bG^F/Z(\bG^F)$).
Hence, only the block numbered~15 has to be considered.
Here, either we can apply the same argument as for $F_4(q)$, or alternatively
observe that the defect groups are cyclic.
The same arguments apply to the unique quasi-isolated block with abelian
defect group of $\tw2E_6(q)$. \par
According to Proposition~\ref{prop:E7-defgrp} and Table~4, for $E_7(q)$ again
the defect groups of $B$ are abelian if and only if the defect groups of
the block of $\bG^F$ lifting $B$ are abelian and this occurs only
for the blocks~4, 7, 11, 13, 16 and~18. In all cases,
the characters in $\Irr(B)\cap\cE_\ell(\bG^F,s)$ have the maximal possible
$\ell$-part in their degree, so we conclude as before. \par

For $E_8(q)$ the blocks with abelian defect were described in
Propositions~\ref{prop:E8-2-defgrp}, \ref{prop:E8-3-defgrp}
and~\ref{prop:E8-5-defgrp}. In particular, there are no cases when $\ell=2$.
For $\ell=3$, we need to treat blocks~5, 11, and 13--17.
Here, the cases~5 and 11 follow by the standard argument on maximal
$\ell$-power in the degrees. In the remaining cases, that is, when $s$ is a
quasi-isolated 5-element with non-split centraliser, the conditions (1)--(3a)
of Lemma~\ref{lem:goodLevi-split} were shown to hold in Lemma~\ref{lem:3-elts},
whence the claim.
\par
Finally assume that $\ell=5$. We conclude as before when $\cE_5(\bG^F,s)$ is
a single $5$-block. It is straightforward to check that all other
centralisers $\bH$ of quasi-isolated elements $s$ in
Table~\ref{tab:quasi-E8-5b} satisfy condition~(1) of
Proposition~\ref{prop:in esplit}, and those in Table~\ref{tab:quasi-E8-5}
satisfy either~(1) or~(2). Thus there is a non-trivial $e$-torus $\bT$
contained in the centre of $C_{\bG^{*F}}(st)$, and we let $\bM\leq\bG$ be
a Levi subgroup in duality with $C_{\bG^{*F}}(\bT)$. So $\bM$ is proper in
$\bG$ and~(1), (2) and~(3b) of Lemma~\ref{lem:goodLevi-split} hold, which gives
the claim. This completes the proof.
\end{proof}

\subsection{Proof of (HZC1)}

\begin{thm}   \label{thm:BrauerHZC}
 The 'if part' (HCZ1) of Brauer's height zero conjecture holds.
\end{thm}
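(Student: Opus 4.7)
The plan is to argue by contradiction, taking a counterexample $(\chi,S)$ of minimal $(\chi(1),|S|)$ in lexicographic order. By the Berger--Kn\"orr reduction \cite{BK} (or Murai \cite{Mu}), we may assume that $S$ is quasi-simple. Set $\ell:=p$ to match the notation of the paper. We then split the argument according to the isomorphism type of $S/Z(S)$, discarding each possibility in turn.

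First I would dispose of the easy cases: if $S/Z(S)$ is a sporadic group or the Tits simple group, apply Proposition~\ref{prop:spor}; if $S$ is an exceptional cover of a simple group of Lie type or of $\fA_7$, apply Proposition~\ref{prop:exccov}; and if $S/Z(S)=\fA_n$ is alternating (and $S$ is not exceptional), invoke Olsson's result \cite{Ol90}. This reduces to the case that $S$ is a quasi-simple group of Lie type in defining characteristic different from the exceptional covers. If the characteristic of $S$ equals $\ell$, then by Humphreys' classification the $\ell$-blocks of $S$ of positive defect all have full defect, so the defect group is a Sylow $\ell$-subgroup, which is non-abelian as $S$ is a non-abelian simple group of Lie type in characteristic $\ell$; thus any such block with abelian defect is of defect zero, contradicting that $\chi$ has positive height.

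We are left with the main case: $S=\bG^F/Z$ for $\bG$ simple simply connected in characteristic $p\ne\ell$, and $\chi$ lifting to $\tilde\chi\in\Irr(\bG^F)$ lying in a block $\tilde B$ with abelian defect group. Let $s\in\bG^{*F}$ be a semisimple $\ell'$-element with $\tilde\chi\in\cE_\ell(\bG^F,s)$. By Bonnaf\'e--Rouquier \cite{BR}, $\tilde B$ is Morita equivalent (via Lusztig induction from a Levi $\bM$ containing $C_{\bG^*}(s)^*$ in its dual) to an $\ell$-block $c$ of $\bM^F$ in which $s$ is quasi-isolated in $\bM^*$. By Theorem~\ref{thm:all-ab-intro} the defect groups of the images $\bar B$ and $\bar c$ of these blocks in $\bG^F/Z$ resp.\ $\bM^F/Z$ are isomorphic (note $Z\le Z(\bM^F)$), so $\bar c$ also has abelian defect. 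If $\bM<\bG$ properly, then the character of $\bM^F/Z$ in $\bar c$ corresponding to $\chi$ under Jordan/Bonnaf\'e--Rouquier has smaller or equal degree and the same height, and by minimality its height must be zero, contradicting preservation of heights under the Morita equivalence. Hence $\bM=\bG$, i.e.\ $s$ is already quasi-isolated in $\bG^*$, and $\tilde B$ is a quasi-isolated block with abelian defect group.

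It remains to rule out the quasi-isolated case. If $\bG$ is classical, apply the results of Fong--Srinivasan \cite{FS82,FS89}, Cabanes--Enguehard \cite{CE99}, Enguehard \cite{En08}, and Blau--Ellers \cite{BE} (needed in the $A_n$ case where the central quotient may affect heights); in particular if $S$ is a central quotient of $\SL_n$ or $\SU_n$, Blau--Ellers gives (HZC1) directly. If $\bG$ is exceptional and $\ell$ is good for $\bG$, apply Cabanes--Enguehard \cite{CE99}. The hard part is the remaining case, where $\bG$ is exceptional and $\ell$ is bad: here $s=1$ (unipotent blocks) is handled by Proposition~\ref{prop:excunibad}, while the non-trivial quasi-isolated $\ell'$-elements are handled by Proposition~\ref{prop:exc-height0} together with the block/defect-group classification in Theorems~\ref{thm:mainblocks} and~\ref{thm:BMM} from Sections~\ref{sec:F4}--\ref{sec:E8} and Lemma~\ref{lem:G2} for the small-rank exceptional types $G_2$ and $\tw3D_4$. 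Each of these shows that $(\tilde\chi,\bG^F)$ (and hence $(\chi,S)$) cannot be a minimal counterexample, either because every character of $\Irr(\tilde B)\cap\cE(\bG^F,st)$ has maximal possible $\ell$-part in its degree (so height zero), or by reducing via Jordan decomposition / Lemma~\ref{lem:goodLevi-split} to a proper Levi subgroup where minimality applies. The genuine technical obstacle is the combinatorial case analysis for exceptional groups at bad primes, which is precisely the content that occupies the bulk of the paper; once those tables are in place the contradiction is immediate and the theorem follows.
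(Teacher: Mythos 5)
Your overall skeleton is the paper's: minimal counterexample, Berger--Kn\"orr reduction to quasi-simple groups, elimination of sporadic/alternating/exceptional-cover cases via Propositions~\ref{prop:spor} and~\ref{prop:exccov} and \cite{Ol90}, Humphreys in defining characteristic, Bonnaf\'e--Rouquier plus Theorem~\ref{thm:all-ab-intro} to force $s$ quasi-isolated, and Propositions~\ref{prop:excunibad} and~\ref{prop:exc-height0} for exceptional groups at bad primes. The place where you genuinely diverge, and where your write-up has a gap, is the good-prime branch. You apply the Bonnaf\'e--Rouquier reduction uniformly and are then left with quasi-isolated blocks with abelian defect at good primes, which you propose to dispatch by citing \cite{FS82,FS89,CE99}. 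But \cite{CE99} is a block-parametrisation result, not a statement about character heights, and the Fong--Srinivasan papers likewise do not directly assert (HZC1); the report \cite{CE93} covers only ``most'' blocks. The paper instead handles good primes (with $p\ne3$ under triality) \emph{before} invoking Bonnaf\'e--Rouquier: Enguehard's Jordan decomposition of blocks \cite[Thm.~1.6]{En08} gives a height-preserving bijection $\Xi:\Irr(B_0)\to\Irr(b)$ onto a block of $\bG(s)^F$ with isomorphic defect group and $\Xi(\psi)(1)\mid\psi(1)$, so minimality of $(\chi(1),|S|)$ forces $s=1$; the resulting unipotent block is then handled by the Brou\'e--Michel isotypie \cite[Thm.~3.1]{BM93} for $(\bG,F)$-excellent primes. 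Your route can be repaired the same way (apply \cite[Thm.~1.6]{En08} to the quasi-isolated block to force $s=1$, then \cite{BM93}), but as written the step ``apply Cabanes--Enguehard'' does not close the argument, and this is precisely the non-trivial input for good primes.

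Two smaller points. First, in defining characteristic your claim that the Sylow $\ell$-subgroup is non-abelian ``as $S$ is a non-abelian simple group of Lie type'' is false for $\SL_2(q)$ and $\PSL_2(q)$, whose Sylow $p$-subgroups are elementary abelian; the paper avoids this by excluding types $A_n$ and $\tw2A_n$ at the outset via Blau--Ellers \cite{BE}, so that only $\bG\ne\SL_2$ remains. You should move your appeal to \cite{BE} to the front of the argument for the same reason. Second, in the bad-prime exceptional branch you should also account for $\tw2B_2(q^2)$ (no non-defining bad prime), $\tw2G_2(q^2)$, $\tw2F_4(q^2)$ and the case $p=3$ with $F$ a triality endomorphism, for which the paper cites \cite{Wa66}, \cite{MaF}, \cite{Hi} and \cite{DeMi}; your mention of Lemma~\ref{lem:G2} covers $G_2$ and $\tw3D_4$ but not the very twisted types.
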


\begin{proof}
We investigate minimal counterexamples to the assertion. For this let $S$ be
a finite group, $p$ a prime, $B$ a $p$-block of $S$ with abelian defect group
$D$ and $\chi\in\Irr(B)$ an irreducible character of $B$ of positive height,
such that $(\chi(1),|S|)$ is minimal with respect to the lexicographic
ordering on such pairs. \par
Then by the principal result of \cite{BK}, $S$ is quasi-simple, with simple
central factor group $X$, say. We may assume that $X$ is not alternating, by
Olsson's result \cite{Ol90}, respectively by Proposition~\ref{prop:exccov} for
the exceptional covering groups of $\fA_6$ and $\fA_7$. (Note that for the
double cover of $\fA_n$ Olsson only treats the odd primes but the abelian
defect groups of $2$-blocks of alternating groups are of order at most $4$,
and (HCZ1) is known for such blocks by results of Brauer.) Furthermore, $X$ is
not sporadic by Proposition~\ref{prop:spor}, nor a special linear or unitary
group by the theorem of Blau and Ellers \cite[Thm.~5]{BE}, respectively by
Proposition~\ref{prop:exccov} for their exceptional covering groups. Thus by
the classification of finite simple groups, $X$ is a simple group of Lie type
not of type $A_n$ or $\tw2A_n$.
\par
There is a simple algebraic group $\bG$ of simply connected type with a
Steinberg endomorphism $F:\bG\rightarrow\bG$ such that $X\cong \bG^F/Z(\bG)^F$
(recall that we consider $\tw2F_4(2)'$ as a sporadic group). Moreover, by
Proposition~\ref{prop:exccov} we may assume that $S=\bG^F/Z$ for some central
subgroup $Z\le Z(\bG)^F$. Now first assume that $p$ is the defining prime
for $X$. Then $\gcd(p,|Z|)=1$, so any $p$-block of $S$ is also a $p$-block of
$\bG^F$, with the same defect group. By the theorem of Humphreys \cite{Hum}
the $p$-blocks of $\bG^F$ are either of defect zero or of full defect. But
the Sylow $p$-subgroup of $\bG^F$ is non-abelian unless $\bG=\SL_2$, which was
excluded before.
\par
Thus, $p$ is not the defining prime for $\bG$. Let $\bG^*$ be a group in
duality with $\bG$, thus of adjoint type, with compatible Steinberg morphism
$F:\bG^*\rightarrow\bG^*$. Let $B_0$ be the $p$-block of $\bG^F$ containing
the lift, say $\chi_0 $, of $\chi$ to $\bG^F$ and let $s\in \bG^{*F}$ be a
semisimple $p'$-element such that $B_0 \subseteq \cE_p(\bG, s)$.
\par
Now assume first that $p$ is a good prime for $\bG$, and $p\ne3$ if $F$ is a
triality automorphism. Since $\bG$ is not of type $A_n$ and $p$ is good for
$\bG$, $p$ does not divide $|Z(\bG^F)|$, hence $B_0$ and $B$ have isomorphic
defect groups. So, $\chi$ and $\chi_0$ have equal degree and equal height.
In particular, $\chi_0$ is not of zero height. Again since $p$ is good for
$\bG$ and $p\ne3$ if $F$ is a triality automorphism, by the theorem of
Enguehard \cite[Thm.~1.6]{En08}, there is a reductive algebraic group
$\bG(s)$ with a Steinberg endomorphism
$F:\bG(s)\rightarrow\bG(s)$, such that $\bG(s)^\circ$ is in duality with
$C_{\bG^*}^\circ(s)$, a $p$-block $b$ of $\bG(s)^F$ with isomorphic defect
group $D$ and a height preserving bijection $\Xi:\Irr(B_0)\rightarrow\Irr(b)$.
Moreover, $\Xi(\psi)(1)|\psi(1)$ for all $\psi\in \Irr(B_0)$. Thus, if
$|\bG(s)^F|<|S|$, then $(\Xi(\chi),\bG(s)^F)$
cannot be a counterexample, so neither can $(\chi,S)$. Hence, in this case we
must have $|\bG(s)^F| \geq |S|$, so $s\in Z(\bG^*)=1$
(since $\bG^*$ is adjoint). So $B_0$ is unipotent. \par
Since $\bG$ is not of type $A_n$, $p$ is good and $p\ne3$ when $F$ induces
triality, $p$ is even $(\bG,F)$-excellent in the sense of
Brou\'e--Michel \cite[Def.~1.11]{BM93}.
Thus, by \cite[Thm.~3.1]{BM93} there is an isotypie between the unipotent block
$B_0$ and a block $b$ of the normaliser of some non-trivial $p$-subgroup; in
particular Brauer's height zero conjecture holds for $B_0$ and hence for $B$,
contradicting our choice.
\par
Thus, either $p$ is a bad prime for $\bG$, or $p=3$ and $F$ induces triality.
Let $\bM $ be an $F$-stable Levi-subgroup of $\bG$ such that
$C_{\bG^*}(s) \leq \bM^*$ and $s$ is quasi-isolated in $\bM^*$. Let $C_0$ be the
Bonnaf\'e--Rouquier correspondent of  $B_0 $ in   $\bM^F$.  Then,
$\psi \mapsto \epsilon_\bG \epsilon_\bM \RMG(\psi)$ is a
height preserving bijection between $\Irr (C_0)$ and $\Irr(B_0)$.
For $\psi \in \Irr (C_0)$ and $z\in \bG^F$, $z \in \ker(\psi)$
if and only if $z \in \ker(\epsilon_\bG\epsilon_\bM \RMG(\psi))$.
Hence, the character of $C_0$ corresponding to $\chi_0$ via the above
bijection is the lift of a character, say $\tau $, of
$\bM^F/Z$ to $\bM^F$.  Let  $C$ be the $\ell$-block of $\bM^F/Z$
containing $\tau$.
By Theorem~\ref{thm:all-ab} the defect groups of $C$ are abelian and of the
same size as the defect groups of $B$ (note that
Theorem~\ref{thm:all-ab} applies to  the Sylow $p$-subgroup of $Z$ and  defect
groups remain unchanged on passing to quotients by $p'$-groups).
Now $\tau $ and $\chi$ have  equal heights, and $\tau(1) \leq \chi(1)$.
So,  $\bM^F/Z =\bG^F/Z$, $\bM =\bG$ and $s$ is quasi-isolated in $\bG^*$.
\par
Assume first that $p=2$ and $\bG^F$ is of classical type different from
$A_n$ or $\tw2A_n$. The quasi-isolated elements of $\bG^*$ are $2$-elements,
hence $s=1$.
But then by \cite[Prop.~1.5]{En08}, $B$ is the principal block of
$S$, and in particular has non-abelian defect groups.
Thus, $S$ is of exceptional type. For the groups $\tw2B_2(q^2)$ the only bad
prime is the defining one, so this case does not occur here.
The height zero
conjecture for the groups $^2G_2(q^2)$, $G_2(q)$, $\tw3D_4(q)$ and
$\tw2F_4(q^2)$ has been checked by Ward \cite{Wa66}, Hi\ss\ \cite{Hi},
Deriziotis--Michler \cite{DeMi} and Malle \cite{MaF} respectively.
So $s$ is quasi-isolated in a quasi-simple exceptional group
of Lie type of rank at least~4 and $p$ is a bad prime. In this case the
claim is contained in Propositions~\ref{prop:excunibad}
and~\ref{prop:exc-height0}.

This completes the proof of Theorem~\ref {thm:BrauerHZC}.
\end{proof}

\section{Blocks with equal height zero degrees} \label{sec:nilblocks}

Here, we complete the proof of Theorem~\ref{thm:nil}. According to the
result in \cite[Thm.~6.1]{MN} the only blocks left to consider are spin
blocks (i.e., faithful blocks) of the double cover of alternating groups,
and quasi-isolated blocks of exceptional groups of Lie type of rank at
least~4. The validity of Theorem~\ref{thm:nil} for spin blocks of alternating
groups has recently been shown by Gramain \cite[Thm.~4.1 and Cor.~4.2]{Gr10}.
\par
So we may assume that $S$ is quasi-simple of exceptional Lie type in
characteristic~$p$, and $B$ is a quasi-isolated $\ell$-block of $S$ with
$\ell\ne p$. It is immediate from our explicit description of such blocks in
Sections~\ref{sec:F4}--\ref{sec:E8} and the degree formula resulting from
Lusztig's Jordan decomposition of characters that the only quasi-isolated
$\ell$-blocks with all height zero characters of the same degree are those
consisting of a single cuspidal character. In those cases, the defect groups
are central, and in particular abelian. Together with the criterion in
\cite[Thm.~4.1]{MN}, the claim follows.

\vskip 2pc\noindent
\textbf{Acknowledgements:} We thank Marc Cabanes for several hints and
explanations, in particular for showing us how to avoid having to treat the
three groups $\tw2E_6(2),E_7(2)$ and $E_8(2)$ separately in the proof of
Proposition~\ref{prop:Cabanesjordancrux}, Michel Brou\'e for a helpful
conversation on the topic of Section~\ref{sec:back}, and J\"urgen M\"uller for
determining the defect groups of some non-principal blocks of
sporadic simple groups. We thank Michel Enguehard for clarifying 
the structure of the defect groups of $2$-blocks of the classical groups.
Last but not least, we thank Jean Michel for
providing the relevant commands in the {\sf Chevie}-package, without which
the computations necessary for this paper would have been much more cumbersome.


\end{document}


\title{Supplement: Quasi-isolated blocks\\ and Brauer's height zero conjecture}

\date{\today}

\author{Radha Kessar}
\address{Department of Mathematics, City, University of London, Northampton Square, London N88HG, UK}
\email{radha.kessar.1@city.ac.uk}
\author{Gunter Malle}
\address{FB Mathematik, TU Kaiserslautern,
  Postfach 3049, 67653 Kaisers\-lautern, Germany.}
\email{malle@mathematik.uni-kl.de}

\begin{abstract}
We treat a configuration for the $5$-blocks of the finite simple groups
$E_8(q)$ that was inadvertently omitted in \cite{KM} and clarify and correct
the meaning of some entries in Tables~3 and~4 from \cite{KM}. The results stated
in the introductions of \cite{KM}, \cite{KM15} and \cite{KM17} remain unaffected.
\end{abstract}

\maketitle

\pagestyle{myheadings}
\markboth{R. Kessar and G. Malle}{Supplements on quasi-isolated blocks}

\section{Certain 5-blocks in groups of type $E_8$} \label{sec:5-blocks}
We deal with a situation missed in our paper \cite{KM}. 
Let $\bG$ be a simple linear algebraic group of type $E_8$ with a Frobenius
endomorphism $F:\bG\rightarrow\bG$ such that $G=\bG^F=E_8(q)$ for a prime
power~$q$. Let $G^*$ be dual to $G$. If $q\equiv\pm1\pmod6$ there exists an
isolated element $s\in G^*\cong G$ of order six with centraliser
$C_{\bG^*}(s)$ of type $A_5+A_2+A_1$. It was inadvertently left
out of \cite[Tab.~1]{KM} (probably as its order is divisible by two distinct
bad primes; but type $E_8$ has three bad primes, viz.~$2,3$ and~$5$). The centraliser of $s$ in
$\bG^*$ is of rational type $A_5(q).A_2(q).A_1(q)$ if $q\equiv1\pmod6$,
and $\tw2A_5(q).\tw2A_2(q).A_1(q)$ if $q\equiv5\pmod6$. We determine the
$\ell$-blocks in $\cE_\ell(\bG^F,s)$ for the only relevant bad prime $\ell=5$.

\begin{thm}   \label{thm:E8 missing}
 Let $q$ be a prime power with $5{\not|}q$. Theorems~$1.2$, $1.4$ and~$1.5$ of
 \cite{KM} continue to hold for the isolated 5-blocks of $G^F=E_8(q)$ in
 $\cE(\bG^F,s)$ with $s$ as
 described above.
\end{thm}

\begin{proof}
The method is completely analogous to that employed in \cite{KM}. There are
three cases to distinguish, depending on whether the order $e:=e_5(q)$ of
$q$ modulo~5 is~1,2 or~4. First we determine the decomposition of the Lusztig
functor $\RLG$ for the relevant $e$-cuspidal pairs $(\bL,\la)$ below $\cE(G,s)$.
Since the unipotent characters of groups of type $A$ are uniform, and Jordan
decomposition commutes with Deligne--Lusztig induction of uniform functions,
this decomposition follows from the corresponding one for unipotent characters.
This also shows the fact that $\bG^F$ satisfies an $e$-Harish-Chandra theory
above each $e$-cuspidal pair below $\cE(\bG^F,s)$ in the sense of
\cite[Def.~2.9]{KM}, thus proving the assertion of \cite[Thm.~1.4]{KM} in this
case. The decomposition of $\RLG(\la)$ is given in Tables~\ref{tab:q=1(5)}
and~\ref{tab:q=2(5)}. The notation is as in the analogous tables in \cite{KM}.
Here, the case $e=2$ can be obtained by Ennola duality from the one for $e=1$,
and the case of centraliser $\tw2A_5(q).\tw2A_2(q).A_1(q)$ when
$q\equiv\pm2\pmod5$
from the one with centraliser $A_5(q).A_2(q).A_1(q)$ when $q\equiv\mp2\pmod5$.
(Note that the $e$-Harish-Chandra series in Lines~45--49 are exactly as for
the 2-blocks~1 and~2 of $E_7(q)$ in \cite[Tab.~4]{KM}.)

\begin{table}[htbp]
\caption{Isolated 5-blocks in $E_8(q)$, $q\equiv1\pmod5$}   \label{tab:q=1(5)}
\[\begin{array}{|r|r|llll|}
\hline
 \text{No.}& C_{\bG^*}(s)^F& \bL^F& C_{\bL^*}(s)^F& \la& W_{\bG^F}(\bL,\la)\\
\hline\hline
 45& A_5(q)A_2(q)A_1(q)& \emptyset& \bL^{*F}& 1& A_5\ti A_2\ti A_1\\
\hline
 46& \tw2A_5(q).\tw2A_2(q)A_1(q)& A_1^3& \Ph1^5\Ph2^3& 1& C_3\ti A_1\ti A_1 \\
 47&     & D_4& \Ph1^4\Ph2^2.\tw2A_2(q)& \phi_{21}& C_3\ti A_1\\
 48&     & D_6& \Ph1^2\Ph2.\tw2A_5(q)& \phi_{321}& A_1\ti A_1\\
 49&     & E_7& \Ph1.\tw2A_5(q).\tw2A_2(q)& \phi_{321}\otimes\phi_{21}& A_1\\
 \hline
\end{array}\]
\end{table}

\begin{table}[htbp]
\caption{Isolated 5-blocks in $E_8(q)$, $q\equiv\pm2\pmod5$}   \label{tab:q=2(5)}
\[\begin{array}{|r|r|llll|}
\hline
 \text{No.}& C_{\bG^*}(s)^F& \bL^F& C_{\bL^*}(s)^F& \la& W_{\bG^F}(\bL,\la)\\
\hline\hline
 50& A_5(q)A_2(q)A_1(q)& \Ph4.\tw2D_6(q)& \Ph1^3\Ph4.A_2(q)A_1(q)& 6\text{ chars}& Z_4\ti A_1\\
 51&  & \bG^F& C_{\bG^*}(s)^F& 18\text{ chars}& 1\\
\hline
\end{array}\]
\end{table}

We had already checked in \cite[Lemma~6.9]{KM} that all relevant $e$-split
Levi subgroups $\bL$ of $\bG$ satisfy
$$C_\bG(Z(\bL)_5^F)=\bL.$$
(In fact they all already occur in Lines~19--23 of Table~7 respectively in
Line~43 of Table~8 in \cite{KM}.) Furthermore, all relevant $e$-cuspidal
characters $\la$ are readily seen to be of central $5$-defect. But then by
\cite[Prop.~2.13 and~2.15]{KM} the two conditions in \cite[Prop.~2.12]{KM}
are satisfied and so for all relevant $e$-cuspidal pairs $(\bL,\la)$ all
constituents of $\RLG(\la)$ lie in a single $5$-block $b_{\bG^F}(\bL,\la)$ of
$\bG^F$. Moreover, $Z^{\circ}(\bL)^F \cap [\bL,\bL]^F$ is a $5'$-group, hence by
\cite[Prop.~2.7(g)]{KM}, in each case $(Z(\bL^F)_5,b_{\bL^F}(\la))$ is a
centric $b_{\bG^F}(\bL,\la)$-Brauer pair.   \par
If $(\bL,\la)$ corresponds to
Line~45 of Table~\ref{tab:q=1(5)}, then by \cite[Prop.~2.7(c)]{KM}, a defect
group of $b_{\bG^F}(\bL,\la)$ is an extension of $Z(\bL^F)_5$ by a Sylow
$5$-subgroup of $W_{\bG^F}(\bL,\la)$. In all other cases, the relative Weyl
group is a $5'$-group, hence by \cite[Prop.~2.7]{KM}, $(Z(\bL)^F_5,\la)$ is a
maximal $b_{\bG^F}(\bL,\la)$-Brauer pair, and in particular $Z(\bL)^F_5$ is a
defect group of $b_{\bG^F}(\bL, \la)$. Thus the defect groups of the various
blocks are as described in Theorem 1.2 of \cite{KM}.
\par
Since the orders of the Sylow $5$-subgroups of the various $Z(\bL)^F$ in
Lines~46--49 are all distinct, these lines correspond to different blocks. To
see that the blocks represented by the six characters of Line~50 are distinct,
note that since $\bL =C_\bG(Z(\bL)^F_5)$ and since the pairs $(\bL,\la)$
are not $\bG^F$-conjugate, neither are the corresponding maximal Brauer pairs
$(Z(\bL)^F_5,\la)$. The blocks corresponding to Line~51 are all of defect
zero, hence are distinct.
\end{proof}

The above results show that \cite[Thm.~A, Thm.~B]{KM15} remain unchanged
(note that Remark 2.2(4) of \cite{KM15} applies also to the element $s$ above).

We also obtain the following consequences, filling the gaps in the proofs
of Theorem~\cite[Thm.~1.1]{KM} and \cite[Main Theorem]{KM17} caused by the
missing case.

\begin{cor}
 The $5$-blocks in $\cE_5(\bG^F,s)$ with non-abelian defect group contain
 characters of positive height. Further, $(\bG^F,\chi)$ is not a minimal
 counter-example to (HZC1) for any semisimple $5$-element $t\in \bG^{*F}$
 commuting with $s$ and any $\chi\in\cE(\bG^F,st)$.
\end{cor}

\begin{proof}
The block in Line~45 of Table~\ref{tab:q=1(5)} has non-abelian defect groups (as
the relative Weyl group has order divisible by~5) and the character in
$\cE(\bG^F,s)$ corresponding to the unipotent character of $A_5(q)A_2(q)A_1(q)$
labelled by $\phi_{51}\otimes\phi_3\otimes\phi_2$ has positive height. The
blocks in all other lines have abelian defect groups. This proves the
first assertion.
\par
Suppose that $s$ corresponds to Lines~50 or~51. The blocks in Line~51 are all
of defect~0, and all remaining characters in $\cE_5(\bG^F,s)$ have the same
5-part in their degree, so are all of height~0 in their respective blocks.
In particular, the second assertion holds in these cases. Now suppose that $s$
corresponds
to Lines~46--49. Here we may apply \cite[Lemma~8.5(3)(b)]{KM} in conjunction
with \cite[Prop.~8.6(1)]{KM} to conclude that the second assertion holds
(see the argument in the last part of the proof of \cite[Prop.~8.8]{KM}).
\end{proof}

\section{Blocks of $E_6(q)$ and $E_7(q)$.}
We correct the interpretation of Lines 2, 5, 8 and 11 of Table~3 and Lines 6, 7,
10, 11, 14 and 20 of Table~4 of \cite{KM}.

\subsection{Disconnected centralisers} \label{sub:dis}
In the description of blocks via the tables listing $e$-cuspidal pairs in
\cite{KM}, we wrote that each numbered line gives rise to a unique
$e$-cuspidal series, hence to a unique $\ell$-block. The characters $\la$ in the
$e$-cuspidal pairs $(L, \la)$ (in series $s$) in the tables are labelled via a
character of $C_{\bL^*}^\circ(s)^F $ in Jordan correspondence with $\la $. In
the affected lines, $C_{\bL^*}(s)$ is disconnected, the listed character is
$C_{\bL^*}^\circ(s)^F $-stable and corresponds to
$|C_{\bL^*} (s)^F: C_{\bL^*}^\circ(s)^F|$ characters of $\bL$.

We show that in these cases there are in fact
$|C_{\bL^*}(s)^F: C_{\bL^*}^\circ (s)^F|$ blocks corresponding to each of the
lines which are transitively permuted by the group of inner-diagonal
automorphisms of $\bG^F$. We sketch the arguments for Table~3. The proofs for 
Table~4 are similar (see remark before Subsection~\ref{sub:sep}). We note that
the Table~4 cases were also picked up in
Paragraph 9 of the proof of Theorem~3.14 of \cite{KM15}. 

\begin{lem}
 For every line in \cite[Tab.~3]{KM} such that $C_{\bL^*}(s)^F\ne C_{\bL^*}^\circ(s)^F$,
 each entry in the $\la$-column corresponds to $3$ possibilities for $\la$. The
 $e$-cuspidal pairs $(\bL,\la)$ are in pairwise different $\bG^F$-conjugacy
 classes and give rise to pairwise disjoint $e$-Harish-Chandra series of
 $\bG^F$, but are conjugate under inner diagonal automorphisms of $\bG^F$. 
\end{lem}
 
\begin{proof}
It follows from Lusztig's Jordan decomposition of characters that the three
$e$-cuspidal characters in each case are fused by the diagonal outer automorphism
of order~3. Since $R_L^G$ commutes with diagonal automorphisms, this implies
the claim.
\end{proof} 

The proof of \cite[Prop.~4.2]{KM} carries over without change, hence all
irreducible characters in any listed $e$-Harish-Chandra series lie in the same
block. Moreover, the structure of the defect groups of the blocks corresponding
to any numbered line is as stated in \cite[Prop.~4.3]{KM}. Thus, in order to
complete the proof of \cite[Prop.~4.3]{KM} it remains to show: 

\begin{enumerate} 
\item The blocks of the three Harish-Chandra series in each of the numbered
 lines 2, 5, 8 and 11 of \cite[Tab.~3]{KM} are pairwise distinct.
\item Each of the three Harish-Chandra series in the unnumbered lines in
 Sections 5, 8 and 11 of \cite[Tab.~3]{KM} is contained in a (different) block
 corresponding to the numbered line in the same section.
\end{enumerate}
 
Recall from Section~3 of \cite{KM15} that an $\ell$-group $P$ is said to be
\emph{Cabanes} if $P$ contains a unique maximal normal abelian subgroup.

\begin{lem}   \label{l:defect}
 Let $b$ be a block corresponding to one of the numbered Lines $2$, $5$, $8$ or
 $11$ of \cite[Tab.~3]{KM} corresponding to the $e$-cuspidal pair $(\bL,\la)$
 and let $u$ be the block of $\bL^F $ containing~$\la$.
 There exists a maximal $b$-Brauer pair $(P, c)$ such that $(Z(\bL)^F_2,u)$ is
 normal in $(P,c)$ and such that the following holds.
 \begin{enumerate}[\rm(a)]
  \item In Lines $2$ and~$8$, $P$ is Cabanes with $Z(\bL)^F_2$ as unique maximal
   abelian normal subgroup.
  \item In Lines $5$ and~$11$, $ Z(\bL)^F_2 $ is the unique normal abelian
   subgroup of index~$4$ in $P$.
 \end{enumerate}
\end{lem} 

\begin{proof}
In Line 2, $Z(\bL)^F_2 =\langle x \rangle \times \langle y \rangle$ is a
homocyclic $2$-group of exponent $2^n=(q-1)_2$, $n\geq 2$, $|P: Z(\bL)^F_2|= 2$,
$P= Z(\bL)^F_2 \langle \sigma \rangle $ with $\sigma$ acting on $Z(\bL)^F_2$ by
either interchanging $x$ and $y$ or by sending $x$ to $(xy)^{-1}$ and
fixing~$y$. This is because $P$ is an extension of $Z(L)^F_\ell$ by a Sylow
$\ell$-subgroup of the relative Weyl group and this extension can already be
observed in the normaliser of a subsystem group $A_2(q)^3$. Thus $P$ is a Sylow
$2$-subgroup of $A_2(q)$. Since $n\geq 2$, $\sigma$ does not act quadratically
on $Z(L)^F_2$, thus the result holds by \cite[Prop.~2] {C94}. 

In Line 5, $Z(\bL)^F_2$ is as above, $|P: Z(\bL)^F_2|= 4$,
$P= Z(\bL)^F_2\langle \sigma, \tau \rangle$ where $\sigma$ acts on $Z(\bL)^F_2$
as above and $\tau$ acts by inversion. Let $Q$ be a normal abelian
subgroup of index $4$ of $P$ different from $ Z(\bL)^F_2$. Then either
$Z(\bL)^F_2 \cap Q $ has index $2$ in $Z(\bL)^F_2 $ and $Z(\bL)^F_2 \cap Q$ is
centralised by one of $\sigma $, $\tau$ or $\sigma \tau$ or $Z(\bL)^F_2\cap Q$
has index $4$ in $Z(\bL)^F_2$ and $Z(\bL)^F_2\cap Q$ is centralised by both
$\sigma$ and $\tau$. But the centraliser in $ Z(\bL)^F_2$ of any of $\sigma$,
$\tau$ or $\sigma\tau$ has index at least $4$ in $Z(\bL)^F_2$ and the
intersection of the centraliser in $Z(\bL)^F_2$ of $\sigma$ with the centraliser
in $Z(\bL)^F_2$ of $\tau$ has order $2$.

In Line 8, $P$ is semi-dihedral with $Z(\bL)^F_2$ the unique cyclic subgroup of
index $2$, hence the result holds.
The proof for Line 11 is similar to that for Line 5.
\end{proof} 

Now point (1) is a consequence of Lemmas~\ref{l:defect} and \ref{l:cabanes2}.
\medskip

Now for point (2). To prove the required statement for Line 5, we follow the
argument given in Paragraph 4 of the proof of \cite[Prop.~4.2]{KM}.
So we consider Lusztig
induction from the $2$-split Levi subgroup of Line 11 with $q\equiv 1 \pmod 4$.
Here also there are three distinct $2$- Harish-Chandra series transitively
permuted by $\Inndiag(\bG^F)$. The union of these series contains a
character of $\bG^F$ corresponding to $\tw3D_4[1]$ in the unnumbered line of
Section 5 of the table, as well as a character in one of the $2$-Harish-Chandra
series corresponding to Line 5. Since the series corresponding to Lines 11 and
those in both lines of Section 5 are conjugate by the action of
$\Inndiag(\bG^F)$, it follows that given any character $\mu$ of $\bG^F$
corresponding to $\tw3D_4[1]$ in the unnumbered line of Section 5, there is a
character $\nu$ of $\bG^F$ in a $2$-Harish-Chandra series for Line 11 such that
$\mu$ and $\nu$ lie in the same $2$-Harish-Chandra series of Line 11.
Similarly, given any character $\mu'$ of $\bG^F$ corresponding to $\tw3D_4[-1]$
in the unnumbered line of Section 5, there is a character $\nu'$ of $\bG^F$ in
a $2$-Harish-Chandra series for Line 11 such that $\mu'$ and $\nu'$ lie in the
same $2$-Harish-Chandra series of Line 11. Now point (2) follows since all
characters in a $2$-Harish-Chandra series of Line 11 lie in the same $2$-block.
We made this last assertion without proof in \cite{KM} but the argument is one
of the standard ones employed in the paper: one checks using CHEVIE \cite{Chv}
that Prop.~2.13(1) of \cite{KM} holds and by inspection of the table entry one
sees that Prop.~2.16(3) of \cite{KM} holds. The requisite conclusion now follows
by Props.~2.12, 2.13 and~2.12 of \cite{KM}. 
 
Point (2) for Line 8 follows similarly by using the 1-cuspidal pairs $(\bL,\la)$
in Line~2, with $q\equiv3\pmod4$. Here also there are $3$ distinct
$2$-Harish-Chandra series transitively permuted by $\Inndiag(\bG^F)$.
As stated in Paragraph 3 of the proof of \cite[Prop.~4.3]{KM}, we may verify
that $(\bL,\la)$ satisfies $\bL=C_\bG^\circ(Z(\bL)_2^F)$ and that $\la$ is of
central $2$-defect. We may conclude by \cite[Prop.~2.16]{KM}
that each 1-Harish-Chandra series in Line~2 lies in a unique 2-block. Since the
union of these series contains all the 2-Harish-Chandra series below Line~8 we
may conclude using the transitive action of $\Inndiag(\bG^F)$.
\medskip 

\begin{rem}
Via Ennola duality these arguments also apply to the groups of type $\tw2E_6$.
The same type of arguments are used for the Lines 6, 7, 10, 11, 14 and~20 of
\cite[Tab.~4]{KM}. For point (1), one checks that the defect group in all
relevant lines is Cabanes with maximal normal abelian subgroup $Z(\bL)^F_\ell$.
\end{rem}

\subsection{Separation of blocks} \label{sub:sep}
Let $\bG$ be connected reductive with Frobenius morphism $F$.
For $i=1,2,$ let $\bL_i$ be an $F$-stable Levi subgroup of $\bG$ with
$\la_i\in\cE(\bL_i^F,\ell')$, and let $u_i$ denote the $\ell$-block of $\bL_i^F$
containing $\la_i$. Suppose that $C_\bG(Z(\bL_i)^F_\ell) =\bL_i$ and that
$\la_i$ is of quasi-central $\ell$-defect. Then by
\cite[Props.~2.12, 2.13, 2.16]{KM} there exists a block $b_i$ of $\bG^F$ such
that all irreducible characters of $R_{\bL_i}^\bG (\la_i)$ lie in $b_i$
and $(Z(\bL_i)^F_\ell,u_i)$ is a $b_i$-Brauer pair. The assumption (a) case of
the following is Lemma~3.9 of \cite{KM15}.

\begin{lem}   \label{l:cabanes2}
 In the above situation, let $(P_i,c_i)$ be a maximal $b_i$-Brauer pair such
 that $(Z(\bL_i)^F_\ell, u_i)\leq (P_i,c_i)$. Assume that one of the following
 holds.
 \begin{enumerate}[\rm(1)]
  \item $P_i$ is Cabanes with $Z(\bL_i)^F_\ell$ as unique maximal abelian normal
   subgroup.
  \item $Z(\bL_1)^F_\ell$ is the unique subgroup of $P_1 $ of its isomorphism
   type and $Z(\bL_2)^F_\ell\cong Z(\bL_1)^F_\ell$. 
  \item $Z(\bL_1)^F_\ell$ is the unique normal subgroup of $P_1$ of its
   isomorphism type, $Z(\bL_2)^F_\ell\cong Z(\bL_1)^F_\ell$ and
   $Z(\bL_2)^F_\ell$ is normal in $P_2 $.
 \end{enumerate} 
 If $b_1=b_2$, then $(L_1,\la_1)$ and $ (L_2,\la_2)$ are $\bG^F$-conjugate. 
\end{lem} 
 
\begin{proof}
Suppose that $b_1=b_2$. Then, $(P_1,c_1)=\tw{g}(P_2,c_2)$ for some $g\in \bG^F$
and consequently $\tw{g}(Z(\bL_2)^F_\ell, u_2) \leq (P_1, u_1) $. By hypothesis
we have $\tw{g}Z(\bL_2)^F_\ell= Z(\bL_1)^F_\ell $. Hence the uniqueness of
Brauer pair inclusion implies that $\tw{g}(Z(\bL_2)^F_\ell,u_2)
 = (Z(\bL_1)^F_\ell,u_1)$. Now the result follows as in Lemma~3.9 of \cite{KM15}.
\end{proof}

\subsection{On the results in \cite{KM,KM15,KM17}} \label{sub:cons}
We now briefly outline why the new interpretation of the Tables~3 and~4 of
\cite{KM} does not have an impact on the other results of \cite{KM} and on the
results of \cite{KM15} and \cite{KM17}. With respect to the proof of the forward
direction (HZC1) of the height zero conjecture, the only lines corresponding to
abelian defect groups affected by the reinterpretation are Lines~7 and~11 of
\cite[Tab.~4]{KM}. The argument for these lines given in the proof of
\cite[Prop.~8.8]{KM} goes over without change and hence so does the statement of
\cite[Prop.~8.8]{KM}. We note here that there is a typographical error in
Paragraph~3 of the proof of Prop.~8.8 of \cite{KM}: $\cE_\ell(\bG^F, s)$
should be replaced by $\cE(\bG^F,s)$. Thus Theorem~1.1 of \cite{KM} is
unaffected. 
We have checked above that Theorems~1.2 and~1.4 of \cite{KM} are unaffected. It
can easily be checked that the reinterpretation of the tables has no effect on
the other results of the introduction of \cite{KM} nor on any results in
Sections~7, 8 and~9 of \cite{KM}.

The proof of parts (a),(b), and (c) of Theorem A of \cite{KM15} and the proof of
Theorem~B of \cite{KM15} remain unaffected by the reinterpretation of the
Tables~3 and~4 of \cite{KM}, and these tables are irrelevant for Part~(e) of
Theorem A of \cite{KM15}. Table~3 is irrelevant for Part~(d) of Theorem A of
\cite{KM15} and the proof of this part in \cite{KM15} was already given with the
new interpretation of \cite[Tab.~4]{KM}.

We now consider the Main Theorem of \cite{KM17}, that is, the reverse direction
of the height zero conjecture for quasi-simple groups. Proposition~2.19 of
\cite{KM17} continues to hold for the affected lines. For this, note that only
the case $Z=1$ in the statement of Prop.~2.19 of \cite{KM17} is relevant
here. In Lines~5, 8 and~11 of \cite[Tab.~3]{KM}, each of the three blocks
contains the union of at least two $e$-Harish-Chandra series (in $3'$-Lusztig
series) above $e$-cuspidal characters the $\ell$-part of whose degrees is
different. We may thus conclude as in the proof of Prop.~2.19 of
\cite{KM17}. The same argument applies to Line~10 of \cite[Tab.~4]{KM}. In
Line~2 of Table~3 and Lines~6, 14 and~20 of \cite[Tab.~4]{KM}, the relative
Weyl group has an irreducible character of degree divisible by $\ell$ and we
conclude by an appropriate modification of the proof of Prop.~2.19 of
\cite{KM17}: the intersection of $\cE(G^F,s)$ with the union of the blocks 
labelled by the relevant line is in $\ell$-defect preserving bijection with a
unipotent $e$-Harish-Chandra series of $C_{\bG^*}(s)$ whose relative Weyl group
is isomorphic to that of the relevant line, and hence by a straightforward
generalisation of the arguments of \cite[Cor.~6.6]{MaH} the union of the blocks
contains characters the $\ell$-part of whose degree is different. Since the
blocks in question are pairwise conjugate via an automorphism of $\bG^F$, the
same is also true for each of the blocks. Once we have \cite[Prop.~2.19]{KM17},
the proof of Theorem~2.20 and consequently of the Main Theorem of \cite{KM17}
goes over without change.
\medskip

\noindent {\bf Acknowledgements.}
We thank Niamh Farrell for pointing out the omission in the treatment of
$5$-blocks of $E_8(q)$, and Ruwen Hollenbach and Norman Macgregor for pointing
out the issue in \cite[Tab.~3]{KM}.
